\def\C{{\mathbf C}}
\def\R{{\mathbf R}}
\def\Z{{\mathbf Z}}
\def\Q{{\mathbf Q}}
\def\A{{\mathbf A}}
\def\Cv{{\mathcal{C}_v}}
\newtheorem{theorem}{Theorem}[subsection]
\newtheorem{lemma}[theorem]{Lemma}
\newtheorem{proposition}[theorem]{Proposition}
\newtheorem{corollary}[theorem]{Corollary}
\newtheorem{claim}[theorem]{Claim}
\theoremstyle{definition}
\theoremstyle{remark}
\newtheorem{example}[theorem]{Example}
\newcommand{\mm}[4]{\left(\begin{smallmatrix} #1 & #2\\ #3 & #4\end{smallmatrix}\right)}
\newcommand{\mb}[4]{\left(\begin{array}{cc} #1 & #2\\ #3 & #4\end{array}\right)}
\newcommand{\Cvone}[1]{\mathcal{C}_v(#1,\cdot)}
\newcommand{\Cvtwo}[2]{\mathcal{C}_v(#1,#2)}
\DeclareMathOperator{\tr}{tr}
\DeclareMathOperator{\Spin}{Spin}
\DeclareMathOperator{\Sp}{Sp}
\DeclareMathOperator{\GSp}{GSp}
\DeclareMathOperator{\PGSp}{PGSp}
\DeclareMathOperator{\SU}{SU}
\DeclareMathOperator{\SL}{SL}
\DeclareMathOperator{\GL}{GL}
\DeclareMathOperator{\Span}{Span}
\def\g{{\mathfrak g}}
\def\f{{\mathfrak f}}
\def\h{{\mathfrak h}}
\def\k{{\mathfrak k}}
\def\p{{\mathfrak p}}
\def\a{{\mathfrak a}}
\def\m{{\mathfrak m}}
\def\sl{{\mathfrak {sl}}}
\def\su{{\mathfrak {su}}}
\begin{document}
\title{Exceptional theta functions and arithmeticity of modular forms on $G_2$}
\author{Aaron Pollack}
\address{Department of Mathematics\\ The University of California San Diego\\ La Jolla, CA USA}
\email{apollack@ucsd.edu}
\thanks{Funding information: AP has been supported by the Simons Foundation via Collaboration Grant number 585147, by the NSF via grant numbers 2101888 and 2144021, and by an AMS Centennial Research Fellowship.  Part of this work was done while the author visited the Erwin Schrodinger Institute in Vienna, and the author thanks them for their support and hospitality.  The author would also like to thank the Isaac Newton Institute for Mathematical Sciences, Cambridge, for support and hospitality during the programme New Connections in Number Theory and Physics where some work on this paper was undertaken. The work at the INI was supported by EPSRC grant no EP/R014604/1 and 34.}

\begin{abstract} 
Quaternionic modular forms on the split exceptional group $G_2 = G_2^s$ were defined by Gan-Gross-Savin.  A remarkable property of these automorphic functions is that they have a robust notion of Fourier expansion and Fourier coefficients, similar to the classical holomorphic modular forms on Shimura varieties.  In this paper we prove that in even weight $\ell$ at least $6$, there is a basis of the space of cuspidal modular forms of weight $\ell$ such that all the Fourier coefficients of elements of this basis are in the cyclotomic extension of $\Q$.

Our main tool for proving this is to develop a notion of ``exceptional theta functions" on $G_2$.  We also develop a notion of exceptional theta functions on $\Sp_6$.  In the case of $\Sp_6$, these are level one, holomorphic vector-valued Siegel modular forms, with explicit Fourier expansions, that are the theta lifts from algebraic modular forms on anisotropic $G_2$ for the dual pair $\Sp_6 \times G_2^a \subseteq E_{7,3}$.  In the case of split $G_2$, our exceptional theta functions are level one quaternionic modular forms, with explicit Fourier expansions, that are the theta lifts from algebraic modular forms on anistropic $F_4$ for the dual pair $G_2 \times F_4^I \subseteq E_{8,4}$.
	
As further consequences of this theory of exceptional theta functions, we also obtain the following corollaries: 1) there is an algorithm to determine if any cuspidal, level one Siegel modular form on $\Sp_6$ of most weights is a lift from $G_2^a$; 2) the level one theta lifts from $F_4^I$ possess an integral structure, in the sense of Fourier coefficients; 3) in every even weight $k \geq 6$, there is a nonzero, level one Hecke eigen quaternionic cusp form on split $G_2$ with nonzero $\Z \times \Z \times \Z$ Fourier coefficient.  Finally, we obtain evidence for a conjecture of Gross relating Fourier coefficients of certain $G_2$ quaternionic modular forms to $L$-values.
\end{abstract}
\maketitle

\setcounter{tocdepth}{1}
\tableofcontents

\section{Introduction}
Holomorphic modular forms on Shimura varieties have a good notion of Fourier coefficients.  It is theorem, going back to Shimura, that the Fourier coefficients of holomorphic modular forms on Hermitian tube domains give these modular forms an algebraic structure: there is a basis of the space of holomorphic modular forms so that all the Fourier coefficients of elements of this basis are algebraic.

Outside the realm of holomorphic modular forms on Shimura varieties, there is little one can say\footnote{One exception might be the case of globally generic cohomological automorphic forms, for which the Whittaker coefficients can be directly related to Satake paremeters.} about the arithmeticity of Fourier coefficients of automorphic functions.  In fact, at present, it is not clear in general how one might define a good notion of Fourier coefficients for spaces of non-holomorphic automorphic forms.

Nevertheless, the quaternionic exceptional groups possess a special class of automorphic functions called the quaternionic modular forms, which do have a good notion of Fourier expansion and Fourier coefficients.  These automorphic forms go back to Gross-Wallach \cite{grossWallachI,grossWallachII} and Gan-Gross-Savin \cite{ganGrossSavin}. The precise shape of their Fourier expansion was determined in \cite{pollackQDS}, extending and refining earlier work of Wallach \cite{wallach}.  In particular, it is possible to define what it means for a quaternionic modular form to have Fourier coefficients in some ring $R \subseteq \C$.  In \cite{pollackAWSNotes}, we conjectured that the space of quaternionic modular forms of some fixed weight $\ell$ on a quaternionic exceptional group $G$ has a basis consisting of elements all of whose Fourier coefficients are algebraic numbers.\footnote{It seems useful to point out that there has been other recent work, specifically \cite{prasannaVenkatesh}, that conjectures the existence of surprising algebraic structures on spaces of non-holomorphic automorphic forms.}

In this paper, we provide substantial evidence toward this conjecture in the case of $G$ the split exceptional group $G_2$.  We setup the result now.

Suppose $\varphi$ is a cuspidal quaternionic modular form on $G_2$ of weight $\ell \geq 1$.  Let $N$ denote the unipotent radical of the Heisenberg parabolic of $G_2$ and $Z$ its center.  Then 
\[\varphi_Z(g_f g_\infty) = \sum_{\chi}{a_\chi(g_f)W_\chi(g_\infty)}\]
is the Fourier expansion of $\varphi$. Here the $\chi$ range over non-degenerate characters of $N(\Q)\backslash N(\A)$ and the $W_\chi$ are the generalized Whittaker functions of \cite{pollackQDS}.  The functions $a_\chi: G_2(\A_f) \rightarrow \C$ are locally constant and called the Fourier coefficients of $\varphi$.  We say that $\varphi$ has Fourier coefficients in a ring $R$ if $a_\chi(g_f) \in R$ for all characters $\chi$ of $N(\Q)\backslash N(\A)$ and all $g_f \in G_2(\A_f)$.  We write $S_{\ell}(G_2; R)$ for the space cuspidal quaternionic modular forms on $G_2$ of weight $\ell$ with Fourier coefficients in $R$.

Let $\Q_{cyc} = \Q(\mu_\infty)$ be the cyclotomic extension of $\Q$.
\begin{theorem}\label{thm:introArith} Suppose $\ell \geq 6$ is even.  Then there is a basis of the cuspidal quaternionic modular forms of weight $\ell$ with all Fourier coefficients in $\Q_{cyc}$.  In other words, $S_{\ell}(G_2,\C) = S_{\ell}(G_2,\Q_{cyc}) \otimes_{\Q_{cyc}} \C$.
\end{theorem}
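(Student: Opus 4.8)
The plan is to deduce the theorem from two properties of the exceptional theta lift from $F_4^I$ to $G_2$ for the dual pair $G_2 \times F_4^I \subseteq E_{8,4}$ developed in this paper, together with a short linear-algebra argument. Fix the finite-dimensional representation $\tau = \tau_\ell$ of the compact group $F_4^I(\R)$ that corresponds to the weight-$\ell$ quaternionic discrete series of $G_2(\R)$ under the archimedean exceptional theta correspondence, and let $\mathrm{AM}_\tau$ be the (finite-dimensional) space of algebraic modular forms on the $\Q$-anisotropic group $F_4^I$ valued in $\tau$; the earlier sections attach to each $f \in \mathrm{AM}_\tau$ a level-one cuspidal quaternionic modular form $\theta(f) \in S_\ell(G_2,\C)$ with an explicitly computed Fourier expansion. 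I will use: \textbf{(A)} after fixing a $\Q_{cyc}$-structure on $\tau$, one has $\theta(f) \in S_\ell(G_2,\Q_{cyc})$ whenever $f$ takes values in that structure; and \textbf{(B)} for even $\ell \ge 6$ the image of $\theta$ spans $S_\ell(G_2,\C)$ over $\C$. Granting these, first note that a $\Q_{cyc}$-linearly independent family in $S_\ell(G_2,\C)$ all of whose Fourier coefficients lie in $\Q_{cyc}$ is automatically $\C$-linearly independent: a hypothetical $\C$-linear relation, evaluated on finitely many Fourier coefficients, would be a $\C$-linear relation among vectors with entries in $\Q_{cyc}$ and hence already a $\Q_{cyc}$-linear relation. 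Next, since $\mathrm{AM}_\tau = \mathrm{AM}_\tau(\Q_{cyc}) \otimes_{\Q_{cyc}} \C$ (algebraic modular forms on an anisotropic group are $\tau$-valued functions on a finite double coset set), applying (A) and (B) to a $\Q_{cyc}$-basis of $\mathrm{AM}_\tau(\Q_{cyc})$ produces finitely many elements of $S_\ell(G_2,\Q_{cyc})$ whose $\C$-span is $S_\ell(G_2,\C)$; extracting a maximal $\C$-linearly independent subfamily and invoking the preceding observation shows this subfamily is at once a $\C$-basis of $S_\ell(G_2,\C)$ and a $\Q_{cyc}$-basis of $S_\ell(G_2,\Q_{cyc})$, which is exactly the asserted equality $S_\ell(G_2,\Q_{cyc}) \otimes_{\Q_{cyc}} \C = S_\ell(G_2,\C)$.

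For \textbf{(A)}: since $F_4^I(\R)$ is compact, $\mathrm{AM}_\tau$ is the space of $\tau$-valued functions on the finite set $F_4^I(\Q)\backslash F_4^I(\A_f)/U$ for the relevant open compact subgroup $U$, so a $\Q$-rational model of $\tau$ furnishes a $\Q$-structure, a fortiori a $\Q_{cyc}$-structure. The explicit Fourier expansion of the minimal representation of $E_{8,4}$ established earlier expresses each Fourier coefficient $a_\chi(g_f)$ of $\theta(f)$ as a \emph{finite} sum $\sum_i c_i \langle f(x_i), v_i\rangle$, with $x_i$ running through the finite set above, $v_i$ in the $\Q_{cyc}$-model of $\tau^\vee$, and scalars $c_i \in \Q_{cyc}$; the only roots of unity that occur arise from the Gauss-sum and Weil-index type local factors in that expansion and from the normalization of the archimedean generalized Whittaker functions $W_\chi$ of \cite{pollackQDS}. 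Hence $\theta$ carries $\mathrm{AM}_\tau(\Q_{cyc})$ into $S_\ell(G_2,\Q_{cyc})$.

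The substantive part is \textbf{(B)}, and I expect this to be the main obstacle. I would prove that every cuspidal quaternionic modular form $\varphi$ of even weight $\ell \ge 6$ is a theta lift from $F_4^I$. Introduce the backward lift $\theta^\vee \colon S_\ell(G_2,\C) \to \mathrm{AM}_\tau$ from the same dual pair. A Rallis-type inner product formula should relate $\langle \theta^\vee(\varphi), \theta^\vee(\varphi)\rangle$ to $\langle \varphi,\varphi\rangle$ times a product of local zeta integrals times a critical value of the relevant $L$-function of $\varphi$: the archimedean zeta integral is nonzero precisely for even $\ell \ge 6$ — this is where the parity and the threshold enter, through the branching of the quaternionic discrete series under $G_2(\R) \times F_4^I(\R)$ — the nonarchimedean integrals are nonzero at level one by a direct computation in the minimal representation, and the $L$-value is forced to be nonzero (e.g.\ by lying in the range of absolute convergence, or by a known nonvanishing theorem). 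Hence $\theta^\vee$ is injective on $S_\ell(G_2,\C)$. One then checks, via the see-saw identity, that $\theta \circ \theta^\vee$ acts on each $\varphi$ as a nonzero scalar, using that the minimal representation of $E_{8,4}$ is small enough to leave no room for additional contributions (a multiplicity-one / no-higher-occurrence statement for the local correspondences $G_2 \leftrightarrow F_4^I$: $p$-adically from Gan--Savin-type results, archimedean from the discrete-series branching above). Then $\varphi \in \im\theta$, which gives (B). The points where I expect most of the work to lie are the archimedean branching and nonvanishing for the quaternionic discrete series — in particular pinning the threshold at $\ell \ge 6$ — and the nonvanishing of the global $L$-value; if a clean Rallis inner product formula is unavailable in this exceptional setting, an alternative is to prove surjectivity of the local exceptional theta correspondence $G_2 \leftrightarrow F_4^I$ at every place and to bootstrap to the global surjectivity using a single nonvanishing period integral, but the archimedean input is in any case the crux.
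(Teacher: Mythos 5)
Your overall architecture — a $\Q_{cyc}$-structure on the space of theta lifts (your item (A)), a surjectivity statement (your item (B)), and the linear-algebra reduction to a common basis — is exactly the paper's strategy, and your reduction of (B) to the nonvanishing of a theta period, the identification of the $L$-value by a doubling/Siegel–Weil-type identity, and the use of absolute convergence for the $L$-value nonvanishing are all what the paper does. The one substantive divergence is in how the archimedean zeta integral is handled. You propose a purely local argument ("branching of the quaternionic discrete series under $G_2(\R)\times F_4^I(\R)$") and say this is where the parity and the threshold $\ell\ge 6$ enter. In the paper, the archimedean nonvanishing is \emph{not} proved locally. It is proved by a global bootstrap: one first constructs (via the explicit Fourier expansion of Theorem \ref{thm:introQMF} and a specific rank-one $\beta_{K,m}$ built from ``singular isotropic'' pairs $x\wedge y$) a nonzero level-one theta lift with nonzero $\Z\times\Z\times\Z$ Fourier coefficient — this is Corollary \ref{cor:introG2FC}, which needs $m=\ell-4$ to be a positive even integer, hence $\ell\ge 6$ even — and then feeds that form into the Siegel--Weil/Rankin--Selberg identity of \cite{pollackSW,gurevichSegal,Segal2017} to \emph{deduce} $I_{\infty,\chi}(s=3)\neq 0$, after which the identity applies to an arbitrary $\varphi$. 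So the parity and threshold do not come from representation-theoretic branching (which only forces $\ell\ge 5$ for cuspidal lifts) but from the specific construction in Lemma \ref{lem:w0nonzero}. Also, there is no off-the-shelf Rallis inner-product formula for $G_2\times F_4^I\subset E_{8,4}$; the paper replaces it with the Siegel--Weil theorem of \cite{pollackSW} paired with the $G_2\times\Spin_8$ Rankin--Selberg integral, which plays precisely the role you want but is not a formal consequence of a doubling method. Your item (A) and the linear-algebra argument are fine (the injectivity of the Fourier-coefficient map, which you use implicitly, is available from \cite{pollackQDS}), though the source of the cyclotomic numbers is simply $\psi(\langle\omega,u_f\rangle)$ evaluated at rationals, not Gauss sums or Weil indices.
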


Our main tool for proving Theorem \ref{thm:introArith} is a notion of ``exceptional" theta functions, that mirrors the classical theory of Siegel modular theta functions associated to pluriharmonic polynomials. Recall that these classical pluriharmonic theta functions are (often cuspidal) Siegel modular forms, with completely explicit Fourier expansions, that can be considered as arising from the Weil representation restricted to $\Sp_{2g} \times O(V)$ where $V$ is a rational quadratic space whose quadratic form is positive definite.  In other words, they are the theta lifts from algebraic modular forms on $O(V)$ with \emph{nontrivial archimedean weight} to holomorphic Siegel modular forms.  From the perspective of theta lifts and algebraic modular forms, the remarkable fact is that one can give the Fourier expansions of these lifts completely explicitly.

To prove Theorem \ref{thm:introArith}, we develop a notion of exceptional theta functions on $G_2$.  These are quaternionic modular forms on $G_2$ whose Fourier coefficients we can tightly control.  They arise as theta lifts from an anisotropic group of type $F_4$.  Using the Siegel-Weil theorem of \cite{pollackSW}, we can prove that every cuspidal quaternionic modular form of even weight at least $6$ on $G_2$ is one of our exceptional theta functions.  This establishes the theorem.

Our theory of exceptional theta functions on $G_2$ has a parallel--but easier--development on $\Sp_6$.  As this theory is a bit easier, and might be more familiar to the reader, we begin with the $\Sp_6$-case.

\subsection{Holomorphic theta functions}
Let $G_2^a$ denote the the algebraic $\Q$ group of type $G_2$ that is split at every finite place and compact at the archimedean place. Let $H_J^1$ denote the simply connected group of type $E_7$ that is split at every finite place and the group $E_{7,3}$ at the archimedean place.  There is a dual pair $\Sp_6 \times G_2^a \subseteq H_J^1$, and a corresponding theta lift from $G_2^a$ to $\Sp_6$ using the automorphic minimal representation on $H_J^1$ \cite{kimMin} that was studied by Gross-Savin \cite{grossSavin}.  This lift produces (in general) vector-valued holomorphic Siegel modular forms on $\Sp_6$.  It makes sense to ask if, given an algebraic modular form on $G_2^a$, one can give an explicit Fourier expansion of its theta lift to $\Sp_6$.  This is easy if the weight of the algebraic modular form is trivial, but is not immediate (at least to us) if the weight is nontrivial.  Theorem \ref{thm:introSMF} computes this Fourier expansion in the level one case.

We now setup this theorem.  Let $\Theta$ denote the octonions over $\Q$ with positive definite norm.  Denote by $J=H_3(\Theta)$ the $27$-dimensional exceptional cubic norm structure consisting of the $3 \times 3$ Hermitian matrices with coefficients in $\Theta$.  Let $V_7$ denote the $7$-dimensional space of trace $0$ octonions, let $W_3$ denote the standard representation of $\GL_3$ and set $V_3 = \wedge^2 W_3$.  There is a projection $\mathcal{P}:J^\vee \simeq J \rightarrow V_3 \otimes V_7$ given by taking the trace $0$ projections of the off-diagonal entries of an element of $J$.  Consider the natural map
\begin{align*}(V_3 \otimes V_7)^{\otimes k_1 + 2k_2} & \rightarrow S^{k_1}(V_3) \otimes V_7^{\otimes k_1} \otimes S^{k_2}(\wedge^2 V_3) \otimes (\wedge^2 V_7)^{\otimes k_2} \\ &\simeq S^{k_1}(\wedge^2 W_3) \otimes V_7^{\otimes k_1} \otimes S^{k_2}(W_3) \otimes (\wedge^2 V_7)^{\otimes k_2} \otimes \det(W_3)^{k_2},\end{align*}
and denote by $\mathcal{P}_{k_1,k_2}(T)$ the image of $(T)^{\otimes (k_1 + 2k_2)}$ under this map.

Let $\omega_1$ denote the highest weight of the representation $V_7$ of $G_2$, and let $\omega_2$ denote the highest weight of the $14$-dimensional adjoint representation $\g_2 \subseteq \wedge^2 V_7$.  For non-negative integers $k_1,k_2$, let $W(k_1,k_2)$ denote the representation of $G_2(\C)$ with highest weight $k_1 \omega_1 + k_2 \omega_2$, embedded in $V_7^{\otimes k_1} \otimes (\wedge^2 V_7)^{\otimes k_2} \otimes \C$. This representation is the one generated by $v_{\omega_1}^{\otimes k_1} \otimes v_{\omega_2}^{\otimes k_2}$ where $v_{\omega_1}$ and $v_{\omega_2}$ are highest weight vectors for $V_7$ and $\g_2$ for the same Borel subgroup of $G_2(\C)$. Given $\beta \in W(k_1,k_2)$, and $T \in J$, we can form the pairing
 \[\{P_{k_1,k_2}(T),\beta\} \in S^{k_1}(\wedge^2 W_3) \otimes S^{k_2}(W_3) \otimes \det(W_3)^{k_2+4},\]
where we have shifted the $k_2 \mapsto k_2 +4$ in the exponent of $\det(W_3)$.

Denote by $R_{\Theta} \subseteq \Theta$ Coxeter's order of integral octonions, and set $J_R \subseteq J$ the elements whose diagonal entries are in $\Z$ and off-diagonal entries are in $R_{\Theta}$.  Recall that Kim's modular form on $H_J^1$ has Fourier expansion
\[\Theta_{Kim}(Z) = \sum_{T \geq 0, T \in J_{R}, rk(T) \leq 1}{a(T) e^{2\pi i(T,Z)}}\]
where $a(0) = 1$ and if $T$ is rank one then $a(T) = 240 \sigma_3(d_T)$ where $d_T$ is the largest integer with $d_T^{-1}T \in J_{R}$.

Finally, set $\Gamma_{G_2} = G_2^a(\Z)$.
\begin{theorem}\label{thm:introSMF} Suppose $\beta \in W(k_1,k_2)$ and $\alpha$ is the level one algebraic modular form on $G_2^a$ with $\alpha(1) = \frac{1}{|\Gamma_{G_2}|} \sum_{\gamma \in \Gamma_{G_2}}{\gamma \cdot \beta}$.  Then the theta lift $\Theta(\alpha)$ of $\alpha$ is a vector-valued Siegel modular form of weight $(k_1+2k_2+4,k_1+k_2+4,k_2+4)$ with Fourier expansion
\[\Theta(\alpha)(Z) = \sum_{T \geq 0, T \in J_{R}, rk(T) \leq 1}{a(T)\{P_{k_1,k_2}(T),\beta\} e^{2\pi i (T,Z)}}.\]
\end{theorem}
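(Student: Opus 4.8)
The plan is to compute the theta lift $\Theta(\alpha)$ directly from the explicit Fourier expansion of Kim's minimal representation automorphic form $\Theta_{Kim}$ on $H_J^1$, exploiting the restriction of its Fourier expansion to the dual pair $\Sp_6 \times G_2^a$. First, I would recall that the theta lift of an algebraic modular form $\alpha$ on $G_2^a$ is defined as
\[
\Theta(\alpha)(g) = \int_{[G_2^a]} \Theta_{Kim}(g,h)\, \alpha(h)\, dh,
\]
where $g \in \Sp_6(\A)$ and $h$ runs over the (finite, since $G_2^a$ is anisotropic) adelic quotient. Because $\Theta_{Kim}$ generates the minimal representation, its Fourier expansion along the Siegel parabolic of $\Sp_6$ is supported on rank $\leq 1$ elements $T \in J$, and the Fourier coefficients are, up to the archimedean Whittaker/Bessel function and the explicit scalars $a(T)$, governed by the ``Schrödinger model'' vectors living in functions on $J$. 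The key computational input is that the $(g_\infty, h_\infty)$-dependence at the archimedean place factors through the standard Gaussian $e^{2\pi i (T,Z)}$ tensored against a $G_2^a(\R)$-finite vector, and the $G_2^a(\R)$-action on that vector is precisely the action on $(V_3 \otimes V_7)^{\otimes(k_1+2k_2)}$ that defines $\mathcal{P}_{k_1,k_2}(T)$ after the projection $\mathcal{P}: J \to V_3 \otimes V_7$.

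The main steps, in order, are: (1) Unfold the theta integral against the Fourier expansion of $\Theta_{Kim}$, reducing the integral over $[G_2^a]$ (a finite sum over the class set, which in level one is a single class with stabilizer $\Gamma_{G_2}$) to an average $\frac{1}{|\Gamma_{G_2}|}\sum_{\gamma \in \Gamma_{G_2}} \gamma\cdot(-)$; this is where the hypothesis $\alpha(1) = \frac{1}{|\Gamma_{G_2}|}\sum_\gamma \gamma\cdot\beta$ enters and produces the pairing $\{P_{k_1,k_2}(T),\beta\}$ by $G_2^a$-invariance of the pairing between $W(k_1,k_2)$ and the relevant tensor space. (2) Identify the archimedean component: show that the $\Sp_6(\R)$-type of the resulting function is the holomorphic discrete series (or its limit) of weight $(k_1+2k_2+4, k_1+k_2+4, k_2+4)$ — the shift by $4 = \frac{1}{2}\dim V_7 / \text{something}$, i.e.\ the $\rho$-shift/half-sum coming from the minimal representation's lowest $K$-type, must be pinned down; this is essentially the computation identifying the lowest weight of the theta lift, and matches the $k_2 \mapsto k_2+4$ shift in $\det(W_3)$ already built into the pairing $\{\cdot,\cdot\}$. (3) Match the Fourier coefficients place by place: the finite-adelic part forces $T \in J_R$ with $a(T)$ as in $\Theta_{Kim}$ (since the integrand at finite places is the characteristic function of the lattice built from $R_\Theta$), and the archimedean part produces the polynomial $\{P_{k_1,k_2}(T),\beta\}$ times $e^{2\pi i(T,Z)}$. (4) Verify that $\Theta(\alpha)$ is genuinely a (vector-valued, level one) Siegel modular form — automorphy in $\Sp_6(\Z)$ and holomorphy — which follows from the automorphy and $K$-finiteness of $\Theta_{Kim}$ together with the fact that the lowest $K_\infty$-type is the holomorphic one, so no lowering operators survive.

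I expect the main obstacle to be Step (2) combined with the precise bookkeeping in Step (1): namely, showing that the archimedean integral of the minimal-representation matrix coefficient against the $G_2^a(\R)$-type on $W(k_1,k_2)$ produces \emph{exactly} the holomorphic vector-valued Whittaker function of weight $(k_1+2k_2+4,k_1+k_2+4,k_2+4)$ with coefficient the polynomial $\mathcal{P}_{k_1,k_2}(T)$, and not some nonzero multiple or a more complicated combination. This requires understanding the explicit branching of the minimal representation of $E_{7,3}$ under $\Sp_6(\R) \times G_2^a(\R)$ at the level of lowest $K$-types and the associated ``generalized Whittaker'' model — presumably this is handled by the results of \cite{grossSavin} on the dual pair together with the explicit Fourier-expansion technology developed earlier in the paper; the polynomial $\mathcal{P}$ arising from iterated projections and Cartan components is forced once one knows the map on lowest $K$-types is the obvious equivariant one, so the real content is the nonvanishing and the exact normalization of the archimedean factor, which I would extract by evaluating at a single well-chosen $T$ (e.g.\ a rank-one $T$ with $d_T = 1$) and comparing with the known scalar-weight case $k_1 = k_2 = 0$.
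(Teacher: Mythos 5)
Your outline has the right ingredients (use Kim's Fourier expansion, unfold the integral, match archimedean types via Gross--Savin), but two things are off, and one of them is the real content of the theorem.

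First, your starting formula for the lift, $\Theta(\alpha)(g) = \int_{[G_2^a]}\Theta_{Kim}(g,h)\,\alpha(h)\,dh$, is not the definition used in the paper and cannot be made to work as written: $\Theta_{Kim}$ is scalar-valued while $\alpha(h)$ takes values in $W(k_1,k_2)$, so the integrand does not make sense without first producing a vector-valued object to pair against $\alpha$. The paper's actual definition is $\Theta(\alpha)(g) = \int_{[G_2^a]}\{D^{k_1+2k_2}\Theta_{Kim}((g,h)),\alpha(h)\}\,dh$, where $D$ is the gradient-type operator valued in $\p_J^{+,\vee}$ and $\{\,,\,\}$ is the induced $G_2^a(\R)$-equivariant pairing. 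You gesture at choosing a nicer archimedean test vector in a Schr\"odinger-type model, but if you do that you lose the identification of the finite coefficients with $a(T) = 240\sigma_3(d_T)$, which are specific to Kim's modular form.

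Second, your Step (2), which you correctly flag as the main obstacle, is precisely what the paper proves and you do not. The paper reduces the theorem to two statements: Proposition 4.0.2 (the lift is a holomorphic, level-one Siegel modular form of weight $(k_1+2k_2+4,k_1+k_2+4,k_2+4)$, cuspidal for $k_2>0$ --- this part does come from Gross--Savin as you anticipate), and Theorem 4.0.1 (\emph{DiffExp}), a closed-form computation of
\[
j(g,i)^{\ell}\rho_{[k_1,k_2]}(J(g,i))\bigl\{D^{k_1+2k_2}\bigl(j(g,i)^{-\ell}e^{2\pi i(T,g\cdot i)}\bigr),\beta\bigr\} = B_{k_1,k_2}\{P_{k_1,k_2}(T),\beta\}\,e^{2\pi i(pr(T),g\cdot i)},
\]
valid for every $T$ with a single constant $B_{k_1,k_2}$. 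This is proved by an explicit Lie-algebra calculation (Iwasawa decomposing the elements of $\p_J^+$ via the Cayley transform, differentiating the Gaussian term by term, and showing via Lemma 6.0.4 that only the ``leading'' tensor term survives after pairing with $\beta\in W(k_1,k_2)$). Your suggestion to fix the normalization by testing at a single well-chosen $T$ would determine the constant but would not establish that the answer is the polynomial $\{P_{k_1,k_2}(T),\beta\}$ for \emph{all} $T$; that requires the full differential calculation, which is where the theorem's content actually lives. So the proposal, as stated, identifies the hard step but does not close it.
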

When $k_2 > 0$ \cite{grossSavin} proves that $\Theta(\alpha)$ is a cusp form.

A simple restatement of Theorem \ref{thm:introSMF} is as follows.  Consider the projection map $J \rightarrow H_3(\Q)$ given by sending
\[\left(\begin{array}{ccc} c_1 & a_3 & a_2^* \\ a_3^* & c_2 & a_1 \\ a_2 & a_1^* & c_3 \end{array}\right) \mapsto \frac{1}{2} \left(\begin{array}{ccc} 2c_1 & \tr(a_3) & \tr(a_2) \\ \tr(a_3) & 2c_2 & \tr(a_1) \\ \tr(a_2) & \tr(a_1) & 2c_3 \end{array}\right).\]
Then if $T_0 \in H_3(\Q)$ (Hermitian $3 \times 3$ matrices with $\Q$ coefficients) is half-integral, the $T_0$ Fourier coefficient of $\Theta(\alpha)(Z)$ is
\begin{equation}\label{eqn:T0FC} a_{\Theta(\alpha)}(T_0) = \sum_{T \geq 0, T \in J_{R}, rk(T) \leq 1, T \mapsto T_0}{a(T)\{P_{k_1,k_2}(T),\beta\}}.\end{equation}
(The sum is finite and explicitly determinable.)  We verify that the $a_{\Theta(\alpha)}(T_0)$ live in the highest weight submodule
\[V_{[k_1,k_2]}:=V_{(k_1+2k_2+4,k_1+k_2+4,k_2+4)} \subseteq S^{k_1}(\wedge^2 W_3) \otimes S^{k_2}(W_3) \otimes \det(W_3)^{k_2+4}\]
so that $\Theta(\alpha)$ really is a vector-valued Siegel modular form for the representation $V_{[k_1,k_2]}$.

An important computational aspect of Theorem \ref{thm:introSMF} is that one can use $\beta$ in the pairing $\{P_{k_1,k_2}(T),\beta\}$ instead of $\alpha(1) = \frac{1}{|\Gamma_{G_2}|} \sum_{\gamma \in \Gamma_{G_2}}{\gamma \cdot \beta}$.  This enables one to compute theta lifts much more quickly than if one had to use the algebraic modular form $\alpha(1) \in W(k_1,k_2)^{\Gamma_{G_2}}$.  Moreover, \emph{even if one does not know a priori that $W(k_1,k_2)^{\Gamma_{G_2}} \neq 0$}, the theorem still holds as stated.  In particular, verifying that the right hand side of equation \eqref{eqn:T0FC} is nonzero for a single $T_0$ shows that $\frac{1}{|\Gamma_{G_2}|} \sum_{\gamma \in \Gamma_{G_2}}{\gamma \cdot \beta} \neq 0$.  The reader can, of course, easily check this claim directly.

The Fourier expansion in Theorem \ref{thm:introSMF} can be seen as completely analogous to the Fourier expansion of classical pluriharmonic theta functions.  Indeed, the $\beta \in W(k_1,k_2)$ becomes the pluriharmonic polynomial, and the rank one $T$'s become the lattice vectors over which one sums.

When the Siegel modular form $\Theta(\alpha)$ is a Hecke eigenform, it has Satake parameters $c_p$, one for each prime number $p$, which are semisimple elements in $\Spin_7(\C)$. (Because we work with level one forms, we blur the distinction between $\Sp_6$ and $\PGSp_6$.) It is proved by Gross-Savin \cite{grossSavin}, Maagard-Savin \cite{magaardSavin}, and Gan-Savin \cite{ganSavinHD} that the theta lift is functorial for spherical representations; in fact, it is functorial for all representations, see Gan-Savin \cite{ganSavinLLC}.  In particular, these conjugacy classes $c_p$ are in $G_2(\C) \subseteq \Spin_7(\C)$.  Thus Theorem \ref{thm:introSMF} can produce numerous explicit examples of level one vector-valued Siegel modular forms all of whose Satake parameters are in $G_2(\C)$.  We have taken the liberty of providing some small illustration of this, as follows.  Let $\lambda_1 = (12,8,8)$ and $\lambda_2 = (14,10,8)$.  Then it is known from Chenevier-Taibi \cite{chenevierTaibi} that the space of vector-valued, level one cuspidal Siegel modular forms of these weights are one dimensional.

\begin{corollary}\label{cor:introCor1} For both $\lambda_1 = (12,8,8)$ and $\lambda_2 = (14,10,8)$, the cuspidal Siegel modular forms of these weights are lifts from $G_2^a$.  In particular, their Satake parameters all lie in $G_2(\C)$.
\end{corollary}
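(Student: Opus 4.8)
The plan is to exhibit, for each of the weights $\lambda_1 = (12,8,8)$ and $\lambda_2=(14,10,8)$, a nonzero theta lift $\Theta(\alpha)$ from $G_2^a$ landing in that weight, and then invoke the one-dimensionality of the corresponding spaces of level one cusp forms (Chenevier--Taibi \cite{chenevierTaibi}) to conclude that the unique such cusp form, up to scalar, equals $\Theta(\alpha)$; functoriality of the lift (Gross--Savin \cite{grossSavin}, Magaard--Savin \cite{magaardSavin}, Gan--Savin \cite{ganSavinHD,ganSavinLLC}) then places all Satake parameters in $G_2(\C) \subseteq \Spin_7(\C)$. First I would match the weights: Theorem \ref{thm:introSMF} produces lifts of weight $(k_1+2k_2+4,\,k_1+k_2+4,\,k_2+4)$, so $\lambda_1$ corresponds to $(k_1,k_2)=(0,4)$ and $\lambda_2$ to $(k_1,k_2)=(2,4)$. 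Since $k_2 = 4 > 0$ in both cases, Gross--Savin \cite{grossSavin} guarantees that any such lift is a cusp form, so it lies in the one-dimensional space in question.

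The only remaining point is to check that the lift is \emph{nonzero} for a suitable choice of $\beta \in W(k_1,k_2)$. Here I would use the computational form of Theorem \ref{thm:introSMF}, in particular equation \eqref{eqn:T0FC}: for a single well-chosen half-integral $T_0 \in H_3(\Q)$, the Fourier coefficient $a_{\Theta(\alpha)}(T_0) = \sum_{T \geq 0,\ T \in J_R,\ \rk(T) \leq 1,\ T \mapsto T_0} a(T)\,\{P_{k_1,k_2}(T),\beta\}$ is a finite, explicitly computable sum, and exhibiting one pair $(T_0,\beta)$ for which it is nonzero simultaneously shows $\Theta(\alpha) \neq 0$ and (as noted in the text) that $W(k_1,k_2)^{\Gamma_{G_2}} \neq 0$. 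Concretely, I would take $\beta = v_{\omega_1}^{\otimes k_1}\otimes v_{\omega_2}^{\otimes k_2}$, a highest weight vector, choose $T_0$ with small determinant so that only the rank one $T$ mapping to it with $d_T$ small contribute, read off the $a(T) = 240\,\sigma_3(d_T)$ coefficients from Kim's form, and evaluate the pairing $\{P_{k_1,k_2}(T),\beta\}$ by the explicit recipe for $\mathcal{P}_{k_1,k_2}$. This is a finite linear-algebra computation over $\Q$ and produces a definite nonzero element of $V_{[k_1,k_2]}$.

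The main obstacle is the nonvanishing check: a priori one must ensure that the chosen $\beta$ does not lie in the kernel of the map $\beta \mapsto \Theta(\alpha)$, equivalently that $\frac{1}{|\Gamma_{G_2}|}\sum_{\gamma \in \Gamma_{G_2}}\gamma\cdot\beta$ has nonzero theta lift. Since $\dim W(k_1,k_2)^{\Gamma_{G_2}} = 1$ in these cases (itself verifiable by the same Fourier coefficient computation, or by dimension formulas for $G_2^a$), and the space of cusp forms of the target weight is also one-dimensional, the lift is either zero or an isomorphism on these one-dimensional spaces; so it suffices to produce one nonzero Fourier coefficient, which the explicit computation above supplies. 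Once nonvanishing is in hand, the identification with the Chenevier--Taibi form and the functoriality statement are immediate, completing the proof.
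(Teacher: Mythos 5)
Your proposal is correct and takes essentially the same route as the paper: choose a highest-weight vector $\beta \in W(k_1,k_2)$, compute a single Fourier coefficient $a_{\Theta(\alpha)}(T_0)$ via equation \eqref{eqn:T0FC} by enumerating rank-one $T \in J_R$ lying over a small $T_0$, verify nonvanishing by a finite computation, and conclude using the Chenevier--Taibi one-dimensionality and functoriality of the lift. The one small detour in your write-up is the argument invoking $\dim W(k_1,k_2)^{\Gamma_{G_2}} = 1$ in the "main obstacle" paragraph: it is unnecessary and slightly circular, since a single nonzero Fourier coefficient already shows $\Theta(\alpha) \neq 0$ directly (and, as the paper notes, also establishes $W(k_1,k_2)^{\Gamma_{G_2}} \neq 0$ as a byproduct rather than a prerequisite).
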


Indeed, the proof of Corollary \ref{cor:introCor1} is to produce a single $\beta_1 \in W(0,4)$ and $\beta_2 \in W(2,4)$, and a single $T_0$ so that the right hand side of \eqref{eqn:T0FC} is nonzero for these $\beta_i$.  It then follows that the $\Theta(\alpha)$ are nonzero level one Siegel modular forms, and thus by the dimension computation of \cite{chenevierTaibi} are the unique cuspidal level one eigenforms of weights $\lambda_1$ and $\lambda_2$.  Producing many more such explicit examples would be possible.  We remark that one can find conjectures here \cite{bCGwebsite} of which small weight level one Siegel modular forms have their Satake parameters in $G_2(\C)$.

More than just a couple of examples, however, Theorem \ref{thm:introSMF} produces an algorithm to determine if any fixed level one Siegel modular cusp form of most weights is a lift from $G_2^a$.  To setup the result, note that for a weight $\lambda = (\lambda_1,\lambda_2,\lambda_3)$, it is known \cite{ibukiyama} that there exists explicitly determinable finite sets $\mathcal{C}_{\lambda}$ of half-integral symmetric matrices $T$ so that if $F$ is a cuspidal level one Siegel modular form of weight $\lambda$, and if $a_F(T) = 0$ for all $T \in \mathcal{C}_\lambda$, then $F=0$.

\begin{corollary}\label{cor:alg} Suppose $\lambda = (k_1 + 2k_2 + 4,k_1+k_2 + 4,k_2+4)$ with $k_2 > 0$, and $F$ is a level one Siegel modular cusp form of weight $\lambda$ on $\Sp_6$, whose Fourier coefficients $a_F(T)$ are given for all $T \in \mathcal{C}_\lambda$.  Then there is an algorithm to determine if $F = \Theta(\alpha)$ for some algebraic modular form $\alpha$ on $G_2^a$.\end{corollary}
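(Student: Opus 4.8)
\emph{Proof proposal.} The plan is to turn the membership question ``$F = \Theta(\alpha)$?'' into a finite linear-algebra computation, using the fact that Theorem \ref{thm:introSMF} makes the Fourier coefficients of every theta lift from $G_2^a$ completely explicit. First I would reduce to the correct weight and to a finite spanning set on the source side. Since $G_2^a$ has class number one, a level one algebraic modular form on $G_2^a$ valued in the representation $W(k_1,k_2)$ is the same datum as an element of $W(k_1,k_2)^{\Gamma_{G_2}}$, and every such element has the shape $\frac{1}{|\Gamma_{G_2}|}\sum_{\gamma \in \Gamma_{G_2}}\gamma \cdot \beta$ for some $\beta \in W(k_1,k_2)$; moreover, by the archimedean component of the correspondence for the dual pair $\Sp_6 \times G_2^a$, any algebraic modular form on $G_2^a$ whose theta lift is a nonzero holomorphic Siegel modular form of weight $\lambda = (k_1+2k_2+4,\, k_1+k_2+4,\, k_2+4)$ must have weight $W(k_1,k_2)$. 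As theta lifting is linear, the space $\Theta_\lambda^{G_2}$ of all weight-$\lambda$ theta lifts from $G_2^a$ equals $\Span\{\Theta(\alpha_{\beta}) : \beta \in W(k_1,k_2)\}$, with $\alpha_\beta(1) := \frac{1}{|\Gamma_{G_2}|}\sum_{\gamma}\gamma\cdot\beta$, and it suffices to let $\beta$ run over a fixed basis $\beta_1,\dots,\beta_m$ of $W(k_1,k_2)$, written down explicitly inside $V_7^{\otimes k_1} \otimes (\wedge^2 V_7)^{\otimes k_2}$.

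Next, for each $i$ and each $T_0 \in \mathcal{C}_\lambda$, I would compute, using formula \eqref{eqn:T0FC} with $\beta = \beta_i$, the vector
\[
v_{i,T_0} \;=\; \sum_{\substack{T \geq 0,\ T \in J_R,\ \rk(T) \leq 1,\ T \mapsto T_0}} a(T)\,\{P_{k_1,k_2}(T),\beta_i\} \;\in\; V_{[k_1,k_2]}.
\]
Each such sum is finite and effectively determinable: the rank $\leq 1$ elements $T \in J_R$ lying over a given half-integral $T_0$ form an enumerable finite set (governed by the arithmetic of Coxeter's order $R_\Theta$), the coefficients $a(T) = 240\,\sigma_3(d_T)$ are computable, and the pairing $\{P_{k_1,k_2}(T),\beta_i\}$ is a finite piece of multilinear algebra over $\Q$ (or over any fixed number field containing the entries of the $\beta_i$). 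Collecting these yields, for each $i$, a vector $V_i := (v_{i,T_0})_{T_0 \in \mathcal{C}_\lambda}$ in the finite-dimensional space $\bigoplus_{T_0 \in \mathcal{C}_\lambda} V_{[k_1,k_2]}$, which by Theorem \ref{thm:introSMF} is precisely the tuple of $\mathcal{C}_\lambda$-Fourier coefficients of $\Theta(\alpha_{\beta_i})$.

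Finally I would form $V_F := (a_F(T_0))_{T_0 \in \mathcal{C}_\lambda}$ from the given data and decide, by linear algebra, whether $V_F \in \Span(V_1,\dots,V_m)$; if so, solving $\sum_i c_i V_i = V_F$ exhibits the desired $\alpha$. Correctness is where the hypothesis $k_2 > 0$ enters: by Gross--Savin every $\Theta(\alpha_{\beta_i})$ is then a \emph{cusp} form of weight $\lambda$, so by the result of \cite{ibukiyama} the restriction-to-$\mathcal{C}_\lambda$ map is injective on $\Theta_\lambda^{G_2}$, whence $\Span(V_1,\dots,V_m) \cong \Theta_\lambda^{G_2}$. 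If $V_F$ lies in this span, write it as the $\mathcal{C}_\lambda$-coefficient tuple of some $\Theta(\alpha)$; then $F - \Theta(\alpha)$ is a level one cusp form of weight $\lambda$ all of whose $\mathcal{C}_\lambda$-coefficients vanish, hence $F = \Theta(\alpha)$; the converse is immediate.

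The step demanding the most care is making the computation of the $v_{i,T_0}$ genuinely effective, and this is exactly where the paper's earlier structural work is used: one needs the combinatorics of $R_\Theta$ to list the rank $\leq 1$ elements of $J_R$ above a given $T_0$, together with concrete implementations of the projection $\mathcal{P}$, of $(T)^{\otimes(k_1+2k_2)} \mapsto \mathcal{P}_{k_1,k_2}(T)$, and of the pairing with $\beta_i$. Once Theorem \ref{thm:introSMF} is available, everything else is bookkeeping plus finite-dimensional linear algebra, and the only further external input is the explicit set $\mathcal{C}_\lambda$ furnished by \cite{ibukiyama}.
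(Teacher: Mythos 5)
Your linear-algebra reduction is essentially the same as the paper's algorithm, and your observation that cuspidality (via Gross--Savin, using $k_2>0$) together with the Ibukiyama determination of $\mathcal{C}_\lambda$ makes the restriction-to-$\mathcal{C}_\lambda$ map injective is exactly the right correctness criterion. The weight-matching observation via the archimedean correspondence is also correct and appears in the paper's argument.

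However, there is a genuine gap in the step where you assert, by ``linearity of theta lifting,'' that the space $\Theta_\lambda^{G_2}$ of all weight-$\lambda$ theta lifts from $G_2^a$ equals $\Span\{\Theta(\alpha_\beta) : \beta \in W(k_1,k_2)\}$. Your spanning set only produces lifts $\Theta(\alpha)$ built from the \emph{specific} theta kernel $\Theta_{k_1,k_2} = D^{k_1+2k_2}\Theta_{Kim}$ with $\Theta_{Kim}$ the level one form, paired against \emph{level one} algebraic modular forms valued in $W(k_1,k_2)$. A priori, a level one cusp form $F$ of weight $\lambda$ could lie in the image of the theta correspondence for $G_2^a \times \Sp_6 \subseteq H_J^1$ via some other vector in the minimal representation $\Pi_{min}$ and some higher-level automorphic form on $G_2^a$, without being equal to any $\Theta(\alpha_\beta)$ in your spanning set. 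Ruling this out is precisely Claim \ref{claim:introHD}, which the paper identifies as the essential nontrivial ingredient. Its proof requires: the Howe duality theorem of Gan--Savin to reduce to irreducible $\tau = \Theta(\pi_F)$; the archimedean correspondence of Gross--Savin to pin down $\tau_\infty = W(k_1,k_2)$; a period criterion of Gross--Savin combined with the B\"ocherer--Das nonvanishing theorem (existence of a nonzero Fourier coefficient supported on the maximal order of some $B_q$) to show that the unique-up-to-scalar local map $\Pi_{min,p}\otimes\tau_p\to\pi_{F,p}$ does not kill the spherical vector, so that $\tau$ is unramified everywhere and the spherical kernel vector works. Without this claim, your algorithm correctly decides membership in $\Span\{\Theta(\alpha_\beta)\}$, but you have not shown that a ``no'' answer rules out $F$ being a lift from $G_2^a$ by some other choice of data.
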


Let us indicate now some of the ingredients that go into the proof of Corollary \ref{cor:alg}.  First, let us clarify that the theta lifts $\Theta(\alpha)$ of level one algebraic modular forms $\alpha(1) \in W(k_1,k_2)^{\Gamma_{G_2}}$ that appear in Theorem \ref{thm:introSMF}  are defined as integrals
\[\Theta(\alpha)(g) = \int_{G_2^a(\Q)\backslash G_2^a(\A)}{\{\Theta_{k_1,k_2}(g,h),\alpha(h)\}\,dh}\]
for a certain specific vector-valued element of $\Theta_{k_1,k_2}$ in the minimal representation $\Pi_{min}$ on $H_J^1$.  Here, in the integral, $\{\,,\,\}$ is a $G_2^a(\R)$-equivariant pairing valued in $S^{k_1}(V_3) \otimes S^{k_2}(\wedge^2 V_3)$. Let $\{\beta_1,\ldots,\beta_N\}$ be a spanning set of $W(k_1,k_2)$, and $\alpha_j = \frac{1}{|\Gamma_{G_2}|}\sum_{\gamma \in \Gamma_{G_2}}{\gamma \beta_j}$.  One can use Theorem \ref{thm:introSMF} to explicitly compute the Fourier coefficients of $\Theta(\alpha_j)$ associated to the various $T's$ in $\mathcal{C}_{\lambda}$, where $\lambda = (k_1+2k_2+4,k_1+k_2+4,k_2 + 4)$.  Using linear algebra, one can check algorithmically if $F$ is some linear combination of the $\Theta(\alpha)$'s.  Thus Theorem \ref{thm:introSMF} gives an algorithm to determine if the Siegel modular form $F$ is a $\Theta(\alpha)$ for some algebraic modular form $\alpha$ on $G_2^a$.  Thus Corollary \ref{cor:alg} will be proved if one knew the following claim:

\begin{claim}\label{claim:introHD} Suppose $F$ is a level one cuspidal Siegel modular form of weight $\lambda = (k_1 + 2k_2 + 4,k_1+k_2 + 4,k_2+4)$ with $k_2 > 0$.  Suppose $F$ is in the image of the theta correspondence for $G_2^a \times \Sp_6 \subseteq H_J^1$.  Then $F= \Theta(\alpha)$ for some algebraic modular form $\alpha(1) \in W(k_1,k_2)^{\Gamma_{G_2}}$. \end{claim}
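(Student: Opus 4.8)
The plan is to exploit the local--global structure of the theta correspondence for the dual pair $G_2^a \times \Sp_6 \subseteq H_J^1$. First I would decompose $F$ into Hecke eigenforms and use that $\alpha \mapsto \Theta(\alpha)$ is linear, so that we may assume $F$ is an eigenform; then $F$ generates an irreducible cuspidal automorphic representation $\sigma$ of $\Sp_6(\A)$, and by hypothesis $\sigma = \Theta(\pi)$ for some irreducible automorphic representation $\pi$ of $G_2^a(\A)$, necessarily cuspidal since $G_2^a$ is anisotropic. What must then be shown is that $\pi$ is level one of archimedean weight $W(k_1,k_2)$, and finally that the abstract theta lift of $\pi$ coincides with the explicit integral $\Theta(\alpha)$.

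For the finite places I would argue as follows. Since $F$ has level one, each $\sigma_p$ is unramified. The unramified theta correspondence for $G_2(\Q_p) \times \Sp_6(\Q_p)$ inside the local minimal representation of $H_J^1(\Q_p)$ carries the unramified representation of $G_2(\Q_p)$ to the unramified representation of $\Sp_6(\Q_p)$ with matching Satake parameter, and Howe duality for this pair -- implicit in the functoriality results of Gross--Savin \cite{grossSavin} and Gan--Savin \cite{ganSavinLLC} -- makes the local correspondence injective. Hence $\sigma_p$ unramified forces $\pi_p$ to be the unramified representation of $G_2(\Q_p)$ with the corresponding parameter, so $\pi$ has level one.

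For the archimedean place, note that $\sigma_\infty$ is the holomorphic, vector-valued, lowest-weight representation of $\Sp_6(\R)$ of weight $\lambda = (k_1 + 2k_2 + 4, k_1 + k_2 + 4, k_2 + 4)$. I would use the explicit archimedean theta correspondence for $G_2^a(\R) \times \Sp_6(\R) \subseteq H_J^1(\R)$ -- the decomposition of the minimal representation of $H_J^1(\R)$ restricted to this dual pair, as in Gross--Savin \cite{grossSavin} and as used in the proof of Theorem \ref{thm:introSMF}, or alternatively by matching Harish-Chandra infinitesimal characters and invoking that an irreducible representation of the compact group $G_2^a(\R)$ is determined by its infinitesimal character -- to conclude that $\sigma_\infty$ is paired with, and only with, $W(k_1,k_2)$; here the hypothesis $k_2 > 0$ guarantees the relevant global lift is cuspidal. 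Thus $\pi_\infty \cong W(k_1,k_2)$, and combining with the finite-place analysis and class number one for $G_2^a$, the representation $\pi$ corresponds to a nonzero level one algebraic modular form $\alpha$ with $\alpha(1) \in W(k_1,k_2)^{\Gamma_{G_2}}$.

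Finally, to identify $F$ with $\Theta(\alpha)$: the global theta lift attached to the vector $\Theta_{k_1,k_2} \in \Pi_{min}$ is, by construction, realized on level-one weight-$W(k_1,k_2)$ inputs by the explicit integral of Theorem \ref{thm:introSMF}, and it is nonzero because $F \neq 0$ generates the irreducible $\Theta(\pi)$ while those inputs generate $\pi$ (nonvanishing is also visible directly in the Fourier expansion of Theorem \ref{thm:introSMF}). Since $\Theta(\pi)$ is irreducible and cuspidal (again using $k_2 > 0$), its minimal $K_\infty$-type $V_{[k_1,k_2]}$ appears with multiplicity one and its finite part has a unique spherical line, so the level-one weight-$\lambda$ holomorphic vectors in $\Theta(\pi)$ form a single copy of $V_{[k_1,k_2]}$, exhausted by $\{\Theta(\alpha): \alpha(1) \in W(k_1,k_2)^{\Gamma_{G_2}}\}$; as $F$ lies in this copy, $F = \Theta(\alpha)$ for some $\alpha$. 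I expect the archimedean step to be the main obstacle: one must control the theta correspondence for this exceptional dual pair precisely enough -- including the shift $k_2 \mapsto k_2+4$ and the $\rho$-type corrections, compatibly with the normalizations of Theorem \ref{thm:introSMF} -- to see that the holomorphic representation of weight $\lambda(k_1,k_2)$ is matched with $W(k_1,k_2)$ and nothing else; a secondary point is checking that Howe duality for $G_2 \times \Sp_6 \subseteq H_J^1$ is available in exactly the form needed to transfer ``level one'' from $\sigma$ to $\pi$ at the finite places.
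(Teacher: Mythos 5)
Your overall architecture matches the paper's: decompose $F$ into eigenforms, use Howe duality and the Gross--Savin archimedean analysis (together with $k_2 > 0$ for cuspidality of the back-lift) to pin down $\pi_\infty \cong W(k_1,k_2)$ and $\pi_p$ unramified, and then try to identify $F$ with the explicit lift $\Theta(\alpha)$. The gap is in your final paragraph, and it is the hardest part of the argument. You assert that the level-one, weight-$\lambda$ holomorphic vectors in $\Theta(\pi)$ are ``exhausted by $\{\Theta(\alpha)\}$'' and that nonvanishing is ``visible directly in the Fourier expansion of Theorem \ref{thm:introSMF},'' but this is circular: the question is precisely whether the explicit lift built from the spherical vector in the finite part of $\Pi_{min}$ is nonzero. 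Local Howe duality gives \emph{uniqueness} of the equivariant map $\Pi_{min,p}\otimes\tau_p\to\pi_{F,p}$, but says nothing about whether the spherical vector on the left has nonzero image; a priori $\Theta(\alpha)$ could vanish identically even though the abstract global theta representation $\Theta(\pi)$ contains $F$. Likewise, the Fourier expansion in Theorem \ref{thm:introSMF} could sum to zero for all $T_0$.

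The paper closes this gap with a substantive period argument that your proposal does not supply. Having reduced (via multiplicity one for the minimal $K_\infty\times G_2^a(\R)$-type and $p$-adic Howe duality) to showing that the unique equivariant map $\Pi_{min,p}\otimes\tau_p\to\pi_{F,p}$ is nonzero on spherical vectors, the paper proves this prime-by-prime via a global argument: by \cite[Prop.\ 4.5]{grossSavin}, it suffices to exhibit a $B_q$-period of the algebraic modular form $\alpha$ for some quaternion algebra $B_q$ ramified at an odd prime $q\neq p$ and at $\infty$; and the existence of such a nonvanishing period follows from the theorem of B\"ocherer--Das \cite{bochererDas2021}, which guarantees that $F$ has a nonzero Fourier coefficient attached to a maximal order in some such $B_q$. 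This input is essential and is not recoverable from Howe duality or from the explicit Fourier expansion alone, so your proof as written is incomplete at exactly this step.
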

We prove this claim.  The proof uses some powerful ingredients: The Howe Duality theorem of Gan-Savin \cite{ganSavinHD}; an analysis of the minimal representation, as provided by Gross-Savin \cite{grossSavin}, Magaard-Savin \cite{magaardSavin} and Gan-Savin \cite{ganSavinHD}; the existence of enough nonvanishing Fourier coefficients of $F$ of a certain form, as proved by B{\"o}cherer-Das \cite{bochererDas2021}; and another argument from Gross-Savin \cite{grossSavin}.

Our proof of Theorem \ref{thm:introSMF} is very simple.  Let $\p_{J}$ be the complexification of the $-1$ eigenspace for the Cartan involution on the Lie algebra of $E_{7,3} = H_J^1(\R)$.  Write $\p_J = \p_J^+ \oplus \p_J^-$, the natural decomposition, so that $\p_J^-$ annihilates the automorphic form on $E_{7,3}$ associated to any holomorphic modular form for this group.  Let $\{X_\alpha\}_\alpha$ be a basis of $\p_J^+$ and $\{X_\alpha^\vee\}_\alpha$ be the dual basis of $\p_J^{+,\vee} \simeq \p_J^-$.  For an automorphic form $\varphi$ on $H_J^1$, set $D\varphi(g) = \sum_{\alpha}{X_\alpha \varphi \otimes X_\alpha^\vee}.$  For an integer $m \geq 0$, let $D^m \varphi = D \circ D \circ \cdots \circ D \varphi$, so that
\[D^m \varphi = \sum_{\alpha_1,\ldots,\alpha_m}{X_{\alpha_m} \cdots X_{\alpha_1}\varphi \otimes X_{\alpha_1}^\vee \otimes \cdots \otimes X_{\alpha_m}^\vee}.\]
If (abusing notation) $\Theta_{Kim}$ denotes the automorphic form on $H_J^1$ associated to Kim's holomorphic modular form, we set $\Theta_{k_1,k_2}(g) = D^{k_1+2k_2}\Theta_{Kim}(g)$.

With this definition, using Kim's expansion of $\Theta_{Kim}$, we compute the Fourier expansion of $\Theta_{k_1,k_2}(g)$.  This is the main step in the proof of Theorem \ref{thm:introSMF}.  We then use this to compute the Fourier expansion of $\Theta(\alpha)$.  One obtains (the automorphic form associated to) a holomorphic function with the Fourier expansion as given in Theorem \ref{thm:introSMF}.  We remark that the use of differential operators as we do has some overlap with the works \cite{ibukiyamaDiff1,clerc} of Ibukiyama and Clerc.

\subsection{Quaternionic theta functions} Denote by $G_J$ the group of type $E_8$ which is split at every finite place and $E_{8,4}$ at the archimedean place and by $G_2$ the split exceptional group of this Dynkin type.  Analogous to the dual pair $\Sp_6 \times G_2^a \subseteq H_J^1$ is the dual pair $G_2 \times F_4^I \subseteq G_J$, where $F_4^I$ is a specific form of $F_4$ that is split at all finite places and compact at the archimedean place, defined as the stabilizer of the identity matrix $I \in J$.  We compute the theta lifts of certain algebraic modular forms on $F_4^I$ to $G_2$, and obtain cuspidal quaternionic modular forms on $G_2$ together with their exact Fourier expansions.

More precisely, let $J^0$ denote the trace $0$ elements of $J$.  In other words, $J^0$ consists of the $X \in J$ with $(I,X)_I = 0$, where $(\,,\,)_I$ is the symmetric non-degenerate pairing on $J$ determined by $I$.  There is a surjective $F_4^I$-equivariant map $\wedge^2 J^0 \rightarrow \mathfrak{f}_4$ from $\wedge^2J^0$ to the Lie algebra $\mathfrak{k}_4$ of $F_4^I$.  Denote by $V_{\lambda_3}$ the kernel of this map.  It is an irreducible representation of $F_4$ of dimension $273$.  For an integer $m > 0$, let $V_{m \lambda_3} \subseteq (\wedge^2 J^0)^{\otimes m}$ denote the irreducible representation of $F_4$ with highest weight $m \lambda_3$, generated by the tensor product of a highest weight vector of $V_{\lambda_3}$.  It follows from the archimedean theta correspondence calculated in \cite{HPS} that algebraic modular forms on $F_4^I$ for the representation $V_{m\lambda_3}$ should lift to quaternionic modular forms on $G_2$ of integer weight $4 +m$.  We explicitly compute the Fourier expansion of this lift, and as a result, obtain exceptional ``pluriharmonic" cuspidal quaternionic theta functions on $G_2$.

We setup the statement of the result. To do so, recall that the group $F_4^I$ has not one but two integral structures \cite{grossZgrps}, \cite{elkiesGrossIMRN}.  More precisely, if $F_4^I(\widehat{\Z})$ denotes one of these integral structures, then the double coset space $F_4^I(\Q) \backslash F_4^I(\A_f)/F_4^I(\widehat{\Z})$ has size two.  Because of this, algebraic modular forms for $F_4^I$ can be described as follows. Denote by $M_J^1$ the subgroup of $\GL(J)$ fixing the cubic norm. Set $\Gamma_{I}$ to be the subgroup of $M_J^1$ preserving the lattice $J_R$ and fixing the element $I$.  Recall the element $E \in J_R$ of norm one from \cite{elkiesGrossIMRN} or \cite{grossZgrps}.  Let $\Gamma_E$ denote the subgroup of $M_J^1$ preserving the lattice $J_R$ and fixing the element $E$.  Fix an element $\delta_{E}^{\Q} \in M_J^1(\Q)$ with $\delta_E^\Q E = I$.  If $V$ is a representation of $F_4^I(\R)$, let $\Gamma_E$ act on $V$ via $\gamma \cdot_E v = (\delta_E^\Q v \delta_E^{\Q,-1})v$, where the conjugation takes place in $M_J^1$.  Then a level one algebraic modular form for $F_4^I$ can be considered as a pair
\[(\alpha_I,\alpha'_E) \in V^{\Gamma_{I}} \oplus V^{\Gamma_E}.\]

In the case of $V = V_{m\lambda_3}$, we rephrase this as follows.  Let $(\,,\,)_E$ be the symmetric non-degenerate pairing on $J$ determined by $E$; one has
\[(u,v)_E = \frac{1}{4}(E,E,u)(E,E,v) - (E,u,v)\]
where $(\cdot,\cdot,\cdot)$ is the symmetric trilinear form on $J$ satisfying $(x,x,x) = 6n(x)$, $6$ times the cubic norm on $J$.  Set $J_E^0$ to be the perpendicular space to $E$ under the pairing $(\,,\,)_E$.  One easily verifies that $\delta_{E}^{\Q,-1} J^0 = J_E^0$.  Thus
\[\alpha_E := \delta_E^{\Q,-1} \alpha'_E \subseteq (\wedge^2 J_E^0)^{\otimes m}\]
and is $\Gamma_E$ invariant for the natural action of $\Gamma_E$.  It will be convenient for us to consider the algebraic modular form to be the pair $(\alpha_I,\alpha_E)$.

Now, for $x,y, b \in J$ and $c\in J^\vee$, write
\[\langle x \wedge y, b \wedge c \rangle_I = (x,b)_I(y,c)-(x,c)(y,b)_I.\]
We use the same notation for the pairing $(\wedge^2 J)^{\otimes m} \otimes (\wedge^2 J)^{\otimes m} \rightarrow \C$ that extends this one via $\langle z_1 \otimes \cdots \otimes z_m, z_1' \otimes \cdots \otimes z_m' \rangle_I = \prod_{j=1}^{m} \langle z_j, z_j' \rangle_I$.  Thus if $\beta \in V_{m \lambda_3}$, $b \in J$ and $c \in J^\vee$, we can compute the quantity $\langle \beta, (b \wedge c)^{\otimes m} \rangle \in \C$.  We similarly define $\langle \,,\,\rangle_E$, by replacing the pairing $(\,,\,)_I$ with $(\,,\,)_E$.

Set $W_{J_R} = \Z \oplus J_R \oplus J_R^\vee \oplus \Z$.  For $w \in W_{J_R}$ of rank one, set $a(w) = \sigma_4(d_w)$ where $d_w$ is the largest integer with $d_w^{-1}w \in W_{J_R}$.  In \cite{pollackE8}, we proved that the minimal modular form $\Theta_{Gan}$ on $G_J$ has Fourier expansion
\[\Theta_{Gan,Z}(g) = \Theta_{Gan,N}(g) + \sum_{w \in W_{J_R}, rk(w) =1}{a(w) W_{2\pi w}(g)}.\]
Here $N \supseteq Z \supseteq 1$ is the unipotent radical of the Heisenberg parabolic of $E_{8,4}$, $\Theta_{Gan,Z}$, $\Theta_{Gan,N}$ denote constant terms, and $W_{2\pi w}$ is the generalized Whittaker function of \cite{pollackQDS}.

Now, if $w \in W_{J}$ and $m > 0$, set $P_m(w) = (b \wedge c)^{\otimes m} \in (\wedge^2 J)^{\otimes m}$ if $w = (a,b,c,d)$.  Moreover, with this notation, let $p_I(w)$ and $p_E(w)$ be the binary cubic forms given as
\[p_I(w)(u,v) = au^3 + (b,I^\#)u^2 v + (c,I) uv^2 + dv^3;\,\,\, p_E(w)(u,v) = au^3 + (b,E^\#)u^2v + (c,E) uv^2 + dv^3.\]
We prove the following.

\begin{theorem}\label{thm:introQMF} Suppose $\alpha$ is a level one algebraic modular form on $F_4^I$ for the representation $V_{m\lambda_3}$ with $m >0$.  Represent $\alpha$ as a pair $(\alpha_I,\alpha_E)$ with $\alpha_I \in (\wedge^2 J)^{\otimes m}$ being $\Gamma_I$ invariant and $\alpha_E \in (\wedge^2 J)^{\otimes m}$ being $\Gamma_E$ invariant.  Let $\beta_I,\beta_E$ be in $V_{m \lambda_3}$, respectively $\delta_E^{\Q,-1} V_{m \lambda_3}$ so that $\alpha_I = \frac{1}{|\Gamma_I|}\sum_{\gamma \in \Gamma_I}{\gamma \beta_I}$ and $\alpha_E = \frac{1}{|\Gamma_E|}\sum_{\gamma \in \Gamma_E}{\gamma \beta_E}.$  Then the theta lift $\Theta(\alpha)$ is a cuspidal, level one, quaternionic modular form on $G_2$ of weight $4+m$ with Fourier expansion
\begin{align*}\Theta(\alpha)_Z(g) &= \frac{1}{|\Gamma_I|}\sum_{w \in W_{J_R}, rk(w) =1}{a(w) \langle P_m(w), \beta_I\rangle_I W_{2\pi pr_I(w)}(g)}\\ &\,\,+ \frac{1}{|\Gamma_E|}\sum_{w \in W_{J_R}, rk(w) =1}{a(w) \langle P_m(w), \beta_E\rangle_E W_{2\pi pr_E(w)}(g)}.\end{align*}
\end{theorem}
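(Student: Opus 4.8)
The plan is to run, in the setting $G_2 \times F_4^I \subseteq G_J$, the same argument as in the proof of Theorem~\ref{thm:introSMF}, with $\Theta_{Kim}$ replaced by Gan's minimal modular form $\Theta_{Gan}$ on $G_J$ (whose Fourier expansion is the one recorded above, from \cite{pollackE8}), the holomorphic differential operator on $E_{7,3}$ replaced by its quaternionic analogue on $E_{8,4}$, and the pairing against algebraic modular forms on $G_2^a$ replaced by a pairing against algebraic modular forms on $F_4^I$. Concretely, I would first fix a $G_2(\R) \times F_4^I(\R)$-equivariant differential operator $D$ on $G_J$, built from a basis of the relevant ``weight-raising'' subspace of $\mathfrak p \subseteq \mathfrak e_8\otimes\C$ together with its dual basis, such that the $m$-fold iterate $D^m$, followed by projection onto the appropriate $K$-type for $E_{8,4}$, carries the automorphic form attached to $\Theta_{Gan}$ to a vector-valued automorphic form $\Theta_{(m)}$ on $G_J$ taking values in $(\wedge^2 J)^{\otimes m}$ tensored with the datum of the weight-$(4+m)$ quaternionic $K$-type of $G_2$. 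The archimedean theta correspondence of \cite{HPS} then guarantees that
\[\Theta(\alpha)(g) = \int_{F_4^I(\Q)\backslash F_4^I(\A)} \langle \Theta_{(m)}(g,h),\alpha(h)\rangle\,dh\]
is a quaternionic modular form on $G_2$ of weight $4+m$, cuspidal when $m>0$; this is the object whose Fourier expansion we must identify.

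The core of the proof is to transport the Fourier expansion of $\Theta_{Gan}$ through $D^m$ and then through the integral. The key step is the computation that $D^m$ sends the generalized Whittaker function $W_{2\pi w}$ attached to a rank-one $w=(a,b,c,d)\in W_{J_R}$ to $c_m\cdot P_m(w)\otimes W^{(m)}_{2\pi w}$, where $c_m$ is a universal nonzero constant, $P_m(w)=(b\wedge c)^{\otimes m}\in(\wedge^2 J)^{\otimes m}$ records exactly the $F_4$-directions along which $D$ differentiates the ``exponential'' factor of $W_{2\pi w}$, and $W^{(m)}_{2\pi w}$ is a vector-valued generalized Whittaker function of the higher weight. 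This is the exact analogue of the appearance of $\mathcal P_{k_1,k_2}(T)$ in Theorem~\ref{thm:introSMF}; its verification will combine the explicit (Bessel-function) description of the $W_\chi$ from \cite{pollackQDS} with an explicit model of $\mathfrak p\subseteq\mathfrak e_8\otimes\C$ as a $G_2(\R)\times F_4^I(\R)$-module. This is the main technical hurdle.

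Granting this, I would unfold the theta integral. Because $\alpha$ is level one and $F_4^I(\Q)\backslash F_4^I(\A_f)/F_4^I(\widehat{\Z})$ consists of exactly two classes, represented by $1$ and $\delta_E$, the integral breaks as a sum of two finite terms: one governed by the $I$-structure and one by the $E$-structure, where on the second class one replaces $\alpha'_E$ by $\alpha_E=\delta_E^{\Q,-1}\alpha'_E$ and every pairing $(\,,\,)_I$ by $(\,,\,)_E$. I would then restrict Fourier characters: the Heisenberg unipotent $N$ of $G_2$ sits inside that of $G_J$, and a character $2\pi w$ of $(N/Z)$ for $G_J$ --- parametrized by $w\in W_J$ --- restricts to the character of $(N/Z)$ for $G_2$ given by the binary cubic $2\pi p_I(w)$; indeed, since $I^\#=I$, the form $p_I(w)=au^3+(b,I)u^2v+(c,I)uv^2+dv^3$ is precisely the piece of $w$ surviving restriction through the $I$-embedding, and the $E$-embedding produces $p_E(w)$ in the same way. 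Hence, evaluated at $g\in G_2(\A)$ and projected onto the quaternionic $G_2$-type, $W^{(m)}_{2\pi w}$ becomes a scalar multiple of $W_{2\pi p_I(w)}$ (resp. $W_{2\pi p_E(w)}$). Finally, pairing the $(\wedge^2 J)^{\otimes m}$-factor $P_m(w)$ against $\alpha(h)$, using the $F_4^I(\R)$-equivariance of $\langle\,,\,\rangle$ and the observation --- just as in the $\Sp_6$ case, where one is allowed to use $\beta$ rather than $\alpha(1)$ --- that the average $\frac{1}{|\Gamma_I|}\sum_{\gamma\in\Gamma_I}\gamma\beta_I$ may be moved onto $P_m(w)$ inside the integral and then dropped, produces the scalars $\langle P_m(w),\beta_I\rangle_I$ and $\langle P_m(w),\beta_E\rangle_E$. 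Summing the two classes yields the asserted Fourier expansion; the constant and ``too degenerate'' terms fall away automatically, because $P_m(w)=(b\wedge c)^{\otimes m}$ vanishes whenever $b\wedge c=0$, and the residual constant term vanishes by the cuspidality established at the previous stage.

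I expect the decisive obstacle to be the computation in the second paragraph: showing that $D^m$ acts on the generalized Whittaker functions exactly by multiplication by $P_m(w)$ (times a new, higher-weight Whittaker function), with no lower-order correction terms. This requires (i) an explicit realization of the weight-raising part of $\mathfrak p\subseteq\mathfrak e_8\otimes\C$ and of its decomposition under $G_2(\R)\times F_4^I(\R)$, compatible with \cite{HPS}; (ii) the explicit differential equations satisfied by the $W_\chi$ of \cite{pollackQDS} and the action of $\mathfrak g_{2}\otimes\C$ and $\mathfrak f_{4}\otimes\C$ through them, which is a genuinely involved computation in the non-holomorphic setting; and (iii) confirming that the iterate lands in the Cartan component $V_{m\lambda_3}\subseteq(\wedge^2 J^0)^{\otimes m}$, so that pairing with $\beta\in V_{m\lambda_3}$ loses no information. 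A secondary, more bookkeeping-heavy obstacle is keeping careful track of the two integral structures on $F_4^I$ and their normalizations --- the element $\delta_E^{\Q}$, the weights $\frac{1}{|\Gamma_I|}$ and $\frac{1}{|\Gamma_E|}$, and the pairings $(\,,\,)_I$ versus $(\,,\,)_E$ --- so that the final identity emerges with exactly the stated constants.
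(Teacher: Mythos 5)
Your high-level plan is indeed the one the paper follows: push the Fourier expansion of $\Theta_{Gan}$ through the iterated differential operator, use cuspidality for $m>0$ to kill the constant term, and unfold the $[F_4^I]$-integral over the two $F_4^I(\widehat{\Z})$-classes via $\delta_E^\Q$, converting the $(\,,\,)_I$-pairing to $(\,,\,)_E$ on the second class. But there are two substantive gaps relative to what the paper actually does.

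First, you leave the decisive computation --- that $\{D^{2m}W_{w,\ell},\beta\}$ is a scalar multiple of $\langle P_m(w),\beta\rangle_I\,W_{pr_I(w),\ell+m}$ --- as an acknowledged obstacle, and the route you gesture at (directly differentiating the Bessel-function formulas from \cite{pollackQDS}) is not how the paper handles it. The paper instead represents $W_{w,\ell}^{-\ell}(g)$ as an integral over $N_w\backslash N$ of the rational function $a_{\ell,v}(ng)$, differentiates under the integral sign, and --- crucially --- reduces to $\beta=(x\wedge y)^{\otimes m}$ with $E_{x,y}$ an isotropic, singular plane, so that the relevant vectors in $\p$ commute and the iterated derivative satisfies a manageable recursion. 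This ``amber plane'' reduction is the same structural trick that makes Lemma \ref{lem:Pnleading} work in the $\Sp_6$ case, and without it (or something equivalent) the derivative does not visibly collapse to the leading term $P_m(w)$. Also note that the operator is $D^{2m}$, not $D^m$: one needs $2m$ copies of $\p\simeq V_2^\ell\otimes W_J$ to project onto $\det(V_2^s)^{\otimes m}\otimes(\wedge^2 J^0)^{\otimes m}$. Relatedly, in the quaternionic setting $\p$ has no ``weight-raising'' summand analogous to $\p_J^+$; the operator $D$ uses all of $\p$ and the $K$-type selection happens only after pairing with $\beta$.

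Second, cuspidality for $m>0$ is not ``guaranteed by \cite{HPS}.'' The paper proves it in Proposition \ref{prop:thetaQMF} via a constant-term analysis: one first shows (Claim \ref{claim:constTerms}) that $\Theta_{v,N}=\Theta_{v,N_J}$ and $\Theta_{v,V}=\Theta_{v,V_J}$ (the key input being that a rank-one $T\in J$ with $\tr(T)=0$ must vanish), then invokes the explicit computation of $\Theta_{v,N_J}$ and $\Theta_{v,V_J}$ from \cite{ganSW}, and only then uses \cite{HPS} to rule out $V_{m\lambda_3}$ ($m>0$) participating in the $\SL_2\times F_4^I$ correspondence. Your phrasing makes cuspidality an input when in the paper it is the output of this analysis, and its proof requires \cite{ganSW} in an essential way, not just \cite{HPS}.
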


A simple restatement of Theorem \ref{thm:introQMF} is as follows.  If $w_0$ is an integral binary cubic form, then the $w_0$ Fourier coefficient of $\Theta(\alpha)$ is
\[a_{\Theta(\alpha)}(w_0) = \frac{1}{|\Gamma_I|}\sum_{w \in W_{J_R}, pr_{w,I}(w) = w_0}{a(w) \langle P_m(w),\beta_I\rangle_I} + \frac{1}{|\Gamma_E|}\sum_{w \in W_{J_R}, pr_{w,E}(w) = w_0}{a(w) \langle P_m(w),\beta_E\rangle_E}.\]
These sums are finite.

The reason we do not state Theorem \ref{thm:introQMF} in the case $m=0$ is because then the theta lifts will be non-cuspidal.  In fact, the theta lifts obtained for $m=0$ are exactly the automorphic forms obtained in \cite[Section 10]{ganGrossSavin}.  Thus Theorem \ref{thm:introQMF} may be considered a generalization of \cite[Section 10]{ganGrossSavin}.

Again, just like Theorem \ref{thm:introSMF}, the Fourier expansion given by Theorem \ref{thm:introQMF} is completely parallel to the classical pluriharmonic theta functions: The $\beta's$ in $V_{m \lambda_3}$ are the pluriharmonic polynomial, and the sum over $w \in W_{J_R}$ of rank one is the sum over lattice vectors.

We now state a few corollaries of Theorem \ref{thm:introQMF}.  For the first corollary, we can partially refine Theorem \ref{thm:introArith} in the case of level one.
\begin{corollary}\label{cor:introIntegrality} There is a lattice $L_m^I \subseteq V_{m\lambda_3}$ and a lattice $L_m^E \subseteq \delta_{E}^{\Q,-1}V_{m\lambda_3}$ so that the level one theta lifts of elements of these lattices to $G_2$ have Fourier coefficients that are integers when evaluated at $g_f=1$.
\end{corollary}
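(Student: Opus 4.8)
The plan is to extract the integrality statement directly from the Fourier expansion in Theorem~\ref{thm:introQMF}, choosing $L_m^I$ and $L_m^E$ so as to swallow the two rational normalizing factors $\tfrac{1}{|\Gamma_I|}$ and $\tfrac{1}{|\Gamma_E|}$ that occur there. (Recall that $\Gamma_I$ and $\Gamma_E$ are finite groups, since $F_4^I$ is anisotropic over $\R$.)

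First I would rewrite, using bilinearity of $\langle\,,\,\rangle_I$ and $\langle\,,\,\rangle_E$ in the first variable, the formula from the restatement of Theorem~\ref{thm:introQMF} for the $w_0$-th Fourier coefficient of $\Theta(\alpha)$ at $g_f=1$ as
\[ a_{\Theta(\alpha)}(w_0)\;=\;\tfrac{1}{|\Gamma_I|}\,\langle S_I(w_0),\beta_I\rangle_I\;+\;\tfrac{1}{|\Gamma_E|}\,\langle S_E(w_0),\beta_E\rangle_E, \]
where $S_I(w_0):=\sum_{w\in W_{J_R},\ pr_{w,I}(w)=w_0} a(w)\,P_m(w)$ and $S_E(w_0)$ is the analogous sum with $pr_{w,E}$; these are finite sums by the theorem. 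The crucial observation is that every $S_I(w_0)$ and every $S_E(w_0)$ lies in a single full-rank integral lattice $\Lambda$, independent of $w_0$, sitting inside the $F_4^I$-stable subspace $\Sym^m(\wedge^2 J)\subseteq(\wedge^2 J)^{\otimes m}$: indeed $a(w)=\sigma_4(d_w)\in\Z$, and for $w=(a,b,c,d)\in W_{J_R}$ one has $b\in J_R$, $c\in J_R^\vee$, so $P_m(w)=(b\wedge c)^{\otimes m}$ belongs to the $\Z$-span $\Lambda$ of all such tensors, a lattice of full rank in $\Sym^m(\wedge^2 J)$ (the elements $b\wedge c$ already span $\wedge^2 J$; and, $J_R$ being self-dual, $\Lambda\subseteq(\wedge^2 J_R)^{\otimes m}$).

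Next I would apply duality. Since $(\,,\,)_I$, $(\,,\,)_E$ on $J$ and the canonical pairing $J\times J^\vee\to\Q$ are non-degenerate, so are $\langle\,,\,\rangle_I$ and $\langle\,,\,\rangle_E$ on $\Sym^m(\wedge^2 J)$; and as $V_{m\lambda_3}$ is a $\Q$-rational subspace of $\Sym^m(\wedge^2 J^0)\subseteq\Sym^m(\wedge^2 J)$ (and likewise $\delta_E^{\Q,-1}V_{m\lambda_3}$), these pairings are non-degenerate in the $V_{m\lambda_3}$- and $\delta_E^{\Q,-1}V_{m\lambda_3}$-variable. Let $\Lambda_I^{\vee}=\{v\in V_{m\lambda_3}:\langle\Lambda,v\rangle_I\subseteq\Z\}$ and $\Lambda_E^{\vee}=\{v\in\delta_E^{\Q,-1}V_{m\lambda_3}:\langle\Lambda,v\rangle_E\subseteq\Z\}$ be the resulting dual lattices; these are full-rank lattices. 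Set $L_m^I:=|\Gamma_I|\cdot\Lambda_I^{\vee}$ and $L_m^E:=|\Gamma_E|\cdot\Lambda_E^{\vee}$. For $\beta_I\in L_m^I$ and any $w_0$ we have $\langle S_I(w_0),\beta_I\rangle_I\in|\Gamma_I|\,\langle\Lambda,\Lambda_I^{\vee}\rangle_I\subseteq|\Gamma_I|\Z$, so $\tfrac{1}{|\Gamma_I|}\langle S_I(w_0),\beta_I\rangle_I\in\Z$, and similarly for the $E$-term with $\beta_E\in L_m^E$. Substituting into the displayed formula gives $a_{\Theta(\alpha)}(w_0)\in\Z$ for all $w_0$, which is the corollary.

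The argument is essentially bookkeeping layered on Theorem~\ref{thm:introQMF}, so I do not expect a serious obstacle; the points deserving care are (i) exhibiting an integral lattice $\Lambda$ that works uniformly in $w_0$ --- which is possible precisely because the $a(w)$ are integers and, in $w=(a,b,c,d)\in W_{J_R}$, the ``lattice vectors'' $b,c$ lie in the fixed integral structures $J_R$, $J_R^\vee$ of \cite{elkiesGrossIMRN}, \cite{grossZgrps} --- and (ii) the $\Q$-rationality of $V_{m\lambda_3}\subseteq(\wedge^2 J^0)^{\otimes m}$, needed so that intersecting a rational lattice with it yields a lattice of the right rank. If one prefers to avoid dual lattices, one can instead take $\Lambda$ itself and absorb into the scaling factor the bounded denominator of $(\,,\,)_I$ and $(\,,\,)_E$ on $J_R$ (a power of $2$ coming from the $\tfrac14$ in the formula $(u,v)_E=\tfrac14(E,E,u)(E,E,v)-(E,u,v)$).
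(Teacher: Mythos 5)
Your argument is genuinely different from the paper's. The paper produces a $\Q$-rational direct summand $V_{m\lambda_3,\Q}\subseteq(\wedge^2 J^0)^{\otimes m}$ by invoking \cite[\S\S2.1--2.2]{BGW}, sets $L_m := V_{m\lambda_3,\Q}\cap(\wedge^2 J_R)^{\otimes m}$, and then reads integrality off the explicit Fourier formula because both $P_m(w)$ and $\beta_I$ lie in the integral lattice $(\wedge^2 J_R)^{\otimes m}$ on which $\langle\,,\,\rangle_I$ is integral. You instead build a lattice $\Lambda$ generated by the $P_m(w)$'s themselves and take a dual lattice inside $V_{m\lambda_3}$, which dispenses with any need for integrality of $\langle\,,\,\rangle_I$ on $J_R$. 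Both routes rest on the $\Q$-rationality of $V_{m\lambda_3}\subseteq(\wedge^2 J^0)^{\otimes m}$, which you flag but do not prove; the paper gets it from the results of Bhargava--Gross--Wang because $F_4^I$ is a pure inner form of split $F_4$.

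There is, however, a real gap in your justification that $\Lambda_I^\vee$ is a full-rank lattice in $V_{m\lambda_3}$. You claim that $\Lambda$, the $\Z$-span of the tensors $(b\wedge c)^{\otimes m}$ with $b\in J_R$, $c\in J_R^\vee$, is ``a lattice of full rank in $\Sym^m(\wedge^2 J)$,'' citing the fact that the decomposable bivectors $b\wedge c$ span $\wedge^2 J$. This inference is false for $m\ge 2$: the $m$-th powers $\xi^{\otimes m}$ of decomposable $\xi$ satisfy the degree-$m$ part of the Pl\"ucker ideal, so their span is the Schur module $S_{(m,m)}(J)$, a proper $\GL(J)$-subrepresentation of $\Sym^m(\wedge^2 J)$ (already for $\dim J=4$, $m=2$ one has $\Sym^2(\wedge^2)=S_{(2,2)}\oplus\wedge^4$). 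Consequently, non-degeneracy of $\langle\,,\,\rangle_I$ on the ambient $\Sym^m(\wedge^2 J)$ does not by itself give non-degeneracy of the pairing $\Lambda\otimes\Q\times V_{m\lambda_3}\to\Q$ in the second variable, which is what full-rankness of $\Lambda_I^\vee$ actually requires. The conclusion is nonetheless true and fixable: $V_{m\lambda_3}$ is generated as an $F_4^I$-module by a highest weight vector $(x\wedge y)^{\otimes m}$, a pure $m$-th power of a decomposable bivector, so $V_{m\lambda_3}\subseteq S_{(m,m)}(J)=\Lambda\otimes\C$; and $\langle\,,\,\rangle_I$ restricted to $V_{m\lambda_3}\times V_{m\lambda_3}$ is non-degenerate (it is a nonzero invariant form on an irreducible representation of a compact group). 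With that substituted for your full-rank claim, the dual-lattice argument goes through.
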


For the second corollary, recall that it was proved in \cite{CDDHPRcompleted} that if $\pi$ is a cuspidal automorphic representation on $G_2(\A)$ that corresponds to a level one quaternionic modular form $\varphi_\pi$ of even weight $\ell$, then the completed standard $L$-function $\Lambda(\pi,Std,s)$ satisfies the exact functional equation $\Lambda(\pi,Std,s) = \Lambda(\pi,Std,1-s)$, so long as the $w_0 = u^2 v- uv^2$ Fourier coefficient of $\varphi_\pi$ is nonzero.  At the time of the writing of \cite{CDDHPRcompleted}, it was not known whether such $\pi$ exist.  Using Theorem \ref{thm:introQMF}, one easily obtains the following.
\begin{corollary}\label{cor:introG2FC} Suppose $\ell \geq 6$ is even.  Then there is a cuspidal automorphic representation $\pi$ on $G_2(\A)$ that corresponds to a level one quaternionic modular form $\varphi_\pi$ of weight $\ell$ with nonzero $w_0 = u^2 v- uv^2$ Fourier coefficient.\end{corollary}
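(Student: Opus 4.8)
The plan is to read off the $w_0 = u^2v - uv^2$ Fourier coefficient of a carefully chosen exceptional theta function from Theorem \ref{thm:introQMF}. Put $m = \ell - 4$; since $\ell \ge 6$ is even, $m$ is a positive \emph{even} integer, so Theorem \ref{thm:introQMF} applies. For $\beta_I \in V_{m\lambda_3}$, let $\alpha$ be the level one algebraic modular form on $F_4^I$ represented by the pair $(\alpha_I,0)$ with $\alpha_I = \frac{1}{|\Gamma_I|}\sum_{\gamma\in\Gamma_I}\gamma\beta_I$. By Theorem \ref{thm:introQMF} and its restatement, $\Theta(\alpha)$ is a cuspidal, level one, weight $\ell$ quaternionic modular form on $G_2$ whose $w_0$-Fourier coefficient is $\frac{1}{|\Gamma_I|}\sum_w a(w)\langle P_m(w),\beta_I\rangle_I$, the sum over rank one $w \in W_{J_R}$ with $pr_I(w) = w_0$. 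It therefore suffices to make this sum nonzero for some $\beta_I$: the resulting $\Theta(\alpha)$ is then a nonzero cusp form (so in particular $\alpha \ne 0$, exactly as in the remark after Theorem \ref{thm:introSMF}) with nonzero $w_0$-coefficient. Since the pairing $\langle\,,\,\rangle_I$ is non-degenerate on the irreducible $V_{m\lambda_3}$, this sum is nonzero for some $\beta_I$ precisely when the vector $S_I := \sum_w a(w)\,\mathrm{proj}_{V_{m\lambda_3}}(P_m(w)) \in V_{m\lambda_3}$ is nonzero, so the problem reduces to proving $S_I \ne 0$.

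To compute $S_I$ I would first pin down the index set. If $pr_I(w) = w_0$ then $w = (0,b,c,0)$ with $(b,I) = 1$ and $(c,I) = -1$; and the rank one equations in $W_J$, with the outer coordinates zero, then force $b$ and $c$ to have rank $\le 1$ with $(b,c) = 0$. A rank $\le 1$ element of $J$ of trace $1$ is a primitive idempotent, and since an idempotent in $J_R = H_3(R_\Theta)$ has non-negative integer diagonal entries summing to $1$, the only primitive idempotents in $J_R$ are $E_{11},E_{22},E_{33}$; using self-duality of $J_R$ one gets similarly $c \in \{-E_{11},-E_{22},-E_{33}\}$, and then $(b,c) = 0$ forces $b = E_{ii}$, $c = -E_{jj}$ with $i \ne j$. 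Each such $w$ is primitive, so $a(w) = \sigma_4(1) = 1$, and since $m$ is even $(E_{ii}\wedge(-E_{jj}))^{\otimes m} = (E_{ii}\wedge E_{jj})^{\otimes m}$, whence $S_I = \sum_{i\ne j}\mathrm{proj}_{V_{m\lambda_3}}\big((E_{ii}\wedge E_{jj})^{\otimes m}\big)$. A weight computation for $F_4$ shows that the Cartan component $V_{m\lambda_3}$ of $(\wedge^2 J)^{\otimes m}$ lies inside $V_{\lambda_3}^{\otimes m}$ (one cannot reach the weight $m\lambda_3$ using the other constituents $\mathfrak{f}_4$ and $J^0$ of $\wedge^2 J$), so $\mathrm{proj}_{V_{m\lambda_3}}\big((E_{ii}\wedge E_{jj})^{\otimes m}\big) = \mathrm{proj}_{V_{m\lambda_3}}(\xi_{ij}^{\otimes m})$ with $\xi_{ij} := \mathrm{proj}_{V_{\lambda_3}}(E_{ii}\wedge E_{jj}) \in V_{\lambda_3}$. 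Writing $e_i = E_{ii} - \frac{1}{3}I \in J^0$, one has $e_1+e_2+e_3 = 0$, so the $\xi_{ij}$ with $i<j$ all equal $\xi := \mathrm{proj}_{V_{\lambda_3}}(e_1\wedge e_2)$ up to sign; as $m$ is even all six summands coincide and $S_I = 6\,\mathrm{proj}_{V_{m\lambda_3}}(\xi^{\otimes m})$.

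It remains to see $\mathrm{proj}_{V_{m\lambda_3}}(\xi^{\otimes m}) \ne 0$. First $\xi \ne 0$: the skew endomorphism of $J^0$ attached to $e_1\wedge e_2$ is not a derivation of the Jordan algebra $J$ — a short explicit check, e.g. applied to $e_1\circ e_1$ — and so does not lie in $\mathfrak{f}_4$, which forces its $V_{\lambda_3}$-component $\xi$ to be nonzero. Second, for any nonzero $\xi \in V_{\lambda_3}$ one has $\mathrm{proj}_{V_{m\lambda_3}}(\xi^{\otimes m}) \ne 0$, because $V_{m\lambda_3}$ is spanned by the $m$-th tensor powers of highest-weight-orbit vectors $v$ of $V_{\lambda_3}$ and $\langle \xi^{\otimes m}, v^{\otimes m}\rangle = \langle \xi, v\rangle^m$ cannot vanish identically in $v$. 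Hence $S_I \ne 0$, giving a nonzero, level one, weight $\ell$ cuspidal quaternionic modular form $\varphi = \Theta(\alpha)$ with nonzero $w_0$-coefficient; decomposing the (finitely many) irreducible cuspidal constituents $\pi_i$ of the span of $\varphi$ and writing $\varphi = \sum_i\varphi_i$ accordingly, with $\varphi_i$ a weight $\ell$ quaternionic modular form in $\pi_i$, linearity of the $w_0$-Fourier coefficient yields some $\pi_{i_0}$ with the required property. The main obstacle is the middle paragraph: identifying the (small and weight-independent) index set of rank one $w$ with $pr_I(w) = w_0$, and ruling out cancellation in $S_I$ — which is precisely why $\ell$ even is needed, the six terms being forced to add, rather than cancel, by the parity of $m$. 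The branching statement $V_{m\lambda_3} \subseteq V_{\lambda_3}^{\otimes m}$ inside $(\wedge^2 J)^{\otimes m}$ and the non-vanishing of $e_1\wedge e_2$ modulo $\mathfrak{f}_4$ are the two supporting computations.
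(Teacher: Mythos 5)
Your approach is correct in outline but is a genuinely different route from the paper's, which is worth noting.

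The paper's proof is short and fully explicit: it constructs a particular vector $\beta_{K,m}=(x\wedge y)^{\otimes m}\in V_{m\lambda_3}$ from a concrete pair of rank-one octonion Hermitian matrices $x,y$ (Section \ref{sec:F4}), identifies the same six elements $(0,E_{ii},-E_{jj},0)$ as you do (citing \cite{elkiesGrossIMRN}), and then just evaluates $\langle E_{ii}\wedge E_{jj},x\wedge y\rangle_I=-1$ for the three unordered pairs, so that with $m$ even the sum equals $6$ (Lemma \ref{lem:w0nonzero}). The same explicit $\beta_{K,m}$ is reused in the proof of Corollary \ref{cor:Shimura}, which is a reason the paper wants a concrete witness rather than an abstract one. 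Your proof instead avoids picking any $\beta_I$: you dualize, set $S_I=\sum_w a(w)P_m(w)$, and show the functional $\beta_I\mapsto\langle S_I,\beta_I\rangle_I$ is nonzero on $V_{m\lambda_3}$. Since $(\,,\,)_I$ coincides with the trace form on $J$ (so $\langle\,,\,\rangle_I$ is positive definite and symmetric on $(\wedge^2J)^{\otimes m}$), the reduction to $\mathrm{proj}_{V_{m\lambda_3}}(S_I)\neq 0$ is sound, and the parity trick (all six $\xi_{ij}^{\otimes m}$ coincide because $m$ is even and $e_1+e_2+e_3=0$) is exactly where the hypothesis on $\ell$ enters in both arguments.

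That said, your write-up leaves several nontrivial claims as assertions that would need to be filled in for a complete proof: (i) the weight computation that $m\lambda_3$ is attained in $(\wedge^2 J)^{\otimes m}=(V_{\lambda_3}\oplus\mathfrak{f}_4\oplus J^0)^{\otimes m}$ only inside $V_{\lambda_3}^{\otimes m}$ — this is true (one checks $\lambda_3-\omega_{\mathrm{adj}}$ and $\lambda_3-\omega_{J^0}$ are positive) but should be carried out; (ii) the fact that the Cartan component $V_{m\lambda_3}\subseteq S^m(V_{\lambda_3})$ is spanned by $v^{\otimes m}$ with $v$ in the highest-weight cone — a standard consequence of the cone's $G$-homogeneity, but it needs a citation or a one-line proof; and (iii) the ``short explicit check'' that the skew endomorphism attached to $e_1\wedge e_2$ is not a derivation of $J$, which you gesture at but do not perform (it does work: $D(e_1\circ e_1)=\tfrac19 e_1+\tfrac29 e_2$ while $2e_1\circ D(e_1)=-\tfrac29 e_1-\tfrac49 e_2$). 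Once those are supplied, your argument establishes the corollary without producing any explicit $\beta_I$; the trade-off is that the paper's explicit witness is shorter, computable, and reusable, whereas yours needs more representation theory but no octonion arithmetic beyond step (iii).
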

This corollary is, in fact, an ingredient in the proof of Theorem \ref{thm:introArith}.

The third corollary we state has to do with the Fourier expansion of a particular cuspidal quaternionic modular form.  To setup this corollary, recall that Dalal \cite{dalal} has recently given an explicit formula for the dimension of the level one quaternionic cuspidal modular forms of weights at least $3$.  From his dimension formula, one has that the first such nonzero cusp form appears in weight $6$, and the space of weight $6$ cuspidal quaternionic modular forms is one-dimensional, spanned by a cusp form $\Delta_{G_2}$.  Combining the (proof of) Corollary \ref{cor:introG2FC} with Corollary \ref{cor:introIntegrality}, one obtains that $\Delta_{G_2}$ can be normalized to have integer Fourier coefficients, and that the Fourier coefficient associated to the cubic ring $\Z \times \Z \times \Z$ is nonzero.

Now, Benedict Gross has suggested that the Fourier coefficients of certain non-tempered cuspidal quaternionic modular forms on $G_2$ of weight $\ell$ should be related to square-roots of twists of $L$-values of classical modular forms of weight $2\ell$ by Artin motives associated to totally real cubic fields.  As pointed out to the author by Mundy, the quaternionic modular form $\Delta_{G_2}$ should be one of these non-tempered lifts, to which Gross's conjecture applies. Now, for $D$ congruent to $0$ or $1$ modulo $4$, denote by $\Z_D$ the quadratic ring of discriminant $D$.   Then on the one hand, in the case of $\Delta_{G_2}$, Gross's conjecture implies that the square Fourier coefficients $a_{\Delta_{G_2}}(\Z \times \Z_D)^2$ associated to the cubic ring $\Z \times \Z_D$ should be related to the central\footnote{We here use the classical normalization of $L$-functions, instead of the automorphic normalization.} $L$-value $L(\Delta,D,6)$ of the twist of Ramanujan's $\Delta$ function by the quadratic character associated to $D$.  Denote by $\delta(z) \in S_{13/2}(\Gamma_0(4))^{+}$, $\delta(z) = \sum_{D \equiv 0,1 \pmod{4}}{\alpha(D)q^D}$ the Shimura lift of $\Delta(z)$.  On the other hand, following Waldspurger \cite{waldspurger}, Kohnen-Zagier \cite{kohnenZagier} relate the squares of Fourier coefficients $\alpha(D)$ to the same $L$-value, $L(\Delta,D,6)$.  It thus makes sense to ask, in light of Gross's conjecture, if there is some relationship between $a_{\Delta_{G_2}}(\Z \times \Z_D)$ and $\alpha(D)$.  It turns out, the numbers are equal:
\begin{corollary}\label{cor:Shimura} Normalize $\Delta_{G_2}$ so that $a_{\Delta_{G_2}}(\Z \times \Z \times \Z) =1$, and normalize $\delta(z)$ so that $\alpha(1) = 1$.  Then $a_{\Delta_{G_2}}(\Z \times \Z_D) = \alpha(D)$ for all $D$.
\end{corollary}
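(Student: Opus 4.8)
The plan is to realise $\Delta_{G_2}$ as one of the exceptional theta functions of Theorem~\ref{thm:introQMF}, use that theorem to write its $\Z\times\Z_D$ Fourier coefficients as an explicit arithmetic function of $D$, and then recognise the resulting generating series as an element of the one-dimensional space $S_{13/2}(\Gamma_0(4))^{+}$. To pin down $\Delta_{G_2}$: by Dalal's dimension formula \cite{dalal} the space $S_6(G_2,\C)$ is one-dimensional, and the proof of Corollary~\ref{cor:introG2FC} (applied with $\ell=6$, i.e.\ $m=2$) produces a nonzero theta lift $\Theta(\alpha)$ of a level one algebraic modular form $\alpha$ on $F_4^I$ for $V_{2\lambda_3}$ whose Fourier coefficient at the binary cubic form $u^2v-uv^2$ is nonzero. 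Since $u^2v-uv^2=uv(u-v)$ is exactly the form attached to the totally split cubic ring $\Z\times\Z\times\Z$, and $S_6(G_2,\C)$ is one-dimensional, $\Delta_{G_2}$ must be a nonzero multiple of this $\Theta(\alpha)$; after rescaling we may take $\Delta_{G_2}=\Theta(\alpha)$ with $a_{\Delta_{G_2}}(\Z\times\Z\times\Z)=1$, which is the stated normalisation, and by Corollary~\ref{cor:introIntegrality} we may arrange the relevant $\beta_I,\beta_E$ so that all Fourier coefficients of $\Delta_{G_2}$ lie in $\Z$.

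Next I would feed $\Delta_{G_2}=\Theta(\alpha)$ into Theorem~\ref{thm:introQMF}. The cubic ring $\Z\times\Z_D$ corresponds to a reducible integral binary cubic form $w_0=w_0(D)$ with a rational linear factor and a binary quadratic factor of discriminant $D$, taken up to $\GL_2(\Z)$-equivalence (recall $\Z_1=\Z\times\Z$, so $w_0(1)=uv(u-v)$). The formula in Theorem~\ref{thm:introQMF} then gives
\[
a_{\Delta_{G_2}}(\Z\times\Z_D)=\frac{1}{|\Gamma_I|}\sum_{\substack{w\in W_{J_R},\, \mathrm{rk}(w)=1\\ p_I(w)\sim w_0(D)}} a(w)\,\langle P_2(w),\beta_I\rangle_I\;+\;\frac{1}{|\Gamma_E|}\sum_{\substack{w\in W_{J_R},\, \mathrm{rk}(w)=1\\ p_E(w)\sim w_0(D)}} a(w)\,\langle P_2(w),\beta_E\rangle_E ,
\]
a finite, explicitly computable sum. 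I would then set $h(z)=\sum_{D\equiv 0,1\ (4)} a_{\Delta_{G_2}}(\Z\times\Z_D)\,q^D$, with $q=e^{2\pi i z}$; by the previous paragraph this is a power series with integer coefficients supported on $D\equiv 0,1\pmod 4$.

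The heart of the argument is to show $h\in S_{13/2}(\Gamma_0(4))^{+}$. I would do this by exhibiting $h$ as a classical descent of $\Delta_{G_2}$: the characters $\chi$ of $N(\Q)\backslash N(\A)$ whose attached binary cubic form is divisible by a fixed rational linear form cut out a ``Fourier--Jacobi type'' piece of the Fourier expansion of $\Delta_{G_2}$, and summing the corresponding generalized Whittaker functions along the associated $\SL_2$ inside the Heisenberg Levi of $G_2$ produces a holomorphic modular form of weight $\ell+\tfrac12=\tfrac{13}{2}$ on $\Gamma_0(4)$ whose $D$-th Fourier coefficient is $a_{\Delta_{G_2}}(\Z\times\Z_D)$; cuspidality of $\Delta_{G_2}$ makes it a cusp form, and the support condition $D\equiv 0,1\pmod 4$ places it in Kohnen's plus space. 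As a backup, one can instead verify the defining transformation laws for $h$ directly and reduce to a finite computation via a Sturm bound, using the integrality of $h$. Granting this, the plus space $S_{13/2}(\Gamma_0(4))^{+}$ is one-dimensional, isomorphic to $S_{12}(\SL_2(\Z))=\C\Delta$ and spanned by $\delta$ (cf.\ \cite{kohnenZagier}); hence $h=c\,\delta$ for a constant $c$. Comparing the $D=1$ coefficients gives $c=a_{\Delta_{G_2}}(\Z\times\Z\times\Z)/\alpha(1)=1$, and therefore $a_{\Delta_{G_2}}(\Z\times\Z_D)=\alpha(D)$ for all $D$, as claimed.

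I expect the main obstacle to be precisely the assertion $h\in S_{13/2}(\Gamma_0(4))^{+}$: extracting a genuine classical half-integral-weight modular form from the exceptional Fourier expansion of $\Delta_{G_2}$, and controlling its level together with its membership in the plus space, is the delicate point. Once modularity and the plus-space condition are established, the identification with $\delta$ is immediate from the one-dimensionality of that space and the single comparison at $D=1$, and everything else is bookkeeping with the explicit formula of Theorem~\ref{thm:introQMF} or a citation.
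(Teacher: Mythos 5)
Your general strategy agrees with the paper's: normalize $\Delta_{G_2}$ as the theta lift from Corollary~\ref{cor:introG2FC}, compute $a_{\Delta_{G_2}}(\Z\times\Z_D)$ via Theorem~\ref{thm:introQMF}, form the generating series $h(z)=\sum_D a_{\Delta_{G_2}}(\Z\times\Z_D)q^D$, and identify it inside the one-dimensional space $S_{13/2}(\Gamma_0(4))^{+}$ by comparing $D=1$. But you acknowledge yourself that the proposal hinges on the unestablished claim that $h\in S_{13/2}(\Gamma_0(4))^{+}$, and the two arguments you sketch for that do not close the gap.

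The ``Fourier--Jacobi type descent'' you describe is not a tool that exists in this context: quaternionic modular forms on $G_2$ have a Fourier expansion in terms of non-holomorphic generalized Whittaker functions, and the paper develops no mechanism by which summing those functions along an $\SL_2$ inside the Heisenberg Levi produces a classical holomorphic half-integral-weight form. Asserting such a descent is essentially as hard as the corollary itself. Your backup---``verify the transformation laws directly and reduce to a finite computation via a Sturm bound''---also does not apply: a Sturm-bound argument lets you distinguish modular forms from each other once you know both are modular, but it cannot manufacture modularity from integrality of an arbitrary $q$-series. What the paper actually does, and what your proposal omits, is the concrete computation in the proof of Corollary~\ref{cor:Shimura}: with the specific $\beta_{K,2}=(x\wedge y)^{\otimes 2}$ built from $K=\Q(\sqrt{-1})$, $a_2=e_2$, $a_3=u_0$, $a_2'=e_2-e_2^*$, one enumerates the rank-one $w=(0,T_1,T_2,q)\in W_{J_R}$ projecting to the binary cubic of $\Z\times\Z_D$; the trace condition forces $T_1\in\{-e_{11},-e_{22},-e_{33}\}$, and parametrising the resulting $T_2$ yields
\[a_{\Delta_{G_2}}(\Z\times\Z_D)=\sum_{\substack{(v,x)\in\Z\oplus R_\Theta\\ v^2+4n(x)=D}}\Bigl([v+\sqrt{-1}(x,(0,i))]^2+[v-\sqrt{-1}(x,(0,i))]^2+[v+\sqrt{-1}(x,(-i,0))]^2\Bigr),\]
which is \emph{manifestly} the $D$-th Fourier coefficient of a harmonic theta series of weight $\tfrac{9}{2}+2=\tfrac{13}{2}$ attached to the rank-nine positive-definite lattice $\langle 2\rangle\perp 2E_8$. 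Modularity and the plus-space condition then come for free from the classical theory of pluriharmonic theta series, and only at that point does one invoke $\dim S_{13/2}(\Gamma_0(4))^{+}=1$ and compare $D=1$. In short, the missing step is that the identification with a \emph{known} construction of half-integral-weight modular forms (a harmonic lattice theta function) is what delivers modularity; without carrying out the explicit computation, the argument does not go through.
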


\subsection{Acknowledgements} It is pleasure to thank Wee Teck Gan, Nadya Gurevich, and Gordan Savin for engaging with the author in a ``Research in Teams" project in Spring 2022 at the Erwin Schrodinger Institute, which helped to stimulate thinking about exceptional theta correspondences.  We thank them for fruitful discussions.  We thank Tomoyoshi Ibukiyama for sending us his note \cite{ibukiyama}, and we also thank Gaetan Chenevier, Chao Li, Finley McGlade, Sam Mundy, Cris Poor and David Yuen for helpful discussions.

\section{Groups and embeddings}\label{sec:groups}
In this section, we explain various group theoretic facts regarding the groups with which we work.

\subsection{Some exceptional groups} We begin by defining the group $H_J^1$.  Thus let $J$ be our exceptional cubic norm structure, $W_J = \Q \oplus J \oplus J^\vee \oplus \Q$.  A typical element of $W_J$ we write as an ordered four-tuple $(a,b,c,d)$, so that $a, d \in \Q$, $b \in J$ and $c \in J^\vee$.  We put on $W_J$ Freudenthal's symplectic form, and quartic form.  The group $H_J^1$ is defined as the algebraic $\Q$-group preserving these two forms.  We let $H_J$ be the group that preserves the forms on $W_J$ up to similitude, and let $\nu: H_J \rightarrow \GL_1$ be this similitude.

The Siegel parabolic $P_J$ of $H_J^1$ is defined as the stabilizer of the line $\Q (0,0,0,1)$.  Write $M_J$ for the group of linear automorphisms of $J$ that preserve the norm on $J$, up to scaling.  Let $\lambda: M_J \rightarrow \GL_1$ be this scaling factor, and $M_J^1$ the kernel of $\lambda$.  A Levi subgroup of $P_J$ can be defined as the subgroup that also stabilizes the line $\Q (1,0,0,0)$.  This is isomorphic to the group of pairs $(\delta,m) \in \GL_1 \times M_J$ with $\delta^2 = \lambda(m)$.  Such a pair acts on $W_J$ as $(a,b,c,d) \mapsto (\delta^{-1}a,\delta^{-1} m(b),\delta \widetilde{m}(c),\delta d)$.  We write $M(\delta,m)$ for this group element of $H_J^1$.

\subsection{The first dual pair} We now explain how $\GSp_6 \times G_2^a$ embeds in $H_J$.  To accomplish this, we describe a linear isomorphism between $\wedge^3_0 W_6 \otimes \nu^{-1} \oplus W_6 \otimes \Theta^0$ and $W_J$.  Here $\Theta^0 = V_7$ is the trace $0$ octonions, $W_6$ is the $6$-dimensional defining representation of $\GSp_6$, and $\wedge^3_0 W_6$ is the kernel of the contraction map $\wedge^3 W_6 \rightarrow W_6 \otimes \nu$.  We let $e_1,e_2,e_3,f_1,f_2,f_3$ be the standard symplectic basis of $W_6$.
\begin{itemize}
	\item If $v_1,v_2,v_3,v_1',v_2',v_3' \in \Theta^0$, we map $v_1' e_1 + v_2' e_2 + v_3' e_3 + v_1 f_1 + v_2 f_2 + v_3 f_3$ to $(0,Y,Y',0)$ where $Y=\left(\begin{array}{ccc} 0 & v_3 & -v_2 \\ -v_3 & 0 & v_1 \\ v_2 & -v_1 & 0 \end{array}\right) \in J$ and $Y' = - \left(\begin{array}{ccc} 0 & v_3' & -v_2' \\ -v_3' & 0 & v_1' \\ v_2' & -v_1' & 0 \end{array}\right)$.
	\item We map $f_1 \wedge f_2 \wedge f_3$ to $(1,0,0,0)$ and $e_1 \wedge e_2 \wedge e_3$ to $(0,0,0,1)$.
	\item Set $f_i^* = f_{i+1} \wedge f_{i-1}$ and $e_i^* = e_{i+1} \wedge e_{i-1}$ (with indices taken modulo $3$). We map $\sum_{i,j}{b_{ij} f_i^* \wedge e_j}$ to $(0,(b_{ij}),0,0)$ and $\sum_{i,j}{c_{ij} e_i^* \wedge f_j}$ to $(0,0,(c_{ij}),0)$.
\end{itemize}
Via the natural action of $\GSp_6 \times G_2^a$ on $\wedge^3_0 W_6 \otimes \nu^{-1} \oplus W_6 \otimes \Theta^0$, we obtain an action of this group on $W_J$. It is clear that the action is faithful. To obtain the embedding into $H_J$, we must check that $\GSp_6 \times G_2^a$ preserves the symplectic and quartic form on $W_J$, up to scaling:

\begin{proposition}\label{prop:dualPair1} The defined action of $\GSp_6 \times G_2^a$ on $W_J$ gives an embedding $\GSp_6 \times G_2^a \subseteq H_J$.
\end{proposition}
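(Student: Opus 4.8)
The plan is to reduce the claim, for each of the two factors, to a check on generators. Since the displayed action of $\GSp_6 \times G_2^a$ on $W_J$ is linear and, as already noted, faithful, all that must be verified is that both Freudenthal's symplectic form and his quartic form on $W_J$ are equivariant for a single character of $\GSp_6 \times G_2^a$ — which can only be $(g,h) \mapsto \nu(g)$, the symplectic similitude character $\nu$ of the first factor. Because $H_J$ is a subgroup of $\GL(W_J)$, it is enough to show separately that $1 \times G_2^a$ and $\GSp_6 \times 1$ land in $H_J$.

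I would dispose of the factor $G_2^a$ first. It acts on $\Theta = \Q \oplus \Theta^0$ by algebra automorphisms fixing $\Q$ pointwise, hence entrywise on $J = H_3(\Theta)$ by automorphisms of the cubic norm structure; and one checks that, under the displayed identification, this is exactly the action induced from the tautological action on $\Theta^0$ — trivial on the $\wedge^3_0 W_6 \otimes \nu^{-1}$ summand, matching the fact that automorphisms of $\Theta$ fix the $\Q$-entries of $J$. Since Freudenthal's symplectic and quartic forms on $W_J$ are built purely out of the cubic norm structure on $J$, they are preserved by $G_2^a$; as $G_2^a$ has no nontrivial characters there is no scaling, and $1 \times G_2^a \subseteq H_J^1 \subseteq H_J$.

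For $\GSp_6 \times 1$, I would first peel off similitudes. The central $\GL_1 = \{t\cdot \mathrm{id}_{W_6}\} \subseteq \GSp_6$, which has $\nu = t^2$, acts by $t$ on $W_6$ and by $t^3$ on $\wedge^3 W_6$, hence by the scalar $t$ on both $\wedge^3_0 W_6 \otimes \nu^{-1}$ and $W_6 \otimes \Theta^0$, and therefore by $t$ on all of $W_J$ — which rescales Freudenthal's symplectic form by $t^2 = \nu$ and his quartic form by $t^4 = \nu^2$, exactly as an element of $H_J$ should. It then remains to check that $\Sp_6 \times 1$ preserves both forms exactly. Here I would use $\Sp_6 = \langle M, N, \overline{N}\rangle$, where the Siegel parabolic $MN$ of $\Sp_6$ is the stabilizer of the Lagrangian $\langle e_1,e_2,e_3\rangle$ (so $M \cong \GL_3$, and $N$, its abelian unipotent radical, is the space of symmetric $3 \times 3$ matrices) and $\overline{N}$ is the opposite unipotent radical. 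Tracing the generators through the identification: $M = \GL_3$ fixes the two lines $\Q\, f_1\wedge f_2 \wedge f_3 = \Q(1,0,0,0)$ and $\Q\, e_1 \wedge e_2 \wedge e_3 = \Q(0,0,0,1)$ and acts on the intermediate copy of $J$ by $X \mapsto A X\,{}^{t}A$, which is precisely the Levi element $M(\det A, m_A)$ of the Siegel parabolic $P_J$ of $H_J^1$ in the notation above, so $M \subseteq H_J^1$; one then checks that $N$, which fixes $\Q(0,0,0,1)$, maps into the unipotent radical of $P_J$ — abelian, since $\alpha_7$ occurs with coefficient $1$ in the highest root of $E_7$ — via the embedding $H_3(\Q) \hookrightarrow H_3(\Theta) = J$ of symmetric matrices into $J$, and that $\overline{N}$ maps symmetrically into the unipotent radical of the opposite parabolic $\overline{P}_J$. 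Since $\langle P_J, \overline{P}_J\rangle \subseteq H_J^1$, the image of $\Sp_6$ lies in $H_J^1$, and assembling the pieces yields $\GSp_6 \times G_2^a \subseteq H_J$.

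The main obstacle — and essentially the only genuine computation in the argument — is this last verification: that the action of $\mm{I}{S}{0}{I} \in N$ on $\wedge^3 W_6$ (together with its action on $W_6 \otimes \Theta^0$) coincides, under the identification, with the explicit unipotent translation on $W_J = \Q \oplus J \oplus J^\vee \oplus \Q$ attached to the image of $S$ in $J$, with every sign in the bookkeeping $f_i^\ast = f_{i+1}\wedge f_{i-1}$, $e_i^\ast = e_{i+1}\wedge e_{i-1}$ and every normalization of Freudenthal's forms correctly tracked, and likewise for $\overline{N}$ (and the analogous but more forgiving matching of $M$ with $M(\det A, m_A)$, where one must confirm the $\det A$ powers on the two end $\Q$-coordinates come out right). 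Everything else is formal.
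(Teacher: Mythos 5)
Your proposal is correct and follows essentially the same strategy as the paper: both reduce to checking the action on a set of generators of $\GSp_6$ built from the Siegel parabolic (Levi $\cong \GL_3$ plus unipotents), and both observe that the essential computation is identifying the $N$-action with the Heisenberg-type unipotent translation $n_L(X)$ on $W_J$. The only cosmetic differences are that you generate $\Sp_6$ by $\langle M,N,\overline N\rangle$ where the paper uses $N$, the torus, and the Weyl element $\mm{0}{-1}{1}{0}$, and that you make explicit the (easy, and in the paper implicit) point that $G_2^a = \Aut(\Theta)$ acts by automorphisms of the cubic norm structure on $J$ and hence preserves Freudenthal's forms outright.
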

To prove the proposition, we first make a few lemmas.  We work over a general field $F$ of characteristic $0$.

\begin{lemma}  Suppose $X = \left(\begin{array}{ccc} s_1 & u_1 & u_2 \\ u_3 & s_2 & u_1 \\ u_2 & u_1 & s_3 \end{array}\right)$ is in $H_3(F)$ and $v_j \in \Theta^0$ so that the element $Y=\left(\begin{array}{ccc} 0 & v_3 & -v_2 \\ -v_3 & 0 & v_1 \\ v_2 & -v_1 & 0 \end{array}\right)$ is in $J$.  Then $X \times Y = -\left(\begin{array}{ccc} 0 & v_3' & -v_2' \\ -v_3' & 0 & v_1' \\ v_2' & -v_1' & 0 \end{array}\right)$ with
	\begin{enumerate}
		\item $v_1' = s_1 v_1 + u_3 v_2 + u_2 v_3$
		\item $v_2' = u_3 v_1 + s_2 v_2 + u_1 v_3$
		\item $v_3' = u_2 v_1 + u_1 v_2 + s_3 v_3$.
	\end{enumerate}
\end{lemma}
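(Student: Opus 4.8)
The plan is to compute the Freudenthal cross product $X \times Y$ directly, by bilinearizing the adjoint (sharp) map on the cubic norm structure $J = H_3(\Theta)$. Recall that $x \times y = (x+y)^{\#} - x^{\#} - y^{\#}$, and that for a Hermitian matrix $z \in H_3(\Theta)$ with diagonal entries $c_1,c_2,c_3$ and off-diagonal entries $a_1,a_2,a_3$ placed according to the convention fixed above, the entries of $z^{\#}$ are given by the classical octonion adjugate of $z$: the $(i,i)$-entry is $c_j c_k - N(a_i)$, and each off-diagonal entry is the corresponding octonion cofactor, for $\{i,j,k\}$ running over $\{1,2,3\}$ in cyclic order. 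The first step is to write this formula out explicitly; one can alternatively express $x \times y$ through the Jordan product, the linear trace form, and the identity element of $J$, but the adjugate form is the most transparent here.

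The essential simplification is that $X$ has entries in the base field $F$, which lies in the nucleus of $\Theta$. Thus every $s_i$ and $u_i$ commutes and associates with all octonions, and satisfies $\bar s = s$. Consequently, once $z = X + Y$ is substituted into the part of $z^{\#}$ that is bilinear in $X$ and $Y$, every product that occurs is just an $F$-scalar times an element of $\Theta^{0}$, and no non-associativity or non-commutativity of $\Theta$ is ever invoked: the computation degenerates to linear algebra over $F$ with values in $\Theta^{0}$.

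I would then read off the entries one at a time. For the diagonal: the $(i,i)$-entry of $z^{\#}$ involves only the two complementary diagonal entries, which vanish for $Y$, together with the mixed norm term $(a_i^{(X)}, a_i^{(Y)})$, where $a_i^{(X)} \in F$ while $a_i^{(Y)} = \pm v_i \in \Theta^{0}$; since $(s, v) = s \bar v + v \bar s = -sv + vs = 0$, the diagonal of $X \times Y$ vanishes, as asserted. For an off-diagonal entry, bilinearizing the corresponding entry of $z^{\#}$ and using $\bar s = s$ on the scalar factors collapses everything to an $F$-linear combination of $v_1, v_2, v_3$; collecting the coefficients produces precisely the $v_k'$ of the statement (up to the global sign recorded there). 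Each entry is a two-line computation, and running over the three off-diagonal positions, together with the Hermitian symmetry, gives the three formulas $(1)$--$(3)$.

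The only genuine work is bookkeeping: keeping the cyclic conventions $f_i^{*} = f_{i+1} \wedge f_{i-1}$ and the alternating signs in the definition of $Y$ straight, and pinning down the normalization of the sharp map. I expect the overall sign of $X \times Y$ to be the single item most likely to cause an error; there is no conceptual obstacle. The substance of the lemma is simply that, under the identification of the relevant copy of $H_3(F)$ inside $J$, the cross product with $X$ implements the expected linear action on the $\Theta^{0}$-valued coordinates.
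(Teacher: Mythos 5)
Your proposal is correct and matches the paper's approach: the paper's proof is simply ``This is direct computation,'' and your plan — bilinearize $z \mapsto z^{\#}$ via $x \times y = (x+y)^{\#} - x^{\#} - y^{\#}$, exploit that the $F$-valued entries of $X$ lie in the nucleus of $\Theta$ so no non-associativity arises, and read off the entries — is exactly what that computation consists of. You have also correctly located where the only real risk of error lies, namely the sign and indexing conventions for $Y$ and $z^{\#}$.
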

\begin{proof} This is direct computation.\end{proof}

Let now $X = (X_{ij}) \in H_3(F)$ and consider the element $n_{L,\Sp_6}(X) = \mm{0}{X}{0}{0}$ in the Lie algebra of $\Sp_6$. Recall the element $n_L(X)$, in the Lie algebra of $H_J^1$, that acts on $(a,b,c,d) \in W_J$ as $n_L(X)(a,b,c,d) = (0,aX,b\times X,(c,X))$.
\begin{lemma} Under the above identification $\wedge^3_0 W_6 \otimes \nu^{-1} \oplus W_6 \otimes \Theta^0 \simeq W_J$, the operator $n_{L,\Sp_6}(X)$ acts as $n_{L}(X)$.\end{lemma}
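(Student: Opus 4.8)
The plan is to verify the asserted identity by checking it on the explicit spanning set of $\wedge^3_0 W_6 \otimes \nu^{-1} \oplus W_6 \otimes \Theta^0$ that was used to build the isomorphism with $W_J$: the vectors $v_1'e_1 + \cdots + v_3 f_3$ with $v_j, v_j' \in \Theta^0$, the decomposable vectors $f_1 \wedge f_2 \wedge f_3$ and $e_1 \wedge e_2 \wedge e_3$, and the vectors $f_i^* \wedge e_j$ and $e_i^* \wedge f_j$. Since $n_{L,\Sp_6}(X)$ and $n_L(X)$ are both linear operators, equality on such a set suffices. On the source side one computes $n_{L,\Sp_6}(X)$ using that it acts as a derivation on exterior powers of $W_6$ together with $n_{L,\Sp_6}(X) f_j = \sum_i X_{ij} e_i$ and $n_{L,\Sp_6}(X) e_i = 0$; on the target side one uses the defining formula $n_L(X)(a,b,c,d) = (0, aX, b \times X, (c,X))$ and the explicit dictionary for the isomorphism.

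Several of the cases are essentially immediate. For $e_1 \wedge e_2 \wedge e_3 \mapsto (0,0,0,1)$ the left side vanishes since $n_{L,\Sp_6}(X)$ kills every $e_i$, and the right side is $(0,0,0,(0,X)) = 0$. For $f_1 \wedge f_2 \wedge f_3 \mapsto (1,0,0,0)$, the derivation property together with a reordering of wedge factors into the shape $f_p^* \wedge e_q$ gives $n_{L,\Sp_6}(X)(f_1 \wedge f_2 \wedge f_3) = \sum_{i,j} X_{ij}\, f_j^* \wedge e_i$, which maps to $(0,X,0,0) = n_L(X)(1,0,0,0)$. For a vector $v_1'e_1 + \cdots + v_3 f_3$ with image $(0,Y,Y',0)$, the operator $n_{L,\Sp_6}(X)$ produces the vector whose $f$-coordinates vanish and whose $e_i$-coordinate is $\sum_j X_{ij} v_j$, whose image is $(0,0,Y'',0)$; here the equality $Y'' = Y \times X$ reduces, by commutativity of the Freudenthal product, to the preceding Lemma, while $(Y',X) = 0$ because $Y'$ has zero diagonal and trace-zero octonionic off-diagonal entries. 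Finally, for $e_i^* \wedge f_j$, the derivation annihilates $e_i^*$ and sends $f_j$ into $\sspan(e_1,e_2,e_3)$, so $n_{L,\Sp_6}(X)(e_i^* \wedge f_j)$ is a multiple of $e_1 \wedge e_2 \wedge e_3$; a short sign check gives $X_{ij}\, e_1 \wedge e_2 \wedge e_3 \mapsto (0,0,0,X_{ij})$, matching the $(0,0,0,(c,X))$ produced by $n_L(X)$ on $(0,0,c,0)$.

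The one case that requires genuine work, and that I expect to be the main obstacle, is that of $f_i^* \wedge e_j$, whose image lies in the $J$-component of $W_J$. Here $n_{L,\Sp_6}(X)$, being a derivation that kills $e_j$ and replaces the two factors of $f_i^* = f_{i+1} \wedge f_{i-1}$ by combinations of the $e_k$, produces a sum of terms of type $e \wedge e \wedge f$; after reorganizing these into the basis $e_p^* \wedge f_q$ one obtains an element of the $J^\vee$-component, and one must then check that this element equals $b \times X$, where $b$ is the symmetric matrix attached to $f_i^* \wedge e_j$ by the dictionary. This is the analogue, for the Freudenthal cross product of two elements of $H_3(F)$, of the computation carried out in the preceding Lemma, and is a direct but slightly tedious verification; the symmetry of $X$ is precisely what guarantees that the output stays inside $\wedge^3_0 W_6$. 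Alternatively, once the $e_i^* \wedge f_j$ case is in hand one can deduce this case without further computation: conjugating $n_{L,\Sp_6}(X)$ by the element of $\Sp_6$ exchanging $e_i$ and $f_i$ carries it to the opposite operator $n_R$, and the corresponding element of $H_J^1$ exchanges the two copies of $\Q$ in $W_J$ and interchanges $J$ with $J^\vee$, carrying $n_L(X)$ to the corresponding $n_R(X)$, so that the two cases are interchanged.
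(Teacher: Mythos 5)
Your overall strategy matches the paper's: verify the identity on the explicit spanning set that was used to build the isomorphism, treating each piece of $\wedge^3_0 W_6 \oplus W_6 \otimes \Theta^0$ in turn. You correctly identify that the cases $e_1\wedge e_2\wedge e_3$, $f_1\wedge f_2 \wedge f_3$, $e_i^*\wedge f_j$, and the $W_6\otimes\Theta^0$ piece are quick (and your handling of these, including the observation that $(Y',X)=0$ because $Y'$ has zero diagonal and trace-zero octonion off-diagonals, is sound), and that the $f_i^*\wedge e_j$ case — producing $(0,b,0,0)\mapsto(0,0,b\times X,0)$ — is where all the work lies. The paper also focuses its explicit calculation there.

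However, you never actually carry out that key computation. You say it is ``the analogue, for the Freudenthal cross product of two elements of $H_3(F)$, of the computation carried out in the preceding Lemma,'' but the preceding Lemma computes $X\times Y$ with $Y$ in the off-diagonal octonion part, which is a \emph{different} (and easier) calculation than expanding $n_{L,\Sp_6}(X)\bigl(\sum b_{ij}\,f_i^*\wedge e_j\bigr)$, reorganizing the resulting three-vectors into the $e_p^*\wedge f_q$ basis, and matching coefficients with $b\times X$. The paper does precisely this coefficient-by-coefficient check (it records, for instance, the four terms contributing to the coefficient of $e_1^*\wedge f_1$ and the four contributing to $e_2^*\wedge f_3$). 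That step is the substance of the lemma and needs to be written out.

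The alternative symmetry shortcut you propose does not close the gap. Conjugating $n_{L,\Sp_6}(X)$ by the Weyl element $w=\left(\begin{smallmatrix}0&-1\\1&0\end{smallmatrix}\right)$ produces $n_{R,\Sp_6}(-X)$, a different operator, and on the $W_J$ side conjugating $n_L(X)$ by the image of $w$ produces $n_L^\vee(-X)$. So the known $e_i^*\wedge f_j$ case for $n_{L,\Sp_6}(X)$ only tells you how the \emph{lower} triangular operator $n_{R,\Sp_6}(-X)$ acts on the $f_i^*\wedge e_j$ part; it does not give the $f_i^*\wedge e_j$ case for $n_{L,\Sp_6}(X)$ itself. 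To make a symmetry argument work you would need an additional automorphism (e.g.\ involving transpose-inverse, using the symmetry of $X$) that carries $n_{R,\Sp_6}$ back to $n_{L,\Sp_6}$ compatibly with a corresponding automorphism of $H_J^1$; as stated, the two cases are not simply interchanged.
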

\begin{proof} We first compute how $n_{L,\Sp_6}(X)$ acts on $(0,(b_{ij}),0,0) = \sum_{ij}{b_{ij} f_i^* \wedge e_j}$.  Writing out the action of $n_{L,\Sp_6}(X)$ on this element, we obtain
\[ \sum_{ij}{b_{ij}( \sum_{m}X_{m,i-1} f_{i+1} \wedge e_m \wedge e_j + \sum_{\ell} X_{\ell,i+1} e_{\ell} \wedge f_{i-1} \wedge e_j)}.\]

The coefficient of $e_1^* \wedge f_1 = e_2 \wedge e_3 \wedge f_1$ comes from $4$ terms:
\begin{itemize}
	\item $i=3,m=2,j=3$: $b_{33} X_{22}$
	\item $i=3, m=3,j=2$: $-b_{32} X_{32}$
	\item $i=2, \ell=3, j=2$: $b_{22} X_{33}$
	\item $i=2,\ell=2,j=3$: $-b_{23}X_{23}$
\end{itemize}

The coefficient of $e_2^* \wedge f_3 = e_3 \wedge e_1 \wedge f_3$ again comes from $4$ terms:
\begin{itemize}
	\item $i=2,m=3,j=1$: $b_{21} X_{31}$
	\item $i=2,m=1,j=3$: $-b_{23} X_{11}$
	\item $i=1,\ell=3,j=1$: $-b_{11} X_{32}$
	\item $i=1,\ell=1,j=3$: $b_{13}X_{12}$.
\end{itemize}
Putting the above computations together, one obtains $n_{L,\Sp_6}(X)(0,b,0,0) = (0,0,c,0)$ where $c = b \times X$.

Finally, one immediately obtains that $n_{L,\Sp_6}(X)(0,0,c,0) = (0,0,0,(c,X))$ and $n_{L,\Sp_6}(X)(1,0,0,0) = (0,X,0,0)$. The lemma follows. \end{proof}

The following lemma is immediate.
\begin{lemma} The element $\mm{0}{-1}{1}{0}$ of $\Sp_6$ acts on $W_J$ as $(a,b,c,d) \mapsto (d,-c,b,-a)$.  The element $\mm{\nu}{}{}{1}$ acts on $(a,b,c,d)$ as $(\nu^{-1}a,b,\nu c, \nu^2 d)$.\end{lemma}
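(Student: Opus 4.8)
The plan is to verify both formulas by evaluating them on each of the three types of elements used to build the identification $\wedge^3_0 W_6 \otimes \nu^{-1} \oplus W_6 \otimes \Theta^0 \simeq W_J$ recorded above. The only input is the action on the symplectic basis: $\mm{0}{-1_3}{1_3}{0}$ sends $e_i \mapsto f_i$ and $f_i \mapsto -e_i$ and lies in $\Sp_6$, so the $\nu^{-1}$-twist is trivial on it; while $\mm{\nu}{}{}{1} = \diag(\nu 1_3, 1_3)$ sends $e_i \mapsto \nu e_i$, $f_i \mapsto f_i$, and has similitude factor $\nu$.

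First I would treat the two $\Q$-slots of $W_J$, coming from $f_1 \wedge f_2 \wedge f_3 \mapsto (1,0,0,0)$ and $e_1 \wedge e_2 \wedge e_3 \mapsto (0,0,0,1)$. For $\mm{0}{-1_3}{1_3}{0}$ one gets $f_1 \wedge f_2 \wedge f_3 \mapsto -e_1 \wedge e_2 \wedge e_3$ and $e_1 \wedge e_2 \wedge e_3 \mapsto f_1 \wedge f_2 \wedge f_3$, i.e. $(a,0,0,d) \mapsto (d,0,0,-a)$; for $\mm{\nu}{}{}{1}$, $\wedge^3$ fixes $f_1 \wedge f_2 \wedge f_3$ and scales $e_1 \wedge e_2 \wedge e_3$ by $\nu^3$, so after tensoring with $\nu^{-1}$ one gets the scalings $\nu^{-1}$ on $a$ and $\nu^2$ on $d$. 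Next the two ``middle'' pieces coming from $\wedge^3_0 W_6$: with $f_i^* = f_{i+1}\wedge f_{i-1}$ and $e_i^* = e_{i+1}\wedge e_{i-1}$, the element $\mm{0}{-1_3}{1_3}{0}$ sends $f_i^*\wedge e_j \mapsto e_i^*\wedge f_j$ and $e_i^*\wedge f_j \mapsto -f_i^*\wedge e_j$, which is $(0,b,c,0)\mapsto(0,-c,b,0)$; and $\mm{\nu}{}{}{1}$ scales $f_i^*\wedge e_j$ by $\nu$ and $e_i^*\wedge f_j$ by $\nu^2$, so after the $\nu^{-1}$-twist it fixes $b$ and scales $c$ by $\nu$. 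Finally the $W_6 \otimes \Theta^0$ piece: writing an octonion vector as $\sum_i(v_i' e_i + v_i f_i)$, the element $\mm{0}{-1_3}{1_3}{0}$ sends it to $\sum_i(v_i' f_i - v_i e_i)$ and $\mm{\nu}{}{}{1}$ to $\sum_i(\nu v_i' e_i + v_i f_i)$; reading off the matrices $Y$ (built from the $f$-coefficients) and $Y'$ (built from the $e$-coefficients) appearing in the identification $(0,Y,Y',0)$, the first replaces $(Y,Y')$ by $(-Y',Y)$ and the second by $(Y,\nu Y')$, which is exactly $(b,c)\mapsto(-c,b)$ and $(b,c)\mapsto(b,\nu c)$. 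Combining the three cases gives the two displayed formulas.

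The computation is entirely mechanical, and there is no genuine obstacle --- which is why the statement is recorded as immediate. The only two things to keep in view are the sign in $f_i \mapsto -e_i$ for the Weyl element, and, more importantly, the similitude $\nu^{-1}$-twist on the $\wedge^3_0 W_6$ summand: it is precisely this twist that turns the naive exponents $1$ and $\nu^3$ on the outer slots into $\nu^{-1}$ and $\nu^2$, and it must be carried along in every line.
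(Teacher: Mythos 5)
The paper does not give a proof of this lemma (it is labeled ``immediate''), so yours is the only argument to assess. Your verification is correct and is exactly the direct computation that the word ``immediate'' is inviting: you act with $e_i\mapsto f_i$, $f_i\mapsto -e_i$ (resp. $e_i\mapsto \nu e_i$, $f_i\mapsto f_i$) on each of the three generating families $\{f_1\wedge f_2\wedge f_3,\ e_1\wedge e_2\wedge e_3\}$, $\{f_i^*\wedge e_j,\ e_i^*\wedge f_j\}$, $\{e_j\otimes v_j',\ f_j\otimes v_j\}$, track the $\nu^{-1}$-twist on the $\wedge^3_0 W_6$ summand, and read off the $(a,b,c,d)$-coordinates. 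I double-checked the two computations that are easy to get wrong --- the sign bookkeeping in $f_i^*\wedge e_j \leftrightarrow e_i^*\wedge f_j$ under the Weyl element, and the identification of new $(Y,Y')$ from the transformed $e$- and $f$-coefficients --- and both come out to $(b,c)\mapsto(-c,b)$ as you state, consistent with the $\wedge^3_0$ summand. Likewise the exponents $\nu^{-1},1,\nu,\nu^2$ arise correctly as $\nu^0\cdot\nu^{-1}$, $\nu^1\cdot\nu^{-1}$, $\nu^2\cdot\nu^{-1}$, $\nu^3\cdot\nu^{-1}$ on the $\wedge^3_0 W_6\otimes\nu^{-1}$ part and as $(\nu^0,\nu^1)$ on the $W_6\otimes\Theta^0$ part. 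No gaps.
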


\begin{lemma}\label{lem:LeviSp6action} Suppose $\mm{m}{}{}{n} \in \Sp_6$, so that $n = \,^tm^{-1}$.  The action of this element on $W_J$ is in $M_J^1$; in particular, it preserves the symplectic and quartic form on $W_J$.
\end{lemma}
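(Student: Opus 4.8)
The statement to prove is Lemma~\ref{lem:LeviSp6action}: an element $\mm{m}{}{}{n} \in \Sp_6$ with $n = {}^t m^{-1}$, acting on $W_J$ via the identification $\wedge^3_0 W_6 \otimes \nu^{-1} \oplus W_6 \otimes \Theta^0 \simeq W_J$, lands in $M_J^1 \subseteq H_J^1$, and in particular preserves Freudenthal's symplectic and quartic forms. The plan is to reduce this to the lemmas already established and to a direct inspection of the action on the cubic norm structure $J$.

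First I would unwind what $\mm{m}{}{}{n}$ does under the identification. On the summand $W_6 \otimes \Theta^0$ the element acts through $m \otimes \mathrm{id}$ on the ``$e_i$'' part and $n \otimes \mathrm{id} = {}^tm^{-1} \otimes \mathrm{id}$ on the ``$f_i$'' part, while on $\wedge^3_0 W_6 \otimes \nu^{-1}$ it acts through $\wedge^3 m$, which on $f_1\wedge f_2 \wedge f_3$ and $e_1 \wedge e_2 \wedge e_3$ is multiplication by $\det(m)$ and $\det(m)^{-1}$ respectively (using $\wedge^3 n = (\wedge^3 m)^{-1}$). Tracing through the explicit formulas for the identification, one sees that the induced action on $W_J = \Q \oplus J \oplus J^\vee \oplus \Q$ fixes the two lines $\Q(1,0,0,0)$ and $\Q(0,0,0,1)$ and preserves the decomposition; hence it is of the form $(a,b,c,d)\mapsto (\delta^{-1}a, \phi(b), \psi(c), \delta d)$ for some $\delta \in \GL_1$ and linear maps $\phi$ on $J$, $\psi$ on $J^\vee$. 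So the element lies in (the image of) the Levi subgroup of the Siegel parabolic $P_J$, i.e.\ it has the form $M(\delta, m_J)$ for some $(\delta, m_J) \in \GL_1 \times M_J$ with $\delta^2 = \lambda(m_J)$; and it lies in $M_J^1$ precisely when $\delta = 1$, which will follow from the fact that the action on $\Q(1,0,0,0)$ is trivial, since $\wedge^3 m$ acts on $f_1\wedge f_2 \wedge f_3$ and on $e_1 \wedge e_2 \wedge e_3$ by reciprocal scalars and the twist by $\nu^{-1}$ in $\Sp_6$ (as opposed to $\GSp_6$) has trivial similitude. Once we know the element is $M(\delta,m_J)$ with $\delta = 1$, it automatically preserves the symplectic and quartic forms on $W_J$, because $M_J^1 \subseteq H_J^1$ by construction.

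The one point that needs genuine (though still routine) verification is that $\phi$ actually preserves the cubic norm on $J$ up to scaling, i.e.\ that it is induced by an element $m_J \in M_J$ at all, rather than just an arbitrary linear automorphism. For this I would use the same strategy as in the preceding lemmas: reduce to generators. Since the subgroup $\{\mm{m}{}{}{{}^tm^{-1}}\} \cong \GL_3$ is generated by diagonal tori and the $\GL_2$'s in coordinate pairs, and these are built from the Chevalley generators, it suffices to check the claim for root subgroups. The unipotent root subgroups $\mm{1}{*}{}{1}$ and $\mm{1}{}{*}{1}$ are handled by the lemma identifying $n_{L,\Sp_6}(X)$ with $n_L(X)$ (and its transpose analogue), which already shows those act on $W_J$ via exponentials of $n_L(X) \in \Lie(H_J^1)$, hence lie in $M_J^1$; the torus part is handled by direct computation of the action on $J = H_3(\Theta)$, where one sees a diagonal $t = \diag(t_1,t_2,t_3) \in \GL_3$ scales the $(i,j)$ off-diagonal entry of $J$ by $t_i t_j / (t_1 t_2 t_3)$ (up to bookkeeping from the $Y,Y'$ conventions) and the diagonal entry $c_i$ by an appropriate monomial, and checks this scales the cubic norm $n(X) = c_1 c_2 c_3 - \cdots$ by a single scalar. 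Assembling: the action of a general $\mm{m}{}{}{{}^tm^{-1}}$ is a product of such generators, each in $M_J^1$, so the whole thing is in $M_J^1$.

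The main obstacle is bookkeeping rather than conceptual: one must carefully track the $(0,Y,Y',0)$ versus $(0,Y',Y,0)$ and sign conventions in the identification to confirm (i) that the induced map on $W_J$ really is block-diagonal with trivial action on $\Q(1,0,0,0)$ — this is what pins down $\delta = 1$ rather than some other similitude — and (ii) that the off-diagonal and diagonal parts of $J$ transform compatibly under a torus element so that the cubic norm is genuinely preserved up to a \emph{single} scalar (which must then equal $1$ as well). I expect the cleanest writeup to observe that it suffices to treat the three types of generators — the two opposite unipotent radicals and the diagonal torus of $\GL_3$ — invoking the $n_L(X)$ lemma for the first two and a one-line norm computation for the third, and then to conclude by multiplicativity; no new ideas beyond the lemmas already proved are needed.
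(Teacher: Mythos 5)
Your high-level plan — observe that the element is block-diagonal on $W_J$ with trivial similitude, so it lands in the Levi $M(\delta,m_J)$ with $\delta=1$, and then verify that the induced map on $J$ preserves the cubic norm — matches the shape of the paper's argument. But the reduction-to-generators step has a genuine error that breaks the proof.

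You identify the root subgroups of the Levi $\GL_3 \cong \{\mm{m}{}{}{{}^tm^{-1}}\}$ with the elements $\mm{1}{*}{}{1}$ and $\mm{1}{}{*}{1}$. These are not the same thing: the latter are the unipotent radicals of the Siegel parabolic of $\Sp_6$, while the root subgroups of the Levi have the form $\mm{u}{}{}{{}^tu^{-1}}$ with $u$ an elementary transvection in $\GL_3$. So the lemma computing $n_{L,\Sp_6}(X)$ in terms of $n_L(X)$ is not applicable to the elements you need. Worse, even for the elements $\mm{1}{X}{}{1}$, the conclusion you draw is wrong: $\exp(n_L(X))$ lies in the unipotent radical of the Siegel parabolic $P_J$ of $H_J^1$, not in the Levi $M_J^1$; ``being an exponential of something in $\Lie(H_J^1)$'' does not place an element in $M_J^1$. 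As a result, the only generator class you actually handle is the diagonal torus, and the crucial root-subgroup case for the Levi $\GL_3$ is left without any argument.

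The paper avoids this issue by computing the action of a general $\mm{m}{}{}{{}^tm^{-1}}$ on each component of $W_J$ directly, reducing the statement to the claim that $(X,v) \mapsto ({}^tn^{-1}Xn^{-1},\det(n)^{-1}nv)$ scales the cubic norm on $J$ by $\det(n)^{-2}$; that is then checked via the Cayley–Dickson construction and the explicit norm formulas. If you want to keep a generators argument, you would need to supply a genuine computation for the transvections $\mm{u}{}{}{{}^tu^{-1}}$ — tracking how they act on $H_3(\Theta) \oplus V_3 \otimes \Theta^0$ — rather than citing a lemma that concerns a different subgroup.
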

\begin{proof} We compute the action of $\mm{m}{}{}{n}$ on $(a,b,c,d)$ when $n = \,^tm^{-1}$.  First, one computes that $n$ acts on $f_i^*$ taking $f_i^*$ to $\sum_{k}{(n_{k+1,i+1}n_{k-1,i-1}-n_{k-1,i+1}n_{k+1,i-1}) f_k^*} = \sum_{k}{c(n)_{ki} f_k^*}$, where $c(n) = \det(n) \,^tn^{-1}$ is the cofactor matrix of $n$.  This gives that the $\wedge^3_0 W_6 \otimes \nu^{-1}$ part of $b$ maps to $c(n) b \,^tm = \det(n) \,^tn^{-1}b n^{-1}$.  The vector (i.e., $\Theta^0$) part of $b$ moves as $\sum_{j}{v_j f_j} \mapsto \sum_{k}{ (\sum_j{n_{kj}v_j}) f_k}$.

Completely similarly (it is the same calculation), the $\wedge^3_0 W_6 \otimes \nu^{-1}$ part of $c$ maps to $c(m) c \,^tn = \det(m) \,^tm^{-1} c m^{-1}$.  The vector part of $c$ moves as $\sum_{j}{v_j' e_j} \mapsto \sum_{k}{ (\sum_j{m_{kj}v_j'}) e_k}$.

To finish the proof of the lemma, one must verify that the map $J \simeq H_3(F) \oplus \Theta^0 \otimes V_3 \rightarrow J$ given by 
\[(X,v) \mapsto (\,^tn^{-1}Xn^{-1},\det(n)^{-1}n v)\]
scales the norm on $J$ by $\det(n)^{-2}$.  This can be done, for example, by repeatedly applying the Cayley-Dickson construction and using the formulas of \cite[Section 8.1]{pollackLL}.
\end{proof}

We can now prove Proposition \ref{prop:dualPair1}.
\begin{proof}[Proof of Proposition \ref{prop:dualPair1}] Exponentiating the action of $n_{L,\Sp_6}(X)$, we see that the action of $\mm{1}{X}{}{1} \in \Sp_6$ lands in $H_J^1$.  The group $\GSp_6$ is generated by these elements, together with the $\mm{\nu}{}{}{1}$ and with $\mm{0}{-1}{1}{0}$.  The proposition thus follows from the above lemmas. \end{proof}

\subsection{Action of the maximal compact} Let $K_{\Sp_6}$ denote the standard maximal compact subgroup of $\Sp_6(\R)$, so that
\[K_{\Sp_6} = \left\{\mb{A}{-B}{B}{A}: A+iB \in U(3)\right\}.\]  

Let $K_{E_7}$ denote the subgroup of $H_J^1(\R)$ that fixes the line spanned by $(1,i,-1,-i) \in W_J(\C)$.  This is a maximal compact subgoup of $H_J^1(\R)$.  Let $\p_J$ be the complexification of the $-1$ part for the Cartan involution on the Lie algebra of $H_J^1(\R)$ for this choice of maximal compact.  Write $\p_J^{\pm}$ for its two $K_{E_7}$ factors.  We now work out how $K_{\Sp_6}$ acts on $\p_J^{\pm }$. 

We begin by discussing the Cayley transform $C_h \in H_J^1(\C)$ \cite[section 5]{pollackQDS}. To do, we review some notation from \cite{pollackQDS}:
\begin{itemize}
	\item $n_G(X) := \exp(n_L(X))$;
	\item $n_G^\vee(\gamma) := \exp(n_L^\vee(\gamma))$, with, for $\gamma \in J^\vee$, $n_L^\vee(\gamma)(a,b,c,d) = ((b,\gamma),c \times \gamma,d \gamma,0)$;
	\item for $\lambda \in \GL_1$, $\eta(\lambda)(a,b,c,d) = (\lambda^3 a,\lambda b, \lambda^{-1}c,\lambda^{-3}d)$;
	\item $\m_J$ is the Lie algebra of $M_J$, which acts on $W_J$ as given in \cite[section 3.4]{pollackQDS}.
\end{itemize}
The Cayley transform is defined as $C_h = n_G(-i) n_G^\vee(-i/2) \eta(2^{-1/2})$, so that $C_h^{-1}= \eta(2^{1/2}) n_G^\vee(i/2) n_G(i)$.  It satisfies:
\begin{enumerate}
	\item $C_h^{-1} n_L(J \otimes \C) C_h = \p_J^+$
	\item $C_h^{-1} n_L^{\vee}(J\otimes \C) C_h = \p_J^{-}$
	\item $C_h^{-1} (\m_J \otimes \C) C_h = \k_{E_7}$.
\end{enumerate}

For $Z \in J \otimes \C$, we set $r_1(Z)  \in W_J \otimes \C$ as
\[r_1(Z) = (1,Z,Z^\#,n(Z)) = r_0(-Z).\]
For $g \in H_J(\R)$ and $Z \in \mathcal{H}_J$, the factor of automorphy $j(g,Z) \in \C^\times$ and the action of $g$ on $\mathcal{H}_J$ is defined as $g r_1(Z) = j(g,Z) r_1(g \cdot Z)$.

Recall that we let $M(\delta,m)$, for $m \in M_J$ and $\delta \in \GL_1$ such that $\delta^2 = \lambda(m)$, act on $W_J$ as 
\[M(\delta,m)(a,b,c,d) = (\delta^{-1}a, \delta^{-1}m(b),\delta \widetilde{m}(c), \delta(d)).\]
For $Y \in J_{>0}$ set $M_Y = M(n(Y)^{1/2}, U_{Y^{1/2}})$. Here $Y^{1/2}$ is the positive definite square root of $Y$, and for $x \in J$, $U_x: J \rightarrow J$ is the map defined as $U_x(z) = - x^\# \times z + (x,z)x$.  For $Z = X+iY \in \mathcal{H}_J$ set $g_Z = n_G(X) M_Y$.  Then $g_Z r_1(i) = n(Y)^{-1/2} r_1(Z)$.

\begin{lemma}\label{lem:Chef}
	One has
	\begin{enumerate}
		\item $C_h^{-1} (1,0,0,0) = \frac{1}{2\sqrt{2}} r_1(i)$
		\item $C_h^{-1} (0,0,0,1) = \frac{1}{2\sqrt{2}} r_1(-i)$.
	\end{enumerate}
	Consequently, $\k = C_h^{-1} \m_J C_h$.  Moreover,
	\[C_h^{-1}(0,z,0,0) = \frac{1}{2\sqrt{2}}(i(1,z),2z-(1,z)1,i(-2z+(1,z)1),(-1,z))\]
	and
	\[C_h^{-1}(0,0,E,0) = \frac{1}{2\sqrt{2}}(-(1,E),i((1,E)-2E),2E-(1,E),i(1,E)).\]
\end{lemma}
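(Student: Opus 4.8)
The plan is to compute directly, using the factorization $C_h^{-1} = \eta(2^{1/2}) n_G^\vee(i/2) n_G(i)$ and the explicit formulas for how each of $\eta(\lambda)$, $n_G^\vee(\gamma) = \exp(n_L^\vee(\gamma))$, and $n_G(X) = \exp(n_L(X))$ acts on $W_J$. For item (1), I would apply $n_G(i)$ to $(1,0,0,0)$: since $n_L(X)(a,b,c,d) = (0,aX,b\times X,(c,X))$ one has $n_L(i)(1,0,0,0) = (0,i1,0,0)$, then $n_L(i)^2(1,0,0,0)$ picks up $1^\# = 1$, and the exponential telescopes into $(1, i1, (i1)^\#, n(i1)) = (1,i1,-1,-i)$, i.e.\ $r_1(i)$ since $Z=i1$ gives $Z^\# = -1$, $n(Z) = -i$. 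Wait — one must be careful: $n_G(i)(1,0,0,0)$ should land on the ``Siegel'' translate, and indeed $n_G(X) r_1(0) = r_1(X)$, so $n_G(i)(1,0,0,0) = r_1(i1)$. Then $n_G^\vee(i/2)$ fixes the first coordinate pattern appropriately and $\eta(2^{1/2})$ rescales by $(2^{3/2}, 2^{1/2}, 2^{-1/2}, 2^{-3/2})$; tracking constants gives the claimed $\frac{1}{2\sqrt 2} r_1(i)$. Item (2) is the analogous computation starting from $(0,0,0,1)$, where now $n_G(i)$ acts trivially on the last slot's leading behavior and $n_G^\vee(i/2)$ does the work, producing $r_1(-i)$ up to the same scalar; alternatively one can deduce (2) from (1) by applying the long Weyl element $(a,b,c,d)\mapsto (d,-c,b,-a)$, which commutes with $C_h^{-1}$ up to a sign one checks, and using $r_1(-i) = (1,-i1,-1,i)$.

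For the consequence $\k = C_h^{-1}\m_J C_h$: this is essentially property (3) of the Cayley transform recalled just above the lemma, $C_h^{-1}(\m_J\otimes\C)C_h = \k_{E_7}$, combined with the observation that $K_{E_7}$ was \emph{defined} as the stabilizer of the line $\C(1,i,-1,-i) = \C r_1(i)$; items (1)--(2) show $C_h^{-1}$ carries the line $\C(1,0,0,0)$ to $\C r_1(i)$, which re-confirms that $C_h^{-1}$ conjugates the relevant Levi-type subalgebra to the Lie algebra of the stabilizer of $r_1(i)$, i.e.\ $\k$. So this part is a short deduction, not a fresh computation.

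The remaining two displayed formulas, for $C_h^{-1}(0,z,0,0)$ and $C_h^{-1}(0,0,E,0)$, are again brute-force applications of the same three-step factorization, now on a general $J$-element in the $b$-slot (resp.\ $c$-slot). Here $n_G(i)$ acts by $(0,z,0,0)\mapsto (0,z, z\times(i1), (0,X)) = (0, z, i(2z - (1,z)1)/?\ldots)$ — one uses $z\times 1 = \tr(z)1 - z$ in the appropriate normalization, so the cross products with the identity reduce to traces and the element $z$ itself, which is exactly why $(1,z)$ and $2z - (1,z)1$ appear. Then $n_G^\vee(i/2)$ mixes the slots back via $n_L^\vee(\gamma)(a,b,c,d) = ((b,\gamma), c\times\gamma, d\gamma, 0)$, and $\eta(2^{1/2})$ supplies the overall $\frac{1}{2\sqrt 2}$. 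I expect the main obstacle to be purely bookkeeping: keeping the four coordinates, the factors of $i$, and the powers of $2$ straight across the composition of three exponentials, and correctly using the Jordan-algebra identities $1^\# = 1$, $z\times 1 = (1,z)1 - z$ (up to the sign conventions of \cite{pollackQDS}) to collapse the cross products into the stated linear expressions in $z$, $1$, and $(1,z)$. No conceptual difficulty is anticipated; the formulas for $(0,z,0,0)$ and $(0,0,E,0)$ are related by the same Weyl-element symmetry used for (1)$\Leftrightarrow$(2), which provides a consistency check on the signs.
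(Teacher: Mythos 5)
Your proposal is correct and follows the paper's own proof: the four displayed identities are obtained by direct computation using the factorization $C_h^{-1} = \eta(2^{1/2})\, n_G^\vee(i/2)\, n_G(i)$ together with the Jordan identities $1^\# = 1$, $z\times 1 = (1,z)1 - z$, and the ``Consequently'' claim is deduced from the characterizations of $K$ as the stabilizer of the lines $\C r_1(\pm i)$ and of $M_J$ as the stabilizer of the lines $\C(1,0,0,0)$ and $\C(0,0,0,1)$, exactly as the paper does. One minor caution on phrasing: in arguing the consequence you momentarily invoke ``property (3)'' $C_h^{-1}(\m_J\otimes\C)C_h=\k_{E_7}$ recalled just before the lemma, but that is precisely the assertion being derived at this point, so citing it is circular; the non-circular route, which you also supply, is that items (1)--(2) show $C_h^{-1}$ carries the two $M_J$-stabilized lines to the two $K$-stabilized lines, whence conjugation by $C_h^{-1}$ identifies $\m_J$ with $\k$.
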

\begin{proof} The first parts are direct verifications.  For the second, recall that $K$ is the subgroup of $H_J^1(\R)$ that stabilizes the lines $\C r_1(i)$ and $\C r_1(-i)$, while $M_J$ is the subgroup of $H_J^1(\R)$ that stabilizes the lines spanned by $(1,0,0,0)$ and $(0,0,0,1)$.  The last part is again a direct verification.
\end{proof}

Here is the statement of the result.

\begin{proposition}  Suppose $k = \mm{A}{-B}{B}{A} \in K_{\Sp_6}$, so that $A + iB \in U(3)$.  Let $E \in J$, with $E$ perpendicular to $H_3(\R)$, so that $E = v_1 \otimes x_1 + v_2 \otimes x_2 + v_3 \otimes x_3$.  Then $Ad(k) C_h^{-1} n_L^\vee(E) C_h = C_h^{-1} n_L^\vee(E') C_h$ with $E' = \det(A+iB) \,^t(A+iB)^{-1} E$.
\end{proposition}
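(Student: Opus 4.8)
The plan is to conjugate the whole equation by $C_h$ and thereby reduce to a computation inside a Levi subgroup of the Siegel parabolic, where the relevant action is transparent. Write $\widetilde{k} = C_h k C_h^{-1} \in H_J^1(\C)$. Then
\[\Ad(k)\left(C_h^{-1} n_L^\vee(E) C_h\right) = C_h^{-1}\left(\Ad(\widetilde{k})\, n_L^\vee(E)\right) C_h,\]
so the proposition is equivalent to the identity $\Ad(\widetilde{k})\, n_L^\vee(E) = n_L^\vee(E')$ inside $n_L^\vee(J^\vee \otimes \C)$.

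The first step is to pin down $\widetilde{k}$ by restricting the Cayley transform $C_h$ to the subgroup $\Sp_6 \subseteq H_J^1$. Using the lemma identifying $n_{L,\Sp_6}(X)$ with $n_L(X)$ (and its mirror statement for $n_L^\vee$, obtained by conjugating by the Weyl element $\mm{0}{-1}{1}{0}$ via the lemma computing its action on $W_J$), together with the identification of $\eta(\lambda)$ with $\diag(\lambda^{-1}I_3,\lambda I_3) \in \Sp_6$, one computes from $C_h = n_G(-i)n_G^\vee(-i/2)\eta(2^{-1/2})$ that
\[C_h\big|_{\Sp_6} = \tfrac{1}{\sqrt{2}}\mm{I_3}{-iI_3}{-iI_3}{I_3},\]
a standard Cayley element for $\Sp_6$. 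A direct $2\times 2$ block-matrix computation then gives $\widetilde{k} = \mm{\bar u}{}{}{u}$ with $u = A+iB$; here one uses that $u$ is unitary, so $\bar u = \,^tu^{-1}$, to confirm that $\widetilde{k}$ again lies in $\Sp_6(\C)$ — in fact in the Levi of its Siegel parabolic, hence in the Levi of the Siegel parabolic of $H_J^1$ by Lemma \ref{lem:LeviSp6action}.

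Since $\widetilde{k}$ lies in that Levi, it normalizes $n_L^\vee(J^\vee \otimes \C)$, and $\Ad(\widetilde{k})\, n_L^\vee(E) = n_L^\vee(E')$ for a unique $E'$, which I would read off by evaluating both operators on $(0,0,0,1) \in W_J$: using $n_L^\vee(\gamma)(0,0,0,1) = (0,0,\gamma,0)$ and the formulas of Lemma \ref{lem:LeviSp6action} for how $\mm{m}{}{}{n}$ acts on the slots of $W_J$ (the final slot by $\det m$, the vector part of the $J^\vee$-slot by $m$ on the relevant index), one obtains $E' = \det(\bar u)^{-1}\,\bar u\, E$ on the vector part. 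Invoking $\bar u = \,^tu^{-1}$ and $\det \bar u = \det(u)^{-1}$ once more turns this into $E' = \det(u)\,{}^tu^{-1} E = \det(A+iB)\,^t(A+iB)^{-1}E$, as claimed.

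The main obstacle is the explicit determination of $\widetilde{k}$: it requires correctly matching the conventions for the building blocks $n_G, n_G^\vee, \eta$ of $C_h$ with their images in $\Sp_6$, and — in the last step — tracking the transpose, determinant, and index conventions in the Siegel–Levi action of Lemma \ref{lem:LeviSp6action} (together with the identification $J \simeq J^\vee$) carefully enough that $E'$ emerges as $\det(A+iB)\,^t(A+iB)^{-1}E$ rather than one of its near-variants. Everything after $\widetilde{k} = \mm{\bar u}{}{}{u}$ is bookkeeping; as a consistency check one can use Lemma \ref{lem:Chef} to verify that $C_h^{-1}(0,0,E,0) = \tfrac{1}{\sqrt{2}}(0,-E,E,0)$ for $E$ in the vector part (where $(1,E)=0$) and compare the endpoints of the computation against it.
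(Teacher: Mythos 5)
Your proof is correct and takes a genuinely different route from the paper's.

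The paper's argument stays on the compact side: it notes that $\Ad(k)$ preserves $\p_J^- = C_h^{-1} n_L^\vee(J\otimes\C)C_h$ (by $K_{E_7}$-equivariance, so the answer is of the stated form for \emph{some} $E'$), then determines $E'$ by applying both operators to $r_0(i)=r_1(-i)$, computing the left side directly by unwinding the identification $(0,iE,-E,0)\leftrightarrow \sum_j (e_j+if_j)\otimes x_j$ and letting $k$ act there. You instead move everything over to the Levi side by writing $\widetilde{k}=C_h k C_h^{-1}$, computing $C_h|_{\Sp_6}=\tfrac{1}{\sqrt 2}\mm{I}{-iI}{-iI}{I}$ as a genuine $\Sp_6(\C)$-element (this requires, and you supply, the identifications $n_L(X)\leftrightarrow\mm{0}{X}{0}{0}$, $n_L^\vee(\gamma)\leftrightarrow\mm{0}{0}{\gamma}{0}$, $\eta(\lambda)\leftrightarrow\diag(\lambda^{-1}I,\lambda I)$), obtaining $\widetilde{k}=\mm{\bar u}{}{}{u}$, and reading off $E'$ by applying $\Ad(\widetilde{k})n_L^\vee(E)$ and $n_L^\vee(E')$ to $(0,0,0,1)$. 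Both approaches amount to choosing a convenient anchor vector ($r_0(i)$ versus $(0,0,0,1)$), but yours does the extra work of identifying $\widetilde{k}$ explicitly; what that buys is that the remaining calculation takes place entirely in the Levi of the Siegel parabolic via Lemma \ref{lem:LeviSp6action}, where the $\det$ on the $d$-slot and the action of $m$ on the $\Theta^0\otimes V_3$ part of the $c$-slot are transparent. I checked the final bookkeeping: $\widetilde{k}^{-1}(0,0,0,1)=\det(\bar u)^{-1}(0,0,0,1)$ and $\widetilde{k}(0,0,E,0)=(0,0,\bar u E,0)$ give $E'=\det(\bar u)^{-1}\bar u E$, and unitarity ($\bar u=\,^tu^{-1}$, $\det\bar u=\det(u)^{-1}$) converts this to $\det(u)\,^tu^{-1}E$ as required.

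One small slip, in your side consistency check only: from Lemma \ref{lem:Chef}, with $(1,E)=0$ one gets $C_h^{-1}(0,0,E,0)=\tfrac{1}{2\sqrt 2}(0,-2iE,2E,0)=\tfrac{1}{\sqrt 2}(0,-iE,E,0)$, not $\tfrac{1}{\sqrt 2}(0,-E,E,0)$; you dropped an $i$. This does not affect the main argument.
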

\begin{proof} We know from lemmas above that $Ad(k) C_h^{-1} n_L^\vee(E) C_h = C_h^{-1} n_L^\vee(E') C_h$ for some $E' \in J$.  To compute it, we apply both sides to  $ r_0(i) = r_1(-i)$.  The right-hand side gives $2\sqrt{2} C_h^{-1}(0,0,E',0)$, which is $(0,-2iE',2E',0)$.  The left-hand side gives $j(k^{-1},i)^* k \cdot (0,-2iE,2E,0)$.

To further compute this left-hand side, note that we have $(0,iE,-E,0)$ is identified with $(e_1+if_1) \otimes x_1 + (e_2+if_2) \otimes x_2 + (e_3+if_3) \otimes x_3$.  Applying $k$, one gets the action of $A-iB = \,^t(A+iB)^{-1}$ on the $e_j + i f_j$. 
\end{proof}

\subsection{More exceptional groups} We define the group $F_4^I$ to be the stabilizer of $I$ inside of $M_J^1$.  For the group $G_J$, recall the Lie algebra $\g(J)$ of \cite{pollackQDS}, so that $G_J$ is the identity component of the group of automorphisms of $\g(J)$.  Let us remark that an integral model of the group $G_J$ is described in \cite{ganSW}.  We use this integral model.

\subsection{The second and third dual pairs}\label{subsec:dualpairs23} We will now define two more dual pairs, $\GL_2 \times F_4^I \subseteq H_J$ and $G_2 \times F_4^I \subseteq G_J$.

For the second dual pair, $\GL_2 \times F_4^I \subseteq H_J$, we explicate an identification $W_\Q \oplus V_2 \otimes J^0 \simeq W_J$, as follows. Here $V_2$ is the standard representation of $\GL_2$, with standard basis $e,f$. Identify $(a,b,c,d) + f \otimes B + e \otimes C$ with $(a,bI + B,cI -C,d)$.  We claim that under this identification, and the natural action of $\GL_2 \times F_4^I$ on the left, this group action preserves the symplectic and quartic form on $W_J$.

It is clear that $F_4^I$ preserves the symplectic and quartic form.  For the action of $\GL_2$, the proof is similar to (but much easier than) that given above for $\GSp_6 \times G_2$.  One simply needs to observe that $n_{L}(b)(0,B,0,0) = (0,0,bI \times B,0) = (0,0,-bB,0)$ and $n_{L}(b)(f \otimes B) = b e \otimes B$.

For the third dual pair, $G_2 \times F_4^I \subseteq G_J$, we proceed as follows.  Observe that $F_4^I \subseteq G_J$ from the action of $F_4^I$ on $J$ and $J^\vee$.  Now notice that $\g(J)^{F_4^I} = \g_2$.  Because $G_2$ is simply connected, we thus obtain a map from $G_2$ to the centralizer of the group $F_4^I$ in $G_J$.  Thus we have a unique map $G_2 \times F_4^I \rightarrow G_J$ so that $F_4^I$ acts on $J, J^\vee$ naturally, and the differential of the map $G_2 \rightarrow G_J$ is the Lie algebra embedding $\g_2 \rightarrow \g(J)$.

We now relate these two dual pairs.  Thus let $\GL_2^s$ be the Levi of the Heisenberg parabolic in $G_2$.  Observe that $\GL_2^s$ fixes the line spanned by $E_{13}$ in $\g(J)$, because it does so in $\g_2$.  Thus because $H_J$ is the Levi of the Heisenberg parabolic in $G_J$, we obtain a map $\GL_2^s \times F_4^I \rightarrow H_J \rightarrow \GL(W_J)$. 
\begin{proposition} The above-described two maps $\GL_2^s \times F_4^I \rightarrow H_J$ are identical.\end{proposition}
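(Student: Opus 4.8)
The plan is to reduce the statement to a single bracket computation. Since $\GL_2^s \times F_4^I$ is connected and we work in characteristic zero, a homomorphism out of it is determined by its differential, so it suffices to check that the two induced maps $\mathfrak{gl}_2 \oplus \f_4^I \to \h_J$ agree, equivalently that the two resulting actions on $W_J$ coincide. (Here $\GL_2^s$ is identified with the $\GL_2$ of the second dual pair via the evident isomorphism.) On the $\f_4^I$-summand there is nothing to do: for the second dual pair $F_4^I \subseteq M_J^1 \subseteq H_J$ acts naturally on $W_J = \Q \oplus J \oplus J^\vee \oplus \Q$ by construction, while for the third dual pair $F_4^I \hookrightarrow G_J$ is defined through precisely this action on $J$ and $J^\vee$, compatibly with $F_4^I \subseteq M_J^1 \subseteq H_J \subseteq G_J$. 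So everything reduces to comparing the two maps $\mathfrak{gl}_2 \to \h_J$.

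By the construction recalled just above, $\GL_2^s$ lies in the Levi $H_J$ of the Heisenberg parabolic of $G_J$, so on Lie algebras $\mathfrak{gl}_2 \subseteq \h_J = \g(J)_0$ acts on $W_J \cong \g(J)_1$ by $\ad$. Since $\GL_2^s$ commutes with $F_4^I$, both maps $\mathfrak{gl}_2 \to \h_J$ preserve the $F_4^I$-isotypic decomposition $W_J = W_\Q \oplus (V_2 \otimes J^0)$ and act on the second factor diagonally through $\GL(V_2)$. In particular the Cartan subalgebra of $\mathfrak{gl}_2$, whose elements commute with $\f_4^I$, acts by scalars on $W_\Q$ and on $V_2 \otimes J^0$ in both descriptions, and matching these scalars is a routine check; by $\sl_2$-theory the remaining content is then to match the image of a single nonzero nilpotent $e \in \sl_2 \subseteq \mathfrak{gl}_2$.

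This last matching is the crux and the only genuine computation. On the second-dual-pair side, the action of a root vector $e$ of $\sl_2$ on $W_J$ is read off directly from the identification $(a,b,c,d) + f \otimes B + e \otimes C \mapsto (a,bI + B, cI - C, d)$ together with the displayed formulas $n_L(b)(0,B,0,0) = (0,0,-bB,0)$ and $n_L(b)(f \otimes B) = b\, e \otimes B$. On the third-dual-pair side, the same element $e \in \mathfrak{gl}_2 \subseteq \h_J = \g(J)_0$ acts on $W_J$ as $\ad(e)$ on $\g(J)_1$; to evaluate it one locates $e$ inside the explicit model of $\g(J)$ via the embedding $\g_2 \hookrightarrow \g(J)$ (equivalently, via $\g(J)^{F_4^I} = \g_2$) and computes the relevant brackets with the structure constants of \cite{pollackQDS}. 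Comparing the two outputs finishes the proof. I expect the main obstacle to be purely bookkeeping: reconciling the two explicit realizations --- the $W_\Q \oplus (V_2 \otimes J^0)$ picture on one side versus the model of $\g(J)$ together with the $\g_2$-embedding on the other --- and keeping the normalization of $\GL_2^s \cong \GL_2$ consistent across them. Alternatively, one may bypass the reduction to Lie algebras and verify the identity directly on a generating set of $\GL_2^s \times F_4^I$: on $F_4^I$, on the two opposite root subgroups of $\GL_2^s$, and on its center --- the content being the same.
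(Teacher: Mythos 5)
Your proposal follows the same route as the paper's proof: both treat the two $F_4^I$ factors as manifestly identical, reduce to the Lie algebra level, and verify that the differentials of the two $\mathfrak{gl}_2$-actions on $W_J$ agree via the explicit Lie bracket formulas of $\g(J)$ (the paper states this in two terse sentences; you supply the reduction via the commutant structure and $\sl_2$-theory, which is valid since a weight $-2$ vector in $\ker(\ad e)$ must vanish). One small inaccuracy: the Cartan of $\mathfrak{gl}_2$ does not act by scalars on $W_\Q$ (which is $\Sym^3 V_2$ up to a determinant twist, hence has four distinct Cartan eigenvalues), but as it still acts diagonally the "routine matching" of its action goes through unchanged.
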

\begin{proof} Indeed, it is clear that the $F_4^I$'s act exactly the same way.  As for $\GL_2^s$, we have two algebraic representations of $\GL_2^s$ on $W_J$.  By the formulas for the Lie bracket on $\g(J)$, it is easy to see that the differential of these representations of $\GL_2^s$ are identical.  Thus, they agree on the level of algebraic groups, as desired.\end{proof}

\section{Facts about $F_4^I$}\label{sec:F4}
We set down some notations and results we will need concerning the representations $V_{m \lambda_3}$ and algebraic modular forms on $F_4^I$.

\subsection{Special elements of representations of $F_4$}
Let $J^0 \subseteq J$ be the trace $0$ subspace.  Let $\iota: J \rightarrow J^\vee$ be the identification of the $J$ with its dual given by the trace pairing.  Recall that if $\gamma \in J^\vee$ and $x \in J$ then $\Phi_{\gamma,x} \in End(J)$ is defined as
\[\Phi_{\gamma,x}(z) = - \gamma \times (x \times z) + (\gamma,z)x + (\gamma,x)z.\]
If $X,Y \in J$ then one defines $\Phi_{X \wedge Y} = \Phi_{\iota(X),Y} - \Phi_{\iota(Y),X}$.  This defines a map $\wedge^2 J \rightarrow \a(J) = \f_4$. (Here $\a(J)=f_4$ is the Lie algebra of the subgroup of $M_J^1$ that preserves the trace pairing, or equivalently, fixes the element $I \in J$.) As $\Phi_{\iota(1),x} = \Phi_{\iota(x),1}$, this map factors through the projection $\wedge^2 J \rightarrow \wedge^2 J^0$.

Set
\[V_{\lambda_3} = \ker \{\wedge^2 J^0 \rightarrow \f_4\}.\]
One can construct special elements of $V_{\lambda_3}$ using the following two lemmas.
\begin{lemma}  Suppose $x \in J$ is rank one.  Suppose $z \in J$ and $\gamma = z \times x$.  Then $\Phi_{\gamma,x} = 0$.
\end{lemma}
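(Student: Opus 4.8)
The plan is to reduce this to the twice-linearized adjoint identity of the cubic norm structure together with the full symmetry of the trilinear form $(x,y,z) := (x \times y, z)$. First I would recall that linearizing the adjoint identity $(y^\#)^\# = n(y)\,y$ once, in a direction $u$ (and using $\partial_u n(y) = (y^\#,u)$), yields $y^\# \times (y \times u) = (y^\#,u)\,y + n(y)\,u$ for all $y,u \in J$; here the outer cross product is the one on $J^\vee$, so that $y^\# \times (y\times u) \in J^{\vee\vee} = J$. Linearizing this a second time, now in a direction $v$ with $u$ held fixed, gives
\[ (y\times v)\times(y\times u) + y^\# \times (v \times u) = (y\times v,u)\,y + (y^\#,u)\,v + (y^\#,v)\,u .\]

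Next I would specialize to $y = x$ with $x$ rank one, so that $x^\# = 0$ and hence $n(x) = \tfrac13(x^\#,x) = 0$ as well. The displayed identity then collapses to $(x\times v)\times(x\times u) = (x\times v,u)\,x$ for all $u,v \in J$. Taking $v = z$ and $u = w$, this reads $(z\times x)\times(x\times w) = (z\times x,w)\,x$. I would also observe that, by the total symmetry of the trilinear form (the same one with $(a,a,a) = 6n(a)$ recalled earlier in the paper), $(z\times x, x) = (x\times x, z) = 2(x^\#,z) = 0$.

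Finally I would substitute these two facts into the definition of $\Phi_{\gamma,x}$: for any $w \in J$,
\[ \Phi_{\gamma,x}(w) = -(z\times x)\times(x\times w) + (z\times x,w)\,x + (z\times x,x)\,w = -(z\times x,w)\,x + (z\times x,w)\,x + 0 = 0, \]
so $\Phi_{\gamma,x} = 0$. The only genuine work here is the bookkeeping in the double linearization — keeping track of which cross products land in $J$ versus $J^\vee$ — and this is the step most prone to sign errors; everything else is formal. If one prefers a hands-on check, the statement can alternatively be verified in the model $J = H_3(\Theta)$ using the normal form of a rank one element, but the identity-based argument is cleaner and coordinate-free.
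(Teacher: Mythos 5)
Your proof is correct and takes essentially the same approach as the paper: both arguments reduce to the identity $(x\times v_1)\times(x\times v_2) = (x,v_1,v_2)\,x$ for $x^\#=0$, obtained by linearizing the adjoint identity, together with the observation $(\gamma,x)=2(z,x^\#)=0$. The only cosmetic difference is that you derive the needed cross-product identity directly from $(y^\#)^\# = n(y)y$ by two successive linearizations, whereas the paper quotes the once-linearized identity $(u\times v)^\# + u^\#\times v^\# = (u,v^\#)u + (v,u^\#)v$ and then symmetrizes once more in $v$.
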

\begin{proof}
	Observe that $(\gamma,x) = (z \times x,x)= 2(z,x^\# = 0)$ so that
	\[\Phi_{\gamma,x}(z') = -(x \times z) \times (x \times z') + (x,z,z')x.\]	
	
	Keeping this expression in mind, we now recall the identity
	\[ (u \times v)^\# + u^\# \times v^\times = (u,v^\#)u + (v,u^\#)v\]
	valid for all $u,v$ in $J$.  If $u^\# = 0$, then symmetrizing this identity in $v$ gives
	\[ (u \times v_1) \times (u \times v_2) = (u,v_1,v_2) u.\]
	Consequently, taking $u = x$, $v_1 = z$ and $v_2 = z'$ we obtain
	\[(x \times z) \times (x \times z') = (x,z,z')x.\]
	This gives the lemma.
\end{proof}

\begin{lemma}  Suppose $x \in J$ and $\gamma \in J^\vee$ are such that $\Phi_{\gamma,x} = 0$.  Then $\Phi_{\iota(x),\iota(\gamma)} = 0$.
\end{lemma}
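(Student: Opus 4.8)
The plan is to deduce the lemma from the fact that, up to interchanging its two arguments, the operator $\Phi_{\gamma,x}$ is self-adjoint for the trace pairing, and then to transport this through $\iota$. Write $\langle\,,\,\rangle\colon J\times J^\vee\to\Q$ for the tautological pairing, so that $(\gamma,z)=\langle z,\gamma\rangle$. First I would read off from the defining formula that, for $z\in J$ and $\delta\in J^\vee$,
\[
\langle\Phi_{\gamma,x}(z),\delta\rangle \;=\; -\,\langle\gamma\times(x\times z),\,\delta\rangle \;+\; \langle z,\gamma\rangle\langle x,\delta\rangle \;+\; \langle x,\gamma\rangle\langle z,\delta\rangle .
\]
The last two terms are unchanged if $\Phi_{\gamma,x}$ on $J$ is replaced by the operator $\Phi_{x,\gamma}$ on $J^\vee$ given by the very same formula with the roles of $J$ and $J^\vee$ exchanged (i.e.\ $\delta\mapsto -\,x\times(\gamma\times\delta)+(x,\delta)\gamma+(x,\gamma)\delta$), and then transported back to $J$ via $\iota$. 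So the whole lemma comes down to the single identity
\[
\langle\gamma\times(x\times z),\,\delta\rangle \;=\; \langle z,\; x\times(\gamma\times\delta)\rangle\qquad(z\in J,\ \delta\in J^\vee),
\]
which says precisely that the transpose of $\Phi_{\gamma,x}$ for $\langle\,,\,\rangle$ equals $\Phi_{x,\gamma}$. (In Jordan-pair terms this is the associativity of the trace form of the pair $(J,J^\vee)$.) Granting the identity: since the identification $\iota\colon J\cong J^\vee$ afforded by the trace form intertwines the cross products and the trace pairings on $J$ and on $J^\vee$, the transport of $\Phi_{x,\gamma}$ from $J^\vee$ to $J$ is exactly $\Phi_{\iota(x),\iota(\gamma)}$; and $\Phi_{\gamma,x}=0$ forces its transpose, hence $\Phi_{x,\gamma}$, to vanish, hence $\Phi_{\iota(x),\iota(\gamma)}=0$.

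Next I would prove the displayed identity by polarization. Both sides are $4$-linear in $(\gamma,x,z,\delta)$; the left side is the value of the symmetric trilinear form of $J^\vee$ at $(\gamma,\,x\times z,\,\delta)$ and the right side is the value of the symmetric trilinear form of $J$ at $(z,\,x,\,\gamma\times\delta)$, so each side is symmetric under $x\leftrightarrow z$ and, separately, under $\gamma\leftrightarrow\delta$. Over a field of characteristic $0$ it therefore suffices to verify the identity when $z=x$ and $\delta=\gamma$. There, using $w\times w=2w^\#$ and the total symmetry of the trilinear forms, the left side is
\[
\langle\gamma\times(x\times x),\gamma\rangle \;=\; 2\langle\gamma\times x^\#,\gamma\rangle \;=\; 2\langle\gamma\times\gamma,\,x^\#\rangle \;=\; 4\langle\gamma^\#,x^\#\rangle ,
\]
and the same manipulation turns the right side $\langle x,\,x\times(\gamma\times\gamma)\rangle$ into $4\langle x^\#,\gamma^\#\rangle$; since $\langle\gamma^\#,x^\#\rangle=\langle x^\#,\gamma^\#\rangle$ the two agree, which proves the identity and hence the lemma.

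I expect the only genuine content to be that identity---a symmetry of the Freudenthal structure on $J$---with everything else being bookkeeping between $J$ and $J^\vee$ and through $\iota$; the fiddliest point to get right will be checking that the transported operator is literally $\Phi_{\iota(x),\iota(\gamma)}$ in the paper's conventions, and keeping track of the normalization constants relating $\times$ and $\#$. If one would rather not invoke the symmetry of the trilinear form of a cubic norm structure, the displayed identity can instead be extracted exactly as in the proof of the preceding lemma, by linearizing the adjoint identities $(u^\#)^\#=n(u)\,u$ and $(u\times v)^\#+u^\#\times v^\#=(u,v^\#)u+(v,u^\#)v$ in $u$ and $v$; that is a routine but somewhat lengthy computation, and is the step I would budget the most care for.
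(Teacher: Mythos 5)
Your proof is correct and coincides in substance with the paper's ``direct'' computation: the paper verifies $(\iota(y),\Phi_{\iota(x),\iota(\gamma)}(\iota(\mu)))=(\Phi_{\gamma,x}(y),\mu)$, which is exactly your statement that $\Phi_{\iota(x),\iota(\gamma)}$ is the $\iota$-transport of the transpose of $\Phi_{\gamma,x}$. Isolating the identity $\langle\gamma\times(x\times z),\delta\rangle=\langle z,x\times(\gamma\times\delta)\rangle$ and proving it by polarization (both sides symmetric in $(x,z)$ and in $(\gamma,\delta)$, so it suffices to check $z=x$, $\delta=\gamma$, where both give $4\langle\gamma^\#,x^\#\rangle$) is a clean, fully-justified version of the step the paper compresses into one line of equalities.

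The paper also gives a second, shorter conceptual proof you do not take, and it is worth knowing: the pair of conditions $(\gamma,x)=0$, $\Phi_{\gamma,x}=0$ (the first being a consequence of the second) is equivalent to the element $(0,x,\gamma,0)\in W_J$ having rank at most one, and there is an element $J_2\in H_J^1$ sending $(0,x,\gamma,0)\mapsto(0,-\iota(\gamma),\iota(x),0)$. Since $H_J^1$ preserves rank, the image again has rank $\leq 1$, which is exactly $\Phi_{\iota(x),\iota(\gamma)}=0$. That route requires no polarization and no $\iota$-bookkeeping; the price is knowing the rank-$\leq1$ criterion on $W_J$ and the action of $J_2$. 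Your approach buys self-containedness within the cubic norm structure and makes the ``self-adjointness'' of the $\Phi$-operators explicit, which is useful to have on record; the paper's Freudenthal argument buys brevity and makes clear that the lemma is really a statement about rank in $W_J$.
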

\begin{proof} The point is that one has $(x,\gamma) = 0$ and $\Phi_{\gamma,x} = 0$ (in fact, $\Phi_{\gamma,x} = 0 $ implies $(\gamma,x) = 0$) if and only if $(0,x,\gamma,0)$ is rank at most one in $W_J$.  But $J_2(0,x,\gamma,0) = (0,-\iota(\gamma),\iota(x),0)$ so the lemma follows.
	
	Of course, one can also give a more direct proof:  One has
	\begin{align*} (\iota(y),\Phi_{\iota(x),\iota(\gamma)}(\iota(\mu))) &= (\iota(y),-\iota(x) \times (\iota(\gamma) \times \iota(\mu)) + (x,\mu)\iota(\gamma) + (x,\gamma) \iota(\mu)) \\ &= (- \gamma \times (x \times y),\mu) + ((y,\gamma)x,\mu) + ((x,\gamma)y,\mu) \\ &= (\Phi_{\gamma,x}(y),\mu).
	\end{align*}
	Thus if $\Phi_{\gamma,x} = 0$, then $\Phi_{\iota(x),\iota(\gamma)} = 0$.
\end{proof}

We now write down some specific elements $x,y \in J^0$ with the following property:
\begin{itemize}
	\item The four dimensional space
	 \[E_{x,y}:=\{(0,\alpha_1 x + \alpha_2 y, \alpha_3\iota(x) + \alpha_4 \iota(y),0): \alpha_i \in F\}\]
	 is isotropic and singular in the sense that symplectic form on $W_J$ restricted to $E_{x,y}$ is $0$ and every element of $E_{x,y}$ has rank at most one.
\end{itemize}
Note that $E_{x,y}$ is isotropic and singular is equivalent to:
\begin{itemize}
	\item $(x,x) = (x,y) = (y,y) = 0$
	\item $x,y$ are rank at most one, and $x \times y = 0$
	\item $\Phi_{\iota(x),x} = 0$, $\Phi_{\iota(x),y} = 0$, $\Phi_{\iota(y),x} = 0$ and $\Phi_{\iota(y),y} = 0$.
\end{itemize}
If $x,y$ satisfy the above properties, then $x \wedge y \in V_{\lambda_3}$ is a highest weight vector for some Borel.  Indeed, one can show that the span of $x,y$ is ``amber", in the sense of Aschbacher \cite[9.3-9.5]{aschbacherI}, see also \cite[Definition 7.2, Proposition 7.3, Proposition 7.4(1)]{magaardSavin}.  Consequently, such $x,y$ will allow us to construct explicit elements of $V_{m \lambda_3}.$

\begin{lemma} Suppose $x \in J^0$ is rank one, and that $z \in J$ is such that $(z,x) = 0$ and $(x,z^\#) = 0$.  Set $y = \iota(z \times x)$.  Then $x,y \in J^0$ and $E_{x,y}$ is isotropic and singular.
\end{lemma}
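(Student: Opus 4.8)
The goal is to verify that for $x \in J^0$ rank one with $(z,x) = 0$ and $(x,z^\#) = 0$, setting $y = \iota(z \times x)$, the pair $x,y$ satisfies the three bulleted conditions characterizing $E_{x,y}$ isotropic and singular. First I would record the basic trace-zero and rank conditions: $x \in J^0$ by hypothesis, and $y = \iota(z \times x) \in J^0$ follows from $(I, z \times x) = (I \times z, x)$ (or directly $2(z,x) = 0$ after using $I^\# = I$ and the standard identity $I \times w = \tr(w) I - w$, whence $(I, z\times x) = \tr(z) \cdot 0 - \ldots$), in any case reducing to $(z,x)=0$. Rank one of $x$ is given. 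For $y$: since $x$ has rank one, $x^\# = 0$, and the preceding lemma (``$z \in J$, $\gamma = z \times x$, $x$ rank one $\Rightarrow \Phi_{\gamma,x} = 0$'') shows $(z \times x, x) = 0$ and that $z \times x$ behaves like a rank $\leq 1$ element paired against $x$; combined with the identity $(u \times v)^\# + u^\# \times v^\# = (u, v^\#) u + (v, u^\#) v$ specialized to $u = x$ (so $x^\# = 0$), one gets $(z \times x)^\# = \ldots$ which vanishes precisely when $(x, z^\#) = 0$ and $(z, x) = 0$ — this is where the second hypothesis $(x,z^\#)=0$ is used. Thus $y$ is rank at most one.

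Next I would check $x \times y = 0$ and the pairing conditions $(x,x) = (x,y) = (y,y) = 0$. Here $(x,x) = 2(x,x^\#)/\ldots$ vanishes because $x^\# = 0$ (more simply, for a rank one element the trace pairing with itself against $\iota(x)$ is controlled by $x^\#$). For $(x,y) = (x, z \times x) = 2(x^\#, z) = 0$. For $(y,y)$: $(z \times x, z \times x) = 2((z \times x)^\#, \text{something})$, reduced to the vanishing of $(z\times x)^\#$ established above. For $x \times y = x \times (z \times x)$: apply the symmetrized form of the identity $(u \times v_1) \times (u \times v_2) = (u, v_1, v_2) u$ with $u = x$ (valid since $x^\# = 0$) — wait, that gives $(x \times z) \times (x \times z')$, not $x \times (z \times x)$; instead I would use that $x$ rank one means $x = e_{11}$-like in suitable coordinates and $z \times x$ lies in the ``off-diagonal relative to $x$'' space, so the cross product $x \times (z\times x)$ vanishes by a direct coordinate computation, or better, use the linearization of $w^\# $ and the fact that $(0, x, \iota(z\times x), 0)$ should be forced to lie in a singular isotropic subspace. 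Actually the cleanest route: $x$ rank one and $y$ rank one with $(x,y) = 0$ means $x + ty$ has $(x+ty)^\# = x^\# + t(x\times y) + t^2 y^\# = t (x\times y)$; to see this is $0$ for all $t$, I would show $x + ty$ remains rank $\leq 1$ — equivalently show directly $x \times y = 0$ by the coordinate argument using the explicit shape of $z \times x$ when $x$ is a rank one idempotent multiple.

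Finally, the $\Phi$-conditions. $\Phi_{\iota(x),x} = 0$ is immediate from the first preceding lemma with $z = x$ (or its symmetrization), since $x$ is rank one. $\Phi_{\iota(y),x} = \Phi_{\iota(z\times x), x} = 0$ is \emph{exactly} the statement of the first preceding lemma ($\gamma = z \times x$, $x$ rank one). By the second preceding lemma ($\Phi_{\gamma,x} = 0 \Rightarrow \Phi_{\iota(x),\iota(\gamma)} = 0$) applied to $\gamma = z \times x$, we get $\Phi_{\iota(x), y} = \Phi_{\iota(x), \iota(z\times x)} = 0$. The remaining one, $\Phi_{\iota(y),y} = 0$: since $y$ is rank at most one, if $y \neq 0$ it is rank one and we again invoke the first preceding lemma with the roles of $x$ replaced by $y$; if $y = 0$ it is trivial. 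Assembling these, all three bulleted conditions hold, so $E_{x,y}$ is isotropic and singular.

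\textbf{Main obstacle.} The delicate point is the rank-$\leq 1$ claim for $y = \iota(z \times x)$ and the vanishing $x \times y = 0$: these are the only places where the hypothesis $(x, z^\#) = 0$ (beyond $(z,x) = 0$) genuinely enters, and they require careful use of the adjoint identity $(u\times v)^\# + u^\# \times v^\# = (u,v^\#)u + (v,u^\#)v$ together with its polarizations. The $\Phi$-vanishings, by contrast, follow formally from the two preceding lemmas. I expect the proof to reduce everything to these two lemmas plus one Freudenthal identity, with the rank computation for $z \times x$ being the one genuinely computational step.
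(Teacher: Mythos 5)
Your reduction to the three bulleted conditions, the identification that $y \in J^0$ reduces to $(z,x) = 0$, that rank $\leq 1$ for $y$ reduces to $y^\# = (x \times z)^\# = (x, z^\#)x = 0$ via the adjoint identity with $x^\# = 0$, and the deduction of $\Phi_{\iota(y),x} = 0$ and $\Phi_{\iota(x),y} = 0$ from the two preceding lemmas are all in line with the paper.

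However, there are two genuine gaps. The crucial one is $x \times y = 0$: you explicitly concede you do not have a working argument (your first attempt via $(x+ty)^\#$ is circular, as you note, and ``a coordinate argument'' is a placeholder, not a proof). The paper closes this with a short chain of identities: for any $z' \in J$,
\[
(z', x \times y) = (z' \times x, y) = (z' \times x, -1 \times y) = ((z \times x) \times (z' \times x), -1) = ((z,z',x)\,x, -1) = 0,
\]
where the second equality uses $1 \times y = -y$ (valid since $y \in J^0$ is rank $\leq 1$), the fourth uses the symmetrized identity $(x\times z)\times (x\times z') = (x,z,z')x$ for $x^\# = 0$, and the last uses $(x,1) = 0$. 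You already had every ingredient here (the symmetrized identity, $1 \times v = -v$ for trace-zero rank-one $v$), so this is a gap in assembly rather than in toolkit, but as written it is a real hole in the proof.

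The second, smaller issue: to obtain $\Phi_{\iota(x),x} = 0$ you say to apply the first lemma ``with $z = x$'', but $z = x$ gives $\gamma = x \times x = 2x^\# = 0$, which yields only the vacuous $\Phi_{0,x} = 0$. The correct choice is $z = 1$: for $x \in J^0$ rank one, $1 \times x = (1,x)\mathbf{1} - x = -x$, so $\Phi_{-x,x} = 0$, whence $\Phi_{\iota(x),x} = 0$. The same remark applies to your derivation of $\Phi_{\iota(y),y} = 0$ (once $y$ is known to lie in $J^0$ and be of rank $\leq 1$).

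Finally, a stylistic note: the paper obtains $(x,x) = (x,y) = (y,y) = 0$ ``for free'' from the $\Phi$-vanishings (since $\Phi_{\gamma,x} = 0$ forces $(\gamma,x) = 0$), whereas you check them directly; both routes are fine, though your direct computations for $(x,x)$ and $(y,y)$ are not fully spelled out.
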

\begin{proof}
	First suppose $x \in J^0$ is rank one and $z \in J$ is arbitrary.  Then if $y = \iota(z \times x)$, then $\Phi_{\iota(y),x} = 0$ and $\Phi_{\iota(x),y} = 0$.
	
	Moreover, observe that if $v \in J^0$ is rank one then $1 \times v = (1,v)1-v = -v$, so that $\Phi_{\iota(v),v} = 0$.  Thus $\Phi_{\iota(x),x} = 0$, and if we arrange that $y \in J^0$ is rank one, then $\Phi_{\iota(y),y} = 0$.
	
	To ensure that $y \in J^0$ we use $(z,x) = 0$.  Indeed,
	\[ (y,1) = (z \times x,1) = (z, x\times 1) = (z,(1,x)1-x) = -(z,x).\]  To ensure $y$ has rank at most one, we use $(x,z^\#) = 0$.  Indeed,
	\[y^\# = (x \times z)^\# = (x,z^\#)x.\]
	
	Because $\Phi_{\iota(x),x} = 0$ and $\Phi_{\iota(y),x}= 0$ and $\Phi_{\iota(y),y}=0$, we get for free that $(x,x) = (x,y) = (y,y) = 0$.  (Of course, one can also check this directly.)
	
	To complete the proof, we must verify that $x \times y = 0$.  For this, we compute
	\begin{align*} (z', x\times y) &= (z' \times x,y) \\ &= (z'\times x,-1 \times y) \\ &= ((z \times x) \times (z' \times x), -1) \\ &= ((z,z',x)x, -1) = 0.\end{align*}
\end{proof}

\begin{example} As an example of such $x,y,z \in J \otimes \C$ we can take:
\begin{itemize}
	\item $x = \left(\begin{array}{ccc} 1 & a_3 & a_2^* \\ a_3^* & -1 & a_3^* a_2^* \\ a_2 & a_2 a_3 & 0 \end{array}\right)$ with $n(a_2) = 0$, $a_2 \neq 0$, and $n(a_3) = -1$.
	\item $z = \left(\begin{array}{ccc} 0 & 0 & (a_2')^* \\ 0 & 1 & 0 \\ a_2' & 0 & 0 \end{array}\right)$ with $n(a_2') = 1$ and $(a_2',a_2) = 1$.  Note that $z^\# = -z$ and $(z,x) = 0$, so that $(z^\#,x) = 0$ as well.
	\item Then $y = z \times x = \left(\begin{array}{ccc} 0 & (a_2')^*(a_2 a_3) & * \\ * & -1 & a_3^*(a_2')^* \\ a_2'-a_2 & * & 1 \end{array}\right)$.
\end{itemize}
We will use this example to prove Corollaries \ref{cor:introG2FC} and \ref{cor:Shimura}.
\end{example}

\subsection{Algebraic modular forms}  Suppose $V$ is a representation of $F_4^I(\R)$.  By an algebraic modular form for $F_4^I$, we mean an automorphic form $\alpha:F_4^I(\Q)\backslash F_4^I(\A) \rightarrow V$ satisfying $\alpha(gk) = k^{-1} \cdot \alpha(g)$ for all $g \in F_4^I(\A)$ and $k \in F_4^I(\R)$.  If $\alpha$ has level one, then because the double coset $F_4^I(\Q)\backslash F_4^I(\A_f)/F_4^I(\widehat{\Z})$ has size two, such $\alpha$ can be described by two elements of $V$.  In this subsection, we make this identification explicit.

Recall the elements $I, E \in J_R$ of norm $1$, see \cite{elkiesGrossIMRN}.  Define $F_4^I$ to be the stabilizer of $I$ in $M_J^1 \simeq E_{6}$ and $F_4^E$ to be the stabilizer of $E$ in $M_J^1$.  From the point of view of double cosets, the element $E \in J_R$ arises as follows.  Let $\{1,\gamma_E\}$ be representatives for $F_4^I(\Q)\backslash F_4^I(\A_f) /F_4^I(\widehat{\Z})$.  Using strong approximation on $M_J^1$, we can write $\gamma_E = \delta_E^\Q (\delta_E^\R)^{-1} \delta_E^{\widehat{\Z}}$ with $\delta_E^\Q \in M_J^1(\Q)$ etc.  We can choose $\gamma_E$ and $\delta_E^{?}$ so that $E = (\delta_E^{\Q})^{-1} \cdot I$.  Indeed, observe that
\begin{enumerate}
	\item $E = (\delta_E^\Q)^{-1} I \in J_R \otimes \Q$
	\item $(\delta_E^{\Q})^{-1} = (\delta_E^{\R})^{-1} \delta_E^{\widehat{\Z}} \gamma_E^{-1}$ so that $(\delta_E^\Q)^{-1} I$ has finite part in $J_R \otimes \widehat{\Z}$.
\end{enumerate}
We have $F_4^E = (\delta_E^{\Q})^{-1} F_4^I \delta_E^\Q$.  If $V$ is a representation of $F_4^I(\R)$, we let $F_4^E(\R)$ act on $V$ via $g \cdot_E v = (\delta_E^{\R}g(\delta_E^{\R})^{-1}) v$.

We make the following notations:
\begin{itemize}
	\item Let $\Gamma_I$ be the image of $F_4^I(\Q) \cap (F_4^I(\widehat{\Z}) F_4^I(\R))$ in $F_4^I(\R)$.  Thus
	\[\Gamma_I = (F_4^I(\Q) F_4^I(\widehat{\Z})) \cap F_4^I(\R).\]
	\item Let $\Gamma_E$ be the image of $F_4^E(\Q) \cap (F_4^E(\widehat{\Z})F_4^E(\R))$ in $F_4^E(\R)$.  Thus
	\[\Gamma_E = (F_4^E(\Q) F_4^E(\widehat{\Z})) \cap F_4^E(\R).\]
	\item Let $\Gamma_{\gamma_E}$ be the image in $F_4^I(\R)$ of $\gamma_E^{-1} F_4^I(\Q)\gamma_E \cap (F_4^I(\widehat{\Z}) F_4^I(\R))$.  Thus
	\[\Gamma_{\gamma_E} = ((\gamma_E^{-1} F_4^I(\Q)\gamma_E)F_4^I(\widehat{\Z})) \cap F_4^I(\R).\]
\end{itemize}

\begin{lemma} One has $\delta_E^\R \Gamma_E \delta_E^{\R,-1} = \Gamma_{\gamma_E}.$\end{lemma}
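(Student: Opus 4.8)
The plan is to unwind the definitions of the two groups $\Gamma_E$ and $\Gamma_{\gamma_E}$ and exploit the relations $F_4^E = (\delta_E^{\Q})^{-1} F_4^I \delta_E^{\Q}$, together with the decomposition $\gamma_E = \delta_E^{\Q}(\delta_E^{\R})^{-1}\delta_E^{\widehat{\Z}}$. First I would record that conjugation by $\delta_E^{\R}$ is an isomorphism of $F_4^E(\R)$ onto $F_4^I(\R)$, since $\delta_E^{\R} F_4^E(\R)(\delta_E^{\R})^{-1} = \delta_E^{\R}(\delta_E^{\Q})^{-1} F_4^I(\R) \delta_E^{\Q}(\delta_E^{\R})^{-1}$ and $\delta_E^{\Q}(\delta_E^{\R})^{-1} = \gamma_E (\delta_E^{\widehat{\Z}})^{-1}$, whose archimedean component is trivial (the $\widehat{\Z}$ and $\A_f$ parts act trivially on the real group). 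So at least the statement typechecks: $\delta_E^{\R}\Gamma_E\delta_E^{\R,-1}$ is a subgroup of $F_4^I(\R)$, as is $\Gamma_{\gamma_E}$.

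Next I would chase an element through both sides. Take $g \in \Gamma_E$, so $g$ is (the image in $F_4^E(\R)$ of) some $q \in F_4^E(\Q)$ that also lies in $F_4^E(\widehat{\Z})F_4^E(\R)$; write $q = q_{\widehat{\Z}} q_{\R}$ accordingly, with $q_{\widehat{\Z}}\in F_4^E(\widehat{\Z})$ and $q_\R \in F_4^E(\R)$, and $g = q_\R$ viewed in $F_4^E(\R)$. Conjugating by $\delta_E^{\Q} = (\delta_E^{\R})(\delta_E^{\widehat{\Z}})^{-1}\gamma_E^{-1}$ — which carries $F_4^E$ to $F_4^I$ over $\Q$, hence carries $q$ to an element $\delta_E^{\Q} q (\delta_E^{\Q})^{-1} \in F_4^I(\Q)$ — one then rewrites $\delta_E^{\R}g\delta_E^{\R,-1} = \delta_E^{\R} q_\R \delta_E^{\R,-1}$ and compares with $\gamma_E^{-1}(\delta_E^{\Q} q(\delta_E^{\Q})^{-1})\gamma_E$ evaluated at the archimedean place, using that the finite components $\delta_E^{\widehat{\Z}}$, $q_{\widehat{\Z}}$, $\gamma_E$ all act trivially on $F_4^I(\R)$. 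The upshot is that $\delta_E^{\R}\Gamma_E\delta_E^{\R,-1}$ consists exactly of the archimedean components of elements of $\gamma_E^{-1}F_4^I(\Q)\gamma_E$ that also lie in $F_4^I(\widehat{\Z})F_4^I(\R)$ — which is precisely $\Gamma_{\gamma_E}$. One also checks the reverse inclusion the same way, since every manipulation is reversible (conjugation by $\delta_E^{\Q}$ and by $\delta_E^{\R}$ are bijections).

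The one subtlety, which I expect to be the main (though mild) obstacle, is bookkeeping the integral structures correctly: one must check that conjugation by $\gamma_E$ (resp. by $\delta_E^{\widehat{\Z}}$) carries $F_4^I(\widehat{\Z})$ to $F_4^I(\widehat{\Z})$ — or rather, carries the relevant lattice-stabilizer condition defining $\Gamma_E$ to that defining $\Gamma_{\gamma_E}$ — so that the intersection conditions $\cap (F_4^E(\widehat{\Z})F_4^E(\R))$ and $\cap (F_4^I(\widehat{\Z})F_4^I(\R))$ really do match up. This follows from the defining property $E = (\delta_E^{\Q})^{-1}\cdot I$ together with the fact that $(\delta_E^{\Q})^{-1}$ has finite part in $M_J^1(\widehat{\Z})$ preserving $J_R\otimes\widehat{\Z}$ (points (1) and (2) recalled just before the lemma): this makes $\delta_E^{\widehat{\Z}}$, and hence $\gamma_E$, transport the integral structure on $F_4^E$ to that on $F_4^I$. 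Once this compatibility is in hand, the proof is the purely formal double-coset computation sketched above, and I would present it as a short display-free paragraph tracking one element through the equality of subgroups, invoking the reversibility of each step for the two-sided inclusion.
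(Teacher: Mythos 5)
Your argument is essentially the paper's: both chase an element of $\Gamma_E$ through the strong-approximation factorization $\gamma_E = \delta_E^\Q(\delta_E^\R)^{-1}\delta_E^{\widehat{\Z}}$, equivalently $\delta_E^\R = \delta_E^{\widehat{\Z}}\gamma_E^{-1}\delta_E^\Q$, comparing archimedean and finite components, and the paper's one-line proof first replaces $F_4^?(\widehat{\Z})$ by $M_J^1(\widehat{\Z})$ in the definitions of $\Gamma_E$ and $\Gamma_{\gamma_E}$, which is exactly what dissolves the bookkeeping you flag as ``the one subtlety.''

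A few slips in your side remarks are worth correcting, though none break the core chase. From $\gamma_E = \delta_E^\Q(\delta_E^\R)^{-1}\delta_E^{\widehat{\Z}}$ one gets $\delta_E^\Q = \gamma_E(\delta_E^{\widehat{\Z}})^{-1}\delta_E^\R$, not $\delta_E^\R(\delta_E^{\widehat{\Z}})^{-1}\gamma_E^{-1}$: the archimedean factor $\delta_E^\R$ commutes with the two finite ones, but $\gamma_E$ and $\delta_E^{\widehat{\Z}}$ need not commute with each other. Also, neither $\gamma_E$ nor $\delta_E^{\widehat{\Z}}$ conjugates $F_4^I(\widehat{\Z})$ into itself in general ($\gamma_E$ represents the nontrivial double coset; $\delta_E^{\widehat{\Z}}$ lies in $M_J^1(\widehat{\Z})$, not in $F_4^I$), and $(\delta_E^\Q)^{-1}$ does not have finite part in $M_J^1(\widehat{\Z})$ --- its finite part is $\delta_E^{\widehat{\Z}}\gamma_E^{-1}$, which is why the paper's points (1)--(2) state only that $(\delta_E^\Q)^{-1}I = \delta_E^{\widehat{\Z}}\gamma_E^{-1}I = \delta_E^{\widehat{\Z}}I$ lands in $J_R\otimes\widehat{\Z}$. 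What you actually need at a finite place $p$ is the collapse $\gamma_{E,p}^{-1}\,\delta_E^\Q q(\delta_E^\Q)^{-1}\,\gamma_{E,p} = (\delta_{E,p}^{\widehat{\Z}})^{-1} q\,\delta_{E,p}^{\widehat{\Z}}$, which lies in $M_J^1(\Z_p)$ because $q$ and $\delta_E^{\widehat{\Z}}$ do; intersecting with $F_4^I(\Q_p)$ then lands you in $F_4^I(\Z_p)$. With those corrections the element chase and its reverse close as you describe.
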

\begin{proof} Observe
	\[\Gamma_E = (F_4^E(\Q) M_J^1(\widehat{\Z})) \cap F_4^E(\R)\]
	and
	\[\Gamma_{\gamma_E} = ((\gamma_E^{-1} F_4^I(\Q)\gamma_E)M_J^1(\widehat{\Z})) \cap F_4^I(\R).\]
	Using that $\delta_E^\R = \delta_E^{\widehat{\Z}} \gamma_E^{-1} \delta_E^{\Q}$, the lemma follows.
\end{proof}

Suppose $V$ is a representation of $F_4^I(\R)$ and
\[\alpha: F_4^I(\Q)\backslash F_4^I(\A) \rightarrow V\]
satisfies $\alpha(gk_f k_\R) = k_\R^{-1} \alpha(g)$ for all $g \in F_4^I(\A)$, $k_f \in F_4^I(\widehat{\Z})$ and $k_\R \in F_4^I(\R)$.  Then $\alpha$ is determined by its values at $g=1$ and $g = \gamma_E$.  Moreover, because $F_4(\widehat{\Z})$ acts freely on $F_4^I(\Q)\backslash F_4^I(\A)$, we have
\[F_4^I(\Q) \backslash F_4^I(\A) = \left(\Gamma_I\backslash F_4^I(\R)\right) \cdot F_4^I(\widehat{\Z}) \bigsqcup \left(\Gamma_{\gamma_E}\backslash F_4^I(\R)\right) \cdot F_4^I(\widehat{\Z}).\]
Note that the measures of the two open sets are in the proportion $\frac{1}{|\Gamma_I|} : \frac{1}{|\Gamma_E|}$.  Consequently, if $V = V_{m \lambda_3}$, one has
\[\int_{[F_4^I]} \{D^{2m}\Theta(g,h),\alpha(h)\}\,dh = \frac{1}{|\Gamma_I|} \{D^{2m}\Theta(g,1),\alpha(1)\} + \frac{1}{|\Gamma_E|} \{D^{2m}\Theta(g,\gamma_E),\alpha(\gamma_E)\}.\]

Set $\alpha_I = \alpha(1)$.  Consider $V_{m \lambda_3} \subseteq (\wedge^2 J^0)^{\otimes m} \subseteq (\wedge^2 J)^{\otimes m}$, and let $\alpha_E = \delta_E^{\Q,-1}\alpha(\gamma_E)$. Observe the following lemma:
\begin{lemma}\label{lem:Epair} If $b,x \in J$ then $(\delta_E^{\Q}b,x)_I = (b,\delta_E^{\Q,-1} x)_E$.
\end{lemma}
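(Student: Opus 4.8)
The plan is to deduce Lemma \ref{lem:Epair} formally from two facts already in place: that $\delta_E^\Q$ lies in $M_J^1(\Q)$, and that the representative has been chosen so that $\delta_E^\Q E = I$. First I would recall that $M_J^1$ consists, by definition, of the linear automorphisms of $J$ preserving the cubic norm $n$ exactly (not merely up to scaling); since the symmetric trilinear form $(\cdot,\cdot,\cdot)$ is obtained by polarizing $6n$, every $g \in M_J^1$ satisfies $(gu,gv,gw) = (u,v,w)$ for all $u,v,w \in J$.

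Next I would package the two pairings uniformly: for a norm-one element $N \in J$, set
\[(u,v)_N = \frac14 (N,N,u)(N,N,v) - (N,u,v),\]
which is the formula recalled in the introduction for $N = E$ and, with the identical normalization, for $N = I$. Combining this with the previous step yields the equivariance
\[(gu,gv)_{gN} = (u,v)_N \qquad (g \in M_J^1,\ u,v \in J),\]
read off by substituting into the displayed formula and using $(gN,gN,gu) = (N,N,u)$ together with $(gN,gu,gv) = (N,u,v)$.

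Finally I would apply this with $g = \delta_E^\Q$, $N = E$ (so that $gN = \delta_E^\Q E = I$), $u = b$ and $v = \delta_E^{\Q,-1} x$, which gives exactly $(\delta_E^\Q b, x)_I = (b, \delta_E^{\Q,-1} x)_E$. I do not expect any real obstacle here: the content is entirely formal once one knows that $\delta_E^\Q \in M_J^1$ preserves $(\cdot,\cdot,\cdot)$ and carries $E$ to $I$. The only points worth a second look are that $(\,,\,)_I$ really is the $N = I$ instance of the displayed formula — so that the two pairings differ only in their base point — and that the chosen $\delta_E^\Q$ satisfies $\delta_E^\Q E = I$ on the nose; both are guaranteed by the constructions recalled above.
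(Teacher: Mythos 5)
Your proof is correct and takes essentially the same approach as the paper: both rest on the fact that $\delta_E^\Q \in M_J^1(\Q)$ preserves the symmetric trilinear form and that $\delta_E^\Q E = I$, applied term by term to the formula $(u,v)_N = \tfrac14(N,N,u)(N,N,v) - (N,u,v)$. The paper simply writes out the three terms directly rather than abstracting the equivariance $(gu,gv)_{gN} = (u,v)_N$, but the computation is the same.
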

\begin{proof} One has
	\begin{align*} (\delta_E^{\Q}b,x)_I &= \frac{1}{4}(I,I,\delta_E^{\Q}b)(I,I,x) - (I,\delta_E^{\Q}b,x) \\ &= \frac{1}{4}(E,E,b)(E,E,\delta_E^{\Q,-1}x) - (E,b,\delta_E^{\Q,-1}x) \\ &= (b,\delta_E^{\Q,-1}x)_E.
	\end{align*}
\end{proof}
If one writes $J_E^0$ to be the perpendicular space to $E$ under the pairing $(\,,\,)_E$, then $\alpha_E \in (\wedge^2 J_E^0)^{\otimes m}$.  This follows from the lemma with $b=E$.  Moreover, observe that, for the action of $M_J^1$ on $(\wedge^2 J)^{\otimes m}$, $\alpha_E$ is stabilized by the action of $\Gamma_E$.  Thus, we can think of our algebraic modular form as being the pair
\[(\alpha_I,\alpha_E) \in [(\wedge^2 J)^{\otimes m}]^{\Gamma_I} \oplus [(\wedge^2 J)^{\otimes m}]^{\Gamma_E}.\]

\section{Theorems on Siegel modular forms} In this section, we give the proof of Theorem \ref{thm:introSMF} and its corollaries, Corollary \ref{cor:introCor1} and Corollary \ref{cor:alg}.  We do this assuming Theorem \ref{thm:DiffExp} stated below, which is the main technical ingredient in the proof of Theorem \ref{thm:introSMF}.  Theorem \ref{thm:DiffExp} will be proved in section \ref{sec:expder}.

To setup the statement of Theorem \ref{thm:DiffExp}, suppose $V$ is a $K_{E_7}$ representation, and $\varphi: H_J^1(\R) \rightarrow V$ is a function satisfying $\varphi(gk) = k^{-1} \cdot \varphi(g)$.  In this scenario, let $D\varphi$ be the $V \otimes \p_{J}^{+,\vee}$-valued function defined as 
\[D\varphi = \sum_{\alpha}{X_{\alpha}\varphi \otimes X_{\alpha}^\vee}\]
where $\{X_{\alpha}\}_\alpha$ is a $\C$-basis of $\p_J^+$.  It is easily checked that $D\varphi$ is again $K_{E_7}$-equivariant.  Recall also that $j(g,Z): H_J^1(\R) \times \mathcal{H}_J \rightarrow \C^\times$ is the factor of automorphy defined in section \ref{sec:groups}.  Let $\rho_{[k_1,k_2]}$ be the representation of $\GL_3(\C)$ on $S^{k_1}(V_3) \otimes S^{k_2}(\wedge^2 V_3)$.

\begin{theorem}\label{thm:DiffExp} Suppose $g \in \Sp_{6}(\R)$ and $\beta \in W(k_1,k_2)$. Let $\ell \geq 0$ be an integer.  There is a nonzero constant $B_{k_1,k_2}$, independent of $g$ and $T$, so that 
	\[j(g,i)^{\ell} \rho_{[k_1,k_2]}(J(g,i))\{D^{k_1+2k_2}(j(g,i)^{-\ell} e^{2\pi i (T, g\cdot i)}),\beta\} = B_{k_1,k_2} \{P_{k_1,k_2}(T),\beta\} e^{2\pi i (pr(T),g\cdot i)}.\]
Moreover, $\{P_{k_1,k_2}(T),\beta\}$ lies in the highest weight submodule $S^{[k_1,k_2]}$ of $S^{k_1}(V_3) \otimes S^{k_2}(\wedge^2 V_3)$.
\end{theorem}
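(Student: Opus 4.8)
The plan is to transport $D$ to a classical differential operator on the tube domain $\mathcal{H}_J$ via the Cayley transform, to compute its effect on the exponential automorphic function term by term, and then to check that after restriction to $\Sp_6(\R)$ and pairing with $\beta$ only the leading term survives.

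First I would set $\varphi_{\ell,T}(g)=j(g,i)^{-\ell}e^{2\pi i(T,g\cdot i)}$, the automorphic function attached to the weight-$\ell$ function $Z\mapsto e^{2\pi i(T,Z)}$ on $\mathcal{H}_J$. Using $C_h^{-1}n_L(J\otimes\C)C_h=\p_J^+$ and $C_h^{-1}n_L^\vee(J\otimes\C)C_h=\p_J^-$ together with Lemma \ref{lem:Chef} and the identity $g_Zr_1(i)=n(Y)^{-1/2}r_1(Z)$, one checks that a basis vector $X_\alpha\in\p_J^+$ acts on $\varphi_{\ell,T}$ by differentiation along the translation $Z\mapsto Z+2X$ of $\mathcal{H}_J$, where $X\in J\otimes\C$ is determined by $X_\alpha=C_h^{-1}n_L(X)C_h$. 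Hence $X_\alpha\varphi_{\ell,T}$ is the sum of a ``holomorphic derivative'' term, a nonzero multiple of $(T,X)\,j(g,i)^{-\ell}e^{2\pi i(T,g\cdot i)}$ carried by the appropriate new automorphy factor, and a ``weight correction'' term proportional to $\ell$ coming from differentiating $j(g,i)^{-\ell}$ and the nonlinear part of $Z(t)=\exp(tX_\alpha)\cdot i$; the latter is a combination of the functional $X\mapsto\tr(X)$ on $J$ and of $X\mapsto\nabla_Zj(g,i)\cdot X$. Iterating, $D^{k_1+2k_2}\varphi_{\ell,T}$ is a sum of terms, each of the shape (polynomial in $T$ of degree $\le k_1+2k_2$)$\cdot$(automorphy factor, possibly carrying $\overline{j(g,i)}$-type factors)$\cdot\,e^{2\pi i(T,g\cdot i)}$, valued in $(\p_J^{+,\vee})^{\otimes(k_1+2k_2)}$.

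Next I would restrict $g$ to $\Sp_6(\R)$ and use the $K_{\Sp_6}$-decomposition $\p_J^{+,\vee}\simeq\p_J^-=P_{\mathrm{sym}}\oplus P_{\mathrm{vec}}$, where $P_{\mathrm{sym}}$ is the $\Sp_6$-part and $P_{\mathrm{vec}}\cong V_7\otimes V_3$ with projection $J^\vee\simeq J\to V_3\otimes V_7$ equal to the map $\mathcal{P}$ of the introduction; this decomposition and the $\GL_3$-action on $P_{\mathrm{vec}}$ are exactly those computed in the proposition of Section \ref{sec:groups} on the action of the maximal compact. Since $\beta\in W(k_1,k_2)\subseteq V_7^{\otimes k_1}\otimes(\wedge^2V_7)^{\otimes k_2}$ has total $V_7$-degree $k_1+2k_2$, the $G_2$-equivariant pairing $\{\,\cdot\,,\beta\}$ annihilates every summand of $(\p_J^{+,\vee})^{\otimes(k_1+2k_2)}$ except $P_{\mathrm{vec}}^{\otimes(k_1+2k_2)}$. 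The crucial point is that on $\Sp_6(\R)$ the weight corrections do not contribute to this $P_{\mathrm{vec}}$-component: the $\tr$-part vanishes because the octonion directions have trace $0$, and the $\nabla_Zj(g,i)$-part vanishes because for $g\in\Sp_6(\R)$ the automorphy factor $j(g,Z)$ depends only on the $\Sp_6$-part of $Z$, so $\nabla_Zj(g,i)$ is supported on $P_{\mathrm{sym}}$. Granting this, the $P_{\mathrm{vec}}^{\otimes(k_1+2k_2)}$-component of $D^{k_1+2k_2}\varphi_{\ell,T}|_{\Sp_6(\R)}$ equals a nonzero constant $B_{k_1,k_2}$ — a product of the normalizing constants $2\sqrt{2}$, factors of $2$ from $\dot Z=2X$, and $(2\pi i)^{k_1+2k_2}$ — times the automorphy factor $j(g,i)^{-\ell}\rho_{[k_1,k_2]}(J(g,i))^{-1}$, times the $P_{\mathrm{vec}}^{\otimes(k_1+2k_2)}$-projection of $T^{\otimes(k_1+2k_2)}$, times $e^{2\pi i(pr(T),g\cdot i)}$; the projection $pr(T)$ appears because $g\cdot i$ lies in the Siegel upper half-space $\mathcal{H}_3\subseteq\mathcal{H}_J$, where $(T,Z)=(pr(T),Z)$. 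Multiplying by the prefactors $j(g,i)^\ell$ and $\rho_{[k_1,k_2]}(J(g,i))$ on the left of the asserted identity then cancels the automorphy factor.

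It then remains to identify the surviving term with $B_{k_1,k_2}\{P_{k_1,k_2}(T),\beta\}\,e^{2\pi i(pr(T),g\cdot i)}$ and to verify the last assertion. The $P_{\mathrm{vec}}^{\otimes(k_1+2k_2)}$-projection of the symmetric tensor $T^{\otimes(k_1+2k_2)}$ is $\mathcal{P}(T)^{\otimes(k_1+2k_2)}\in S^{k_1+2k_2}(V_3\otimes V_7)$; pairing its $V_7$-slots against those of $\beta$, the $k_1$ slots from the $V_7^{\otimes k_1}$-factor leave $V_3$-entries that symmetrize (since $T^{\otimes\bullet}$ is symmetric) to $S^{k_1}(V_3)$, while the $2k_2$ slots from $(\wedge^2V_7)^{\otimes k_2}$ force the corresponding pairs of $V_3$-entries to antisymmetrize, yielding $(\wedge^2V_3)^{\otimes k_2}$ which then symmetrizes to $S^{k_2}(\wedge^2V_3)$ — this is exactly the construction of $P_{k_1,k_2}(T)$ and of the pairing $\{P_{k_1,k_2}(T),\beta\}$, so the two agree. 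That $\{P_{k_1,k_2}(T),\beta\}$ lies in the Cartan component $S^{[k_1,k_2]}$ of $S^{k_1}(\wedge^2W_3)\otimes S^{k_2}(W_3)\otimes\det(W_3)^{k_2+4}$ follows from a standard highest-weight argument: the map $T\mapsto\{P_{k_1,k_2}(T),\beta\}$ is $\GL_3$-equivariant and factors through $\mathcal{P}(T)^{\otimes(k_1+2k_2)}$, and since $\beta$ generates the $G_2$-Cartan component $W(k_1,k_2)$, pairing one Cartan component against a matching one lands in the Cartan component. I expect the main obstacle to be the bookkeeping in the middle step: carrying all the automorphy factors and normalizations through the Cayley transform and the iteration, and in particular establishing rigorously that the weight-correction terms — including corrections produced by differentiating earlier corrections — do not survive the restriction to $\Sp_6(\R)$ and the pairing with the highest-weight vector $\beta$. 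The representation-theoretic content, by contrast, is essentially forced by the $V_7$-degree count.
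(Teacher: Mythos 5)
Your plan follows the same basic strategy as the paper's proof (Cayley transform to identify $\p_J^+$ with translations on the tube domain, differentiate the exponential on the Levi, argue only the leading term survives after pairing with $\beta$), and you correctly identify the main obstacle. But the gap you flag is in fact not closed by the arguments you offer, and it is precisely the heart of the proof.

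The two ``weight corrections'' you describe --- the trace $(1,E)$ and the ``$\nabla_Z j(g,i)$'' terms --- are, in the paper's calculation, the same correction: $M(\Phi_{1,E})$ applied to $j(m_Y,i)^{-\ell}$ produces $\ell(1,E)$, which already vanishes for $E \in V_3 \otimes \Theta^0$ since $\Theta^0$ is traceless. The correction you have not accounted for is the Jordan cross-product $\{E_i,E_j\} = \Phi_{1,E_i}(E_j)$, which arises when the $\m_J$- and $\k_{E_7}$-components of $X_{E_{k+1}}$ (in the Iwasawa decomposition, not a commutator of $\p_J^+$ elements, which are abelian) act on the polynomial factor already accumulated from the earlier derivatives. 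These appear in the paper's recursion for $P_{k+1}$ as $\tfrac{1}{2}\{E_{k+1},P_k\} + \tfrac{1}{2}P_k(\{E_{k+1},E_1,\ldots,E_k\})$. They are \emph{not} killed by degree counting: even after the pairing with $\beta$ forces every $E_{\alpha_i}$ into $V_3 \otimes V_7$, the Freudenthal cross $E_{\alpha_i} \times E_{\alpha_j}$ lands partly back in $H_3$ and partly in $V_3 \otimes \Theta$, so the lower-order polynomial terms in $T$ do not automatically project to zero.

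What actually kills them, in the paper's Lemma \ref{lem:Pnleading}, is the reduction by $G_2$-equivariance to the highest weight vector $\beta = e_2^{k_1}\otimes(e_2\wedge e_3^*)^{k_2}$, followed by the observation that the only basis elements $E_{\alpha_i} = v\otimes u$ that survive the pairing have $u\in\{e_2,e_3^*\}$, and for these one has $(1,E_{\alpha_i})=0$, $E_{\alpha_i}^\# = 0$, and $\{E_{\alpha_i},E_{\alpha_j}\}=0$ because the span of $e_2,e_3^*$ is a null two-dimensional subspace of $\Theta^0$. It is this null property, not degree counting, that forces the recursion to collapse to its leading term. You do invoke the equivariant reduction to a highest weight vector, but you then need to make (and justify) exactly this observation about the vanishing cross-products; without it, the proof is incomplete. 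Similarly, your last-paragraph claim that ``pairing one Cartan component against a matching one lands in the Cartan component'' is not a general fact --- the paper's Lemma \ref{lem:HWsub} instead verifies the vanishing of the relevant contraction directly on the same highest weight vector.
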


Recall that if $\alpha \in \mathcal{A}(G_2^a) \otimes W(k_1,k_2)$ is a level one algebraic modular form for the representation $W(k_1,k_2)$, then we defined the theta lift of $\alpha$ as 
\[\Theta(\alpha)(g) := \int_{G_2^a(\Q)\backslash G_2^a(\A)}{\{D^{k_1+2k_2}\Theta_{Kim}((g,h)),\alpha(h)\}\,dh} = \frac{1}{|\Gamma_{G_2}|}\{D^{k_1+2k_2}\Theta_{Kim}((g,1)),\alpha(1)\}\]
where we have normalized the measure so that $G_2^a(\widehat{\Z}) G_2^a(\R)$ has measure $1$.  By rescaling $\alpha$ or this measure, we can (and will) ignore the term $\frac{1}{|\Gamma_{G_2}|}$.

We first state the fact that $\Theta(\alpha)$ is the automorphic form corresponding to a Siegel modular form of weight $[k_1,k_2]$.
\begin{proposition}\label{prop:ThetaSMF} For $g \in \Sp_6(\R)$ and $Z \in \mathcal{H}_3$ the Siegel upper half-space of degree three,  define 
\[f_{\Theta(\alpha)}(Z) = \rho_{[k_1,k_2]}(J(g,i))\Theta(\alpha)(g)\]
for any $g$ with $g \cdot (i 1_3) = Z$ in $\mathcal{H}_3$.  Then $f_{\Theta(\alpha)}(Z)$ is well-defined, and is a level one Siegel modular form of weight $[k_1,k_2]$.  If $k_2 > 0$, it is a cusp form.
\end{proposition}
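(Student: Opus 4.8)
The plan is to reduce the whole proposition to Theorem \ref{thm:DiffExp}, which is designed to output precisely the shape of a holomorphic Siegel modular form, and then to read off modularity, the cusp condition, and (for $k_2>0$) cuspidality from standard inputs. Recall that at level one
\[\Theta(\alpha)(g)\;=\;\frac{1}{|\Gamma_{G_2}|}\{D^{k_1+2k_2}\Theta_{Kim}((g,1)),\alpha(1)\},\qquad g\in\Sp_6(\R),\]
with $\alpha(1)=\frac{1}{|\Gamma_{G_2}|}\sum_{\gamma\in\Gamma_{G_2}}\gamma\cdot\beta$ and $\beta\in W(k_1,k_2)$. First I would restrict Kim's modular form along the $\Sp_6$-equivariant holomorphic embedding $\mathcal{H}_3\hookrightarrow\mathcal{H}_J$ induced by $\Sp_6\hookrightarrow H_J^1$, writing, for $g\in\Sp_6(\R)$ and $Z=g\cdot(i1_3)$,
\[\Theta_{Kim}(g)\;=\;j(g,i)^{-4}\sum_{T}a(T)\,e^{2\pi i(T,\,g\cdot i)},\]
the sum being over the $T\in J_R$, $T\ge 0$, $\rk(T)\le 1$ of Kim's expansion. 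Since that series converges absolutely and $\Theta_{Kim}$ is smooth on $H_J^1(\R)$, the fixed operator $D^{k_1+2k_2}$ may be applied term by term.

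Now apply Theorem \ref{thm:DiffExp} with $\ell=4$, once for each $T$ and each of the finitely many vectors $\gamma\cdot\beta\in W(k_1,k_2)$ (the theorem holds for an arbitrary element of $W(k_1,k_2)$), and sum. One obtains
\[j(g,i)^{4}\,\rho_{[k_1,k_2]}(J(g,i))\,\Theta(\alpha)(g)\;=\;\frac{B_{k_1,k_2}}{|\Gamma_{G_2}|^{2}}\sum_{\gamma\in\Gamma_{G_2}}\sum_{T}a(T)\,\{P_{k_1,k_2}(T),\gamma\beta\}\,e^{2\pi i(pr(T),\,Z)}.\]
The right-hand side is a function of $Z$ alone (here the scalar $j(g,i)^{4}$ is part of what the normalization ``weight $[k_1,k_2]$'' encodes, as in Theorem \ref{thm:DiffExp}), so $f_{\Theta(\alpha)}(Z)$ is \emph{well-defined} and is holomorphic on $\mathcal{H}_3$; moreover $\{P_{k_1,k_2}(T),\beta\}\in S^{[k_1,k_2]}$ by the last assertion of Theorem \ref{thm:DiffExp}, so $f_{\Theta(\alpha)}$ takes values in the irreducible $V_{[k_1,k_2]}$. (Alternatively, well-definedness is the transformation law $\Theta(\alpha)(gk)=j(k,i)^{-4}\rho_{[k_1,k_2]}(J(k,i))^{-1}\Theta(\alpha)(g)$ for $k\in K_{\Sp_6}$, which follows from the computation in Section \ref{sec:groups} of the action of $K_{\Sp_6}$ on $\p_J^{\pm}$, together with the $G_2^a(\R)$-equivariance of the pairing $\{\,,\,\}$ and the fact that $G_2^a(\R)$ acts trivially on $S^{k_1}(V_3)\otimes S^{k_2}(\wedge^2V_3)$; the route via Theorem \ref{thm:DiffExp} makes this automatic.)

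Next, $\Sp_6(\Z)$-modularity. The embedding $\Sp_6\times G_2^a\hookrightarrow H_J$ is defined over $\Q$ and $\Theta_{Kim}$ is a level one automorphic form on $H_J^1(\A)$, so the adelic function on $\Sp_6(\A)$ obtained by integrating $D^{k_1+2k_2}\Theta_{Kim}$ against $\alpha$ over $G_2^a(\Q)\backslash G_2^a(\A)$ is left $\Sp_6(\Q)$-invariant and right $\Sp_6(\widehat{\Z})$-invariant; by strong approximation for $\Sp_6$ the function $\Theta(\alpha)$ on $\Sp_6(\R)$ is then left $\Sp_6(\Z)$-invariant, and the cocycle relations for $j(\cdot,\cdot)$ and $J(\cdot,\cdot)$ convert this into the weight $[k_1,k_2]$ transformation law for $f_{\Theta(\alpha)}$. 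For $T\in J_R$ with $T\ge 0$, the matrix $pr(T)$ is half-integral by the explicit formula for $pr$, and positive semidefinite since $pr$ is the orthogonal projection $J\to H_3(\R)$ and carries the positive cone into the positive cone (e.g.\ $pr(vv^{*})$ is the Gram matrix $(\langle v_i,v_j\rangle)_{i,j}$, and a general positive element is a sum of such). Hence the Fourier expansion of $f_{\Theta(\alpha)}$ is supported on positive semidefinite half-integral $T_0=pr(T)$, and since the degree is $3\ge 2$ the Koecher principle gives holomorphy at the cusps; thus $f_{\Theta(\alpha)}$ is a genuine vector-valued Siegel modular form of weight $[k_1,k_2]$.

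Finally, for $k_2>0$ the image of the lift consists of cusp forms; this is exactly the content of Gross--Savin \cite{grossSavin}, whose analysis of the archimedean theta correspondence for $\Sp_6\times G_2^a\subseteq H_J^1$ applies once the weight is nondegenerate in this way, and I would simply invoke it. Modulo Theorem \ref{thm:DiffExp} — which absorbs essentially all of the real difficulty — the only points requiring care above are: justifying the termwise application of $D^{k_1+2k_2}$ to Kim's series (absolute convergence), checking that $pr$ sends positive semidefinite lattice elements to positive semidefinite half-integral matrices, and matching the $\p_J^{\pm}$-side normalizations with the $\GL_3$-side weight conventions (it is here that the maximal-compact computation of Section \ref{sec:groups} would enter if one preferred the structural proof of well-definedness). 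I therefore expect no serious obstacle in this proposition beyond what is already handled in Theorem \ref{thm:DiffExp}.
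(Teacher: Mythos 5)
Your proposal is correct, but it proves the proposition by a genuinely different route from the paper. The paper's proof does not use Theorem \ref{thm:DiffExp} at all: it argues representation-theoretically, pairing $\Theta(\alpha)$ against a basis of $S^{[k_1,k_2],\vee}$ to decompose into scalar-valued lifts $\Theta(v_j,\alpha_j)$, invoking Gross--Savin's Theorem 3.5 to place each scalar lift inside the holomorphic discrete series $\pi(k_1,k_2)$, and then using the $K_{\Sp_6}$-equivariance computation from Section \ref{sec:groups} together with multiplicity one of the minimal $K_{\Sp_6}$-type to identify $f_{\Theta(\alpha)}$ as the lowest-weight vector, hence a holomorphic Siegel modular form; cuspidality for $k_2>0$ comes from Gross--Savin's Corollary 4.9. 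You instead invoke Theorem \ref{thm:DiffExp} directly (with $\ell=4$), termwise on Kim's expansion, and read well-definedness, holomorphy, and the $S^{[k_1,k_2]}$-valuedness straight off the explicit Fourier series; modularity then comes from adelic left $\Sp_6(\Q)$-invariance plus the cocycle identities, and holomorphy at the cusps from the half-integrality and positive semidefiniteness of $pr(T)$ (or Koecher). This is not circular --- Theorem \ref{thm:DiffExp} is a self-contained Lie-algebra computation proved in Section \ref{sec:expder} with no dependence on the present proposition --- and your approach is arguably more elementary and more explicit. What it does \emph{not} give, and what the paper's route is really after, is the structural fact that $\Theta(\alpha)$ occupies the minimal $K_{\Sp_6}$-type of $\pi(k_1,k_2)$ with multiplicity one; that piece of representation theory is reused in an essential way in the proof of Claim \ref{claim:introHD}, so the paper keeps the proposition independent of Theorem \ref{thm:DiffExp} and lets the two feed into Theorem \ref{thm:introSMF} separately. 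Two small points to tighten in your write-up: justify the termwise application of $D^{k_1+2k_2}$ by the standard polynomial growth of Fourier coefficients (absolute convergence of Kim's series alone does not license differentiating under the sum), and be explicit about the $j(g,i)^4$ factor, since $\rho_{[k_1,k_2]}$ as literally defined acts on $S^{k_1}(V_3)\otimes S^{k_2}(\wedge^2 V_3)$ and the $\det^4$ twist must be carried by $j(g,i)^4$ for well-definedness to hold; you flag this correctly but it is worth making the bookkeeping visible.
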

\begin{proof} The fact that $f_{\Theta(\alpha)}(Z)$ is well-defined comes from the $K_{\Sp_6} \simeq U(3)$ action on $\p_J^{+,\vee}$, which was determined in section \ref{sec:groups}.  To see that it's a holomorphic modular form of the correct weight, we use \cite[Theorem 3.5]{grossSavin}.  The cuspidality when $k_2 > 0$ is proved in \cite[Corollary 4.9]{grossSavin}.
	
In more detail, if $\ell \in S^{[k_1,k_2],\vee}$ is a linear form on $S^{[k_1,k_2]}$, then one can write $\ell(\Theta(\alpha)(g))$ as a sum of terms of the form $\Theta(v_j,\alpha_j)$, where
\[\Theta(v_j,\alpha_j) = \int_{[G_2^a]}{\Theta_{v_j}((g,h))\alpha_j(h)\,dh}\]
is a usual scalar-valued theta lift.  By \cite[Theorem 3.5]{grossSavin}, as functions of $g$, these lifts all lie in the holomorphic discrete series representation $\pi(k_1,k_2)$ with minimal $K_{\Sp_6}$-type $S^{[k_1,k_2]} \det^{4}$.  Moreover, by the $K_{\Sp_6}$-equivariance that proves that $f_{\Theta(\alpha)}(Z)$ is well-defined, the vector-valued function $\Theta(\alpha)$ exactly encompasses the minimal $K_{\Sp_6}$-type in $\pi(k_1,k_2)$.  Here we use that this minimal $K_{\Sp_6}$-type appears in $\pi(k_1,k_2)$ with multiplicity one.  Consequently, $f_{\Theta(\alpha)}(Z)$ is a holomorphic Siegel modular form.  It is clearly level one.  Finally, \cite[Corollary 4.9]{grossSavin} shows that all the $\Theta(v_j,\alpha_j)$ are cusp forms if $k_2>0$, thus so is $\Theta(\alpha)$.  

This completes the proof.
\end{proof}

\begin{proof}[Proof of Theorem \ref{thm:introSMF}]  Theorem \ref{thm:introSMF} follows directly from Proposition \ref{prop:ThetaSMF} and Theorem \ref{thm:DiffExp} by plugging in the Fourier expansion of Kim's modular form $\Theta_{Kim}(Z)$.\end{proof}

We now explain the proofs of Corollary \ref{cor:introCor1} and Corollary \ref{cor:alg}, especially that of Claim \ref{claim:introHD}.

\begin{proof}[Proof of Corollary \ref{cor:introCor1}] Let $u,v$ span a null, two-dimensional subspace of the trace zero elements $\Theta^0 \otimes \C$.  That they are null means that $u^2 = uv = vu = v^2 = 0$.  We set $\beta = u^{\otimes k_1} \otimes (u \wedge v)^{\otimes k_2}$.  It is easy to see that $\beta \in W(k_1,k_2)$.  Indeed, we can choose a Borel subgroup of $G_2(\C)$ to be the one that stabilizes the flag $\C u \subseteq \C u \oplus \C v$, and then it is clear that $\beta$ is a highest weight vector in $W(k_1,k_2)$.  A computer calculation shows that, if $\beta_1$ is defined as above with $k_1 = 0$ and $k_2 = 4$, then the 
\[T_0 := \frac{1}{2}\left(\begin{array}{ccc} 2 & 1 &1 \\ 1 & 2 & 1 \\ 1 &1 & 2 \end{array}\right)\]
Fourier coefficient of $\Theta(\beta_1)$ is nonzero.  Similarly, if $\beta_2$ is defined as above with $k_1 = 2$ and $k_2 =4$, then the $T_0$ Fourier coefficient of $\Theta(\beta_2)$ is nonzero.

To actually do the computation on a computer, we proceed as follows.  First, we set $H$ to be the quaternion algebra over $\Q$ ramified at $2$ and the archimedean place.  Let $1,i,j,k$ be its usual basis.  We obtain the octonion algebra $\Theta = H \oplus H$ via the Cayley-Dickson construction using $\gamma =-1$.  This means that the addition in $\Theta$ is component-wise and the multiplication is 
\[(x_1,y_1) (x_2,y_2) = (x_1 x_2 + \gamma y_2^* y_1,y_2 x_1 + y_1 x_2^*).\]
Set $e = (0,1)$ and $h = \frac{1}{2}(i+j+k+e)$.  Then, the following are a $\Z$ basis of $R_\Theta$, Coxeter's ring \cite{coxeter}: $jh, e,-h,j,ih,1,eh,ke$.  These are the simple roots of the $E_8$ root lattice, with $jh$ the extended node, $1$ the branch vertex, and $e,-h,j,ih,1,eh,ke$ going along longways.

For $u,v$ we take elements inside of $\Theta \otimes \Q(\sqrt{-1})$ as
\[u = \frac{1}{2}((0,1)-\sqrt{-1}(0,i));\,\,\, v = \frac{1}{2}((0,j)-\sqrt{-1}(0,k)).\]

Finally, to compute the $T_0$ Fourier coefficient, where
\[T_0 = \frac{1}{2}\left(\begin{array}{ccc} 2a & f & e \\ f & 2b & d \\ e & d & 2c\end{array}\right),\]
we must explain how to enumerate the rank one $T \in J_R$ with $pr(T) = T_0$.  The point is that 
\[T = \left(\begin{array}{ccc} c_1 & x_3 & x_2^* \\ x_3^* & c_2 & x_1 \\ x_2 & x_1^* & c_3\end{array}\right)\]
being rank one with $pr(T) = T_0$ implies  $n(x_1) = c_2 c_3 = bc,$ $n(x_2) = c_3 c_1 = ca$, $n(x_3) = c_1 c_2 = ab$.  Thus one only must search through a finite list of $x_i$ (namely, those with these norms) to find all such $T$.
\end{proof}

In order to prove Corollary \ref{cor:alg}, we require Claim \ref{claim:introHD}, which we prove now.
\begin{proof}[Proof of Claim \ref{claim:introHD}] Recall that we assume $k_2 >0$, that $F$ is a level one Siegel modular form of weight $[k_1,k_2]$ and we wish to prove that if $F$ is in the image of the theta correspondence from $G_2^a$, then $F = \Theta(\alpha)$ for a level one algebraic modular form $\alpha(1) \in W(k_1,k_2)^{\Gamma_{G_2}}$.  

We first use the Howe Duality theorem of Gan-Savin \cite{ganSavinHD} to reduce to the case of eigenforms.  Thus write $F = \sum_j{F_j}$ as am orthogonal sum of eigenforms with each $F_j$ nonzero.  Then the Petersson inner product $(F_j,F) \neq 0$ for each $j$.  Let $\pi_{F_j}$ be the automorphic cuspidal representation of $\PGSp_6$ generated by $F_j$.  Then, by changing the order of the theta integral, one sees that the big theta lift $\Theta(\pi_{F_j})$ of $\pi_{F_j}$ to $G_2^a$ is nonzero.  By Howe Duality \cite{ganSavinHD} and the argument in \cite[Proposition 3.1]{ganAWSNotes}, $\tau_j:=\Theta(\pi_{F_j})$ is an irreducible representation of $G_2^a$.  

We now argue that $\Theta(\tau_j) = \pi_{F_j}$.  Let $W(k_1',k_2')$ the archimedean component of $\tau_j$, which a priori could depend upon $j$.  For any fixed vector $y_j$ in the finite part of $\tau_j \simeq \tau_{j,f} \otimes W(k_1',k_2')$, and any fixed vector $v_j$ in the finite part of $\Pi_{min} \simeq \Pi_{min,f} \otimes \Pi_{min,\infty}$ one can consider the map 
\[\Pi_{min,\infty} \otimes W(k_1',k_2') \rightarrow \mathcal{A}(\Sp_6)\]
given by the theta lift.  By the archimedean correspondence proved in Gross-Savin \cite{grossSavin}, the image is a holomorphic discrete series representation $\pi(k_1',k_2')$ with lowest $K_{\Sp_{6}}$-type $S^{[k_1',k_2']}$.  Because for some choice of $y_j$ and $v_j$ we must obtain a theta lift that is not orthogonal to $F_j$, we must have $k_1' = k_1$ and $k_2' = k_2$.

It now follows from \cite{grossSavin}, because $k_2' = k_2 > 0$, that $\Theta(\tau_j)$ consists of cusp forms.  Thus again by Howe Duality \cite{ganSavinHD} and the argument in \cite[Proposition 3.1]{ganAWSNotes}, $\Theta(\tau_j)$ is irreducible, so $\Theta(\tau_j) = \pi_{F_j}$.  By the results of \cite{grossSavin,magaardSavin,ganSavinHD} that apply to spherical representations,  it must be that $\tau_j$ is unramified at every finite place.  Thus we have a level one algebraic modular form $\alpha_j(1) \in W(k_1,k_2)^{\Gamma_{G_2}}$, and we must show that $\Theta(\alpha_j)$ is a nonzero multiple of $F_j$.  We have now reduced ourselves to the case of eigenforms, so will drop the $j$ from $\alpha_j$, $F_j$ and $\tau_j$.

Let $b_1,\ldots, b_N$ be a basis of $W(k_1,k_2)$, $\varphi_j$ the level one automorphic form on $G_2^a(\A)$ corresponding to $b_j$, and let $w_1,\ldots, w_{M}$ be a basis of $S^{[k_1,k_2]}$.  By changing variables in the integral defining the theta lift from $G_2^a$ to $\Sp_6$, one sees that there exists $v_{i,j}$ in $\Pi_{min}$ so that 
\[F = \sum_{i,j}{\Theta(v_{i,j},\varphi_j) \otimes w_i}.\]
Set $v' = \sum_{i,j}{v_{i,j}' \otimes w_i \otimes b_j} \in \Pi_{min} \otimes S^{[k_1,k_2]} \otimes W(k_1,k_2)$ and $\alpha = \sum_{j} \varphi_j \otimes b_j^\vee$.  Then we have
\[F = \int_{[G_2^a]}{\{\Theta_{v'}((g,h)),\alpha(h)\}\,dh}.\]
Let
\[v = \int_{K_{\Sp_6} \times G_2^a(\R)}{(k_1,k_2) \cdot v'\,dk_1\,dk_2}\]
where the action is diagonal on the minimal representation and the vector space $S^{[k_1,k_2]}\otimes W(k_1,k_2)$.  Because $\alpha$ is $G_2^a(\R)$-equivariant and $F$ is $K_{\Sp_{6}}$-equivariant, one has
\[F = \int_{[G_2^a]}{\{\Theta_{v}((g,h)),\alpha(h)\}\,dh}.\]
Because the $K_{\Sp_6} \times G_2^a(\R)$-type $(S^{[k_1,k_2]} \otimes W(k_1,k_2))^\vee$ appears in $\Pi_{min,\infty}$ with multiplicity one \cite{grossSavin}, we can write
\[v = \sum_{i,j}{v_{i,j,f} \otimes v_{i,j,\infty} \otimes w_i \otimes b_j}\]
where $v_{i,j,\infty} \in \Pi_{min,\infty}$ is the basis of $(S^{[k_1,k_2]} \otimes W(k_1,k_2))^\vee \subseteq \Pi_{min,\infty}$ dual to the basis $w_i \otimes b_j$.  We wish to show that we can set $v_{i,j,f}$ to be the spherical vector in $\Pi_{min,f}$ for all $i,j$ in this equality.

Fixing the vectors $v_{i,j,\infty}$ and letting the $v_{i,j,f}$ vary, the theta integral gives an equivariant map
\[\Pi_{min,f} \otimes \tau_f \rightarrow \pi_{F,f}.\]
This map is nonzero by assumption, so by $p$-adic Howe Duality again \cite{ganSavinHD} the map is uniquely determined up to scalar multiple.  Our goal is to show that the image of the spherical vector on the left-hand side is nonzero on the right-hand side.  We will do this by a global argument. 

It suffices to show that, for each prime $p$, in the unique up to scalar map $\Pi_{min,p} \otimes \tau_p \rightarrow \pi_{F,p}$, the image of the spherical vector is nonzero.  Because this map is unique up to scalar multiple, we must only find some $v \in \Pi_{min}$ which is spherical at $p$, some $\varphi \in \tau$ which is spherical at $p$, and so that $\Theta(v,\varphi) \neq 0$.  By \cite[Proposition 4.5]{grossSavin}, it suffices to check that $\alpha$ has an appropriate period.  More precisely, let $q$ be an odd prime with $q \neq p$, and let $B_q$ be the quaternion algebra over $\Q$ ramified at $q$ and infinity.  Then it suffices to show that $\alpha$ has a $B_q$ period.  But finally, by Bocherer-Das \cite{bochererDas2021}, for some odd prime $q \neq p$, $F$ has a nonzero Fourier coefficient associated to the maximal order in $B_q$.  Consequently, $\alpha$ does have a $B_q$ period and we have shown that the unique map $\Pi_{min,p} \otimes \tau_p \rightarrow \pi_{F,p}$ is nonzero on the spherical vector.  

Let
\[v_0 = \sum_{i,j}{v_{0,f} \otimes v_{i,j,\infty} \otimes w_i \otimes b_j}\]
where $v_{0,f}$ is the spherical vector in $\Pi_{min,f}$. We have shown
\[\int_{[G_2^a]}{\{\Theta_{v_0}((g,h)),\alpha(h)\}\,dh}.\]
is nonzero, and thus a nonzero multiple of $F$.  Because
\[\Theta(\alpha)(g):=\int_{[G_2^a]}{\{D^{k_1+2k_2}\Theta_{Kim}((g,h)),\alpha(h)\}\,dh} =  \int_{[G_2^a]}{\{\Theta_{v_0}((g,h)),\alpha(h)\}\,dh},\]
we obtain $\Theta(\alpha)$ is nonzero, and thus is nonzero multiple of $F$.  The claim is proved.
\end{proof}

\begin{proof}[Proof of Corollary \ref{cor:alg}]
We have explained, given $\beta \in W(k_1,k_2) \subseteq V_7^{\otimes k_1} \otimes (\wedge^2 V_7)^{\otimes k_2}$, how to compute individual Fourier coefficients of $\Theta(\beta)$.  It remains to explain how to enumerate a spanning set of $W(k_1,k_2)$.  To do this, we define elements $e_1,e_2,e_3, e_1^*, e_2^*,e_3^*,u_0 = \epsilon_1-\epsilon_2$ which are a basis of $V_7 \otimes \Q(\sqrt{-1})$, as follows.
\begin{itemize}
	\item $e_2 = \frac{1}{2}((0,1)-\sqrt{-1}(0,i))$
	\item $e_3^* = \frac{1}{2}((0,j)-\sqrt{-1}(0,k))$
	\item $e_3 = \frac{1}{2}((0,-j)-\sqrt{-1}(0,k))$.
	\item $e_2^* = \frac{1}{2}((0,-1)-\sqrt{-1}(0,i))$
	\item $\epsilon_1 = \frac{1}{2}((1,0)-\sqrt{-1}(i,0))$
	\item $\epsilon_2 = \frac{1}{2}((1,0)+\sqrt{-1}(i,0))$
	\item $e_1 = \frac{1}{2}((j,0)-\sqrt{-1}(k,0))$
	\item $e_1^* = \frac{1}{2}((-j,0)-\sqrt{-1}(k,0))$
\end{itemize}
Now, in terms of these elements, a basis for root spaces of $\g_2$ can be found in \cite{pollackG2}.  In particular, for the standard Borel chosen in \cite{pollackG2}, $v(k_1,k_2):=e_1^{\otimes k_1} \otimes (e_1 \wedge e_3^*)^{\otimes k_2}$ will be a highest weight vector for $W(k_1,k_2)$.  By the Poincare-Birkoff-Witt theorem, the representation $W(k_1,k_2)$ is spanned by $\{y_1^{m_1} y_2^{m_2} v(k_1,k_2)| m_1,m_2  \leq N(k_1,k_2)\}$ where here
\begin{itemize}
	\item $y_1,y_2$ span the negative root spaces of the simple roots of $\g_2$;
	\item the element $y_1^{m_1} y_2^{m_2} v(k_1,k_2)$ can be explicitly computed using the formulas of \cite{pollackG2};
	\item one can come up with easy bounds for the integer $N(k_1,k_2)$.	
\end{itemize}
The corollary follows.
\end{proof}

\section{Theorems on quaternionic modular forms}\label{sec:QMFmainthms}
In this section we prove Theorem \ref{thm:introQMF}, assuming a crucial technical result, Theorem \ref{thm:WqmfDer}, which is proved in section \ref{sec:qmfDer}.  We also prove Corollaries \ref{cor:introIntegrality}, \ref{cor:introG2FC}, and \ref{cor:Shimura}.
	
For $w \in W_J(\R)$ that is positive semi-definite, and $\ell \geq 1$ an integer, let $W_{w,\ell}$ be the generalized Whittaker function \cite{pollackQDS} on $G_J(\R) = E_{8,4}$ associated to $w$ and $\ell$.  Similarly, for $w_0$ a real binary cubic form that is positive semi-definite, let $W_{w_0,\ell}$ be the associated generalized Whittaker function on $G_2(\R)$.

Let now $\p$ denote the complexification of the $-1$ part for the Cartan involution on $\g(J)\otimes \R$ for the Cartan involution defined in \cite{pollackQDS}.  Recall that if $\varphi: G_J(\R) \rightarrow V$ is a smooth function, then $D \varphi \in C^\infty(G_J(\R);V \otimes \p^\vee)$ is defined as $D\varphi = \sum_{\alpha}{X_\alpha \varphi \otimes X_\alpha^\vee}$, where $\{X_{\alpha}\}_\alpha$ is a basis of $\p$.

\begin{theorem}\label{thm:WqmfDer}  Suppose $\ell = 4$ and $\beta \in V_{m \lambda_3}$ with $m \geq 0$.  Then there is a nonzero constant $B_{m}$ so that for all $w \in W_J$ rank one and $g \in G_2(\R)$, one has
	\[\{D^{2m}W_{w,\ell}(g),\beta\} =B_m \langle P_m(w),\beta \rangle_I W_{pr_I(w),\ell+m}(g).\]
\end{theorem}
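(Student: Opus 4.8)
The plan is to show that, viewed as a function of $g \in G_2(\R)$, the quantity $\Phi_{w,\beta}(g) := \{D^{2m}W_{w,\ell}(g),\beta\}$ is (a scalar multiple of) the moderate-growth generalized Whittaker function of weight $\ell + m$ on $G_2(\R)$ for the character $\psi_{pr_I(w)}$, and then to pin down the scalar by an equivariance argument. The structural input is the decomposition, under $\mathfrak{k}_{G_2} \times \f_4$, of the complexified $(-1)$-eigenspace $\p$ of the Cartan involution on $\g(J) \otimes \R$: starting from $\g(J) = \g_2 \oplus \f_4 \oplus (V_7 \otimes J^0)$ and the branching of $V_7$ to $\mathfrak{k}_{G_2} = \mathfrak{su}(2)_\ell \times \mathfrak{su}(2)_s$, one finds $\p = \p_{G_2} \oplus (S \otimes J^0)$, where $S = \mathbf{2} \boxtimes \mathbf{2}$ is the $4$-dimensional representation of $\mathfrak{k}_{G_2}$, $\mathfrak{su}(2)_\ell$ is the factor recording the quaternionic weight, and $J^0 \cong V_{26}$ as a representation of $F_4^I$. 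Hence $D^{2m}$ produces a $(\p^\vee)^{\otimes 2m}$-valued function whose $F_4^I$-isotypic component of type $(V_{26})^{\otimes 2m}$ — the only part that can pair nontrivially, via the $F_4^I$-invariant pairing, with $\beta \in V_{m\lambda_3} \subseteq (\wedge^2 J^0)^{\otimes m} \subseteq V_{26}^{\otimes 2m}$ — lies entirely in the summand $(S \otimes J^0)^{\otimes 2m}$. After contracting against $\beta$ what remains is valued in $S^{\otimes 2m}$, and since $\mathbb{V}_\ell \otimes (\mathbf{2}_\ell)^{\otimes 2m}$ contains $\Sym^{2\ell+2m}(\mathbf{2}_\ell) = \mathbb{V}_{\ell+m}$ while $(\mathbf{2}_s)^{\otimes 2m}$ contains the trivial $\mathfrak{su}(2)_s$-representation, one checks the result projects into the minimal $K_{G_2}$-type of weight $\ell+m$ — which is what the pairing $\{\,,\,\}$ in the statement builds in.

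Granting this, I would verify the three defining properties of a weight-$(\ell+m)$ generalized Whittaker function on $G_2(\R)$. It has moderate growth, since $W_{w,\ell}$ has the explicit $K$-Bessel shape of \cite{pollackQDS} and $D$ is a fixed algebraic differential operator. It transforms under the Heisenberg unipotent $N(\R)$ of $G_2$ by $\psi_{pr_I(w)}$, since $W_{w,\ell}$ transforms under $N_{E_8}(\R) \supseteq N(\R)$ by $\psi_w$, $D$ commutes with left translation, and the restriction of $\psi_w$ to $N(\R)$ is $\psi_{pr_I(w)}$ — this last statement being exactly that the $\GL_2^s \times F_4^I$-equivariant identification $W_J \cong W_\Q \oplus V_2 \otimes J^0$ carries $w$ to $pr_I(w)$ in the $W_\Q$-factor. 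Finally, it is annihilated by the first-order operator that characterizes weight-$(\ell+m)$ quaternionic modular forms (see \cite{pollackQDS}). This last property is the crux of the argument; I would deduce it from the archimedean theta correspondence of \cite{HPS}, namely that the operation ``apply $D^{2m}$, then contract with $V_{m\lambda_3}$'' realizes, at the level of the minimal representation of $G_J(\R)$, the archimedean theta lift of the $F_4^I(\R)$-type $V_{m\lambda_3}$ into the weight-$(\ell+m) = (4+m)$ quaternionic discrete series of $G_2(\R)$, and theta lifts carry Whittaker vectors to Whittaker vectors. Alternatively — and this is presumably what Section \ref{sec:qmfDer} actually carries out — one differentiates the explicit formula for $W_{w,\ell}$ directly $2m$ times, converting the Bessel function $K_{\ell-1}$ into $K_{\ell+m-1}$ (the signature of weight $\ell+m$) and producing an explicit polynomial factor in $w$. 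By the uniqueness theorem for moderate-growth generalized Whittaker functions of a fixed non-degenerate character and weight \cite{pollackQDS}, it then follows that $\Phi_{w,\beta}(g) = c(w,\beta)\,W_{pr_I(w),\ell+m}(g)$ for a scalar $c(w,\beta)$.

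It remains to identify $c(w,\beta)$. Being linear in $\beta$, we may write $c(w,\beta) = \langle \xi(w),\beta\rangle$ for a unique $\xi(w) \in V_{m\lambda_3}$ (using self-duality), and $w \mapsto \xi(w)$ is $F_4^I$-equivariant: for $h \in F_4^I(\R)$, with the normalization of \cite{pollackQDS} one has $D^{2m}W_{hw,\ell}(g) = D^{2m}W_{w,\ell}(h^{-1}g) = (1 \otimes \rho(h))\,D^{2m}W_{w,\ell}(g)$ for $g \in G_2(\R)$ — the first equality because $D$ commutes with left translation and $\Pi(h)W_{w,\ell} = W_{hw,\ell}$, the second because $h$ commutes with $G_2(\R)$, lies in $K_{E_8}$, acts trivially on $\mathbb{V}_\ell$ and by $\rho$ on $(\p^\vee)^{\otimes 2m}$, combined with the $K_{E_8}$-equivariance of $D^{2m}$; since $pr_I(hw) = pr_I(w)$ and the pairing is $F_4^I$-invariant, this gives $c(hw,\beta) = c(w,h^{-1}\beta)$. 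Moreover $\xi$ is a polynomial map, bihomogeneous of degree $m$ in each of the $J$- and $J^\vee$-components $b,c$ of $w$: each application of $D$ contributing to the $V_{26}$-direction brings down one linear factor in the $J^0$-projection of $b$ or of $c$, and the $2m$ contracted slots split into $m$ ``$b$'s'' and $m$ ``$c$'s'' because $\beta \in (\wedge^2 J^0)^{\otimes m}$ is alternating within each block. Since the space of $F_4^I$-equivariant polynomial maps $J \times J^\vee \to V_{m\lambda_3}$ of this bidegree, factoring through the $J^0$-projections and alternating in the pairing with each $\wedge^2 J^0$-block, is one-dimensional — spanned by $w \mapsto \mathrm{pr}_{V_{m\lambda_3}}\bigl((b \wedge c)^{\otimes m}\bigr)$ — we conclude $\xi(w) = B_m\,\mathrm{pr}_{V_{m\lambda_3}}(P_m(w))$, so that $c(w,\beta) = B_m \langle P_m(w),\beta\rangle_I$ since $\beta \in V_{m\lambda_3}$. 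That $B_m \neq 0$ follows either from the nonvanishing of the archimedean theta lift of $V_{m\lambda_3}$ to $G_2(\R)$ (cf. \cite{HPS,grossSavin}) or, more concretely, by exhibiting a single rank-one $w$ and a single $\beta$ — for instance built from the amber vectors of Section \ref{sec:F4} — for which both sides are visibly nonzero. The main obstacle is the proof that $\Phi_{w,\beta}$ satisfies the weight-$(\ell+m)$ differential equation — equivalently, carrying the $2m$-fold differentiation through to the Bessel function of order $\ell+m-1$ with the correct polynomial coefficient — together with the multiplicity-one fact that reduces $\xi$ to the single unknown scalar $B_m$.
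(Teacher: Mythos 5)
Your reduction strategy — show $\Phi_{w,\beta}$ is a moderate-growth quaternionic function of weight $\ell+m$ via the archimedean correspondence of \cite{HPS}, verify the correct $N(\R)$-equivariance with character $\psi_{pr_I(w)}$, and invoke uniqueness of generalized Whittaker functions — is the same skeleton the paper uses, and those first two steps are sound. Where you part ways with the paper is in identifying the scalar $c(w,\beta)$, and that is where the proposal has a genuine gap.

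You reduce to the claim that the space of $F_4^I$-equivariant polynomial maps from (the rank-one locus of) $W_J$ into $V_{m\lambda_3}$, bihomogeneous of degree $m$ in the $b$- and $c$-components and alternating block by block, is one-dimensional and spanned by $w \mapsto \mathrm{pr}_{V_{m\lambda_3}}\bigl((b \wedge c)^{\otimes m}\bigr)$. This claim is asserted but not proven, and it is not a soft consequence of highest-weight theory. The relevant Hom-space is not $\Hom_{F_4}(V_{m\lambda_3},V_{m\lambda_3})$ but (at best) the multiplicity of $V_{m\lambda_3}$ in the ring of polynomial functions on the rank-one locus of $W_J$ in the prescribed bidegree; even the cleaner question of the multiplicity of $V_{m\lambda_3}$ in $S^m(\wedge^2 J^0)$ (or in $(\wedge^2 J^0)^{\otimes m}$) is not obviously one. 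Moreover the preceding step — that $\xi$ is bihomogeneous of bidegree $(m,m)$ in $b,c$ and factors through $(b\wedge c)^{\otimes m}$ because ``each application of $D$ in the $V_{26}$-direction brings down one linear factor'' — is also not justified: $\p$ is not contained in $\mathfrak{n}_J$, so differentiating $W_{w,\ell}$ does not simply pull out linear factors of $w$; the Iwasawa components of the $X_\alpha$ act on $W_{w,\ell}$ in a more complicated way, and one must actually track them. The paper's Section \ref{sec:qmfDer} sidesteps both issues by a direct computation: it represents $W_{w,\ell}^{-\ell}(g)$ as the integral $\int_\R e^{-i\langle w,\overline{n}\rangle} a_{\ell,v}(ng)\,dn$, differentiates under the integral sign, evaluates the resulting Bessel-type integral (Theorem \ref{thm:intalv} and Proposition \ref{prop:Iint}), and reads off $B_m \langle P_m(w),\beta\rangle_I$ from the answer (Corollary \ref{cor:DmWell}). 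That explicit calculation is exactly the content your multiplicity-one claim is meant to replace, so unless you can supply a proof of that claim, the computation cannot be avoided.
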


Here, the $F_4$-equivariant pairing $\{\,,\,\}$ is defined as follows.  By virtue of the exceptional Cayley transform of \cite{pollackQDS} and the explanations of subsection \ref{subsec:dualpairs23}, one has $\p \simeq V_2^{\ell} \otimes W_J$, and $W_J = W_\Q \oplus V_2^s \otimes J^0$.  Thus
\[\p^{\otimes 2m} \rightarrow (V_2^{\ell})^{\otimes 2m} \otimes (V_2^s \otimes J^0)^{\otimes 2m} \rightarrow S^{2m}(V_2^{\ell}) \otimes \det(V_2^s)^{\otimes m} \otimes (\wedge^2 J^0)^{\otimes m}.\]
Thus if $\beta \in V_{m \lambda_3} \subseteq (\wedge^2 J^0)^{\otimes m}$, and $r \in S^{2\ell}(V_2^{\ell}) \otimes \p^{\otimes 2m}$ we obtain an element $\{r,\beta\}$ in $S^{2m+2\ell}(V_2^{\ell})$.

We can use the theorem to compute the Fourier expansion of the theta lift $\Theta(\alpha)$ of a level one algebraic modular form $\alpha$ on $F_4^I$.  We begin with the statement that these lifts are quaternionic modular forms of weight $4+m$ on $G_2$.

\begin{proposition}\label{prop:thetaQMF} Suppose $m \geq 0$, and $\beta \in V_{m \lambda_3}$.  Then
	\[\Theta(\beta):=\{D^{2m} \Theta_{Gan}(g,1),\beta\}\]
is a quaternionic modular form on $G_2$ of weight $4+m$.  If $m > 0$, it is a cusp form.
\end{proposition}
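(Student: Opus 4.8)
The plan is to argue along the lines of the proof of Proposition~\ref{prop:ThetaSMF}, with the archimedean results of Gross--Savin replaced by the archimedean theta correspondence for the dual pair $G_2(\R)\times F_4^I(\R)\subseteq E_{8,4}$ computed in \cite{HPS}. One must check, in order: that $\Theta(\beta)$ is an automorphic form on $G_2(\A)$; that it takes values in the minimal $K_{G_2(\R)}$-type $S^{2m+2\ell}(V_2^\ell)$ (with $\ell=4$) of the quaternionic discrete series representation $\pi_{4+m}$ of weight $4+m$; that it is a lowest-$K$-type vector in a copy of $\pi_{4+m}$ at the archimedean place; and, when $m>0$, that it is a cusp form. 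Automorphy is immediate: $\Theta_{Gan}$ is an automorphic form on $G_J(\A)$, the operator $D^{2m}$ is built from the action of $\g(J)$, and $\{\,,\,\beta\}$ is a fixed linear map, so $\Theta(\beta)$ is a moderate-growth automorphic form on $G_2(\A)\subseteq G_J(\A)$.

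For the $K_{G_2(\R)}$-type, recall that $\Theta_{Gan}$ is the minimal modular form on $G_J$ of weight $\ell=4$, so its archimedean component is the minimal representation $\Pi_{min,\infty}$ and $\Theta_{Gan}$ lies in the multiplicity-one minimal $K_{E_8}$-type, which is trivial on the $E_7$-factor and equals $S^{2\ell}(V_2^\ell)$ on the $\SL_2$-factor. Using $\p\simeq V_2^\ell\otimes W_J$ and $W_J=W_\Q\oplus V_2^s\otimes J^0$, the composite $\p^{\otimes 2m}\to S^{2m}(V_2^\ell)\otimes\det(V_2^s)^{\otimes m}\otimes(\wedge^2 J^0)^{\otimes m}$, the triviality of $\det(V_2^s)^{\otimes m}$ as a $K_{G_2(\R)}$-representation, the Cartan projection $S^{2\ell}(V_2^\ell)\otimes S^{2m}(V_2^\ell)\to S^{2\ell+2m}(V_2^\ell)$, and the pairing of the $(\wedge^2 J^0)^{\otimes m}$-factor against $\beta$ together show that $\Theta(\beta)=\{D^{2m}\Theta_{Gan}(\cdot,1),\beta\}$ is valued in $S^{2\ell+2m}(V_2^\ell)$. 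Because the minimal $K_{E_8}$-type of $\Pi_{min,\infty}$ occurs with multiplicity one, this vector exactly fills out the (multiplicity-one) minimal $K_{G_2(\R)}$-type of a copy of $\pi_{4+m}$, just as in Proposition~\ref{prop:ThetaSMF} the vector-valued function encompasses the minimal $K_{\Sp_6}$-type of $\pi(k_1,k_2)$.

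To pin down the archimedean component I use that every component of $D^{2m}\Theta_{Gan}$ lies in $\Pi_{min}$ (since $\g(J)$ preserves $\Pi_{min}$) and that pairing the $(\wedge^2 J^0)^{\otimes m}$-factor against $\beta\in V_{m\lambda_3}$ extracts, for the action of $F_4^I(\R)\subseteq E_7$ on the target space, the $V_{m\lambda_3}$-isotypic part. By \cite{HPS} the $V_{m\lambda_3}$-isotypic component of $\Pi_{min,\infty}$ restricted to $G_2(\R)\times F_4^I(\R)$ is $\pi_{4+m}\boxtimes V_{m\lambda_3}$, so $\Theta(\beta)$, as a function of $g\in G_2(\R)$, lies in $\pi_{4+m}$. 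A vector in $\pi_{4+m}$ of minimal $K_{G_2(\R)}$-type is annihilated by the Schmid-type operator of \cite{pollackQDS} cutting out quaternionic modular forms; together with automorphy and the $K$-type, this shows $\Theta(\beta)$ is a quaternionic modular form of weight $4+m$. (Alternatively, one may reach the same conclusion from the explicit Fourier expansion of $\Theta_{Gan}$ in \cite{pollackE8} together with Theorem~\ref{thm:WqmfDer}, which identifies $\{D^{2m}W_{w,\ell}(\cdot),\beta\}$ with a multiple of the generalized Whittaker function $W_{pr_I(w),\ell+m}$ of weight $\ell+m$.)

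Finally, cuspidality for $m>0$ amounts to the vanishing of the constant terms of $\Theta(\beta)$ along representatives of the two conjugacy classes of maximal parabolics $P$ of $G_2$. Writing $P=G_2\cap Q$ for a suitable parabolic $Q$ of $G_J$ --- for the Heisenberg parabolic of $G_2$ this is the compatibility with the Heisenberg parabolic of $G_J$ recorded in Subsection~\ref{subsec:dualpairs23} --- the constant term of $\Theta(\beta)$ is obtained by applying $D^{2m}$ and the $\beta$-pairing to the constant term of $\Theta_{Gan}$ along $Q$, which is explicitly described in \cite{pollackE8} and, in the relevant degenerate situations, in \cite[Section 10]{ganGrossSavin}. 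The pieces that survive transform trivially under $F_4^I(\R)$ (they belong to the weight $\ell=4$, i.e.\ $m=0$, picture), so pairing them against $\beta\in V_{m\lambda_3}$, a representation with no $F_4^I(\R)$-invariants once $m>0$, produces zero; hence all constant terms vanish and $\Theta(\beta)$ is a cusp form. I expect this last step --- keeping track of the constant term of $\Theta_{Gan}$ after the differential operator is applied --- to be the principal technical obstacle, playing here the role of \cite[Corollary 4.9]{grossSavin} in the $\Sp_6$ case.
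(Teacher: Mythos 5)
Your argument for automorphy and for the weight (via the minimal $K_{E_8}$-type, the decomposition $\p\simeq V_2^\ell\otimes W_J$, and the \cite{HPS} identification of the $V_{m\lambda_3}$-isotypic piece with $\pi_{4+m}\boxtimes V_{m\lambda_3}$) matches the paper's proof and is correct. The cuspidality argument, however, has two genuine gaps.

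First, you assert that the constant term of $\Theta(\beta)$ along a maximal parabolic $P=MN$ of $G_2$ is ``obtained by applying $D^{2m}$ and the $\beta$-pairing to the constant term of $\Theta_{Gan}$ along $Q$.'' That would be automatic only if integration over $N$ and integration over the larger unipotent radical $N_J$ (resp.\ $V_J$) of $G_J$ gave the same answer. They do not a priori: $N\subsetneq N_J$, so the $N$-constant term retains the Fourier coefficients along characters of $N_J$ that are trivial on $N$ but not on $N_J$. The paper isolates this as a separate claim --- that $\Theta_{v,N}=\Theta_{v,N_J}$ and $\Theta_{v,V}=\Theta_{v,V_J}$ --- and proves it by observing that the minimal representation only supports Fourier coefficients along rank $\le 1$ elements $w\in W_J$, and the only rank-one $w$ with $pr_I(w)=0$ is $w=0$ (using that a rank-one $T\in J$ with $\tr(T)=0$ is zero). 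That step is substantive and uses the specific rank-one structure; it cannot be waved through via the parabolic compatibility in Subsection~\ref{subsec:dualpairs23}. You also misdiagnose the difficulty: the differential operator $D^{2m}$ commutes harmlessly with the adelic unipotent integration, so ``keeping track of $D^{2m}$'' is not the obstacle; the $N$ vs.\ $N_J$ comparison is.

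Second, your reason the remaining constant terms pair to zero against $\beta$ --- that ``the pieces that survive transform trivially under $F_4^I(\R)$'' --- is not right for the Heisenberg parabolic. The constant term of $\Theta_v$ along $N_J$, restricted to the Levi $M_J$, contains the minimal representation of $M_J\simeq H_J$, which is far from trivial on $F_4^I$. The actual input is that $V_{m\lambda_3}$ with $m>0$ does not occur in the theta correspondence for the dual pair $\SL_2\times F_4^I\subseteq H_J^1$ (again \cite{HPS}), so the integral against an algebraic modular form of type $V_{m\lambda_3}$ vanishes. Only the constant term along the other parabolic $V$ restricts to the trivial $F_4^I$-representation, where your stated reasoning applies.
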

\begin{proof}  First note that, if $\gamma \in \Gamma_{I}$, then
\[\{D^{2m}\Theta_{Gan}(g,1),\gamma \cdot \beta\} = \{\gamma^{-1} \cdot D^{2m}\Theta_{Gan}(g,1),\beta\} = \{D^{2m}\Theta_{Gan}(g,\gamma), \beta\} = \{D^{2m}\Theta_{Gan}(g,1),\beta\}.\]
Thus if $\alpha$ is the level one algebraic modular form on $F_4^I$ with $\alpha(1) = \sum_{\gamma \in \Gamma_{I}}{\gamma \cdot \beta}$ and $\alpha(\gamma_E) = 0$, then 
\[\Theta(\alpha) = \int_{[F_4^I]}{\{D^{2m}\Theta_{Gan}(g,h),\alpha(h)\}\,dh} = \frac{1}{|\Gamma_{I}|}\{D^{2m}\Theta_{Gan}(g,1),\alpha(1)\} = \Theta(\beta).\]

Now, for $v \in \Pi_{min,\infty}$ and $w \in V_{m\lambda_3}$, consider the theta lift
\[\Theta(v,\varphi_{w})(g) = \int_{[F_4^I]}{\Theta_v(g,h)\varphi_{w}(h)\,dh}\]
where $\varphi_{w}(h) = \{\alpha(h),w\}$ is the level one automorphic function on $F_4^I(\A)$ associated to $w \in V_{m\lambda_3}$.
This lift gives an equivariant pairing $\Pi_{min,\infty} \otimes V_{m \lambda_3} \rightarrow \mathcal{A}(G_2)$.  By \cite{HPS}, it thus gives a map $\pi_{4+m} \rightarrow \mathcal{A}(G_2)$.  Finally, by the $K_{G_2}$-equivariance of $\Theta(\alpha)$, we see that $\Theta(\alpha)$ is the minimal $K$-type of this copy of $\pi_{4+m}$, so it is a quaternionic modular form of weight $4+m$.

We now show the cuspidality of the theta lifts if $m > 0$.  Let $P$ and $Q$, respectively, be the two standard maximal parabolic subgroups of $G_2$, so that $P$ is the Heisenberg parabolic.  Let $P_J$ be the Heisenberg parabolic of $G_J$, and $Q_J$ the standard maximal parabolic with $Q_J \cap G_2 = Q$.  In terms of the $F_4$ root system underlying the group $G_J$, with long roots $\alpha_1, \alpha_2$ and short roots $\alpha_3,\alpha_4$, $Q_J$ is the parabolic for which $\alpha_2$ is in its unipotent radical.  The Levi subgroup of $Q_J$ is of absolute Dynkin type $A_1 \times E_6$.  Write $P = M N$, $Q= L V$, $P_J = M_J N_J$, and $Q_J = L_J V_J$ for the Levi decompositions.

We must check that the constant terms $\Theta(v,\varphi)_N$ and $\Theta(v,\varphi)_V$ are $0$, if $\varphi$ is an automorphic form in a representation $\tau$ with $\tau_\infty = V_{m\lambda_3}$ with $m > 0$.  We first observe the following claim:
\begin{claim}\label{claim:constTerms} One has an equality of constant terms $\Theta_{v,N} = \Theta_{v,N_J}$ and $\Theta_{v,V} = \Theta_{v,V_J}$.
\end{claim}
Granting the claim for the moment, we obtain that $\Theta(v,\varphi)_N(g) = \int_{[F_4^I]}{\Theta_{v,N_J}(g,h)\varphi(h)\,dh}$ and $\Theta(v,\varphi)_V(g) = \int_{[F_4^I]}{\Theta_{v,V_J}(g,h)\varphi(h)\,dh}$.  The constant terms $\Theta_{v,N_J}$ and $\Theta_{v,V_J}$, restricted to their Levi subgroups, were determined in \cite{ganSW}.  For the first one, see page 174 of \cite{ganSW}, it is a sum of terms from a one-dimensional representation of $M_J$ and the minimal representation of $M_J$.  Both of these have integral $0$ against $\varphi$, because by \cite{HPS}, the representation $V_{m \lambda_3}$ with $m > 0$ does not participate in the theta correspondence for the dual pair $\SL_2 \times F_4^I \subseteq H_J^1$.  For the second one, see page 176 of \cite{ganSW}, the constant term restricted to $F_4^I$ is the trivial representation.  Thus this too has integral $0$ against $\varphi$.

It remains to explain the proof of Claim \ref{claim:constTerms}.  For the equality $\Theta_{v,N} = \Theta_{v,N_J}$, note that $\Theta_{v,N}$ is a sum of terms of the form $\sum_{w \in W_J: rk(w) \leq 1, pr_I(w) = 0}{\Theta_w(g)}$.  But, using that if $T \in J$ is rank one with $\tr(T) = 0$ then $T=0$, we find that the only $w \in W_J$ with $rk(w) \leq 1$ and $pr_I(w) = 0$ is $w=0$.  Thus $\Theta_{v,N}(g) = \Theta_{v,0}(g) = \Theta_{v,N_J}(g)$.  One makes a completely similar argument for the constant term $\Theta_{v,V}(g)$.  The proposition is proved.
\end{proof}

\begin{proof}[Proof of Theorem \ref{thm:introQMF}] Suppose $\alpha_I \in V_{m\lambda_3}^{\Gamma_I}$, and $\alpha$ is the level one algebraic modular form on $F_4^I$ with $\alpha(1) = \alpha_I$ and $\alpha_E = 0$.  Then it follows from Proposition \ref{prop:thetaQMF} that the theta lift $\Theta(\alpha)$ of $\alpha$ to $G_2$ is a quaternionic modular form of weight $4+m$, and cuspidal if $m > 0$.  Up to the constant $B_m$, its Fourier expansion is given exactly as in the statement of Theorem \ref{thm:introQMF}.  The general case, where $\alpha_E \neq 0$, is explained in subsection \ref{subsec:F4Ethetalifts} below.\end{proof}

We explain the proof of Corollary \ref{cor:introIntegrality}.
\begin{proof}[Proof of Corollary \ref{cor:introIntegrality}] Over $\C$, we have a decomposition $(\wedge^2 J^0)^{\otimes m} \otimes \C = V_{m \lambda_3} \oplus V'$, where $V'$ is $F_4^I$-stable.  Because $F_4^I$ is a pure inner form of split $F_4$, one can use the results of \cite[sections 2.1, 2.2]{BGW} to give such a decomposition over $\Q$: $(\wedge^2 J^0)^{\otimes m} = V_{m\lambda_3,\Q} \oplus V_{\Q}'$, where $V_{m\lambda_3,\Q}$ is a rational representation of the algebraic group $F_4^I$ whose complexification is $V_{m\lambda_3}$.  Now let $L_m$ be the intersection of $V_{m\lambda_3,\Q}$ with $(\wedge^2 J_R)^{\otimes m}$; it is immediately seen to be an integral lattice in $V_{m\lambda_3}$, so that $L_m \otimes \C = V_{m\lambda_3}$.  But now, from the explicit formula from Theorem \ref{thm:introQMF}, it is clear that if $\beta_I \in |\Gamma_I| L_m$, then $\Theta(\beta_I)$ has integral Fourier coefficients.  One makes a similar argument for $\Theta(\beta_E)$.  This proves the corollary.
\end{proof}

We now explain the proof of Corollary \ref{cor:introG2FC}.  To do so, we first construct a special $\beta_m \in V_{m\lambda_3}$.  Thus let
\begin{itemize}
	\item $K$ be an imaginary quadratic field, so that $H \otimes K$ is split, such as $K = \Q(\sqrt{-1})$.
	\item $a_2 \in \Theta \otimes K$, with $a_2 \neq 0$ but $n(a_2) = 0$
	\item $a_3 \in \Theta \otimes K$ with $n(a_3) = -1$
	\item $a_2' \in \Theta \otimes K$ with $n(a_2') = 1$ and $(a_2',a_2) = 1$.
\end{itemize}
Then, as in section \ref{sec:F4}, we set
\begin{itemize}
	\item $x = \left(\begin{array}{ccc} 1 & a_3 & a_2^* \\ a_3^* & -1 & a_3^* a_2^* \\ a_2 & a_2 a_3 & 0 \end{array}\right)$ 
	\item $z = \left(\begin{array}{ccc} 0 & 0 & (a_2')^* \\ 0 & 1 & 0 \\ a_2' & 0 & 0 \end{array}\right)$ 
	\item and $y = z \times x = \left(\begin{array}{ccc} 0 & (a_2')^*(a_2 a_3) & * \\ * & -1 & a_3^*(a_2')^* \\ a_2'-a_2 & * & 1 \end{array}\right)$.
\end{itemize}
We set $\beta_{K,m} = (x \wedge y)^{\otimes m}$.  It is proved in section \ref{sec:F4} that $\beta_{K,m} \in V_{m\lambda_3}$.

We require the following lemma.
\begin{lemma}\label{lem:w0nonzero} Let the notation be as above, with $m > 0$ even. Set $w_0 = u^2v-uv^2$.  Then
\[\left(\sum_{w \in W_{J_R}: rk(w) =1, pr_I(w) = w_0}{\langle P_m(w),\beta_{K,m}\rangle_I}\right) = 6.\]
\end{lemma}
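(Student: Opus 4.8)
The plan is to enumerate explicitly the finite set $S=\{w\in W_{J_R}:\rk(w)=1,\ pr_I(w)=w_0\}$ and then evaluate the summand on each of its elements. First I would translate $pr_I(w)=w_0=u^2v-uv^2$ into conditions on $w=(a,b,c,d)$: since $I^\#=I$, the coefficients of $u^3$, $v^3$, $u^2v$, $uv^2$ in $p_I(w)$ are $a$, $d$, $(b,I)=\tr(b)$, and $(c,I)=\tr(c)$, so $a=d=0$, $\tr(b)=1$, $\tr(c)=-1$. In particular $b\neq 0$, hence $w$ has rank exactly one as soon as it has rank at most one.

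Next I would pin down $S$ using the rank-one conditions for elements of the form $(0,b,c,0)$ recalled in Section~\ref{sec:F4}, namely that $b$ and $c$ are rank-one in $J$ and satisfy the incidence relation $\Phi_{c,b}=0$ (which also forces $(b,c)=0$). The key input is integrality together with positive-definiteness of the octonionic norm on $R_\Theta$: if $b\in J_R$ satisfies $b^\#=0$, then each diagonal entry of $b^\#$ vanishes, so every pairwise product of the three integer diagonal entries of $b$ equals the norm of an off-diagonal entry of $b$, hence is a non-negative integer; an integer triple with non-negative pairwise products and sum $1$ is a permutation of $(1,0,0)$, and then all off-diagonal entries of $b$ vanish too. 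Thus the only rank-one trace-$1$ elements of $J_R$ are $E_{11},E_{22},E_{33}$, and applying this to $-c$ gives $c\in\{-E_{11},-E_{22},-E_{33}\}$; the condition $(b,c)=0$ excludes $c=-E_{ii}$ when $b=E_{ii}$, while one checks directly that $\Phi_{-E_{jj},E_{ii}}=0$ for all $i\neq j$. Hence $S=\{w_{ij}:=(0,E_{ii},-E_{jj},0):i\neq j\}$, a set of six elements.

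With $S$ in hand I would compute the summand. Because $\beta_{K,m}=(x\wedge y)^{\otimes m}$ and $P_m(w)=(b\wedge c)^{\otimes m}$, the pairing is multiplicative, so $\langle P_m(w_{ij}),\beta_{K,m}\rangle_I=\langle E_{ii}\wedge(-E_{jj}),x\wedge y\rangle_I^{m}$. Since $x$ and $y$ lie in $J^0$, the pairing $(\,\cdot\,,\,\cdot\,)_I$ agrees with the trace pairing when one slot is $E_{kk}$, so $(E_{kk},x)_I$ is just the $(k,k)$-entry $x_k$ of $x$, and similarly for $y$; expanding the definition of $\langle\,\cdot\,,\,\cdot\,\rangle_I$ then gives $\langle E_{ii}\wedge(-E_{jj}),x\wedge y\rangle_I=-(x_iy_j-x_jy_i)$. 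From the explicit matrices in Section~\ref{sec:F4}, $x$ has diagonal $(1,-1,0)$ and $y=z\times x$ has diagonal $(0,-1,1)$, so $x_iy_j-x_jy_i=\pm 1$ for every $i\neq j$ (it is $-1$ on the three cyclic pairs and $+1$ on the three anti-cyclic ones). As $m$ is even, each of the six terms $(x_iy_j-x_jy_i)^m$ equals $1$, so the sum is $6$; for $m$ odd the same computation gives $0$, which is why the hypothesis that $m$ be even is imposed.

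The step I expect to be the main obstacle is the enumeration of $S$: one must be certain that no ``exotic'' rank-one $w\in W_{J_R}$ contributes, which is precisely where positive-definiteness of the norm on Coxeter's order is essential, and one must correctly match each $b$ with the admissible $c$ under $\Phi_{c,b}=0$. Once $S$ is identified, evaluating $\langle P_m(w),\beta_{K,m}\rangle_I$ reduces to the elementary determinant $x_iy_j-x_jy_i$ and is immediate.
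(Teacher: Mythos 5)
Your proof is correct and reaches the same computation of the pairing as the paper, but it handles the enumeration of the six-element set $S=\{w\in W_{J_R}:\rk(w)=1,\ pr_I(w)=w_0\}$ by a genuinely different route. The paper imports this enumeration by citing the proof of \cite[Corollary~2.5.2]{pollackE8}, which in turn rests on \cite[Proposition~5.5]{elkiesGrossIMRN}; you instead give a direct, self-contained argument: read off $a=d=0$, $\tr(b)=1$, $\tr(c)=-1$ from $pr_I(w)=w_0$, then observe that the vanishing of the diagonal of $b^\#$ makes each pairwise product of the integer diagonal entries of $b$ equal to a nonnegative octonion norm, forcing $(b_1,b_2,b_3)$ to be a permutation of $(1,0,0)$ and the off-diagonals to vanish by positive definiteness of the norm on $R_\Theta$; the same applied to $-c$ plus the conditions $(b,c)=0$ and $\Phi_{-E_{jj},E_{ii}}=0$ for $i\neq j$ then pins down $S=\{(0,E_{ii},-E_{jj},0):i\neq j\}$. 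The payoff is a citation-free lemma at the cost of a few extra lines. The endgame is essentially that of the paper: you reduce $\langle P_m(w_{ij}),\beta_{K,m}\rangle_I$ to $\langle E_{ii}\wedge(-E_{jj}),x\wedge y\rangle_I^{\,m}=\bigl(-(x_iy_j-x_jy_i)\bigr)^m$, evaluated from the diagonals $(1,-1,0)$ and $(0,-1,1)$ of $x$ and $y$, giving $\pm 1$ and hence $+1$ after raising to the even power $m$; the paper records only the three cyclic pairings as $-1$ and leaves the rest implicit, whereas you spell out that the three anti-cyclic terms are $+1$, which also makes visible exactly where the parity hypothesis on $m$ is used.
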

\begin{proof} As explained in \cite[proof of Corollary 2.5.2]{pollackE8}, it follows from \cite[Proposition 5.5]{elkiesGrossIMRN} that the set of $w \in W_{J_R}$, with $rk(w) =1, pr_I(w) = w_0$ consists of the six elements $(0,e_{ii},-e_{jj},0)$, $i \neq j$, where $e_{kk}$ is the diagonal matrix in $J$ with a $1$ in the $k^{th}$ place and $0$'s elsewhere.  Now observe that
\begin{enumerate}
	\item $\langle e_{22} \wedge e_{33}, x \wedge y \rangle_I = -1$
	\item $\langle e_{33} \wedge e_{11}, x \wedge y \rangle_I = -1$
	\item $\langle e_{11} \wedge e_{22}, x \wedge y \rangle_I = -1$
\end{enumerate}
The lemma follows directly.
\end{proof}

\begin{proof}[Proof of Corollary \ref{cor:introG2FC}] The corollaries now follow immediately from Theorem \ref{thm:introQMF} and Lemma \ref{lem:w0nonzero}.\end{proof}

We now explain the proof of Corollary \ref{cor:Shimura}.
\begin{proof}[Proof of Corollary \ref{cor:Shimura}]
First note that the cubic ring $\Z \times \Z[t]/(t^2-pt-q)$ is associated to the binary cubic form $-y(x^2+pxy+qy^2)$.  Indeed, setting $\omega = (1,0)$ and $\theta = (0,t)$ in $\Z \times \Z[t]/(t^2-pt-q)$, this is a good basis, and computing its multiplication table gives rise to the binary cubic form $-y(x^2+pxy+qy^2)$.  The ring $\Z \times \Z_D$ is of this form with $p = D$ and $q= \frac{D-D^2}{4}$.

To explicitly compute the Fourier coefficients of $\Delta_{G_2}$, we now make a specific choice of $x,z,y$ as in the proof of Corollary \ref{cor:introG2FC}.  Namely, we take $K = \Q(\sqrt{-1})$, $a_2 = e_2$, $a_3 = u_0$, and $a_2' = e_2 - e_2^*$.  We obtain
\[x = \left(\begin{array}{ccc} 1 & x_3 & x_2^*\\ x_3^* & -1 & x_1 \\ x_2 & x_1^* & 0 \end{array}\right), \,\,\,\, y = \left(\begin{array}{ccc} 0 & y_3 & y_2^* \\ y_3^* &-1 & y_1 \\ y_2 & y_1^* & 1 \end{array}\right)\]
with
\begin{itemize}
	\item $x_1 = \frac{1}{2}(0,1-\sqrt{-1}i)$
	\item $x_2 = \frac{1}{2}(0,1-\sqrt{-1}i)$
	\item $x_3 = -\sqrt{-1}(i,0)$
	\item $y_1 = -\sqrt{-1}(0,i)$
	\item $y_2 = \frac{1}{2}(0,1+\sqrt{-1}i)$
	\item $y_3 = -\frac{1}{2}(1+\sqrt{-1}i,0)$.
\end{itemize}

Now, given the binary cubic $-y(x^2+pxy + qy^2)$, in order to compute the associated Fourier coefficient of $\Delta_{G_2}$, we must compute the set of $(0,T_1,T_2,q) \in W_{J_R}$ so that $\tr(T_2) = p$, $\tr(T_1) = -1$, and $(0,T_1,T_2,q)$ rank one.  We assume $q \neq 0$.  Then $(0,T_1,T_2,q)$ is rank one if and only if $n(T_2) = 0$ and $T_2^\# = q T_1$, which implies that $T_1$ is rank one.  As mentioned above, the set of rank one $T_1$ in $J_R$ with $\tr(T_1) = -1$ consists just of the three elements $-e_{11},-e_{22},-e_{33}$.  

Suppose that $T_1 = - e_{11}$.  Then $0 =(T_1,T_2)$ because $(T_2,T_2^\#) = 3n(T_2) = 0$.  So, the $(1,1)$ entry of $T_2 = 0$.  It now follows easily, using that $T_2^\# = -qe_{11}$, that $T_2$ is of the form
\[T_2 = \left(\begin{array}{ccc} 0 & 0 & 0 \\ 0 & c_2 & x \\ 0 & x^* & c_3 \end{array}\right)\]
with $c_2 c_3 - n(x) = -q$ and $c_2 + c_3 = p$.  Substituting $p = D$, $q = \frac{D-D^2}{4}$, $c_2 = \frac{1}{2}(v+D)$, $c_3 = \frac{1}{2}(-v+D)$, one finds that $v$ is an integer and $v^2+4n(x) = D$.  The contribution to the Fourier coefficient $a_{\Delta_{G_2}}(\Z \times \Z_D)$ for such a term is computed to be $(v+\sqrt{-1}(x,(0,i)))^2$.

One makes a completely similar calculation if $T_1 = -e_{22}$ or $T_1 = -e_{33}$.  One obtains
\[a_{\Delta_{G_2}}(\Z \times \Z_D) = \sum{[v+\sqrt{-1}(x,(0,i))]^2+[v-\sqrt{-1}(x,(0,i))]^2 + [v+\sqrt{-1}(x,(-i,0))]^2}\]
where the sum is over pairs $(v,x)$ in $\Z \oplus R_\Theta$ such that $v^2 + 4n(x) = D$.

But, this is clearly the Fourier coefficient of a harmonic theta function in $S_{13/2}(\Gamma_0(4))^{+}$, associated to the lattice $\langle 2 \rangle \perp 2E_8$.  This space has dimension $1$, spanned by $\delta(z)$.  By looking at the coefficient of $D=1$, one obtains the corollary.
\end{proof}

\subsection{Theta lifts}\label{subsec:F4Ethetalifts} In this subsection, we complete the proof of Theorem \ref{thm:introQMF}.  More specifically, we will now use our knowledge of the computation of $\{D^{2m}\Theta(g,1),\beta\}$ for arbitrary $\beta$ to compute $\{D^{2m}\Theta(g,\gamma_E),\beta'\}$. 

Let $\Theta = \Theta_{Gan}$ and denote by $\Theta_w$ the $w$-Fourier coefficient of this automorphic form.  Suppose $g_\infty \in G_J(\R)$. To begin, observe that
\begin{align*} a(w)(\gamma_E)W_{2\pi w}(g_\infty) &= \Theta_{w}(g_\infty \gamma_E) \\ &= \Theta_{w}(\gamma_E g_\infty) \\ &= \Theta_{w}(\delta_E^\Q \delta_E^{\R,-1}\delta_E^{\widehat{\Z}}g_\infty) \\ &= \Theta_{\delta_E^{\Q,-1} w}(\delta_E^{\R,-1}g_\infty) \\ &= a(\delta_E^{\Q,-1} w) W_{2\pi w \cdot \delta_E^{\Q}}(\delta_E^{\R,-1}g_\infty) \\ &= a(\delta_E^{\Q,-1} w) W_{2\pi w}(g_\infty).\end{align*}

Now, if $g \in G_2(\R)$, we prove
\[\{D^{2m}W_w(g),\beta\} = \langle P_m(w),\beta\rangle W_{w_0,4+m}(g)\]
where $w_0$ is the binary cubic $pr_I(w) = au^3 + (b,I^\#)u^2v + (c,I)uv^2 + dv^3$ if $w= (a,b,c,d)$.  Observe that $pr_I(\delta_Q w) = pr_E(w)$, where $pr_E(w) = au^3 + (b,E^\#)u^2 v + (c,E)uv^2 + dv^3$.  Combining these two facts, one obtains
\begin{align*}
\{D^{2m}\Theta(g,\gamma_E),\beta\} &= \sum_{rk(w) = 1} a(\delta_{E}^{\Q,-1} w) \langle P_m(w),\beta \rangle W_{pr_I(w)}(g) \\ &= \sum_{rk(w) = 1} a(w) \langle P_m(\delta_{E}^\Q w),\beta \rangle W_{pr_E(w)}(g) \\ &= \sum_{rk(w) = 1} a(w) \langle P_m(w), \delta_E^{\Q,-1}\beta \rangle_E W_{pr_E(w)}(g).
\end{align*}
Here, if $b \in J, c \in J^\vee \simeq J$, $x, y \in J$ then
\[\langle b \wedge c, x \wedge y\rangle_E = (b,x)_E(c,y)-(b,y)_E(c,x)\]
and one extends to $(\wedge^2 J)^{\otimes m}$.  We have used Lemma \ref{lem:Epair} to get
\[\langle P_w(\delta_E^\Q w), \beta\rangle_I = \langle P_m(w), \delta_{E}^{\Q,-1} \beta \rangle_E.\]
Thus, putting everything together,
\begin{align*}
\Theta(\alpha)(g) &:=\int_{[F_4^I]} \{D^{2m}\Theta(g,h),\alpha(h)\}\,dh \\ &= \frac{1}{|\Gamma_I|} \{D^{2m}\Theta(g,1),\alpha(1)\} + \frac{1}{|\Gamma_E|} \{D^{2m}\Theta(g,\gamma_E),\alpha(\gamma_E)\} \\ &= \frac{1}{|\Gamma_I|} \sum_{rk(w) =1}{a(w)\langle P_m(w),\alpha_I \rangle_I W_{pr_I(w)}(g)} + \frac{1}{|\Gamma_E|} \sum_{rk(w) =1}{a(w)\langle P_m(w),\alpha_E \rangle_E W_{pr_E(w)}(g)}.
\end{align*}

This completes the proof of Theorem \ref{thm:introQMF}.

\section{The exponential derivative}\label{sec:expder}
The purpose of this section is to prove Theorem \ref{thm:DiffExp}, which we restate here:

\begin{theorem} Suppose $g \in \Sp_{6}(\R)$ and $\beta \in W(k_1,k_2)$. Let $\ell \geq 0$ be an integer. There is a nonzero constant $B_{k_1,k_2}$, independent of $g$ and $T$, so that 
	\[j(g,i)^{\ell} \rho_{[k_1,k_2]}(J(g,i))\{D^{k_1+2k_2}(j(g,i)^{-\ell} e^{2\pi i (T, g\cdot i)}),\beta\} = B_{k_1,k_2} \{P_{k_1,k_2}(T),\beta\} e^{2\pi i (pr(T),g\cdot i)}.\]
	Moreover, $\{P_{k_1,k_2}(T),\beta\}$ lies in the highest weight submodule $S^{[k_1,k_2]}$ of $S^{k_1}(V_3) \otimes S^{k_2}(\wedge^2 V_3)$.
\end{theorem}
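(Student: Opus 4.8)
The plan is to compute $D^{k_1+2k_2}$ applied to the function $F_T(g) := j(g,i)^{-\ell}e^{2\pi i(T, g\cdot i)}$ directly, and then read off its pairing with $\beta$ using that $\beta$ lies in the Cartan component $W(k_1,k_2) \subseteq V_7^{\otimes k_1}\otimes(\wedge^2 V_7)^{\otimes k_2}$. The key structural inputs, both from Section \ref{sec:groups}, are: (i) $\p_J^+$ is abelian and, via the Cayley transform $C_h$, is identified with $J\otimes\C$ acting on the tube domain $\mathcal{H}_J$ by holomorphic translations with trivial automorphy factor; and (ii) the Siegel parabolic of $\Sp_6(\R)$ lands inside the Siegel parabolic of $H_J^1$, so that for $Z = X+iY\in\mathcal{H}_3$ the element $g_Z = n_G(X)M_Y\in\Sp_6(\R)$ satisfies $g_Z\cdot i = Z$, acts on $\mathcal{H}_J$ affinely by $Z'\mapsto M_Y\cdot Z'+X$ (since $M_Y\in M_J$ acts linearly), and has $j(g_Z,Z')$ independent of $Z'$. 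Since every $g\in\Sp_6(\R)$ with $g\cdot i = Z$ equals $g_Zk$ for $k\in K_{\Sp_6}\subseteq K_{E_7}$, and $\beta$ is $K_{\Sp_6}$-invariant, the $K_{\Sp_6}\simeq U(3)$-action on $\p_J^{+,\vee}$ computed in Section \ref{sec:groups} (the $\det\cdot{}^t(\cdot)^{-1}$-twisted standard representation on the vector part $V_3^\vee\otimes V_7^\vee$, trivial on $V_7^\vee$) together with the cocycle relations for $j$ and $J$ reduce the asserted identity for general $g$ to the case $g = g_Z$ --- exactly the well-definedness mechanism already used in the proof of Proposition \ref{prop:ThetaSMF}.

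At $g = g_Z$ the differentiation is flat. With $X_1,\dots,X_m\in\p_J^+$ and $\widehat W_j\in J\otimes\C$ the corresponding translation vectors, facts (i)--(ii) give $F_T(g_Z\exp(t_1X_1)\cdots\exp(t_mX_m)) = j(g_Z,i)^{-\ell}\exp\bigl(2\pi i(T,\, M_Y\cdot(i+\textstyle\sum_j t_j\widehat W_j)+X)\bigr)$, an exponential in the $t_j$'s with no higher-order corrections, because $\p_J^+$ is abelian, its elements translate $\mathcal{H}_J$, and $g_Z$ acts affinely with constant $j$. Hence $D^{k_1+2k_2}F_T(g_Z) = (2\pi i)^{k_1+2k_2}\,\iota_{M_Y}(T)^{\otimes(k_1+2k_2)}\,F_T(g_Z)$, where $\iota_{M_Y}(T)\in\p_J^{+,\vee}$ is the functional $X\mapsto(T, M_Y\cdot\widehat W(X))$. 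Multiplying by $j(g_Z,i)^{\ell}$ cancels the automorphy factor, leaving $e^{2\pi i(T,Z)}$.

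Now pair with $\beta$. The pairing $\{\,,\,\}$ first projects each $\p_J^{+,\vee}$-slot to its vector component $V_3^\vee\otimes V_7^\vee$; this is where the Siegel part of $\iota_{M_Y}(T)$ (recording $pr(T)$) drops out, so that $T$ survives only through $\mathcal{P}(T)\in V_3\otimes V_7$ in each slot. It then contracts the $V_7^\vee$-components against the $V_7$-factors of $\beta$ and symmetrizes the remaining $V_3$-slots into $S^{k_1}(V_3)\otimes S^{k_2}(\wedge^2 V_3)$ --- by construction this is $\{P_{k_1,k_2}(\,\cdot\,),\beta\}$ evaluated on $\mathcal{P}(M_Y\cdot T)$. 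Applying $\rho_{[k_1,k_2]}(J(g_Z,i))$ undoes the $M_Y$-twist on each surviving $V_3$-slot, returning the untwisted $\mathcal{P}(T)$, using that the action of $M_Y$ on the vector part $V_3\otimes V_7$ of $J$ is, on the $\GL_3$-factor, precisely the one cancelled by $J(g_Z,i)=Y^{-1/2}$; the $\det$-shift that in the Kim case ($\ell=4$) promotes $S^{k_1}(\wedge^2 W_3)\otimes S^{k_2}(W_3)$ to $\det(W_3)^{4}$-twisted weight comes from the factor $j(g_Z,i)^{\ell}$. What remains is $B_{k_1,k_2}\{P_{k_1,k_2}(T),\beta\}\,e^{2\pi i(T,Z)}$ with $B_{k_1,k_2} = (2\pi i)^{k_1+2k_2}$ times the fixed normalization constant relating the $C_h$-identification to the projection conventions --- nonzero because the leading term is manifestly nonzero for generic $T$ and a highest weight $\beta$. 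Finally $e^{2\pi i(T,Z)} = e^{2\pi i(pr(T),Z)}$ since $Z\in\mathcal{H}_3$ has no vector part, so $T$ pairs with $Z$ only through $pr(T)$; by Step 1 this completes the identity for all $g$.

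It remains to see $\{P_{k_1,k_2}(T),\beta\}\in S^{[k_1,k_2]}$. The assignment $\beta\mapsto\{P_{k_1,k_2}(T),\beta\}$ is $G_2$-equivariant, so it suffices to treat $\beta = v_{\omega_1}^{\otimes k_1}\otimes v_{\omega_2}^{\otimes k_2}$; then a direct finite highest-weight computation --- taking $T$ with $\mathcal{P}(T)$ a suitable rank-two tensor adapted to a common Borel of $\GL_3\times G_2$ (a decomposable $\mathcal{P}(T)$ would kill all the $\wedge^2$-slots) --- shows the output is a $\GL_3$-highest weight vector of weight $(k_1+2k_2, k_1+k_2, k_2)$, whence the $\GL_3$-span of the $\{P_{k_1,k_2}(T),\beta\}$ lies in $S^{[k_1,k_2]}$; alternatively one may feed the identity into $\Theta_{Kim}$ and invoke the multiplicity-one minimal $K_{\Sp_6}$-type of the Gross--Savin holomorphic discrete series, as in the proof of Proposition \ref{prop:ThetaSMF}. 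The main obstacle is not any single step but the bookkeeping in the pairing: matching the $M_Y$-twist on the $V_3$-slots against $\rho_{[k_1,k_2]}(J(g_Z,i))$ (with its $\det$-shift) and confirming that the vector-part projection built into $\{\,,\,\}$ is exactly what turns $\iota_{M_Y}(T)^{\otimes(k_1+2k_2)}$ into $P_{k_1,k_2}(T)$. This circle of ideas overlaps with the covariant differential operators of Ibukiyama \cite{ibukiyamaDiff1} and Clerc \cite{clerc}.
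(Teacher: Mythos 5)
The reduction to $g = g_Z$ by $K_{\Sp_6}$-equivariance and the treatment of the highest-weight-submodule claim are both fine and broadly parallel to what the paper does (Lemma 6.2.5 of the paper proves the latter by the same kind of finite highest-weight computation you describe). The gap is in the middle step, and it is serious.

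Your computation of $D^{k_1+2k_2}F_T(g_Z)$ rests on the claim that elements of $\p_J^+$ translate the tube domain $\mathcal{H}_J$ with trivial automorphy factor, so that $F_T(g_Z\exp(t_1X_1)\cdots\exp(t_mX_m))$ is an exponential with linear exponent in the $t_j$. This is false. It is $n_L(J)$ --- the nilradical of the Siegel parabolic --- that translates $\mathcal{H}_J$; the subspace $\p_J^{+}=C_h^{-1}n_L(J\otimes\C)C_h$ is its conjugate by the Cayley transform, and the Cayley transform is not affine. Proposition \ref{prop:Chidents} makes this explicit: an element of $\p_J^+$ decomposes in the Iwasawa decomposition $\mathfrak{n}(J)+\m_J+\k_{E_7}$ as
\[
iC_h^{-1}n_L(E)C_h = in_L(E)+\tfrac{1}{2}M(\Phi_{1,E})+\tfrac{i}{2}\bigl(n_L^\vee(E)-n_L(E)\bigr),
\]
so it has a Levi part and a $\k$-part in addition to the nilradical part you keep. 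These extra pieces do act nontrivially on $j(g,i)^{-\ell}$ and on $e^{2\pi i(T,g\cdot i)}$ (cf.\ Lemma \ref{lem:derivPieces}), and their iterated interaction produces the polynomial $P_n(E_1,\dots,E_n)$ of Proposition \ref{prop:hmderiv}, whose terms beyond the leading $E_1\otimes\cdots\otimes E_n$ include the weight terms $\ell(1,E_j)$ and the Jordan-product terms $\{E_i,E_j\}$. So $D^nF_T(g_Z)$ is \emph{not} a scalar times $\iota_{M_Y}(T)^{\otimes n}F_T(g_Z)$; even at $\ell=0$ the term $\{E_1,E_2\}$ is present. Your assertion that the $T$-dependence becomes $\iota_{M_Y}(T)^{\otimes n}$ and that pairing with $\beta$ then simply projects to the vector part therefore starts from a wrong intermediate identity.

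What actually makes the theorem true is the content of Lemma \ref{lem:Pnleading}: after pairing with $\beta\in W(k_1,k_2)$, only the leading term of $P_n$ survives. This is not a consequence of the pairing merely projecting each $\p_J^{+,\vee}$-slot to $V_3\otimes V_7$: the correction terms $\{E_i,E_j\}$ of two vector-part elements $E_i,E_j$ land back in $J$ and need not vanish. The paper's argument reduces (by $G_2$-equivariance) to the highest-weight vector $\beta=e_2^{\otimes k_1}\otimes(e_2\wedge e_3^*)^{\otimes k_2}$ and then uses the specific identities $(1,E_{\alpha_i})=0$, $E_{\alpha_i}^\#=0$, $\{E_{\alpha_i},E_{\alpha_j}\}=0$ for the basis elements that actually contribute to the contraction --- these identities are what annihilate every non-leading term of $P_n$. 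Your proposal contains no substitute for this lemma; it is the crux of the proof, not a bookkeeping issue, and your appeal to ``$\p_J^+$ translates $\mathcal{H}_J$'' hides exactly this difficulty.
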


Our proof of this theorem is to, essentially completely explicitly, calculate the derivatives 
\begin{equation}\label{eqn:manyDerivs}X_{\alpha_1} \cdots X_{\alpha_n}(j(g,i)^{-\ell} e^{2\pi i (T, g \cdot i)}).\end{equation}
To do this, we use the Iwasawa decomposition $\h_{J} = n(J) + \m_J + \k_{E_7}$, write each $X_{\alpha}$ as a sum in terms of this decomposition, and calculate the derivatives for each piece.  

\subsection{Preliminaries}
Recall from above the Cayley transform $C_h \in H_J(\C)$, which satisfies
\begin{enumerate}
	\item $C_h^{-1} n_L(J \otimes \C) C_h = \p_J^+$
	\item $C_h^{-1} n_L^{\vee}(J\otimes \C) C_h = \p_J^{-}$
	\item $C_h^{-1} (\m_J \otimes \C) C_h = \k_{E_7}$.
\end{enumerate}

Given $\phi \in \m_J$, let $M(\phi)$ denote its action on $W_J$ \cite[section 3.4]{pollackQDS}.
\begin{proposition}\label{prop:Chidents} One has the following identities:
	\begin{enumerate}
		\item $C_h^{-1} n_L(x) C_h = n_L(x) - \frac{i}{2} M(\Phi_{1,x}) + \frac{1}{2}(n_L^\vee(x) - n_L(x))$
		\item $C_h^{-1} n_L^\vee(\gamma) C_h = n_L(\gamma) + \frac{i}{2} M(\Phi_{1,\gamma}) + \frac{1}{2}(n_L^\vee(\gamma)- n_{L}(\gamma))$
		\item If $\phi(1) = 0$, then $C_h^{-1}M(\phi) C_h = M(\phi)$.
		\item If $\phi = \Phi_{1,z}$, then $C_h^{-1}M(\phi) C_h = -i n_L(z) + i n_L^\vee(z)$.
	\end{enumerate} 
\end{proposition}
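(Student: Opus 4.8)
The plan is to conjugate by the three factors of $C_h^{-1} = \eta(2^{1/2})\,n_G^\vee(i/2)\,n_G(i)$ one at a time. The key structural input is the $\Z$-grading $\h_J \otimes \C = n_L^\vee(J^\vee\otimes\C) \oplus \m_J\otimes\C \oplus n_L(J\otimes\C)$ attached to the Siegel parabolic $P_J$: here $n_L(J\otimes\C)$ has degree $+1$, $n_L^\vee(J^\vee\otimes\C)$ has degree $-1$, and the degree-$0$ piece $\m_J\otimes\C$ (which contains $d\eta$) is the complexified Levi. Since the unipotent radical of $P_J$ is abelian, $[n_L(J), n_L(J)] = 0 = [n_L^\vee(J^\vee), n_L^\vee(J^\vee)]$, so $\exp(\ad n_L(i))$ and $\exp(\ad n_L^\vee(i/2))$ are each polynomials of degree at most $2$ in the corresponding adjoint operator, and every conjugation is a short explicit computation. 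The only bracket data needed are the relations from \cite[\S 3.4]{pollackQDS}: $[M(\phi), n_L(x)] = n_L(\phi(x))$ and the analogue on $n_L^\vee$, the mixed bracket $[n_L^\vee(\gamma), n_L(x)] \in \m_J\otimes\C$ (whose semisimple component is a multiple of $M(\Phi_{\gamma,x})$ and whose $d\eta$-component is a multiple of $(\gamma,x)$), together with $\Ad(\eta(\lambda))n_L(x) = \lambda^{-2}n_L(x)$, $\Ad(\eta(\lambda))n_L^\vee(\gamma) = \lambda^{2}n_L^\vee(\gamma)$, and $\Ad(\eta(\lambda))M(\phi) = M(\phi)$ for $\phi \in \m_J^1$.

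With this, identity (3) is immediate: if $\phi(1) = 0$ then $\phi$ lies in $\f_4 \subseteq \m_J^1$, hence preserves the trace pairing and fixes $\iota(1)$, so $[M(\phi), n_L(1)] = n_L(\phi(1)) = 0$ and $[M(\phi), n_L^\vee(\iota(1))] = 0$; therefore $M(\phi)$ commutes with $n_G(i)$ and $n_G^\vee(i/2)$, and it commutes with $\eta(2^{1/2})$ because $\phi \in \m_J^1$. For (1), $n_G(i)$ already commutes with $n_L(x)$, so one only has $\Ad(n_G^\vee(i/2))n_L(x) = n_L(x) + \tfrac i2[n_L^\vee(\iota(1)), n_L(x)] + \tfrac12\bigl(\tfrac i2\bigr)^2[n_L^\vee(\iota(1)),[n_L^\vee(\iota(1)), n_L(x)]]$; the single bracket contributes the term $-\tfrac i2 M(\Phi_{1,x})$ together with a $d\eta$-piece that is absorbed, the double bracket lies back in $n_L(J)$, and applying $\Ad(\eta(2^{1/2}))$ (which scales $n_L$ by $\tfrac12$, scales $n_L^\vee$ by $2$, and fixes $\m_J\otimes\C$) and collecting terms gives the stated expansion $n_L(x) - \tfrac i2 M(\Phi_{1,x}) + \tfrac12(n_L^\vee(x) - n_L(x))$; identity (2) is the same computation with the roles of $n_G(i)$ and $n_G^\vee(i/2)$ reversed. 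For (4) one uses the identity $\Phi_{1,z}(1) = 2z$ (a short computation in $J$, from $1\times w = (1,w)1 - w$) and its dual statement on $J^\vee$ to get $\Ad(n_G(i))M(\Phi_{1,z}) = M(\Phi_{1,z}) - 2i\,n_L(z)$, then applies $\Ad(n_G^\vee(i/2))$ and $\Ad(\eta(2^{1/2}))$; the residual $\m_J$-valued contributions cancel exactly, leaving $-i\,n_L(z) + i\,n_L^\vee(z)$.

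I expect (4) to be the main obstacle: it is the case in which the intermediate $\m_J$-terms must cancel completely, so it is the most sensitive to the precise normalization of the mixed bracket and of the operators $\Phi_{\gamma,x}$; (1) and (2) use the same normalizations but only through a single bracket, and (3) is essentially formal. As a cross-check — and as an alternative route that sidesteps the structure constants entirely — one may verify each identity inside $\End(W_J)$: both sides of $C_h^{-1}Y C_h = Z$ are endomorphisms of the $56$-dimensional space $W_J$, and the identity is equivalent to $C_h^{-1}\circ Y = Z\circ C_h^{-1}$ as maps $W_J \to W_J\otimes\C$. Evaluating both sides on the spanning vectors $(1,0,0,0)$, $(0,0,0,1)$, $(0,z,0,0)$, $(0,0,E,0)$ of $W_J$ — on which $C_h^{-1}$ is given explicitly by Lemma \ref{lem:Chef} — reduces each of (1)–(4) to the same elementary identities in the cubic norm structure $J$.
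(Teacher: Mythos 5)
Your proposal takes essentially the same route as the paper: the paper's (mostly omitted) proof also works by conjugating degree by degree through the three factors of $C_h^{-1}$, citing the bracket relations of \cite[Lemma 3.4.1]{pollackQDS}, the identity $\Phi_{Y,1}=\Phi_{1,Y}$, and the explicit conjugation $n_G^\vee(\delta)M(\phi)n_G^\vee(-\delta)=M(\phi)-n_L^\vee(\widetilde\phi(\delta))$ — exactly the structural inputs you identify, with the unipotent radicals of $P_J$ being abelian so each $\exp(\ad)$ truncates. Your closing remark about checking inside $\End(W_J)$ via Lemma \ref{lem:Chef} is a reasonable sanity check but is not what the paper does; the core argument matches.
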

\begin{proof} Some of the facts needed to prove this are in Lemma 3.4.1 of \cite{pollackQDS}.  Also useful is the identity $\Phi_{Y,1} = \Phi_{1,Y}$ (see Theorem 4.0.10 in \cite{pollackNotes}).  To help with the proof of the second statement, one computes that 
	\[n_G^\vee(\delta) M(\phi) n_G^\vee(-\delta) = M(\phi) - n_L^\vee(\widetilde{\phi}(\delta)).\]
We omit the rest of the proof.
\end{proof}

\subsection{Some computations}
For $E \in J \otimes \C$, define $X^+(E) = i C_h^{-1} n_L(E) C_h$.  To warm up, we will compute $X^{+}(E)(j(g,i)^{-\ell}e^{2\pi i (T, g \cdot i)})$.  Let $h$ denote the function $h(g) = j(g,i)^{-\ell}e^{2\pi i (T, g \cdot i)}$.  Let $m_Y$ denote any element of $M_J$ for which $m_Y \cdot i = Y i \in \mathcal{H}_J$.

\begin{lemma}\label{lem:derivPieces} One has the following computations:
\begin{enumerate}
	\item $n_L(E) h(M(\delta,m)) = 2\pi i(T,m(E))h(M(\delta,m))$;
	\item $M(\Phi_{1,E}) e^{-2\pi (T, m(1))} = - 4\pi(T,m(E)) e^{-2\pi(T,m(1))}$;
	\item $M(\Phi_{1,E})j(m_Y,i)^{-\ell} = \ell(1,E)n(Y)^{\ell/2} = \ell(1,E)j(m_Y,i)^{-\ell}$;
	\item if $k = n_L^\vee(E) - n_L(E)$, then $kj(g,i)^{-\ell} = -\ell i (1,E) j(g,i)^{-\ell}$.
\end{enumerate}
\end{lemma}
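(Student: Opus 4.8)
The plan is to establish all four identities by a direct first--order computation, writing $X\varphi(g)=\frac{d}{dt}\big|_{t=0}\varphi(g\exp(tX))$ and expanding $\exp(tX)=1+tX+O(t^2)$. The standing tools are the cocycle relation $j(g_1g_2,Z)=j(g_1,g_2\cdot Z)\,j(g_2,Z)$ together with the explicit formulas recalled in Section~\ref{sec:groups}: $n_G(X)\cdot Z=Z+X$ with $j(n_G(X),Z)=1$; $M(\delta,m)\cdot Z=m(Z)$ and $j(M(\delta,m),Z)\equiv\delta^{-1}$ (so that, via the embedding $m\mapsto M(\lambda(m)^{1/2},m)$, the group $M_J$ acts on $\mathcal H_J$ linearly with $j(m,Z)=\lambda(m)^{-1/2}$); and $j(m_Y,i)=n(Y)^{-1/2}$. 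The one non--formal input is the identity $\Phi_{1,E}(1)=2E$, which follows in a couple of lines from the definition of $\Phi_{\gamma,x}$ and the relations $1\times v=(1,v)1-v$, $1^\#=1$, $(1,1)=3$. From it one also obtains $d\lambda(\Phi_{1,E})=2(1,E)$: since $\lambda(m)=n(m(1))$, we have $\lambda(\exp(t\Phi_{1,E}))=n\big(1+2tE+O(t^2)\big)=1+2t(1^\#,E)+O(t^2)=1+2t(1,E)+O(t^2)$, using $Dn(x)[h]=(x^\#,h)$.

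For (1): $\exp(t\,n_L(E))=n_G(tE)$, so $M(\delta,m)n_G(tE)\cdot i=m(i)+t\,m(E)$ while $j(M(\delta,m)n_G(tE),i)=\delta^{-1}$ is independent of $t$; hence $h(M(\delta,m)n_G(tE))=\delta^{\ell}e^{2\pi i(T,m(i)+t\,m(E))}$, whose derivative at $t=0$ is $2\pi i(T,m(E))\,h(M(\delta,m))$. For (2): $\exp(t\Phi_{1,E})\in M_J$ acts linearly on $J$ with $\exp(t\Phi_{1,E})(1)=1+2tE+O(t^2)$, so $m\exp(t\Phi_{1,E})(1)=m(1)+2t\,m(E)+O(t^2)$, and $\frac{d}{dt}\big|_{t=0}e^{-2\pi(T,m(1)+2t\,m(E))}=-4\pi(T,m(E))\,e^{-2\pi(T,m(1))}$.

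For (4): I will show $r_1(i)=(1,i,-1,-i)$ is an eigenvector of $k=n_L^\vee(E)-n_L(E)$. Substituting $(a,b,c,d)=(1,i,-1,-i)$ into $n_L(E)(a,b,c,d)=(0,aE,b\times E,(c,E))$ and $n_L^\vee(E)(a,b,c,d)=((b,E),c\times E,dE,0)$ and simplifying with $1\times E=(1,E)1-E$ yields $k\cdot r_1(i)=i(1,E)\,r_1(i)$. Hence $\exp(tk)\,r_1(i)=e^{it(1,E)}r_1(i)$, so for every $g$ one has $j(g\exp(tk),i)=e^{it(1,E)}j(g,i)$ and $g\exp(tk)\cdot i=g\cdot i$; differentiating $j(\cdot,i)^{-\ell}$ gives $-\ell i(1,E)\,j(g,i)^{-\ell}$. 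For (3): the group element $m_Y\exp(tM(\Phi_{1,E}))$ lies in the Levi $\{M(\delta,m)\}$, with similitude component $\delta=\big(n(Y)\,\lambda(\exp(t\Phi_{1,E}))\big)^{1/2}$; therefore $j(m_Y\exp(tM(\Phi_{1,E})),i)^{-\ell}=\delta^{\ell}=n(Y)^{\ell/2}e^{\frac{t\ell}{2}d\lambda(\Phi_{1,E})}$, and differentiating at $t=0$ with $d\lambda(\Phi_{1,E})=2(1,E)$ gives $\ell(1,E)\,n(Y)^{\ell/2}$.

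The step demanding the most care is part (3): one must confirm that $M(\Phi_{1,E})$ is the element of $\Lie(H_J^1)\otimes\C$ that carries the scalar adjustment forced by $\delta^2=\lambda(m)$ (so that the similitude of $m_Y\exp(tM(\Phi_{1,E}))$ really is $\big(n(Y)\lambda(\exp(t\Phi_{1,E}))\big)^{1/2}$), and that the normalization of $\lambda$ matches \cite[section 3.4]{pollackQDS}; this is consistent with Proposition~\ref{prop:Chidents}, which already realizes $M(\Phi_{1,z})$ inside $\Lie(H_J^1)\otimes\C$. Everything else is routine bookkeeping of first--order Taylor expansions.
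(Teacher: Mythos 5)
Your proof is correct and follows essentially the same route as the paper: direct first-order differentiation along the relevant Lie-algebra directions, with $\Phi_{1,E}(1)=2E$ as the one non-formal input. The only differences are cosmetic—for (3) you track the similitude component $\delta$ of $m_Y\exp(tM(\Phi_{1,E}))$ where the paper tracks the first coordinate of $M(\Phi_{1,E})r_1(i)$, and for (4) you prove the eigenvector relation $k\,r_1(i)=i(1,E)r_1(i)$ in full where the paper reads off only the first coordinate of $e^{tk}r_1(i)$; both amount to the same computation.
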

\begin{proof} The first statement follows from the identity $M(\delta,m) n_L(E) M(\delta, m)^{-1} = n_{L}(m(E))$.  The second statement follows from the fact that $\Phi_{1,E}(E') = \{E,E'\}$, so that $\Phi_{1,E}(1) = 2 E$.  For the third statement, we have 
	\[M(\Phi_{1,E}) j(m_Y,i)^{-\ell} = \frac{d}{dt}|_{t=0} (\langle m_Y \exp(t \Phi_{1,E}) r_0(i), f \rangle)^{-\ell}.\]
Now $M(\Phi_{1,E})r_1(i) = (-(1,E),\ldots)$ so $M(\Phi_{1,E}) j(m_Y,i)^{-\ell}$ is the coefficient in $t$ of $(n(Y)^{-1/2}(1-(1,E)t))^{-\ell}$, which is $n(Y)^{\ell/2} \cdot \ell (1,E)$.  Thus $M(\Phi_{1,E}) j(m_Y,i)^{-\ell} = \ell (1,E) n(Y)^{\ell/2}$ as claimed.  For the fourth statement, observe that $n_L^\vee(E) r_1(i) = (i(1,E),\ldots)$, $n_L(E) r_1(i) = (0,\ldots)$, so $e^{tk}r_1(i) = (1+i(1,E)t,\ldots) + O(t^2)$.  As $j(ge^{tk},i) = j(g,i) j(e^{tk},i)$, the statement follows.
\end{proof}

Putting everything together, from Lemma \ref{lem:derivPieces} and Proposition \ref{prop:Chidents} one obtains
\begin{align*} (C_h^{-1} n_L(E) C_h) h(m) &=  2 \pi i (T,m(E)) h(m) + (-i/2)(-4\pi (T,m(E)) + \ell(1,E))h(m) \\ &\,+ \frac{1}{2}(-\ell i (1,E))h(m)\\ & = (4\pi i(T,m(E)) -\ell i (1,E))h(m) \\
(C_h^{-1} n_L^\vee(E) C_h) h(m) &= 2 \pi i (T,m(E)) h(m) + (i/2)(-4\pi (T,m(E)) + \ell(1,E))h(m) \\ &\,+ \frac{1}{2}(-\ell i (1,E))h(m) \\ &= 0.
\end{align*}
Thus
\[X^+_E h(m) = (-4\pi(T,m(E))+ \ell(1,E))h(m).\]

We now explain the computation of \eqref{eqn:manyDerivs}.  To setup the result, define $P_k: J^{\otimes k} \rightarrow T(J)$ inductively as follows: $P_0 =1$, and for $k \geq 0$,
\begin{align*}P_{k+1}(E_1,\ldots,E_k,E_{k+1}) &= P_{k}(E_1,\ldots E_k) \otimes E_{k+1} + \ell(1,E_{k+1}) P_k(E_1,\ldots,E_k) + \frac{1}{2}\{E_{k+1},P_k(E_1,\ldots E_k)\} \\ &\,+\frac{1}{2}P_k(\{E_{k+1},E_1,\ldots,E_k\}).\end{align*}
Here we write 
\[\{E,V_1 \otimes \cdots \otimes V_r\}:= \sum_{j=1}^r V_1 \otimes \cdots \otimes \{E,V_j\} \otimes \cdots \otimes V_r\]
and we interpret $\{E,C\} = 0$ if $C \in T^0(J)$ is constant.  Thus
\begin{enumerate}
	\item $P_1(E_1) = E_1 + \ell(1,E_1)$.
	\item $P_2(E_1,E_2) = E_1 \otimes E_2 + \ell(1,E_2) E_1 + \ell(1,E_1) E_2 + \{E_1,E_2\} + \ell^2(1,E_1)(1,E_2) + \frac{\ell}{2}(1,\{E_1,E_2\}).$
\end{enumerate}
Define now $w_{T,m}: T(J) \rightarrow \C$ as 
\[w_{T,m}(V_1 \otimes \cdots \otimes V_r) = (-4\pi)^r (T,m(V_1)) \cdots (T,m(V_r))\]
and extending to $T(J)$ by linearity.

\begin{proposition}\label{prop:hmderiv} Let the notation be as above.  Then $X_{E_{k}} \cdots X_{E_1} h(m) = w_{T,m}(P(E_1,\ldots,E_k)) h(m)$.
\end{proposition}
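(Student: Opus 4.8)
The plan is to prove Proposition \ref{prop:hmderiv} by induction on $k$. The base case $k=0$ is the tautology $h(m)=h(m)$, and the case $k=1$ is exactly the warm-up computation $X^+_E h(m) = (-4\pi(T,m(E))+\ell(1,E))h(m)$ carried out above, which matches $w_{T,m}(P_1(E_1))h(m)$ once one notes $w_{T,m}(E_1) = -4\pi(T,m(E_1))$ and $w_{T,m}(\ell(1,E_1)) = \ell(1,E_1)$ on the constant part. For the inductive step, I would apply $X_{E_{k+1}} = iC_h^{-1}n_L(E_{k+1})C_h$ to both sides of $X_{E_k}\cdots X_{E_1}h(m) = w_{T,m}(P(E_1,\ldots,E_k))h(m)$, using the decomposition of $C_h^{-1}n_L(E_{k+1})C_h$ from Proposition \ref{prop:Chidents}(1) into its $n(J)$, $\m_J$, and $\k_{E_7}$ (i.e.\ $n_L^\vee - n_L$) pieces, and then differentiate each piece against the right-hand side.

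The key point is to understand how each of the three pieces acts on a term of the form $w_{T,m}(V_1\otimes\cdots\otimes V_r)h(m)$, viewed as a function of $m\in M_J(\R)$ (extended to a function on $H_J^1(\R)$ via right $K_{E_7}$-equivariance and left translation by $n(J)$). The $n_L(E_{k+1})$ piece should, by the first identity of Lemma \ref{lem:derivPieces}, act on $h(m)$ producing the factor $2\pi i(T,m(E_{k+1}))$, and it should also differentiate $m$ inside $w_{T,m}$: since $w_{T,m}(V) = (-4\pi)^r\prod_j(T,m(V_j))$ and conjugation by $M(\delta,m)$ intertwines $n_L(E_{k+1})$ with $n_L(m(E_{k+1}))$, the Leibniz rule will generate the $\{E_{k+1},-\}$ derivation on the tensor factors, i.e.\ the terms $\frac12\{E_{k+1},P_k\}$ and $\frac12 P_k(\{E_{k+1},E_1,\ldots,E_k\})$ in the recursion — here I would use that $n_L(m(E_{k+1}))$ acting at the identity of $M_J$ corresponds infinitesimally to the action of $\Phi$-type elements, tracking the normalization carefully. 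The $M(\Phi_{1,E_{k+1}})$ piece, by Lemma \ref{lem:derivPieces}(2),(3), contributes $-4\pi(T,m(E_{k+1})) + \ell(1,E_{k+1})$ times $h(m)$, and since $M(\Phi_{1,E_{k+1}})$ is an element of $\m_J$ it also acts on $w_{T,m}$ through the adjoint action $\ad(\Phi_{1,E_{k+1}})$ on $J^{\otimes r}$, which is precisely $\{E_{k+1},-\}$; and the $n_L^\vee - n_L$ piece contributes $-\ell i(1,E_{k+1})$ by Lemma \ref{lem:derivPieces}(4) (and annihilates $w_{T,m}$ since it lies in $\k_{E_7}$ and $w_{T,m}$ is built from $M_J$-data). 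Assembling the three contributions with their coefficients $1$, $-\tfrac{i}{2}$, $\tfrac12$ from Proposition \ref{prop:Chidents}(1), multiplying by the overall $i$ in the definition of $X^+$, and collecting terms, one recovers exactly $w_{T,m}(P_{k+1}(E_1,\ldots,E_{k+1}))h(m)$: the $E_{k+1}\otimes$-tensoring term, the $\ell(1,E_{k+1})$ scaling term, the $\tfrac12\{E_{k+1},P_k\}$ term, and the $\tfrac12 P_k(\{E_{k+1},\ldots\})$ term.

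The main obstacle I anticipate is bookkeeping: matching the numerical constants (the $2\pi i$, the $-4\pi$, the $\tfrac{i}{2}$, the factors of $\ell$) so that everything lines up with the specific normalization chosen in the definitions of $P_{k+1}$ and $w_{T,m}$, and correctly separating "the derivative hits $h(m)$" from "the derivative hits the polynomial $w_{T,m}$" when a single Lie algebra element does both. In particular one must be careful that the $\ad$-action of $\m_J$ on $J^{\otimes r}$ used to differentiate $w_{T,m}$ really is $\sum_j V_1\otimes\cdots\otimes\{E_{k+1},V_j\}\otimes\cdots$ and not some twist of it, and that the $n_L(E_{k+1})$ piece's contribution to differentiating $w_{T,m}$ has the correct coefficient relative to the $M(\Phi_{1,E_{k+1}})$ piece's contribution — these are the two sources that combine to give the $\tfrac12$-weighted derivation terms. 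Once the one-step computation is pinned down, the induction closes formally, and the statement follows.
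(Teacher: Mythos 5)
Your overall architecture is the same as the paper's: induct on $k$, decompose $X_{E_{k+1}} = iC_h^{-1}n_L(E_{k+1})C_h$ via Proposition \ref{prop:Chidents}(1) into its $n_L$, $M(\Phi_{1,\cdot})$, and $\k_{E_7}$ pieces, and match the resulting terms to the recursion defining $P_{k+1}$. But your attribution of the recursion's terms to these three pieces is wrong in two ways, and these are genuine gaps, not just bookkeeping.

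First, you claim the $n_L(E_{k+1})$ piece both multiplies $h(m)$ by $2\pi i(T,m(E_{k+1}))$ and differentiates $w_{T,m}$ via a Leibniz rule to produce the $\{E_{k+1},-\}$ terms. This cannot work: $n_L(E_{k+1})\notin\m_J$, so differentiating at $m\in M_J$ in this direction takes you off $M_J$, where the inductive formula $\tilde h(m)=w_{T,m}(P_k)h(m)$ is simply not available ($w_{T,g}$ is not even defined for general $g\in H_J^1$). What the paper does instead is use the left-$N$-equivariance of $\tilde h=X_{E_k}\cdots X_{E_1}h$ — the $X_{E_j}$ are right-invariant derivatives and hence commute with left translations, so $\tilde h(n_G(X)g)=e^{2\pi i(T,X)}\tilde h(g)$ — together with the conjugation $M(\delta,m)\exp(tn_L(E))=\exp(tn_L(m(E)))M(\delta,m)$ to conclude that $n_L(E_{k+1})$ acts as the \emph{pure scalar} $2\pi i(T,m(E_{k+1}))$ on the entire function $\tilde h$. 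No Leibniz rule, no derivation of $w_{T,m}$.

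Second, you claim the $\frac{i}{2}(n_L^\vee-n_L)$ piece merely contributes the scalar $\frac{\ell}{2}(1,E_{k+1})$ and "annihilates $w_{T,m}$" because it lies in $\k_{E_7}$. In fact this piece is precisely where the term $\tfrac12 P_k(\{E_{k+1},E_1,\ldots,E_k\})$ comes from: since $\k_{E_7}$ does not commute with the $X_{E_j}$, one must move it across them, and the bracket $\bigl[\,ik/2,\,X_{E'}\,\bigr]=\tfrac12 X_{\{E,E'\}}$ (with $k=n_L^\vee(E)-n_L(E)$) generates exactly $\tfrac12\sum_j X_{E_k}\cdots X_{\{E_{k+1},E_j\}}\cdots X_{E_1}h$. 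By the inductive hypothesis and the multilinearity of $P_k$, this becomes $\tfrac12 w_{T,m}(P_k(\{E_{k+1},E_1,\ldots,E_k\}))h(m)$. The correct final bookkeeping is therefore: $n_L$ gives a scalar only; $M(\Phi_{1,E_{k+1}})$ gives a scalar plus the derivation $\tfrac12\{E_{k+1},P_k\}$ on the tensor (this part of your proposal is right); the $\k_{E_7}$ piece gives a scalar plus the commutator term $\tfrac12 P_k(\{E_{k+1},\ldots\})$. The three scalar contributions, $-2\pi(T,m(E_{k+1}))$, $-2\pi(T,m(E_{k+1}))+\tfrac{\ell}{2}(1,E_{k+1})$, and $\tfrac{\ell}{2}(1,E_{k+1})$, sum to $-4\pi(T,m(E_{k+1}))+\ell(1,E_{k+1})$, matching the $P_k\otimes E_{k+1}$ and $\ell(1,E_{k+1})P_k$ terms of the recursion. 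Until you fix both of these attributions the induction does not close.
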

\begin{proof} We proceed by induction.  Observe that
	\begin{enumerate}
		\item $n_L(E_{k+1}) X_{E_{k}} \cdots X_{E_1} h(m) = 2\pi i (T,m E_{k+1}) X_{E_{k}} \cdots X_{E_1} h(m)$.
		\item $M(\Phi_{1,E})w_{T,m}(V_1 \otimes \cdots \otimes V_r) = w_{T,m}(\{E, V_1 \otimes \cdots \otimes V_r\})$.
		\item $M(\Phi_{1,E}) h(m) = (\ell (1,E) - 4\pi(T,m(E))) h(m)$.
		\item If $\mu \in K$, then $\mu X_{E_k} \cdots X_{E_1} h(m) = j(\mu,i)^{-\ell} X_{\mu \cdot E_k} \cdots X_{\mu \cdot E_1} h(m)$.
	\end{enumerate}
Now, if $k = n_L^\vee(E) - n_L(E)$, then 
		\begin{align*} [k, C_h^{-1} n_L(E') C_h] &= [-i C_h^{-1} M(\Phi_{1,E}) C_h, C_h^{-1} n_L(E') C_h] \\&= -i C_h^{-1} [M(\Phi_{1,E}),n_L(E')] C_h \\&= -i C_h^{-1} n_L(\{E,E'\}) C_h.\end{align*}
 Thus $[i k/2, X_{E'}] = \frac{1}{2} X_{\{E,E'\}}$. Consequently, 
		\[ (ik/2) (X_{E_k} \cdots X_{E_1}) h(m) = \frac{\ell}{2}(1,E) (X_{E_k} \cdots X_{E_1}) h(m) + \frac{1}{2} (\sum_{j=1}^{k} X_{E_k} \cdots X_{\{E,E_j\}} \cdots X_{E_1}) h(m).\]
As
\[X_E = i C_h^{-1} n_L(E) C_h = in_L(E) + \frac{1}{2}M(\Phi_{1,E}) + \frac{i}{2}(n_L^\vee(E)-n_L(E)),\]
one can now easily verify the proposition.
\end{proof}

We now prove:
\begin{lemma}\label{lem:Pnleading} Let $P_n: J^{\otimes k} \rightarrow T(J)$ be as above and let $\beta \in W(k_1,k_2) \subseteq V_7^{\otimes k_1} \otimes (\wedge^2 V_7)^{\otimes k_2}$.  Set $n = k_1 + 2k_2$.  Then
	\begin{equation}\label{eqn:Psum}\sum_{\alpha_i}{P_n(E_{\alpha_1},\ldots,E_{\alpha_n}) \{E_{\alpha_1}^\vee \otimes \cdots \otimes E_{\alpha_n}^\vee,\beta\}} = \sum_{\alpha_i}{E_{\alpha_1}\otimes \cdots \otimes E_{\alpha_n} \{E_{\alpha_1}^\vee \otimes \cdots \otimes E_{\alpha_n}^\vee,\beta\}} .\end{equation}
In other words, only the leading term contributes.  
\end{lemma}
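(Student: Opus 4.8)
The plan is to expand $P_n(E_1,\dots,E_n)$ through its recursion as the leading term $E_1\otimes\cdots\otimes E_n$ plus ``correction terms,'' and to show that after summing over the $\alpha_i$ and pairing against $\beta$ every correction contributes $0$. A useful preliminary is the identity $\{E,E'\}:=\Phi_{1,E}(E')=EE'+E'E$, the matrix anticommutator in $J\cong H_3(\Theta)$; it follows from the definition of $\Phi_{\gamma,x}$ together with the standard cubic-norm identities $1\times Y=(1,Y)1-Y$ and $Y\times Z=2(Y\circ Z)-(1,Y)Z-(1,Z)Y+\bigl((1,Y)(1,Z)-(Y,Z)\bigr)1$. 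Unwinding the recursion then shows: in each term of $P_n$ the indices $1,\dots,n$ split disjointly into \emph{solitary} indices, each contributing a lone tensor factor $E_i$ as in the leading term, and \emph{grouped} indices; a group of size $s$ is combined either inside a single scalar factor---of the form $(1,E_i)$ when $s=1$, or $\bigl(E_i,\ \text{iterated anticommutator of the other }s-1\bigr)$ when $s\ge 2$---or inside a single tensor factor which is an iterated anticommutator of the group. Every correction term has at least one group.

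The second ingredient is that the pairing $\{\,\cdot\,,\beta\}$ factors slotwise through the projection $\mathcal{P}\colon J^\vee\cong J\to V_3\otimes V_7$; equivalently, summing over the $\alpha_i$ couples each index through the tensor $\widetilde{\mathcal{P}}:=\sum_\alpha E_\alpha\otimes\mathcal{P}(E_\alpha^\vee)$, whose $J$-component lies in the $21$-dimensional subspace $J_{21}=(\ker\mathcal{P})^\perp$ of $H_3(\Theta)$-elements with vanishing diagonal and trace-zero octonion off-diagonal entries (indeed $\widetilde{\mathcal{P}}$ is the identity tensor of $J_{21}\cong V_3\otimes V_7$). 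If a correction has a size-$1$ group $\{j\}$, then $E_{\alpha_j}$ occurs only inside $(1,E_{\alpha_j})$, and $\sum_{\alpha_j}(1,E_{\alpha_j})\mathcal{P}(E_{\alpha_j}^\vee)=\mathcal{P}(\iota(1))=0$, since $\iota(1)$ corresponds to the diagonal element $I\in J$. So it remains to kill the corrections all of whose groups have size $\ge 2$.

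Here I would use that $\beta$ lies in the Cartan component $W(k_1,k_2)=V_{k_1\omega_1+k_2\omega_2}$. Fix such a group, of size $s\ge 2$. Carrying out its summation first, the correction applies to $\beta$ a $G_2$-equivariant map factoring as $V_7^{\otimes k_1}\otimes(\wedge^2 V_7)^{\otimes k_2}\xrightarrow{\Phi}N\to(\text{target})$, where $\Phi$ applies to the $s$ tensor slots of $\beta$ consumed by the group a ``block map'' built from iterated anticommutators, the trace form, and $\mathcal{P}$, restricted along $\widetilde{\mathcal{P}}$ and hence $G_2$-equivariant (as $G_2$ preserves the octonion product and stabilises $J_{21}$), landing in $J\otimes(\text{residual slots})$ in the tensor-factor case and in $(V_3\otimes V_7)\otimes(\text{residual slots})$, with all consumed $V_7$-content absorbed, in the scalar-factor case. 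Since $J=\mathrm{triv}^{\oplus 6}\oplus V_7^{\oplus 3}$ and $V_3\otimes V_7\cong V_7^{\oplus 3}$ as $G_2$-modules, the block factor carries highest weight $\le\omega_1$, while the residual slots carry highest weight $\le\bigl(1+(k_1-r)+g\bigr)\omega_1+(k_2-f-g)\omega_2$, where $r$ is the number of consumed solitary $V_7$-slots, $f$ the number of fully consumed $\wedge^2 V_7$-pairs, and $g$ the number of half-consumed ones (each leaving a lone $V_7$), so that $r+2f+g=s$. Writing $\omega_1=\theta_\ell+2\theta_s$ and $\omega_2=2\theta_\ell+3\theta_s$ in terms of the long and short simple roots of $G_2$, the coefficient of $\theta_\ell$ in $\bigl((1-r+g)\omega_1-(f+g)\omega_2\bigr)$ equals $1-(r+2f+g)=1-s<0$; hence $k_1\omega_1+k_2\omega_2$ is not dominated by the highest weight of $N$, so $W(k_1,k_2)$ does not occur in $N$ and $\Phi(\beta)=0$. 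Combining this with the previous two steps proves \eqref{eqn:Psum}.

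The main obstacle is this last step: making the factorisation through $N$ precise---organising the block map for iterated anticommutators and nested scalar factors involving more than two slots, and bookkeeping consumed slots distributed across several of the $\wedge^2 V_7$-factors of $\beta$. The algebra of the block map itself (the Freudenthal product on $J_{21}\cong V_3\otimes V_7$, rendered through the $G_2$-invariant bilinear form and the $7$-dimensional cross product on $V_7$) is routine once arranged; the real content is the uniform principle that consuming $s\ge 2$ tensor slots while at most one slot's worth of highest weight can escape into the $J$-valued block strictly lowers the highest weight below $k_1\omega_1+k_2\omega_2$, and hence annihilates the Cartan vector $\beta$.
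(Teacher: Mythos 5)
Your proof takes a genuinely different route from the paper's, and it has an acknowledged gap precisely at the step that carries the real content.

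The paper's proof is far more direct: it first uses the $G_2$-equivariance of $P_n$ and the linearity of both sides in $\beta$ to reduce to a single highest-weight vector $\beta = e_2^{\otimes k_1}\otimes(e_2\wedge e_3^*)^{\otimes k_2}$, then chooses a basis of $J$ adapted to the decomposition $J = H_3\oplus V_3\otimes\Theta^0$ and observes that the only $E_{\alpha_i}$ giving a nonzero pairing against this $\beta$ are of the form $v\otimes u$ with $u\in\{e_2,e_3^*\}$; such elements satisfy $(1,E_{\alpha_i})=0$, $E_{\alpha_i}^\#=0$, and $\{E_{\alpha_i},E_{\alpha_j}\}=0$ outright, so the recursion for $P_n$ collapses to its leading term. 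You instead analyze the recursion structurally, classify the correction terms by a block decomposition (lone tensor factors, iterated-anticommutator tensor blocks, and scalar blocks), kill size-$1$ scalar blocks via $\sum_\alpha(1,E_\alpha)\mathcal{P}(E_\alpha^\vee)=\mathcal{P}(\iota(1))=0$, and then propose a highest-weight argument for size-$\ge 2$ blocks: the $G_2$-map $\Phi$ factors through an intermediate module $N$ whose $\theta_\ell$-coefficient falls short of that of $k_1\omega_1+k_2\omega_2$ by $s-1\ge 1$, so Schur's lemma kills $W(k_1,k_2)$. The weight count $1-(r+2f+g)<0$ that you carry out does indeed show $k_1\omega_1+k_2\omega_2\not\le\mu(N)$, and the preliminary identity $\{E,E'\}=EE'+E'E$ is correct.

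However, you explicitly flag the factorization through $N$ — organising the block map for iterated anticommutators, nested scalar factors involving more than two slots, and the bookkeeping of slots distributed across several $\wedge^2 V_7$-factors — as the "main obstacle," and you do not carry it out. That is not a cosmetic technicality: it \emph{is} the proof in your approach. Until that bookkeeping is made precise (a uniform statement that each correction term factors as a $G_2$-equivariant map through a well-defined $N$ of the asserted highest weight, independent of how blocks interleave solitary $V_7$-slots with halves of $\wedge^2 V_7$-pairs), the argument is a plausible sketch rather than a proof. Notice also that the paper's reduction to a specific $\beta$, which you invoke implicitly via equivariance, already makes all of this structural analysis unnecessary: once one computes in an adapted basis, every quantity — $(1,E)$, $E^\#$, $\{E,E'\}$ — entering the correction terms vanishes \emph{pointwise}, so no partitioning into solitary and grouped indices and no weight inequality is required. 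That is the efficiency your route gives up in exchange for the abstract weight-theoretic perspective.
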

\begin{proof} Suppose $h \in G_2(\R)$.  Observe
	\begin{align*} \sum_{\alpha_i}P_{n}(E_{\alpha_1},\ldots, E_{\alpha_{n}}) \{E_{\alpha_1}^\vee \otimes \cdots \otimes  E_{\alpha_{n}}^\vee,h \cdot \beta\} &= \sum_{\alpha_i}P_{n}(E_{\alpha_1},\ldots, E_{\alpha_{n}}) \{ h^{-1}E_{\alpha_1}^\vee \otimes \cdots \otimes  h^{-1}E_{\alpha_{n}}^\vee,\beta\} \\ &= \sum_{\alpha_i}P_{n}(h E_{\alpha_1},\ldots, h E_{\alpha_{n}}) \{E_{\alpha_1}^\vee \otimes \cdots \otimes  E_{\alpha_{n}}^\vee,\beta\} \\ &= \sum_{\alpha_i} h \cdot P_{n}(E_{\alpha_1},\ldots, E_{\alpha_{n}}) \{E_{\alpha_1}^\vee \otimes \cdots \otimes  E_{\alpha_{n}}^\vee,\beta\}
\end{align*}
where we have used that $P_{n}$ is equivariant for the action of $F_4 \supseteq G_2$.  (This follows, for example, from the recursive formula.)

Now, because of the above equivariance, we can assume $\beta = e_2^{k_1} \otimes (e_2 \wedge e_3^*)^{k_2}$.  Indeed, the two sides of the equality to be proved are linear in $\beta$ and $ e_2^{k_1} \otimes (e_2 \wedge e_3^*)^{k_2} \in W(k_1,k_2)$.

We now compute in the basis $\{u_0,e_1,e_2,e_3,e_1^*,e_2^*,e_3^*\}$ of $\Theta^0$.  More precisely, we take a basis of $J$ that is a basis of $H_3(F)$ union a basis $v_i \otimes u_j$ of $V_3 \otimes \Theta^0$, where $u_j \in \{u_0,e_1,e_2,e_3,e_1^*,e_2^*,e_3^*\}$ and $v_1,v_2,v_3$ is a basis of $V_3$.  We compute in this basis.  Then the only terms that contribute to the left-hand side of \eqref{eqn:Psum} are those with $E_{\alpha_i} = v_{\alpha_i} \otimes u_{\alpha_i}$ for all $i$.  Then, still in order for these terms to contribute in a nonzero way to \eqref{eqn:Psum}, we must have $u_{k} = e_2$ for $1 \leq k \leq k_1$ and $\{u_{\alpha_{2j-1}}, u_{\alpha_{2j}}\} = \{e_2,e_3^*\}$ for $2j-1 > k_1$.  Consequently, all the $E_{\alpha_i}$ that contribute to the sum in our basis satisfy $(1,E_{\alpha_i}) = 0$, $E_{\alpha_i}^\# = 0$, and $\{E_{\alpha_i},E_{\alpha_j}\} = 0$.  It now follows from our recursive formula for $P_n$ that only the leading term contributes.
\end{proof}

It follows immediately from Lemma \ref{lem:Pnleading} that 
\begin{align*}\label{eqn:Psum}\sum_{\alpha_i}{w_{T,m}(P_n(E_{\alpha_1},\ldots,E_{\alpha_n})) \{E_{\alpha_1}^\vee \otimes \cdots \otimes E_{\alpha_n}^\vee,\beta\}} &= \sum_{\alpha_i}{w_{T,m}(E_{\alpha_1}\otimes \cdots \otimes E_{\alpha_n}) \{E_{\alpha_1}^\vee \otimes \cdots \otimes E_{\alpha_n}^\vee,\beta\}}\\ &= (-4\pi)^n \{\widetilde{m}^{-1}(T^{\otimes n}),\beta\}.\end{align*}

Combining this with Proposition \ref{prop:hmderiv}, we obtain
\begin{proposition}\label{prop:diffExp} Suppose $\beta \in W(k_1,k_2)$, $n = k_1+2k_2$ and $m \in M_J$.  Then
\[\{D^n (j(m,i)^{-\ell}e^{-2\pi (T,m(1))}),\beta\} = B_{k_1,k_2} j(m,i)^{-\ell} \{\widetilde{m}^{-1}(T^{\otimes n}),\beta\} e^{-2\pi  (T,m(1))}\]
for a nonzero constant $B_{k_1,k_2}.$
\end{proposition}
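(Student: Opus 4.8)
The plan is to obtain Proposition \ref{prop:diffExp} as an immediate assembly of the two main computations of this section, Proposition \ref{prop:hmderiv} and Lemma \ref{lem:Pnleading}, together with the bookkeeping that defines $D$. First I would spell out $D^n$ applied to $h(g):=j(g,i)^{-\ell}e^{2\pi i(T,g\cdot i)}$. Using the Cayley transform identification $C_h^{-1}n_L(J\otimes\C)C_h=\p_J^+$, I take the basis $X_{E_\alpha}=iC_h^{-1}n_L(E_\alpha)C_h$ of $\p_J^+$ indexed by a basis $\{E_\alpha\}$ of $J\otimes\C$, with the dual basis $\{E_\alpha^\vee\}$ of $\p_J^{+,\vee}\simeq\p_J^-$ corresponding to the dual basis of $J^\vee\otimes\C$. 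Then
\[D^n h=\sum_{\alpha_1,\ldots,\alpha_n}\left(X_{E_{\alpha_n}}\cdots X_{E_{\alpha_1}}h\right)\otimes E_{\alpha_1}^\vee\otimes\cdots\otimes E_{\alpha_n}^\vee,\]
and pairing with $\beta\in W(k_1,k_2)$ against the tensor factors produces exactly the left-hand sides occurring in Proposition \ref{prop:hmderiv} and Lemma \ref{lem:Pnleading}.

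Next I would invoke Proposition \ref{prop:hmderiv}: for $m\in M_J$ one has $X_{E_{\alpha_n}}\cdots X_{E_{\alpha_1}}h(m)=w_{T,m}(P_n(E_{\alpha_1},\ldots,E_{\alpha_n}))\,h(m)$, where on the Levi $M_J$ the description of $j(\cdot,i)$ from Section \ref{sec:groups} gives $h(m)=j(m,i)^{-\ell}e^{-2\pi(T,m(1))}$. Summing over the $\alpha_i$ and pairing with $\beta$, Lemma \ref{lem:Pnleading} shows that only the leading term $E_{\alpha_1}\otimes\cdots\otimes E_{\alpha_n}$ of each $P_n$ contributes, so
\[\{D^n h(m),\beta\}=j(m,i)^{-\ell}e^{-2\pi(T,m(1))}\sum_{\alpha_1,\ldots,\alpha_n}w_{T,m}(E_{\alpha_1}\otimes\cdots\otimes E_{\alpha_n})\,\{E_{\alpha_1}^\vee\otimes\cdots\otimes E_{\alpha_n}^\vee,\beta\}.\]

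Finally I would evaluate the remaining sum. Here $w_{T,m}(E_{\alpha_1}\otimes\cdots\otimes E_{\alpha_n})=(-4\pi)^n\prod_{i=1}^n(T,m(E_{\alpha_i}))$; using the adjointness $(T,m(E))=(\widetilde m^{-1}(T),E)$ of $m$ and $\widetilde m$ with respect to the pairing between $J$ and $J^\vee$, together with the contraction identity $\sum_\alpha(\xi,E_\alpha)E_\alpha^\vee=\xi$, the sum collapses to $(-4\pi)^n\{\widetilde m^{-1}(T^{\otimes n}),\beta\}$. This gives the stated identity, with $B_{k_1,k_2}$ a fixed nonzero constant --- a power of $-4\pi$ once the normalization $X_E=iC_h^{-1}n_L(E)C_h$ is accounted for --- and manifestly independent of $m$ and $T$.

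I do not expect a genuine obstacle at this stage: the substantive content has already been isolated in Proposition \ref{prop:hmderiv} (the recursion for the iterated derivatives) and Lemma \ref{lem:Pnleading} (the vanishing of the lower-order terms of $P_n$ against $\beta$, resting on $(1,E_{\alpha_i})=E_{\alpha_i}^\#=\{E_{\alpha_i},E_{\alpha_j}\}=0$ for the contributing basis vectors). The only points requiring care are the index and contraction bookkeeping between $\p_J^+$, $J\otimes\C$ and the $F_4$-equivariant pairing $\{\cdot,\beta\}$, and the adjointness relation between $m$ and $\widetilde m$, where on $M_J$ (rather than $M_J^1$) one simply absorbs the similitude factor $\lambda(m)$ into $B_{k_1,k_2}$.
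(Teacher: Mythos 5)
Your proposal is correct and mirrors the paper's proof exactly: Proposition \ref{prop:diffExp} is obtained by feeding Proposition \ref{prop:hmderiv} into Lemma \ref{lem:Pnleading} and then contracting the surviving leading term using the definition of $w_{T,m}$. The only small quibble is your closing remark about absorbing $\lambda(m)$ into $B_{k_1,k_2}$ --- since $\widetilde m$ is the contragredient of $m$, the adjointness $(T,m(E))=(\widetilde m^{-1}(T),E)$ holds on all of $M_J$ with no similitude factor at all, so nothing needs to be absorbed.
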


We can now prove Theorem \ref{thm:DiffExp}.
\begin{proof}[Proof of Theorem \ref{thm:DiffExp}]
Observe that by appropriate $K_{\Sp_{6}}$-equivariance, and by equivariance for the unipotent radical of the Siegel parabolic, it suffices to check the equality of the statement of Theorem \ref{thm:DiffExp} when $g \in M$, the Levi of the Siegel parabolic.  But this follows from Proposition \ref{prop:diffExp} and the definition of $\rho_{[k_1,k_2]}(J(m,i))$; see the proof of Lemma \ref{lem:LeviSp6action} for the action of $\widetilde{m}^{-1}$ on $T$.  The proof of the theorem now follows from Lemma \ref{lem:HWsub} below.\end{proof}

Recall that $S^{[k_1,k_2]}$ is the kernel of the contraction
\[S^{k_1}(V_3) \otimes S^{k_2}(\wedge^2 V_3) \rightarrow S^{k_1-1}(V_3) \otimes S^{k_2-1}(\wedge^2 V_3) \otimes \det(V_3).\]
\begin{lemma}\label{lem:HWsub} If $\beta \in W(k_1,k_2)$, then $\{P_{k_1,k_2}(T),\beta\} \in S^{[k_1,k_2]}$.
\end{lemma}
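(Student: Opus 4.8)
The plan is to show directly that $\{P_{k_1,k_2}(T),\beta\}$ is annihilated by the contraction
\[C\colon S^{k_1}(V_3)\otimes S^{k_2}(\wedge^2 V_3)\longrightarrow S^{k_1-1}(V_3)\otimes S^{k_2-1}(\wedge^2 V_3)\otimes\det(V_3),\]
whose kernel is $S^{[k_1,k_2]}$. We may assume $k_1,k_2\geq 1$, since otherwise the target of $C$ vanishes and $S^{[k_1,k_2]}$ is all of $S^{k_1}(V_3)\otimes S^{k_2}(\wedge^2 V_3)$. The key point is that $C$ touches only the $V_3$-tensor factors, whereas the bracket $\{\,\cdot\,,\beta\}$ --- which is the $G_2$-invariant pairing of the $V_7^{\otimes k_1}\otimes(\wedge^2 V_7)^{\otimes k_2}$-component of $P_{k_1,k_2}(T)$ against $\beta\in W(k_1,k_2)$ --- touches only the $V_7$-tensor factors. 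These operations commute, so $C(\{P_{k_1,k_2}(T),\beta\})=\{C'(P_{k_1,k_2}(T)),\beta\}$, where $C'$ applies $C$ to the $V_3$-factors and leaves the $V_7$-factors in place. It therefore suffices to compute $C'(P_{k_1,k_2}(T))$ and to check that its $V_7$-component pairs trivially with every $\beta\in W(k_1,k_2)$.

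To compute $C'(P_{k_1,k_2}(T))$, write $\mathcal{P}(T)=\sum_{i=1}^{3}f_i\otimes t_i$ with $f_1,f_2,f_3$ a basis of $V_3$ and $t_i\in V_7$. The operator $C'$ wedges one $V_3$-slot $f_a$ against one $\wedge^2 V_3$-slot $f_b\wedge f_c$, producing $f_a\wedge f_b\wedge f_c=\epsilon_{abc}\,(f_1\wedge f_2\wedge f_3)$, while the linked $V_7$-data $t_a$ (in a $V_7$-slot) and $t_b\wedge t_c$ (in a $\wedge^2 V_7$-slot) survive. Summing over $a,b,c$, the contribution in those two slots is the trilinear expression $\sum_{a,b,c}\epsilon_{abc}\,t_a\otimes(t_b\wedge t_c)$, which is alternating in $(t_1,t_2,t_3)$ and hence lies in the image of the canonical comultiplication $\wedge^3 V_7\hookrightarrow V_7\otimes\wedge^2 V_7$. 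Consequently $C'(P_{k_1,k_2}(T))$ takes values in $\det(V_3)\otimes S^{k_1-1}(V_3)\otimes S^{k_2-1}(\wedge^2 V_3)\otimes U$, where $U\subseteq V_7^{\otimes k_1}\otimes(\wedge^2 V_7)^{\otimes k_2}$ is the $G_2$-subrepresentation $\wedge^3 V_7\otimes V_7^{\otimes(k_1-1)}\otimes(\wedge^2 V_7)^{\otimes(k_2-1)}$ obtained by placing $\wedge^3 V_7\hookrightarrow V_7\otimes\wedge^2 V_7$ in one pair of slots. I expect the sign bookkeeping here --- propagating the signs through the ``natural map'' that defines $P_{k_1,k_2}$ via the symmetrizations --- to be the only mildly delicate part of the argument; everything that follows is representation theory.

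It then remains to show $U\perp W(k_1,k_2)$ for the $G_2$-invariant form on $E:=V_7^{\otimes k_1}\otimes(\wedge^2 V_7)^{\otimes k_2}$; granting this, $\{C'(P_{k_1,k_2}(T)),\beta\}=0$, hence $C(\{P_{k_1,k_2}(T),\beta\})=0$ and $\{P_{k_1,k_2}(T),\beta\}\in\ker C=S^{[k_1,k_2]}$. Since the form is nondegenerate and $G_2$-invariant and every irreducible $G_2$-representation is self-dual, the isotypic decomposition of $E$ is orthogonal, so it suffices to check that $W(k_1,k_2)$ does not occur as a constituent of $U$. This is a highest-weight count. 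On one hand $\wedge^2 V_7=V_{\omega_1}\oplus V_{\omega_2}$ with $\omega_2>\omega_1$ (the difference $\omega_2-\omega_1$ is a nonzero sum of positive roots of $G_2$), so the highest weight occurring in $E$ is $k_1\omega_1+k_2\omega_2$, realized exactly by the Cartan piece $W(k_1,k_2)$. On the other hand $\wedge^3 V_7\cong V_0\oplus V_{\omega_1}\oplus V_{2\omega_1}$ has top weight $2\omega_1$, so every irreducible constituent of $U$ has highest weight at most $2\omega_1+(k_1-1)\omega_1+(k_2-1)\omega_2=k_1\omega_1+k_2\omega_2-(\omega_2-\omega_1)$, which is strictly smaller than $k_1\omega_1+k_2\omega_2$ in the dominance order. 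Hence $W(k_1,k_2)$ is not a constituent of $U$, which completes the proof.
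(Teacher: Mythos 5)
Your argument is correct and takes a genuinely different route from the paper's. The paper reduces by $G_2$-equivariance and linearity to the single highest weight vector $\beta = e_2^{\otimes k_1} \otimes (e_2 \wedge e_3^*)^{\otimes k_2}$, and then computes the contraction in coordinates: writing $T = T_0 + \sum_i x_i \otimes v_i$, the contracted pairing collapses (via a cofactor expansion) to $(x_1 \wedge x_2 \wedge x_3,\, e_2 \wedge e_2 \wedge e_3^*)$, which vanishes because of the repeated $e_2$. You instead commute $C$ past the pairing, identify that $C'$ inserts the image of the comultiplication $\wedge^3 V_7 \hookrightarrow V_7 \otimes \wedge^2 V_7$ into one $(V_7,\wedge^2 V_7)$-pair of slots, and then finish by a highest-weight comparison ($2\omega_1 + (k_1{-}1)\omega_1 + (k_2{-}1)\omega_2 < k_1\omega_1 + k_2\omega_2$ since $\omega_2 - \omega_1 = \alpha_1 + \alpha_2 > 0$) together with orthogonality of isotypic components under the invariant form. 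The two proofs are expressing the same underlying phenomenon --- the contraction introduces a $\wedge^3 V_7$ on the octonion side, and $W(k_1,k_2)$ pairs trivially with anything containing such a factor --- but the paper verifies this by a direct formula at the highest weight vector, while you verify it by a clean dominance-order count that avoids any coordinate computation. Your version makes the structural reason for the vanishing more transparent and would generalize more readily; the paper's version is shorter and self-contained. One small imprecision you could tighten: since the contraction $C$ on the symmetrized $V_3$-tensors is a sum over all ways of selecting a $V_3$-slot and a $\wedge^2 V_3$-slot, $C'(P_{k_1,k_2}(T))$ lands in a \emph{sum} of subspaces $U_{j,j'}$, one for each choice of linked $(V_7,\wedge^2 V_7)$-slot pair; but these are all $G_2$-isomorphic, so the highest-weight argument applies to each summand and the conclusion is unaffected.
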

\begin{proof} By equivariance and linearity, it suffices to verify the claim of the lemma for $\beta = e_2^{\otimes k_1} \otimes (e_2 \wedge e_3^*)^{\otimes k_2}$.  Now suppose $T = T_0 + x_1 \otimes v_1 + x_2 \otimes v_2 + x_3 \otimes v_3$.  Then 
	\[(T,e_2) = (x_1,e_2) v_1 + (x_2,e_2) v_2 + (x_3,e_2) v_3\]
and
\[(T \otimes T, e_2\wedge e_3^*) = (x_2 \wedge x_3, e_2 \wedge e_3^*) v_2 \wedge v_3 + (x_3 \wedge x_1,e_2\wedge e_3^*) v_3\wedge v_1 + (x_1 \wedge x_2, e_2\wedge e_3^*) v_1 \wedge v_2.\]
Thus contracting yields the term
\[(x_1,e_2)(x_2 \wedge x_3,e_2\wedge e_3^*)+(x_2,e_2) (x_3\wedge x_1,e_2 \wedge e_3^*) + (x_3,e_2) (x_1\wedge x_2, e_2\wedge e_3^*).\]
This is $(x_1 \wedge x_2 \wedge x_3, e_2 \wedge e_2 \wedge e_3^*) = 0$.  The lemma follows.
\end{proof}

\section{The quaternionic Whittaker derivative}\label{sec:qmfDer}

The goal of this section is to prove Theorem \ref{thm:WqmfDer}, which we restate here:
\begin{theorem} Suppose $\ell = 4$ and $\beta \in V_{m \lambda_3}$ with $m \geq 0$.  Then there is a nonzero constant $B_{m}$ so that for all $w \in W_J$ rank one and $g \in G_2(\R)$, one has
	\[\{D^{2m}W_{w,\ell}(g),\beta\} =B_m \langle P_m(w),\beta \rangle_I W_{pr_I(w),\ell+m}(g).\]
\end{theorem}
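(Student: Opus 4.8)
The plan is to closely follow the proof of Theorem \ref{thm:DiffExp} in Section \ref{sec:expder}, with the group $H_J^1 = E_{7,3}$ replaced by $G_J = E_{8,4}$ and the holomorphic exponential $h(g) = j(g,i)^{-\ell}e^{2\pi i(T,g\cdot i)}$ replaced by the quaternionic generalized Whittaker function $W_{w,\ell}$. First I would use the $K_{G_2}$-equivariance of both sides, together with equivariance under the center $Z$ of the Heisenberg unipotent $N$ of $G_2$, to reduce the asserted identity to a computation of $D^{2m}W_{w,\ell}$ restricted to a suitable subgroup of $G_2(\R)$ on which the explicit formula for $W_{w,\ell}$ from \cite{pollackQDS} — an $S^{2\ell}(V_2^\ell)$-valued expression built from an exponential and a $K$-Bessel function whose index depends on $\ell$ — is available. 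This plays the role that the simple exponential $h$ plays in Section \ref{sec:expder}.

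Next I would establish the $G_J$-analogues of Proposition \ref{prop:Chidents} and Proposition \ref{prop:hmderiv}. Using the exceptional Cayley transform of \cite{pollackQDS}, which conjugates the relevant raising subspace of $\p$ onto the unipotent radical of the Heisenberg parabolic of $G_J$ and the complexified Levi of that parabolic onto $\k$, I would write each element of $\p$, after conjugation, as a sum of a unipotent-radical piece, a Levi piece, and a $\k$ piece, exactly as in Proposition \ref{prop:Chidents}. Working on the torus, I would then compute the action of each piece on $W_{w,\ell}$: the unipotent-radical piece multiplies $W_{w,\ell}$ by a linear functional in $w$ (this is the source of the factor $b\wedge c$ when $w = (a,b,c,d)$), the Levi piece acts by the natural action on the $W_J = W_\Q \oplus V_2^s \otimes J^0$-valued data together with a scaling, and the $\k$ piece contributes the weight $\ell$. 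Iterating $2m$ times and projecting onto $S^{2m+2\ell}(V_2^\ell)$ — the minimal $K_{G_2}$-type of $\pi_{\ell+m}$ — would then produce, as in Proposition \ref{prop:hmderiv}, a polynomial $\widetilde P_m(w) \in (\wedge^2 J^0)^{\otimes m}$ satisfying a recursion analogous to the one defining $P_n$ in Section \ref{sec:expder}, whose leading term is $(b\wedge c)^{\otimes m} = P_m(w)$ and whose remaining terms involve the trilinear form on $J$, cross products $x\times y$, pairings $(x,y)$, and the operators $\Phi_{\iota(x),y}$, all applied to the entries of $w$; at the same time the $K$-Bessel radial profile is shifted to that of $W_{pr_I(w),\ell+m}$.

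The crux — and the step I expect to be the main obstacle, exactly as in the holomorphic case — is the analogue of Lemma \ref{lem:Pnleading}: after pairing with $\beta \in V_{m\lambda_3}$, only the leading term of $\widetilde P_m$ survives, yielding $\langle P_m(w),\beta\rangle_I$ up to scalar. By linearity in $\beta$ and $F_4^I$-equivariance of both sides (the pairing $\langle\,,\,\rangle_I$ is $F_4^I$-invariant since $F_4^I$ fixes $I$, and $V_{m\lambda_3}$ is generated by a highest weight vector), it suffices to verify this for $\beta = (x\wedge y)^{\otimes m}$ with $x,y \in J^0$ the ``amber'' pair of Section \ref{sec:F4}: $x$ is rank one, $y = \iota(z\times x)$ for an appropriate $z$, and consequently $x^\# = y^\# = 0$, $x\times y = 0$, $(x,x) = (x,y) = (y,y) = 0$, and $\Phi_{\iota(x),x} = \Phi_{\iota(x),y} = \Phi_{\iota(y),x} = \Phi_{\iota(y),y} = 0$. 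These are precisely the identities that annihilate every non-leading term of $\widetilde P_m$ after contraction with $\beta$, just as the corresponding vanishings $(1,E_{\alpha_i}) = E_{\alpha_i}^\# = 0$ and $\{E_{\alpha_i},E_{\alpha_j}\} = 0$ do in the proof of Lemma \ref{lem:Pnleading}.

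Finally I would identify the surviving function. The leading term contributes $\langle (b\wedge c)^{\otimes m},\beta\rangle_I = \langle P_m(w),\beta\rangle_I$ times a function of $g \in G_2(\R)$ that is $K_{G_2}$-equivariant of minimal weight $\ell+m$, transforms under $N$ by the character $pr_I(w)$, and lies in the copy of $\pi_{\ell+m}$ occurring in $\Pi_{min,\infty}|_{G_2\times F_4^I}$ attached to $V_{m\lambda_3}$ by \cite{HPS}; by uniqueness of the generalized Whittaker model of $\pi_{\ell+m}$ it must be a scalar multiple of $W_{pr_I(w),\ell+m}$, and that scalar $B_m$ is nonzero because the leading term is visibly nonzero. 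As in Section \ref{sec:expder}, I expect the genuinely laborious part will be carrying out the recursion for $\widetilde P_m$ in enough detail to see that all correction terms are hit by the amber identities, but this should proceed in complete parallel with the computation there.
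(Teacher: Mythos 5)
Your high-level framing (reduce by $K_{G_2}$- and $F_4^I$-equivariance, use quaternionicity to compare a single component, take $\beta = (x\wedge y)^{\otimes m}$ with $E_{x,y}$ amber, and close by uniqueness of the weight-$(\ell+m)$ generalized Whittaker model) is the right scaffolding and coincides with the paper's. But the technical heart of your proposal is not how the paper proceeds, and more importantly, it would not work as stated.

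The proposal treats $W_{w,\ell}$ as if it were the analogue of the holomorphic exponential $j(g,i)^{-\ell}e^{2\pi i(T,g\cdot i)}$, so that each piece of a $\p$ element (after Cayley transform) ``multiplies $W_{w,\ell}$ by a linear functional in $w$'' and one gets a recursion $\widetilde P_m$ exactly as in Proposition \ref{prop:hmderiv}. This is false: the formula from \cite{pollackQDS} expresses $W_{w,\ell}$ on the Levi as a sum of products of $K$-Bessel functions, not an exponential. Applying a Lie derivative shifts Bessel indices and produces linear combinations of Bessel functions with different parameters; the result is not a scalar multiple of $W_{w,\ell}$ times a polynomial in $w$, so the direct-iteration scheme does not close up. Relatedly, you assert that the ``$\k$ piece contributes the weight $\ell$,'' but the target is the weight-$(\ell+m)$ Whittaker function; your outline never explains how the index shift $\ell \mapsto \ell+m$ is produced, and it is precisely here that the real work lies.

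The paper circumvents this by \emph{not} differentiating $W_{w,\ell}$ directly. Instead it introduces the function $a_{\ell,v}(g) = (e_\ell,g^{-1}v)^\ell / (pr(g^{-1}v),pr(g^{-1}v))^{\ell+\frac12}$ and realizes $W_{w,\ell}^{-\ell}(g)$ as $\int_{N_w\backslash N} e^{-i\langle w,\overline{n}\rangle}a_{\ell,v}(ng)\,dn$; one then differentiates \emph{under the integral sign}. The integrand's Lie derivatives are tractable (they are rational in the inner products $\mathcal{C}_v(\cdot)$ and $\mathcal{C}_v(\cdot,\cdot)$), the commutativity of $X_1,\dots,X_4$ coming from the amber condition enters at this stage, and the remaining $z$-integral is evaluated in Proposition \ref{prop:Iint}, where a Bessel-function recurrence together with the identity $\sum_{j+k = c}(-1)^j c_m^k c_{m-2k}^j = 0$ collapses the output to a single $K_{\ell+m}$ and hence to $W_{pr_I(w),\ell+m}^{-(\ell+m)}$. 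That integral representation and the subsequent Bessel evaluation are the missing ideas in your proposal; there is no analogue of Lemma \ref{lem:Pnleading} in the paper's argument for this theorem, and the amber identities are used to make the $X_i$ commute and to kill the $pr([X_i,[X_j,\cdot]])$ terms, not to prune a polynomial $\widetilde P_m$ after contraction with $\beta$.
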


We begin with the following proposition.
\begin{proposition}  Let $\beta \in V_{m \lambda 3} \subseteq (\wedge^2 J^0)^{\otimes m}$.  Suppose $\ell =4$ and $w$ is rank one.  Then the function $F_{m,\beta}:G_2(\R) \rightarrow S^{2m+2\ell}(V_2)$ defined as $\{D^{2m}W_{w,\ell}(g),\beta\}$ is quaternionic.\end{proposition}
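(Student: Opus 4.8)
The plan is to show that $F_{m,\beta}$ is a quaternionic generalized Whittaker function of weight $\ell+m$ (recall $\ell=4$) by identifying it, through the archimedean theta correspondence of \cite{HPS}, with the minimal $K_{G_2}$-type of the quaternionic representation $\pi_{4+m}$ of $G_2(\R)$ realized in a generalized Whittaker model. The starting point is that, because $\ell=4$, the function $W_{w,\ell}$ on $G_J(\R)=E_{8,4}(\R)$ is the quaternionic (i.e.\ minimal $K$-type) generalized Whittaker function of weight $4$ attached to $w$ \cite{pollackQDS,pollackE8}: it is the image of the minimal $K$-type $v_0\subseteq\Pi_{min,\infty}$ of the quaternionic minimal representation under a $\psi_w$-equivariant functional $\lambda_w$, so that $W_{w,\ell}(g)=\langle\Pi_{min,\infty}(g)v_0,\lambda_w\rangle$ with $v_0$ of type $S^{2\ell}(V_2^\ell)$.

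First I would record the identity $\{D^{2m}W_{w,\ell}(g),\beta\}=\langle\Pi_{min,\infty}(g)v_{m,\beta},\lambda_w\rangle$ for $g\in G_2(\R)$, where $v_{m,\beta}\subseteq\Pi_{min,\infty}$ is obtained from $v_0$ by applying the derived $\p$-action $2m$ times and then contracting against $\beta$ through the pairing $\{-,\beta\}$; this is immediate from the compatibility of right differentiation with the derived action and the linearity of $\{-,\beta\}$. The crux is then to pin down the $K_{G_2}\times F_4^I(\R)$-type of $v_{m,\beta}$, using the explicit recipe for $\{-,\beta\}$ and the $F_4^I$-equivariant decompositions $\p\cong V_2^\ell\otimes W_J$ and $W_J=W_\Q\oplus V_2^s\otimes J^0$. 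Since $\{-,\beta\}$ projects each $W_J$-factor onto $V_2^s\otimes J^0$, antisymmetrizes the $2m$ copies of $V_2^s$ in pairs (so that the $\SU(2)^s$-factor of $K_{G_2}$ acts through $\det(V_2^s)^{\otimes m}$, i.e.\ trivially), antisymmetrizes the corresponding pairs of $J^0$ into $(\wedge^2J^0)^{\otimes m}$, pairs the result against $\beta$ via the $F_4^I$-invariant form $\langle\,,\,\rangle_I$, and multiplies the surviving symmetric powers of $V_2^\ell$ onto the $S^{2\ell}(V_2^\ell)$ carried by $v_0$, the vector $v_{m,\beta}$ lies in the $V_{m\lambda_3}$-isotypic component of $\Pi_{min,\infty}$ for $F_4^I(\R)$ (one uses here that $\langle\,,\,\rangle_I$ is nondegenerate on the irreducible, multiplicity-one summand $V_{m\lambda_3}\subseteq(\wedge^2J^0)^{\otimes m}$, so that pairing against $\beta$ factors through the projection onto $V_{m\lambda_3}$), and transforms under $K_{G_2}$ through $S^{2(\ell+m)}(V_2^\ell)$, trivially on $\SU(2)^s$.

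Now by \cite{HPS}, the theta correspondence for $G_2(\R)\times F_4^I(\R)\subseteq G_J(\R)$ sends $V_{m\lambda_3}$ to $\pi_{4+m}$; concretely, the $V_{m\lambda_3}$-isotypic component of $\Pi_{min,\infty}$ for $F_4^I(\R)$ is $\pi_{4+m}\otimes V_{m\lambda_3}$, and inside $\pi_{4+m}$ the type $S^{2(4+m)}(V_2^\ell)$ (trivial on $\SU(2)^s$) is the minimal $K_{G_2}$-type. Hence $v_{m,\beta}$ lies in the minimal $K_{G_2}$-type of a copy of $\pi_{4+m}$ inside $\Pi_{min,\infty}$, so $F_{m,\beta}(g)=\langle\Pi_{min,\infty}(g)v_{m,\beta},\lambda_w\rangle$ is the image of that minimal $K_{G_2}$-type under a generalized Whittaker functional. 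Since $\lambda_w$ is $\psi_w$-equivariant for $N_J$ and $N\subseteq N_J$, the function $F_{m,\beta}$ is equivariant for $N$ under the character $\psi_{w_0}$ with $w_0=pr_I(w)$; it is of moderate growth; and it is annihilated by the first-order quaternionic operator $\mathcal{D}_{4+m}$, because that operator kills the minimal $K_{G_2}$-type of $\pi_{4+m}$ \cite{grossWallachI,grossWallachII,pollackQDS}. This is exactly the statement that $F_{m,\beta}$ is quaternionic of weight $4+m$; if $\lambda_w$ vanishes on $v_{m,\beta}$ then $F_{m,\beta}\equiv0$ and there is nothing to prove.

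The step I expect to be the main obstacle is the one in the second paragraph: checking that the explicit contraction $\{-,\beta\}$ genuinely singles out the $V_{m\lambda_3}$-isotypic component for $F_4^I(\R)$ and the Cartan summand $S^{2(\ell+m)}(V_2^\ell)$ for the quaternionic $\SU(2)^\ell$, so that the abstract decomposition of \cite{HPS} applies to the concrete vector $v_{m,\beta}$ produced by the formula. Should that bookkeeping prove awkward, a more computational route is available: verify $\mathcal{D}_{\ell+m}F_{m,\beta}=0$ directly by expressing $\nabla_{\p_{G_2}}F_{m,\beta}$ in terms of $D^{2m+1}W_{w,\ell}$ along $G_2(\R)$, splitting $\p(\g(J))=\p_{G_2}\oplus(V_2^\ell\otimes V_2^s\otimes J^0)$, and using that $W_{w,\ell}$ is quaternionic on $G_J(\R)$ together with the amber-type relations satisfied by $\beta\in V_{m\lambda_3}=\ker(\wedge^2J^0\to\f_4)$ recorded in Section~\ref{sec:F4}.
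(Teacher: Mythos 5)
Your proposal is correct and follows essentially the same route as the paper's proof: represent $W_{w,\ell}$ via the moderate-growth $\psi_w$-equivariant functional $L$ applied to the minimal $K$-type of $\Pi_{min,\infty}$, observe that $D^{2m}$ followed by the contraction $\{-,\beta\}$ produces a vector whose $K_{G_2}\times F_4^I(\R)$-type is $S^{8+2m}(V_2^\ell)\otimes\mathbf 1\otimes V_{m\lambda_3}$, and invoke the HPS archimedean theta correspondence to identify this as the minimal $K_{G_2}$-type of a copy of $\pi_{4+m}$. The paper packages this by forming the $K$-invariant tensor $E=\sum_j x_j\otimes x_j^\vee$ and tracking $D^{2m}E$ rather than working with the subspace $v_{m,\beta}$, but this is a notational difference, not a different argument.
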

\begin{proof} Fix a rank one $w \in W_J$.  Let $\chi = \chi_w$ be the character of $N_J$ given as $\chi(n) = e^{i \langle w, \overline{n}\rangle}$. Here $\overline{n}$ is the image of $n$ in $W_J \simeq N_J^{ab}$, the abelianization of $N_J$. Let $L: \Pi_{min} \rightarrow \C$ be the unique (up to scalar multiple) moderate growth linear functional satisfying $L(n v) = \chi(n) L(v)$ for all $n \in N(\R)$ and $v \in \Pi_{min}$.  (Such an $L$ exists by a global argument: The global minimal representation has nonzero Fourier coefficients, so there is an $L_0$ for some $w_0$.  Now $L(v) := L_0(m_w v)$ for an appropriate $m_w$ is the desired functional.  The uniqueness of $L$ follows from \cite{pollackQDS}.)
	
Now let $x_j$ be a basis of the minimal $K = (\SU(2) \times E_7)/\mu_2$-type of $\Pi_{min}$ and $x_j^\vee$ in $S^8(V_2)$ the dual basis. Note that $W_w(g) = \sum_{j}L(g x_j) \otimes x_j^\vee$.  Then $E=\sum_j x_j \otimes x_j^\vee$ is in $(V_{min} \otimes S^8(V_2))^K$.  One obtains that $D^{2m} E \in (V_{min} \otimes S^8(V_2) \otimes \p^{\otimes 2m})^K$.  This latter space maps $K':=\SU(2) \times \SU(2)_{s} \times F_4^I(\R)$-equivariantly to 
\[S^{8+2m}(V_2) \otimes \det(V_2^s)^{\otimes m} \otimes (\wedge^2 J_0)^{\otimes m}.\]
Finally, mapping $(\wedge^2 J^0)^{\otimes m}$ to $V_{m \lambda_3}$, we obtain a $K'$ invariant element $E'$ in $V_{min} \otimes (S^{8+2m} \otimes \mathbf{1} \otimes V_{m\lambda_3})$.  By Huang-Pandzic-Savin \cite{HPS}, this $E'$ is either $0$ or the minimal type of $\pi_{m+4} \otimes V_{m\lambda_3}$.  Contracting now against some $\beta \in V_{m\lambda_3}$, we obtain some (possibly $0$) multiple of $S^{2m+8}(V_2) \subseteq \pi_{m+4}$.  Applying $L(g \cdot)$, it follows that $F_{m,\beta}$ is quaternionic.
\end{proof}

\subsection{General strategy}
As before, let $W_{w,\ell}(g)$ be the generalized Whittaker function of weight $\ell$ associated to $w \in W_J$, which is positive semi-definite.  Set $W_{w,\ell}^{-\ell}(g) = (x^{2\ell},W_w(g))$ be the component multiplying $y^{2\ell}/(2\ell)!$.  (See \cite{pollackQDS} for the definition of $x,y$.)  Here $(\,,\,)$ is an $\SL_2(\C)$-equivariant pairing on $S^{\cdot}(V_2)$.

Consider the quantity $\{D^{2m} W_{w,\ell}(m),\beta\}$.  Then \[\{D^{2m}W_{w}(m),k \cdot \beta\} = \{k^{-1} D^{2m}W_w(m),\beta\} = \{D^{2m}W_w(mk),\beta\}.\]
Thus if we can compute $\{D^{2m} W_w(m),\beta\}$ for $\beta = (x \wedge y)^{\otimes m}$ where $E_{x,y}$ is assumed singular and isotropic, then we can compute this quantity for general $\beta$.

What we actually do is compute $(x^{2m+2\ell},\{D^{2m}W_w(m),\beta\})$ for $\beta =(x \wedge y)^{\otimes m}$ where $E_{x,y}$ is assumed singular and isotropic. To setup the computation, we fix a basis $x_1 = x, x_2 = y, \ldots$ of $J^0$, fix a basis of $W_J = W_Q \oplus V_2^s \otimes J^0$ that is the union of bases of $W_\Q$ and of the tensor product basis $\{x_s,y_s\} \otimes \{x_1, x_2, \ldots\}$ of $V_2^s \otimes J^0$.  Then we fix our basis of $\p \simeq V_2^{\ell} \otimes W_J$ to be the tensor product basis of $\{x,y\}$ with the above fixed basis of $W_J$.

Now, it is clear that  $(x^{2m+2\ell},\{D^{2m}W_w(m),\beta\})$ only contains the terms in $D^{2m}$ where the $X_{\alpha_i}$ equal one of
\[ X_1 =y \otimes (x_s \otimes x), X_2=y \otimes (x_s \otimes y), X_3= y \otimes (y_s \otimes x), X_4=y \otimes (y_s \otimes y).\]
Note also that because $E_{x,y}$ is isotropic, the above Lie algebra elements all commute.  We obtain that
\begin{align*}(x^{2m+\ell},\{D^{2m}W_w(m),(x \wedge y)^{\otimes m}\}) &= 2^m \sum_{j=0}^{m}{ (-1)^j\binom{m}{j} (X_1 X_4)^{m-j} (X_2 X_3)^j W_{w,\ell}^{-\ell}(m)} \\ &= 2^m (X_1 X_4 - X_2 X_3)^m W_{w,\ell}^{-\ell}(m).\end{align*}

We now observe the following fact: if $k \in F_4^I(\R)$, then
\[(x^{2m+2\ell},\{D^{2m}W_{w,\ell}(m),k \cdot \beta\}) = (x^{2m+2\ell},\{k^{-1} D^{2m}\cdot W_{w,\ell}(m), \beta\}) = (x^{2m+2\ell},\{D^{2m}W_{w,\ell}(mk), \beta\}).\]
Thus, if we can compute the right hand side, then we can compute the left hand side.

To compute the quantity $(x^{2m+2\ell},\{D^{2m}W_{w,\ell}(mk), \beta\})$, we will represent $W_{w,\ell}^{-\ell}(g)$ as an integral, and differentiate under the integral sign.  This is inspired by the work of McGlade-Pollack \cite{mcGladePollack}. More exactly, set 
\[a_{\ell,v}(g)=\frac{(e_{\ell},g^{-1}v)^{\ell}}{(pr(g^{-1}v),pr(g^{-1}v))^{\ell+\frac{1}{2}}}.\]
Here $pr: \g(J) \otimes \R \rightarrow \su_2$ is the projection onto the Lie algebra of the long root $\SU_2$ and we write $(X,Y) = B(X,Y)$ for short.

We prove the following theorem.  To setup the theorem, recall from \cite{pollackQDS} that if $z \in J$ with $\tr(z) = 0$, then
\[V(z) = (0,iz,-z,0), V^*(z) = (0,-iz,-z,0).\]
Moreover, let $\nu: M_J \rightarrow \GL_1$ be the similitude character on the Levi of the Heisenberg parabolic of $G_J$.  There is an identification $M_J \simeq H_J$ (see \cite[Lemma 4.3.1]{pollackQDS}), and $\nu$ is the similitude character of $H_J$ via this identification.  I.e., for $h \in H_J$, $\nu$ satisfies $\langle h v,hv'\rangle = \nu(h) \langle v,v' \rangle$ for all $v,v' \in W_J$ and $\langle \,,\,\rangle$ Freudenthal's symplectic form on $W_J$.
\begin{theorem}\label{thm:intalv} Let the notation be as above.  Let $N_w \subseteq N$ consist of the $n$ with $\langle w, \overline{n} \rangle  = 0$.  Suppose $g \in M_J(\R)$, the Levi of the Heisenberg parabolic of $G_J(\R)$ and $w \in W_J$ is positive semidefinite.  Set $w_1 = g^{-1} w$.  There is a nonzero constant $B'_{\ell,m}$, independent of $w \geq 0$ and independent of $g$ so that the integral
\[\int_{\R = N_w\backslash N} e^{-i\langle w,\overline{n}\rangle}(X_1X_4-X_2X_3)^m a_{\ell,v}(ng)\,dn\]
is equal to
\[B'_{\ell,m} \nu(g)^{m} W_{w,\ell+m}^{-(\ell+m)}(g) (\langle V(x),w_1\rangle \langle V^*(y),w_1 \rangle - \langle V(y),w_1\rangle \langle V^*(x),w_1 \rangle)^{m}.\]
\end{theorem}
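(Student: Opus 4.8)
The plan is to prove Theorem~\ref{thm:intalv} by an explicit computation of the second order operator $(X_1X_4-X_2X_3)^m$ applied to the function $a_{\ell,v}$, combined with an integral representation of the generalized Whittaker function in the style of McGlade--Pollack~\cite{mcGladePollack}. The key preliminary fact, which is the $m=0$ case, is that for a suitable fixed $v$ one has
\[
\int_{N_w\backslash N}e^{-i\langle w,\overline{n}\rangle}\,a_{\ell,v}(ng)\,dn = c_\ell\, W_{w,\ell}^{-\ell}(g)
\]
for a nonzero constant $c_\ell$ independent of $w\ge 0$ and $g\in M_J(\R)$; this records that $g\mapsto a_{\ell,v}(g)$ realizes the relevant vector of the quaternionic (resp.\ minimal) representation as a function on the group, and that convolution against the character $\chi_w$ of $N$ produces the generalized Whittaker function, which is unique up to scalar by \cite{pollackQDS}. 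The absolute convergence of this integral, and of all the integrals obtained by differentiating under it, follows from the decay of $a_{\ell,v}$ along the line $N_w\backslash N\simeq\R$, so one is free to move the left-invariant operators $X_1,X_2,X_3,X_4\in\p$ --- which commute, since $E_{x,y}$ is singular and isotropic --- through the integral.

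The heart of the matter is the computation of $(X_1X_4-X_2X_3)\,a_{\ell,v}(h)$ for $h\in G_J(\R)$. Writing $a_{\ell,v}(h)=\mathcal N(h)/\mathcal D(h)$ with $\mathcal N(h)=(e_\ell,h^{-1}v)^\ell$ and $\mathcal D(h)=(pr(h^{-1}v),pr(h^{-1}v))^{\ell+1/2}$, and using $X(h^{-1}v)=-[X,h^{-1}v]$, the Leibniz rule expresses $X_i\,a_{\ell,v}(h)$ in terms of $(e_\ell,[X_i,h^{-1}v])$ and $(pr([X_i,h^{-1}v]),pr(h^{-1}v))$. Under the exceptional Cayley transform of \cite{pollackQDS} and the identification $\p\simeq V_2^\ell\otimes W_J$, $W_J=W_\Q\oplus V_2^s\otimes J^0$ from Section~\ref{sec:qmfDer}, the elements $X_1,X_2,X_3,X_4$ correspond to $y\otimes V(x)$, $y\otimes V(y)$, $y\otimes V^*(x)$, $y\otimes V^*(y)$ respectively. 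I would then feed in the structural consequences of $E_{x,y}$ being singular and isotropic --- that $x,y$ have rank at most one, that $x\times y=0$, and that $\Phi_{\iota(x),x}=\Phi_{\iota(x),y}=\Phi_{\iota(y),x}=\Phi_{\iota(y),y}=0$ --- to show that the error terms in the Leibniz expansion cancel, and that the net effect of $X_1X_4-X_2X_3$, after forming the $\chi_w$-twisted integral over $N_w\backslash N$ and integrating by parts against the character, is to multiply $a_{\ell,v}$ by a constant times the quadratic expression
\[
\mathcal Q_w := \langle V(x),w_1\rangle\langle V^*(y),w_1\rangle-\langle V(y),w_1\rangle\langle V^*(x),w_1\rangle,
\]
to shift $\ell\mapsto\ell+1$ (the two extra numerator powers and the extra half power of the denominator assembling into a single weight shift), and to produce one factor of the similitude $\nu(g)$ from the $M_J$-action on $W_J$.

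Iterating this $m$ times --- legitimate because the $X_i$ commute and $\mathcal Q_w$ is produced as a genuine multiplier --- gives
\begin{align*}
&\int_{N_w\backslash N}e^{-i\langle w,\overline{n}\rangle}(X_1X_4-X_2X_3)^m a_{\ell,v}(ng)\,dn \\
&\qquad = (\mathrm{const})\cdot\nu(g)^m\,\mathcal Q_w^{\,m}\int_{N_w\backslash N}e^{-i\langle w,\overline{n}\rangle}a_{\ell+m,v}(ng)\,dn,
\end{align*}
and applying the $m=0$ integral representation with $\ell$ replaced by $\ell+m$ identifies the remaining integral with a nonzero multiple of $W_{w,\ell+m}^{-(\ell+m)}(g)$. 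Collecting constants then yields Theorem~\ref{thm:intalv}.

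The step I expect to be the main obstacle is the middle one: pushing the derivative $X(h^{-1}v)=-[X,h^{-1}v]$ through both the numerator $(e_\ell,\cdot)^\ell$ and, especially, the denominator $(pr(\cdot),pr(\cdot))^{\ell+1/2}$ --- the delicacy being that $pr$ is only equivariant for the long-root $\SU_2$ and not for $G_J$, so differentiating the denominator genuinely produces many terms --- and then checking that the specific combination $X_1X_4-X_2X_3$, rather than a general quadratic in the $X_i$, makes all of these cancel. This is precisely where the singular/isotropic hypothesis on $E_{x,y}$ is indispensable, via the vanishing of the $\Phi$-operators and of $x\times y$ established in Section~\ref{sec:F4}; once this cancellation is in hand, tracking the exact nonzero constants and the powers of $\nu(g)$ is routine bookkeeping.
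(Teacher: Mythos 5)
Your overall plan matches the paper's in broad outline (represent $W_{w,\ell}^{-\ell}$ by an integral of $a_{\ell,v}$ over $N_w\backslash N$ and differentiate under the integral), but the two load-bearing claims in the middle are wrong, and the iterative shortcut does not close the gap.

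First, the ``error terms in the Leibniz expansion'' do \emph{not} cancel pointwise. The paper's explicit formula shows that under the singular/isotropic hypothesis the $n$-fold derivative of $b_{\ell,v}=\mathcal{C}_v^{-(\ell+1/2)}$ is a genuine multi-term sum
\[
X_n\cdots X_1 b_{\ell,v}(g)=\sum_{k=0}^{\lfloor n/2\rfloor}(-1)^k2^{n-k}(\ell+\tfrac12)_{n-k}\,\mathcal{C}_{v,n,k}\,\mathcal{C}_v^{-(\ell+n-k+1/2)},
\]
where $\mathcal{C}_{v,n,k}$ contains the second-order contractions $\mathcal{C}_v(X_i,X_j)$. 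For the four operators $X_1,\dots,X_4$ the nonvanishing contractions are $\mathcal{C}_v(X_1,X_1)$, $\mathcal{C}_v(X_1,X_2)$ and $\mathcal{C}_v(X_2,X_2)$; these appear with nonzero coefficients as soon as $m\ge 2$ (the coefficient is proportional to $\binom{m}{2k}\tfrac{(2k)!}{k!2^k}$, which is $\binom m2$ for $k=1$). So $(X_1X_4-X_2X_3)^m b_{\ell,v}$ is a sum of $\lfloor m/2\rfloor+1$ terms with different exponents of $\mathcal{C}_v$, not a single term times $\mathcal{Q}_w^m$. The collapse to one $K$-Bessel function happens only \emph{after} integrating in $z$, and it rests on the nontrivial identity $\sum_{j+k=\text{fixed}}(-1)^jc_m^kc_{m-2k}^j=0$ for $j+k>0$, which is the real content of the proof and which your argument never produces.

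Second, the iteration ``apply the $m=1$ statement $m$ times'' is not legitimate in the form given. The $m=1$ reduction is a statement about integrals: $\int e^{-iz}(X_1X_4-X_2X_3)a_{\ell,v}\,dz$ equals a constant times $\nu(g)\mathcal{Q}_w(g)\int e^{-iz}a_{\ell+1,v}\,dz$. Before integration, $(X_1X_4-X_2X_3)a_{\ell,v}$ is \emph{not} a $g$-independent constant times $a_{\ell+1,v}$ --- it is $\mathcal{Q}_w(g)\cdot\alpha(g)\cdot z_1\cdot\mathcal{C}_v^{-(\ell+5/2)}$ times the old numerator, and the multipliers $\mathcal{Q}_w(g)$ and $\alpha(g)$ depend on $g$ (through $w_1=g^{-1}w$). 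When you then apply $(X_1X_4-X_2X_3)$ a second time you pick up Leibniz cross terms $X_i(\mathcal{Q}_w)$, $X_i(\alpha)$, which are generically nonzero and which you do not address. Thus ``iterating'' does not reduce $m$ to $m-1$ without further work. The paper sidesteps this entirely by computing $(X_1X_4-X_2X_3)^m b_{\ell,v}^*$ once and for all, observing that $\mathcal{Q}_w^m$ and $\alpha^m$ factor out of the full sum, and then evaluating the remaining $z$-integral $I_{\ell,m}(\beta)$ directly via Bessel derivative formulas and the combinatorial identity above. I would focus your revision on proving that identity (or some equivalent reduction of $I_{\ell,m}$ to $I_{\ell+m,0}$) rather than on the iterative scheme.
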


Note that the $m=0$ case of Theorem \ref{thm:intalv} represents the function $W_{w,\ell}^{-\ell}(g)$ as a integral, and the cases $m > 0$ of this theorem compute (by exchaning the order of integration and differentiation) the derivatives $(X_1X_4-X_2X_3)^m W_{w,\ell}^{-\ell}(g)$. We justify this exchange of integration and differentiation in subsection \ref{subsec:tech}.

\begin{corollary}\label{cor:DmWell} Suppose $g$ is in the Levi of the Heisenberg parabolic of $G_2(\R)$, and $\beta' \in V_{m\lambda_3}$.  Then
\[(x^{2m+2\ell},\{D^{2m} W_{w,\ell}(g),\beta'\}) = B_{m,\ell}'' W_{w,\ell+m}^{-(\ell+m)}(g) \langle P_{m}(w),\beta' \rangle_I.
\]
for a nonzero constant $B_{m,\ell}''$, independent of $w$ and $g$.
\end{corollary}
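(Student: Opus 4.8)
The plan is to reduce the corollary to the special vectors $\beta' = (x\wedge y)^{\otimes m}$ with $E_{x,y}$ singular and isotropic, for which the general strategy subsection already rewrites the left--hand side as $2^m(X_1X_4 - X_2X_3)^m W_{w,\ell}^{-\ell}(g)$, and then to feed in Theorem \ref{thm:intalv}. First I would restrict $g$ to the Levi of the Heisenberg parabolic of $G_2(\R)$; by subsection \ref{subsec:dualpairs23} this Levi is contained in $M_J(\R)$, so Theorem \ref{thm:intalv} applies. Its $m=0$ case represents $W_{w,\ell}^{-\ell}(g)$ as the integral $\tfrac{1}{B'_{\ell,0}}\int_{N_w\backslash N}e^{-i\langle w,\overline{n}\rangle}a_{\ell,v}(ng)\,dn$, and applying the differential operator $(X_1X_4 - X_2X_3)^m$ to this identity and moving it inside the integral --- legitimate by the estimates of subsection \ref{subsec:tech} --- the $m>0$ cases of Theorem \ref{thm:intalv} then give
\[(X_1X_4 - X_2X_3)^m W_{w,\ell}^{-\ell}(g) = \frac{B'_{\ell,m}}{B'_{\ell,0}}\,\nu(g)^m\,W_{w,\ell+m}^{-(\ell+m)}(g)\,\big(\langle V(x),w_1\rangle\langle V^*(y),w_1\rangle - \langle V(y),w_1\rangle\langle V^*(x),w_1\rangle\big)^m,\]
where $w_1 = g^{-1}w$. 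Here I must take care that the $X_1,\dots,X_4$ of the general strategy are literally the ones entering Theorem \ref{thm:intalv} for this choice of $x,y$, which is the case since both are defined from the same tensor--product basis of $\p \simeq V_2^{\ell}\otimes W_J$ and $W_J = W_\Q\oplus V_2^s\otimes J^0$.

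The next step is to identify the Freudenthal factor with $\langle P_m(w),(x\wedge y)^{\otimes m}\rangle_I$. Writing $w = (a,b,c,d)$, so $P_m(w) = (b\wedge c)^{\otimes m}$, I would use that under $W_J = W_\Q\oplus V_2^s\otimes J^0$ the vectors $V(z) = (0,iz,-z,0)$ and $V^*(z) = (0,-iz,-z,0)$ lie in $V_2^s\otimes J^0$ for $z\in J^0$, that the Heisenberg Levi $\GL_2^s$ acts there only on the $V_2^s$--factor, and that its similitude $\nu$ restricts to $\det$ on $V_2^s$; together with $\langle hu,hv\rangle = \nu(h)\langle u,v\rangle$ on $W_J$ this shows that $\nu(g)^m\big(\langle V(x),w_1\rangle\langle V^*(y),w_1\rangle - \langle V(y),w_1\rangle\langle V^*(x),w_1\rangle\big)^m$ is independent of $g$ along the Heisenberg Levi, hence equals its value at $g=1$. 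Evaluating the Freudenthal form at $g=1$ and writing $b = b_0 I + B$, $c = c_0 I - C$ with $B,C\in J^0$, the factor becomes $\langle (B-iC)\wedge(B+iC),\,x\wedge y\rangle_I = 2i\langle B\wedge C,\,x\wedge y\rangle_I$, and since $(I,x)_I = (I,y)_I = 0$ for $x,y\in J^0$ this equals $2i\langle b\wedge c,\,x\wedge y\rangle_I = 2i\langle P_1(w),x\wedge y\rangle_I$. Raising to the $m$--th power gives $(2i)^m\langle P_m(w),(x\wedge y)^{\otimes m}\rangle_I$.

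Combining the displays yields the corollary for $\beta' = (x\wedge y)^{\otimes m}$ with $B''_{m,\ell} = (4i)^m B'_{\ell,m}/B'_{\ell,0}\ne 0$, independent of $w$ and $g$. Since both sides of the asserted identity are linear in the final argument, and by the ``amber'' discussion of Section \ref{sec:F4} the vectors $(x\wedge y)^{\otimes m}$ with $E_{x,y}$ singular and isotropic are highest weight vectors of $V_{m\lambda_3}$ for the various Borel subgroups and hence span it, the identity extends by linearity to all $\beta'\in V_{m\lambda_3}$. The main obstacle will be the second step: matching the $X_i$ of the general strategy to those of Theorem \ref{thm:intalv}, tracking the similitude $\nu$ carefully enough to see the $g$--independence, and pinning down the sign and constant in the Freudenthal computation. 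Everything genuinely substantive --- the integral representation and the evaluation of its derivatives --- is already contained in Theorem \ref{thm:intalv}.
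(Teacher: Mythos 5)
Your proposal is correct and follows essentially the same route as the paper: restrict $g$ to the Heisenberg Levi, rewrite the left side via the general-strategy reduction as $2^m(X_1X_4-X_2X_3)^m W_{w,\ell}^{-\ell}(g)$, feed in Theorem \ref{thm:intalv} (with the differentiation under the integral sign justified in \ref{subsec:tech}), and then identify the Freudenthal factor with $\langle P_m(w),\beta'\rangle_I$. The only organizational difference is the final step: you prove the identity for each singular isotropic $(x\wedge y)^{\otimes m}$ (noting $B''_{m,\ell}$ is manifestly independent of the choice) and extend by linearity since these span $V_{m\lambda_3}$, whereas the paper fixes one such pair, writes $\beta' = \sum_j\alpha_j k_j(x\wedge y)^{\otimes m}$ with $k_j\in F_4^I(\R)$, and pushes the $k_j$ through the pairing and the argument of the Whittaker function; both are clean and equivalent, and your overall sign $(4i)^m$ versus the paper's $(-4i)^m$ is an immaterial convention difference absorbed into $B''_{m,\ell}$.
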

\begin{proof} Suppose $w = (a,b,c,d)$.  First consider the case $\beta' = \beta$.  We must simplify the quantity
\[\langle V(x),w_1\rangle \langle V^*(y),w_1 \rangle - \langle V(y),w_1\rangle \langle V^*(x),w_1 \rangle.\]
If $w_1 = w_1' + (0,b_1,-c_1,0)$ with $\tr(b_1) = \tr(c_1) = 0$ and $w_1' \in W_\Q$, then 
\begin{align*} \langle V(x),w_1\rangle \langle V^*(y),w_1 \rangle - \langle V(y),w_1\rangle \langle V^*(x),w_1 \rangle &= (i(x,c_1)-(x,b_1))(-i(y,c_1)-(y,b_1)) \\ &\,\,-(-i(x,c_1)-(x,b_1))(i(y,c_1)-(y,b_1)) \\ &= 2i ((x,b_1)(y,c_1)-(x,c_1)(y,b_1)) \\ &= 2i (x\wedge y,b_1\wedge c_1).
\end{align*}
Consequently, if $g \in \GL_2^s$, the Heisenberg Levi on $G_2$, and $w = w' + (0,b',-c',0)$, then $b_1 \wedge c_1 = \det(g)^{-1} b' \wedge c'$, so 
\[\langle V(x),w_1\rangle \langle V^*(y),w_1 \rangle - \langle V(y),w_1\rangle \langle V^*(x),w_1 \rangle = (2i)\det(g)^{-1} (x\wedge y, b'\wedge c').\]

In general, if $\beta' = \sum_{j}{\alpha_j k_j (x\wedge y)^{\otimes m}}$, with $\alpha_j \in \C$ and $k_j \in F_4^I(\R)$, one finds that
\[\sum_{j}\alpha_j (\langle V(x),k_j^{-1} g^{-1}w\rangle \langle V^*(y),k_j^{-1} g^{-1}w \rangle - \langle V(y),k_j^{-1} g^{-1}w\rangle \langle V^*(x),k_j^{-1} g^{-1}w \rangle)^m\]
is equal to
\begin{align*}(2i)^m \det(g)^{-m}\sum_j \alpha_j (k_j (x\wedge y), b'\wedge c')^m &= (2i)^m \det(g)^{-m} \sum_j \alpha_j (k_j (x\wedge y)^{\otimes m}, (b'\wedge c')^{\otimes m})\\ &= (2i)^m \det(g)^{-m} (\beta, (b'\wedge c')^{\otimes m}) \\ &= (-2i)^m \det(g)^{-m} (\beta,(b\wedge c)^{\otimes m})\end{align*}
if $w= (a,b,c,d)$.

The $\det(g)^{-m}$ cancels the $\nu(g)^m$ from Theorem \ref{thm:intalv}, giving the corollary.
\end{proof}

Theorem \ref{thm:WqmfDer} follows easily from Corollary \ref{cor:DmWell}:
\begin{proof}[Proof of Theorem \ref{thm:WqmfDer}] Both sides of the desired equality transform on the left under $N(\R)$ in the same way, and on the right under $K_{G_2}$ in the same way.  Moreover, for $\ell=4$, they are both known to be quaternionic functions.  Thus to prove their equality, it suffices to pair against $x^{2\ell}$, and evaluate on $g$ in the Levi of the Heisenberg parabolic.  But this is precisely what is done in Corollary \ref{cor:DmWell}, so the theorem is proved.	
\end{proof}

\subsection{Some derivatives} We now focus on proving Theorem \ref{thm:intalv}.  We must consider some derivatives of the function
\[a_{\ell,v}(g)=\frac{(e_{\ell},g^{-1}v)^{\ell}}{(pr(g^{-1}v),pr(g^{-1}v))^{\ell+\frac{1}{2}}}.\]

Suppose $X = x \otimes w \in \p$.  Then $X \cdot e_{\ell} = 0$, and thus $X\{g \mapsto (e_{\ell},g^{-1}v)\} = 0$.  Consequently, the function $(e_\ell,g^{-1}v)^{\ell}$ can be considered constant for the purposes of differentiating with respect to $x \otimes W \subseteq \p$.  We therefore must just differentiate the function
\[b_{\ell,v}(g) = (pr(g^{-1}v),pr(g^{-1}v))^{-(\ell+\frac{1}{2})}.\]

We obtain
\begin{align*}X b_{\ell,v}(g) &= (-(\ell+\frac{1}{2}))\cdot (pr(g^{-1}v),pr(g^{-1}v))^{-(\ell+\frac{3}{2})}\cdot(2)\cdot(-pr([X,g^{-1}v]),pr(g^{-1}v)) \\ &= (2\ell+1) (pr([X,g^{-1}v]),pr(g^{-1}v)) (pr(g^{-1}v),pr(g^{-1}v))^{-(\ell+\frac{3}{2})}.\end{align*}
We write $\Cvone{X} = (pr([X,g^{-1}v]),pr(g^{-1}v))$ and $\Cvtwo{X_1}{X_2} = (pr([X_1,g^{-1}v]),pr([X_2,g^{-1}v]))$ and $\Cv= (pr(g^{-1}v),pr(g^{-1}v)).$  Thus,
\[X b_{\ell,v}(g) = (2\ell+1)\Cvone{X} \Cv^{-(\ell+\frac{3}{2})}.\]

Now suppose $X_1 = X = x \otimes w_1$, and $X_2 = x \otimes w_2$ is such that $\Span\{w_1,w_2\}$ is isotropic and singular.  Because it is isotropic, $[X_1,X_2] = 0$.  Because it is singular, $\Span{X_1,X_2}$ consists of rank one elements of $\g(J)$.  Recall that $X \in \g(J)$ is rank one means $[X,[X,y]] + 2B(X,y)X = 0$ for all $y \in \g(J)$, where $B(\,,\,)$ is the bilinear form proportional to the Killing form defined in \cite[section 4.2.2]{pollackQDS}.  Thus, by symmetrizing and using that $X_1,X_2$ commute, one arrives at $-[X_1,[X_2,y]] = B(X_1,y)X_2 + B(X_2,y)X_1$.  Thus, $pr([X_1,[X_2,y]]) = 0$.  Using this, we differentiate $b_{\ell,v}(g)$ again to obtain
\[ X_2 X_1 b_{\ell,v}(g) = -(2\ell+1)\Cvtwo{X_1}{X_2}\Cv^{-(\ell+\frac{3}{2})} + (2\ell+1)(2\ell+3)\Cvone{X_1}\Cvone{X_2} \Cv^{-(\ell+\frac{5}{2})}.\]

For $0 \leq k \leq \lfloor n/2 \rfloor$, define $\mathcal{C}_{v,n,k}$ to be the symmetric sum of terms of the form 
\[\Cvtwo{X_1}{X_2} \cdots \Cvtwo{X_{2k-1}}{X_{2k}} \Cvone{X_{2k+1}} \cdots \Cvone{X_n}.\]
Then we have the following proposition.

\begin{proposition} Suppose $X_1,\ldots, X_n$ are such that 
	\begin{enumerate}
		\item $X_j = x \otimes w_j$ for all $j$ with
		\item $\Span(w_1, \ldots, w_n)$ singular and isotropic.
	\end{enumerate}
	Then 
	\[X_n \cdots X_1 b_{\ell,v}(g) = \sum_{k=0}^{\lfloor n/2 \rfloor}{(-1)^k 2^{n-k} (\ell+\frac{1}{2})_{n-k} \mathcal{C}_{v,n,k} \Cv^{-(\ell+n-k+\frac{1}{2})}}.\]
\end{proposition}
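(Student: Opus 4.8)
The cleanest approach is to differentiate along the commuting flow and read off the answer as a multilinear Taylor coefficient of a power of a quadratic polynomial. Since the $X_j$ pairwise commute (isotropy), $g\exp(t_1X_1)\cdots\exp(t_nX_n) = g\exp(\sum_j t_jX_j)$, so
\[
X_n\cdots X_1 b_{\ell,v}(g) \;=\; \left.\frac{\partial^n}{\partial t_1\cdots\partial t_n}\right|_{t=0} b_{\ell,v}\bigl(g\exp(\textstyle\sum_j t_jX_j)\bigr),
\]
which is the coefficient of the monomial $t_1\cdots t_n$ in the Taylor expansion at $t=0$. Write $w(t) = \exp(-\ad(\sum_j t_jX_j))(g^{-1}v)$, so that $b_{\ell,v}(g\exp(\sum_j t_jX_j)) = Q(t)^{-(\ell+\frac12)}$, where $Q(t) = (pr(w(t)),pr(w(t)))$.

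The key structural input is that $pr(w(t))$ is affine-linear in $t$. For $r\geq 2$, every term of the degree-$r$ part of $w(t)$ has the form $\ad(X_i)\ad(X_j)(z)$ for some $z\in\g(J)$, and $pr(\ad(X_i)\ad(X_j)(z))=0$ for all such $z$: polarizing the rank-one identity $[X,[X,y]]+2B(X,y)X=0$ and using $[X_i,X_j]=0$ together with the Jacobi identity gives $[X_i,[X_j,y]] = -B(X_i,y)X_j - B(X_j,y)X_i$, whose image under $pr$ vanishes because $pr(X_j)=0$ (as $X_j\in\p$ while the long-root $\su_2$ lies in the compact part $\k$). Hence $pr(w(t)) = pr(g^{-1}v) - \sum_j t_j\,pr([X_j,g^{-1}v])$, so $Q(t)$ is a polynomial of degree at most $2$ with
\[
Q(0)=\Cv, \qquad \left.\partial_{t_j}Q\right|_0 = -2\,\Cvone{X_j}, \qquad \left.\partial_{t_i}\partial_{t_j}Q\right|_0 = 2\,\Cvtwo{X_i}{X_j},
\]
and all partials of order $\geq 3$ vanishing; these are exactly the one- and two-variable derivative formulas recorded in the text, reorganized.

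Next I would expand $Q(t)^{-(\ell+\frac12)} = \Cv^{-(\ell+\frac12)}\bigl(1+P(t)/\Cv\bigr)^{-(\ell+\frac12)}$ by the binomial series, where $P(t)=Q(t)-\Cv$ has no constant term and degree at most $2$. The coefficient of $t_1\cdots t_n$ in $P(t)^m$ is nonzero only when $m=n-k$, with exactly $k$ of the $m$ factors contributing a quadratic monomial (coefficient $2\,\Cvtwo{X_i}{X_j}$) and $n-2k$ a linear one (coefficient $-2\,\Cvone{X_i}$); summing over which factors are which and over the resulting pair–singleton partitions of $\{1,\dots,n\}$ produces $(n-k)!\cdot 2^k\cdot(-2)^{n-2k}\cdot\mathcal{C}_{v,n,k}$, since $\mathcal{C}_{v,n,k}$ is by definition the sum over those partitions of the corresponding products. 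Multiplying by $\binom{-(\ell+\frac12)}{n-k}\Cv^{-(\ell+\frac12)-(n-k)}$, using $\binom{-s}{n-k}(n-k)! = (-1)^{n-k}(s)_{n-k}$, and simplifying signs and powers of $2$ yields exactly $\sum_{k=0}^{\lfloor n/2\rfloor}(-1)^k 2^{n-k}(\ell+\frac12)_{n-k}\,\mathcal{C}_{v,n,k}\,\Cv^{-(\ell+n-k+\frac12)}$, as claimed.

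The only genuinely delicate point is the vanishing $pr(\ad(X_i)\ad(X_j)(z))=0$, which uses the full force of the hypothesis that $\Span(w_1,\dots,w_n)$ be simultaneously isotropic (giving $[X_i,X_j]=0$) and singular (making the $X_j$ rank one in $\g(J)$); everything after that is bookkeeping with the binomial series and the combinatorial definition of $\mathcal{C}_{v,n,k}$, together with care that $\mathcal{C}_{v,n,k}$ vanishes for $k>\lfloor n/2\rfloor$ so the sum is automatically truncated. As an alternative to the generating-function argument, one can induct on $n$ directly: applying $X_{n+1}$ via the Leibniz rule and the three derivative identities, and invoking the partition recursion $\mathcal{C}_{v,n+1,k} = \Cvone{X_{n+1}}\mathcal{C}_{v,n,k} - X_{n+1}\mathcal{C}_{v,n,k-1}$ together with the elementary coefficient identities $2(\ell+n-k+\tfrac12)c_{n,k} = c_{n+1,k}$ and $c_{n,k-1} = -c_{n+1,k}$ for $c_{n,k} := (-1)^k 2^{n-k}(\ell+\frac12)_{n-k}$, one checks that the new $\Cvone{X_{n+1}}$-terms cancel in pairs and the surviving terms reassemble into the asserted formula at level $n+1$.
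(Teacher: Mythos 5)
Your main argument is correct, and it takes a genuinely different (cleaner) route than the paper. The paper proves the proposition by explicit induction on $n$: it first records the first- and second-derivative formulas, then differentiates the inductive hypothesis once more and recognizes the partition recursion $\mathcal{C}_{v,n+1,j} = \Cvone{X_{n+1}}\mathcal{C}_{v,n,j} - X_{n+1}\mathcal{C}_{v,n,j-1}$ to close the induction. Your proof instead exploits commutativity to turn the iterated derivative into a single Taylor coefficient in the variables $t_1,\dots,t_n$, observes that $pr(w(t))$ is affine-linear in $t$ (so $Q(t)$ is a degree-$\le 2$ polynomial whose Taylor data at $t=0$ is exactly $\Cv$, $\Cvone{\cdot}$, $\Cvtwo{\cdot}{\cdot}$), and then reads the full answer off the binomial expansion of $Q(t)^{-(\ell+\frac12)}$ in one step. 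Both arguments hinge on the same structural lemma -- that $pr(\ad(X_i)\ad(X_j)(z))=0$, which you derive by polarizing the rank-one identity and using $[X_i,X_j]=0$ and $pr(X_j)=0$; this is precisely what the paper uses (without spelling out the last step) to get $pr([X_1,[X_2,y]])=0$. What your formulation buys is that the truncation at $k\le\lfloor n/2\rfloor$, the binomial coefficient, and the factorials all appear in one shot rather than being rebuilt recursively, at the cost of the careful bookkeeping of pair/singleton partitions against the multinomial count $(n-k)!\,2^k(-2)^{n-2k}$, which you handle correctly. The ``alternative'' induction you sketch at the end is essentially the paper's proof; one small imprecision there is the phrase that the new $\Cvone{X_{n+1}}$-terms ``cancel in pairs'' -- in fact nothing cancels: after reindexing $k\mapsto k-1$ in the $X_{n+1}\mathcal{C}_{v,n,k}$-sum and using $c_{n,k-1}=-c_{n+1,k}$, the two families of terms \emph{combine} via the partition recursion into $c_{n+1,k}\mathcal{C}_{v,n+1,k}$. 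This is cosmetic; the primary generating-function argument is self-contained and correct.
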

\begin{proof} We proceed by induction, noting that the proposition is true for $n=1$ and $n=2$ as checked above.  Note that 
	\[ X_{n+1} \Cv^{-(\ell+n-k+\frac{1}{2})} = 2 (\ell+n-k+\frac{1}{2}) \Cvone{X_{n+1}} \Cv^{-(\ell+n+1-k+\frac{1}{2})}.\]
	Thus, making the induction assumption, $X_{n+1} \cdots X_{1} b_{\ell,v}(g)$ is equal to 
	\[\sum_{k=0}^{\lfloor n/2 \rfloor}{(-1)^k 2^{n+1-k} (\ell+\frac{1}{2})_{n+1-k} \mathcal{C}_{v,n,k} \Cvone{X_{n+1}} \Cv^{-(\ell+n+1-k+\frac{1}{2})}} \]
	plus
	\[ \sum_{k=0}^{\lfloor n/2 \rfloor}{(-1)^{k-1} 2^{n-k} (\ell+\frac{1}{2})_{n-k}(-X_{n+1}\mathcal{C}_{v,n,k}) \Cv^{-(\ell+n-k+\frac{1}{2})}}.\]
	
	But now note that
	\[ \mathcal{C}_{v,n,j} \Cvone{X_{n+1}}+ (-X_{n+1} \mathcal{C}_{v,n,j-1}) = \mathcal{C}_{v,n+1,j}.\]
	The proposition follows.
\end{proof}

The next step is to calculate the $\Cvone{X_j}$ and $\Cvtwo{X_i}{X_j}$, then integrate.

\subsection{Some Lie algebra calculations}
We set $v = e \otimes w$ and $x = n g \in N_J(\R) M_J(\R)$.

Then if $n = \exp(e \otimes \overline{n})$, $n^{-1} v = e \otimes w + \langle w, \overline{n} \rangle \mm{0}{1}{0}{0}$, so $x^{-1} v = e \otimes w_1 + z_1 \mm{0}{1}{0}{0}$ where $w_1 =g^{-1} w$ and $z_1 = \nu(g)^{-1} \langle w, \overline{n} \rangle $.

Now recall the long root $\sl_2$ triple of $\g(J)$ given by 
\begin{itemize}
	\item $e_{\ell} = \frac{1}{4}(ie+f) \otimes r_0(i)$, 
	\item $f_{\ell} = \frac{1}{4} (ie-f) \otimes r_0(-i)$ and 
	\item $h_{\ell} = \frac{i}{2}(\mm{0}{1}{-1}{0} + n_L(-1) + n_L^\vee(1))$.
\end{itemize}
 For $X \in \g$ we have $pr(X) = B(X,f_{\ell}) e_{\ell} + \frac{1}{2} B(X,h_{\ell})h_{\ell} + B(X,e_{\ell}) f_{\ell}$.  Note that the exceptional Cayley transform $C$ \cite[section 5]{pollackQDS} takes $e_{\ell}, h_{\ell}, f_{\ell}$ to $\mm{0}{1}{0}{0}, \mm{1}{}{}{-1}, \mm{0}{0}{1}{0}$.  We obtain
\[pr(x^{-1}v) = \frac{1}{4} (\alpha^* e_{\ell} - iz_1 h_{\ell}-\alpha f_{\ell})\]
where\footnote{This is slightly different than how $\alpha$ is defined in \cite{pollackG2}.} $\alpha = \langle w_1, r_0(i) \rangle$. 

We now compute $\Cvone{X_j} = (pr([X_j,x^{-1}v]),pr(x^{-1}v))$.  We have
\begin{enumerate}
	\item $h_3 = \frac{1}{2} \mm{-1}{i}{i}{1}$, which under $C$ goes to $-ie \otimes (1,0,0,0)$.
	\item $h_1(X) = \frac{1}{2}(ie+f) \otimes V(X)$, which goes to $-ie \otimes (0,X,0,0)$
	\item $h_{-1}(Z) = \frac{i}{2}(n_L(Z)+n_L^\vee(Z)) + \frac{1}{2} M(\Phi_{1,Z})$, which goes to $-ie \otimes (0,0,Z,0)$
	\item $h_{-3} = \frac{1}{4} (ie-f) \otimes r_0(i)$, which goes to $-ie \otimes (0,0,0,1)$.
\end{enumerate}

Denote by $pr_{\p}$ the projection from $\g(J) \otimes \R \rightarrow \p$.  Let's suppose $C^{-1}(pr_{\p}(x^{-1}v)) = e \otimes w_{e} + f \otimes w_f$.  Write also $X_j = C(e \otimes w_j)$ with $w_j = (0,b_j,c_j,0)$.  Now 
\begin{align*} \Cvone{X_j} &= ([C^{-1}X_j, C^{-1}x^{-1}v], C^{-1}(pr(x^{-1}v))) \\ &= \frac{1}{4}([e \otimes w_j, e\otimes w_e + f\otimes w_f], \alpha^* E - iz_1 H - \alpha F) \\ &= \frac{1}{4}( \langle w_j, w_e \rangle  E - \frac{1}{2}\langle w_j, w_f \rangle H, \alpha^* E - iz_1 H - \alpha F) \\ &= -\frac{\alpha}{4} \langle w_j, w_e \rangle  + i \frac{z_1}{4} \langle w_j, w_f \rangle 
\end{align*}

Here $E = \mm{0}{1}{0}{0}, F = \mm{0}{0}{1}{0}$ and $H = \mm{1}{}{}{-1}$.

We now compute
\begin{align*}
\langle w_j, w_f \rangle &= (-e \otimes w_j, f \otimes w_f) \\ &= (-e \otimes w_j, C^{-1}(pr_\p(x^{-1}v))) \\ &= (-C(e \otimes w_j), x^{-1}v = e \otimes w_1 + z_1 E) \\ &= (-ih_1(b_j)-ih_{-1}(c_j), e \otimes w_1 + z_1 E) \\ &= -i(h_1(b_j),e \otimes w_1) \\ &= \frac{1}{2}((e-if) \otimes V(b_j), e\otimes w_1) \\ &= -\frac{i}{2} \langle V(b_j),w_1 \rangle. 
\end{align*}

Similarly we compute
\begin{align*}
\langle w_j,w_e \rangle = (f \otimes w_j,e \otimes w_e) \\ &= (f \otimes w_j, C^{-1}(pr_\p(x^{-1}v))) \\ &= (C(f\otimes w_j),x^{-1}v= e \otimes w_1 + z_1 E) \\ &= (i\overline{h_{-1}}(b_j) -i \overline{h_1}(c_j), e \otimes w_1 + z_1 E) \\ &= (-i \overline{h_1}(c_j), e \otimes w_1) \\ &= -i(\frac{1}{2}(-ie+f) \otimes V^*(c_j),e \otimes w_1) \\ &= - \frac{i}{2} \langle V^*(c_j),w_1 \rangle.
\end{align*}

We therefore obtain
\[\Cvone{X_j} = \frac{i \alpha}{8} \langle V^*(c_j), w_1 \rangle + \frac{z_1}{8}\langle V(b_j),w_1 \rangle.\]

We now compute $\Cvtwo{X_i}{X_j}$.  We assume $X_i = C(e \otimes w_i = e \otimes (0,b_i,c_i,0))$, $X_j = C(e \otimes w_j = e \otimes (0,b_j,c_j,0))$.  Then for $k = i,j$, we have
\begin{align*} pr([X_k,x^{-1}v]) &= pr([C(e \otimes w_k), pr_\p(x^{-1}v)]) \\ &= pr([C(e \otimes w_k), C(e \otimes w_e + f \otimes w_f)]) \\ &= (pr \circ C)([e \otimes w_k, e \otimes w_e + f \otimes w_f]) \\ &= (pr \circ C)(\langle w_k, w_e \rangle E - \frac{\langle w_k,w_f\rangle}{2}H) \\ &= \langle w_k, w_e \rangle e_{\ell}- \frac{\langle w_k,w_f\rangle}{2} h_{\ell}.\end{align*}
Thus
\[\Cvtwo{X_i}{X_j} = \frac{1}{2} \langle w_i,w_f \rangle \langle w_j,w_f \rangle = -\frac{1}{8} \langle V(b_i),w_1 \rangle \langle V(b_j),w_1 \rangle.\] 

We summarize what we've proved in a proposition.
\begin{proposition} Suppose $X_i = C(e \otimes w_i = e \otimes (0,b_i,c_i,0))$, $X_j = C(e \otimes w_j = e \otimes (0,b_j,c_j,0))$.  Then one has
	\begin{enumerate}
		\item $(e_\ell,x^{-1}v) = -\frac{\alpha}{4}$;
		\item $\Cv = -\frac{1}{8}(|\alpha|^2+z_1^2)$;
		\item $\Cvone{X_j} = \frac{i \alpha}{8} \langle V^*(c_j), w_1 \rangle + \frac{z_1}{8}\langle V(b_j),w_1 \rangle$;
		\item $\Cvtwo{X_i}{X_j}= -\frac{1}{8} \langle V(b_i),w_1 \rangle \langle V(b_j),w_1 \rangle$.
	\end{enumerate}
	Here $x=ng \in N_J(\R)M_J(\R)$, $\alpha = \langle w_1, r_0(i)\rangle$, $w_1 = g^{-1} w$ and $z_1 = \nu(g)^{-1} \langle w, \overline{n}\rangle$.
\end{proposition}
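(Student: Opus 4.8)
The plan is to obtain all four identities by a single direct computation built from three inputs: the explicit form of $x^{-1}v$, the definition of $pr$ through the bilinear form $B$, and the bracket relations of $\g(J)$ in the five-term $\Z$-grading attached to the Heisenberg parabolic (whose grading element is the Cayley image $H$ of $h_\ell$). First I would make $x^{-1}v$ explicit. With $v = e\otimes w$ and $n = \exp(e\otimes\overline{n})$, the Heisenberg relations inside $N_J$ give $n^{-1}v = e\otimes w + \langle w,\overline{n}\rangle\, e_\ell$; applying $g^{-1}\in M_J(\R)$, which carries $w$ to $w_1 = g^{-1}w$ and scales $e_\ell$ by $\nu(g)^{-1}$, yields $x^{-1}v = e\otimes w_1 + z_1 e_\ell$ with $z_1 = \nu(g)^{-1}\langle w,\overline{n}\rangle$. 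Then I would compute $pr(x^{-1}v)$ from $pr(X) = B(X,f_\ell)e_\ell + \tfrac12 B(X,h_\ell)h_\ell + B(X,e_\ell)f_\ell$: since $e_\ell$ and $f_\ell$ are proportional to $(ie\pm f)\otimes r_0(\pm i)$, the pairings $B(e\otimes w_1, e_\ell)$, $B(e\otimes w_1, f_\ell)$, $B(e\otimes w_1, h_\ell)$ reduce to values of Freudenthal's form on $W_J$ against $r_0(\pm i)$, which produce $\alpha = \langle w_1, r_0(i)\rangle$ and its conjugate $\alpha^*$. This gives identity (1), and the intermediate formula $pr(x^{-1}v) = \tfrac14(\alpha^* e_\ell - i z_1 h_\ell - \alpha f_\ell)$; pairing this expression with itself and using $B(e_\ell,f_\ell)$, $B(h_\ell,h_\ell)$ in the normalization of $B$ fixed in \cite{pollackQDS} gives identity (2).

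For identities (3) and (4) I would first replace $x^{-1}v$ by its $\p$-component, since a bracket of $X_i$ or $X_j$ with the remaining $\k$-part $z_1 e_\ell$ lands in $\p$ and is annihilated by $pr$; then I would transport that $\p$-component through the exceptional Cayley transform $C$ and write it as $e\otimes w_e + f\otimes w_f$ for explicit $w_e, w_f \in W_J\otimes\C$. In the model $\p\simeq V_2^\ell\otimes W_J$ one has $[e\otimes w_j, e\otimes w_e + f\otimes w_f] = \langle w_j, w_e\rangle E - \tfrac12\langle w_j, w_f\rangle H$, so $pr([X_j, x^{-1}v]) = \langle w_j, w_e\rangle e_\ell - \tfrac12\langle w_j, w_f\rangle h_\ell$. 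The key substep is to evaluate $\langle w_j, w_e\rangle$ and $\langle w_j, w_f\rangle$ in terms of $\langle V(b_j), w_1\rangle$ and $\langle V^*(c_j), w_1\rangle$, where $w_j = (0,b_j,c_j,0)$; for this I would use the explicit Cayley images of the root vectors $h_3$, $h_1(X) = \tfrac12(ie+f)\otimes V(X)$, $h_{-1}(Z)$, $h_{-3}$ recorded above, pairing the $C$-image of $e\otimes w_j$ against $x^{-1}v$ and keeping track of the factors of $i$ and $\tfrac12$. Substituting into $\Cvone{X_j} = (pr([X_j, x^{-1}v]), pr(x^{-1}v))$ and $\Cvtwo{X_i}{X_j} = (pr([X_i, x^{-1}v]), pr([X_j, x^{-1}v]))$, and once more using the $\su_2$-triple pairings, yields the asserted formulas.

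The only real difficulty is the bookkeeping in this last step: one must thread the Cayley transform through the several root vectors making up the $\p$-component of $x^{-1}v$ and through the maps $V(\cdot), V^*(\cdot)$, while keeping every sign, every factor of $i$, and every normalization constant of $B$ consistent; in particular the distinction between $\alpha$ and $\alpha^*$ (equivalently, the asymmetric roles played by $b_j$ and $c_j$) has to be tracked carefully. There is nothing conceptually deep here, since the split form of the long root $\su_2$ produced by $C$, the Heisenberg brackets, and the defining formula for $pr$ do all the structural work; it is nonetheless worth sanity-checking the constants $-1/4$, $-1/8$ and $i/8$ against the $n=1$ and $n=2$ cases of $X_n\cdots X_1 b_{\ell,v}(g)$ computed earlier in this subsection.
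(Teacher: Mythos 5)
Your proposal is correct and follows the paper's proof essentially step for step: first compute $x^{-1}v = e\otimes w_1 + z_1\mm{0}{1}{0}{0}$, then extract $pr(x^{-1}v) = \tfrac{1}{4}(\alpha^{*}e_{\ell}-iz_1 h_{\ell}-\alpha f_{\ell})$ to get (1) and (2), and finally transport the $\p$-component through $C$, compute $[e\otimes w_j,\, e\otimes w_e + f\otimes w_f]$, and evaluate $\langle w_j,w_e\rangle$, $\langle w_j,w_f\rangle$ against the explicit Cayley images of $h_{\pm 1}$, $h_{\pm 3}$. One tiny slip: the non-$\p$ term in $x^{-1}v$ is $z_1\mm{0}{1}{0}{0}$, the Cayley image $E=C(e_\ell)$, not $z_1 e_\ell$; but your justification for discarding it only uses $[\p,\k]\subseteq\p$ and $pr|_{\p}=0$, which is what matters.
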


Rewriting the above, we have
\begin{enumerate}
	\item $-\Cvone{X_1} = -\frac{z_1}{8} \langle V(x),w_1 \rangle$
	\item $-\Cvone{X_2} = -\frac{z_1}{8} \langle V(y),w_1 \rangle$
	\item $-\Cvone{X_3} = i\frac{\alpha}{8} \langle V^*(x),w_1 \rangle$
	\item $-\Cvone{X_4} = i\frac{\alpha}{8} \langle V^*(y),w_1 \rangle$
	\item $-\Cvtwo{X_1}{X_1} = \frac{1}{8} \langle V(x),w_1\rangle \langle V(x),w_1 \rangle$
	\item $-\Cvtwo{X_1}{X_2} = \frac{1}{8} \langle V(x),w_1\rangle \langle V(y),w_1 \rangle$
	\item $-\Cvtwo{X_2}{X_2} = \frac{1}{8} \langle V(y),w_1\rangle \langle V(y),w_1 \rangle$
\end{enumerate}

\subsection{More calculations}
Putting together the work we have done above, we obtain the following lemma.  Define $\Cv^* = - \Cv$, $\Cvone{X_j}^* = -\Cvone{X_j}$, $\Cvtwo{X_i}{X_j}^* = -\Cvtwo{X_i}{X_j}$ and $\mathcal{C}_{v,n,k}^*(X_1 \cdots X_n)$ to be the quantity one obtains by replacing every $\Cvone{X_j}$ with a $\Cvone{X_j}^*$ and every $\Cvtwo{X_i}{X_j}$ with a $\Cvtwo{X_i}{X_j}^*$. 
\begin{lemma} Suppose $n=2m$.  Consider
	\[ \sum_{r=0}^{m}(-1)^r \binom{m}{r} \mathcal{C}_{v,2m,k}^*((X_1X_4)^{m-r}(X_2X_3)^{r}).\]
	This quantity is
	\[\left(\frac{1}{8}\right)^k \left(\frac{i\alpha}{8}\right)^m \left(\frac{-z_1}{8}\right)^{m-2k} \binom{m}{2k} \frac{(2k)!}{k!2^k} (\langle V(x),w_1\rangle \langle V^*(y),w_1 \rangle - \langle V(y),w_1\rangle \langle V^*(x),w_1 \rangle)^{m}.
	\]
	(Interpret this formula to mean that it is $0$ if $2k > m$.)
\end{lemma}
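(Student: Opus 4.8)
The plan is to reduce the claimed identity to an elementary bookkeeping computation, feeding in the explicit values of $\Cv^*$, $\Cvone{X_j}^*$ and $\Cvtwo{X_i}{X_j}^*$ recorded just above. First I would abbreviate $a = \langle V(x),w_1\rangle$, $b = \langle V(y),w_1\rangle$, $a^* = \langle V^*(x),w_1\rangle$ and $b^* = \langle V^*(y),w_1\rangle$, so that the displayed ``rewritings'' read $\Cvone{X_1}^* = -\tfrac{z_1}{8}a$, $\Cvone{X_2}^* = -\tfrac{z_1}{8}b$, $\Cvone{X_3}^* = \tfrac{i\alpha}{8}a^*$, $\Cvone{X_4}^* = \tfrac{i\alpha}{8}b^*$, together with $\Cvtwo{X_1}{X_1}^* = \tfrac{1}{8} a^2$, $\Cvtwo{X_1}{X_2}^* = \tfrac{1}{8} ab$ and $\Cvtwo{X_2}{X_2}^* = \tfrac{1}{8} b^2$. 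The one additional input I would extract from the formula $\Cvone{X_j} = \tfrac{i\alpha}{8}\langle V^*(c_j),w_1\rangle + \tfrac{z_1}{8}\langle V(b_j),w_1\rangle$ is that $\langle V(b_3),w_1\rangle = \langle V(b_4),w_1\rangle = 0$ (compare that formula with the third and fourth rewritings); combined with $\Cvtwo{X_i}{X_j} = -\tfrac{1}{8}\langle V(b_i),w_1\rangle\langle V(b_j),w_1\rangle$, this shows that $\Cvtwo{X_i}{X_j}^* = 0$ whenever $i$ or $j$ labels a copy of $X_3$ or $X_4$.

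Next I would unwind $\mathcal{C}_{v,2m,k}^*$ applied to the word $(X_1X_4)^{m-r}(X_2X_3)^{r}$ as a sum over all ways of partitioning its $2m$ operator slots into $k$ (unordered) two-element blocks and $2m-2k$ one-element blocks. This word has exactly $m$ ``type $\{1,2\}$'' slots (namely $m-r$ copies of $X_1$ and $r$ copies of $X_2$) and $m$ ``type $\{3,4\}$'' slots ($m-r$ copies of $X_4$, $r$ copies of $X_3$). Since a two-element block meeting a type $\{3,4\}$ slot contributes $0$, in any partition with a nonzero contribution every two-element block is a pair of type $\{1,2\}$ slots; hence all $m$ type $\{3,4\}$ slots are singletons and exactly $m-2k$ of the type $\{1,2\}$ slots are singletons. (In particular such a partition exists only if $2k\le m$, which matches the stated convention that the right-hand side vanishes when $2k>m$.) The crucial observation is then that each slot contributes exactly one \emph{linear} factor, regardless of whether it lies in a pair or a singleton: an $X_1$-slot always contributes a factor $a$, an $X_2$-slot a factor $b$, an $X_3$-slot a factor $a^*$, an $X_4$-slot a factor $b^*$, while the scalars $\tfrac{1}{8}$ (one per pair), $-\tfrac{z_1}{8}$ (one per type $\{1,2\}$ singleton) and $\tfrac{i\alpha}{8}$ (one per type $\{3,4\}$ singleton) attach to blocks rather than slots. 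Hence every surviving partition contributes the same value
\[\left(\tfrac{1}{8}\right)^{k}\left(\tfrac{-z_1}{8}\right)^{m-2k}\left(\tfrac{i\alpha}{8}\right)^{m}\,a^{m-r}b^{r}(a^*)^{r}(b^*)^{m-r},\]
so $\mathcal{C}_{v,2m,k}^*((X_1X_4)^{m-r}(X_2X_3)^{r})$ equals this quantity times the number of such partitions.

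Finally I would count those partitions: choosing $k$ disjoint unordered pairs and $m-2k$ singletons among $m$ labeled slots can be done in $\dfrac{m!}{2^{k}\,k!\,(m-2k)!} = \binom{m}{2k}\dfrac{(2k)!}{k!\,2^{k}}$ ways. Substituting, pulling out of the sum over $r$ everything that does not depend on $r$, and invoking the binomial theorem via
\[\sum_{r=0}^{m}(-1)^{r}\binom{m}{r}(ab^*)^{m-r}(ba^*)^{r} = (ab^*-ba^*)^{m},\]
produces exactly the asserted formula. The only genuinely delicate part of this argument is the middle step: verifying that the two-point factors drop out as soon as a copy of $X_3$ or $X_4$ is involved, and that every slot carries precisely one linear factor, so that the monomial $a^{m-r}b^{r}(a^*)^{r}(b^*)^{m-r}$ is the same for all admissible partitions. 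Once these points are established, the partition count and the binomial identity finish the proof, and the edge case $2k>m$ presents no difficulty since both sides are then visibly zero.
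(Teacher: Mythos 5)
Your proof is correct and follows the same approach as the paper's: identify the nonzero partitions, observe that each contributes the identical value, count them as $\binom{m}{2k}\frac{(2k)!}{k!2^k}$, and finish with the binomial theorem. You spell out more carefully the two observations the paper leaves implicit — that any two-element block touching $X_3$ or $X_4$ vanishes because $\langle V(b_3),w_1\rangle = \langle V(b_4),w_1\rangle = 0$, and that every slot contributes exactly one linear factor with the scalars $\frac{1}{8}$, $-\frac{z_1}{8}$, $\frac{i\alpha}{8}$ attaching to blocks — but the argument is the same.
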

\begin{proof} We evaluate $\mathcal{C}_{v,2m,k}^*((X_1X_4)^{m-r}(X_2X_3)^{r})$.  The point is that every nonzero term that goes into the definition of this sum is the same.  One obtains that any such term is equal to
	\[\left(\frac{1}{8}\right)^k \left(\frac{i\alpha}{8}\right)^m \left(\frac{-z_1}{8}\right)^{m-2k} (\langle V(x),w_1\rangle \langle V^*(y),w_1 \rangle)^{m-r} (\langle V(y),w_1\rangle \langle V^*(x),w_1 \rangle)^{r}.\]
	There are
	\[\frac{1}{k!}\binom{m}{2} \binom{m-2}{2} \binom{m-4}{2} \cdots \binom{m-2k+2}{2} = \binom{m}{2k} \frac{(2k)!}{k!2^k}\]
	such nonzero terms.  Thus
	\begin{align*}\mathcal{C}_{v,2m,k}^*((X_1X_4)^{m-r}(X_2X_3)^{r}) &= \binom{m}{2k} \frac{(2k)!}{k!2^k} \left(\frac{1}{8}\right)^k \left(\frac{i\alpha}{8}\right)^m \left(\frac{-z_1}{8}\right)^{m-2k} \\ &\,\times (\langle V(x),w_1\rangle \langle V^*(y),w_1 \rangle)^{m-r} (\langle V(y),w_1\rangle \langle V^*(x),w_1 \rangle)^{r}\end{align*}
	and the lemma follows.
\end{proof}

Putting everything together, we obtain the following proposition.  Set 
\[b_{\ell,v}^*(g) =(-(pr(g^{-1}v),pr(g^{-1}v)))^{-(\ell+\frac{1}{2})}.\]

\begin{proposition} One has 
	\begin{align*} (X_1 X_4-X_2 X_3)^{m} b_{\ell,v}^*(g) &= 8^{\ell+\frac{1}{2}} (-i\alpha)^m(\langle V(x),w_1\rangle \langle V^*(y),w_1 \rangle - \langle V(y),w_1\rangle \langle V^*(x),w_1 \rangle)^{m} \\ &\,\times \left(\sum_{k=0}^{\lfloor m/2 \rfloor} (-1)^{k} 2^{2m-k} z_1^{m-2k} \binom{m}{2k} \frac{(2k)!}{k!2^k} (\ell + \frac{1}{2})_{2m-k} (|\alpha|^2+z_1^2)^{-(\ell+2m-k+\frac{1}{2})}\right).\end{align*}
\end{proposition}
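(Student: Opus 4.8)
The plan is to combine the recursion for $X_n\cdots X_1 b^*_{\ell,v}(g)$ established above (read throughout with every quantity replaced by its starred, real, well-defined counterpart, so that $b^*_{\ell,v}=(\Cv^*)^{-(\ell+\frac12)}$) with the preceding Lemma, which evaluates the relevant alternating sum of $\mathcal{C}^*$-terms; no new idea is needed beyond careful bookkeeping. First I would recall, as in the general strategy, that $X_1,X_2,X_3,X_4$ pairwise commute in $\g(J)$ because $E_{x,y}$ is isotropic, so $X_1X_4$ and $X_2X_3$ commute and
\[(X_1X_4-X_2X_3)^m=\sum_{r=0}^m(-1)^r\binom{m}{r}(X_1X_4)^{m-r}(X_2X_3)^r.\]
Each term on the right is a product of $2m$ elements lying in a common singular, isotropic span (here the amber property of $\Span(x,y)$ is used), so the recursion above applies with $n=2m$, giving for each $r$
\[(X_1X_4)^{m-r}(X_2X_3)^rb^*_{\ell,v}(g)=\sum_{k=0}^{m}(-1)^k2^{2m-k}\Big(\ell+\tfrac12\Big)_{2m-k}\,\mathcal{C}^*_{v,2m,k}\big((X_1X_4)^{m-r}(X_2X_3)^r\big)\,(\Cv^*)^{-(\ell+2m-k+\frac12)}.\]

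Next I would interchange the order of the $r$- and $k$-summations. Since $(\Cv^*)^{-(\ell+2m-k+\frac12)}$ does not depend on $r$, it pulls out, and for fixed $k$ the inner sum $\sum_{r=0}^m(-1)^r\binom{m}{r}\mathcal{C}^*_{v,2m,k}\big((X_1X_4)^{m-r}(X_2X_3)^r\big)$ is exactly the quantity evaluated in the preceding Lemma; that Lemma both gives it explicitly as
\[\Big(\tfrac18\Big)^k\Big(\tfrac{i\alpha}{8}\Big)^m\Big(\tfrac{-z_1}{8}\Big)^{m-2k}\binom{m}{2k}\frac{(2k)!}{k!\,2^k}\,Q^m,\qquad Q:=\langle V(x),w_1\rangle\langle V^*(y),w_1\rangle-\langle V(y),w_1\rangle\langle V^*(x),w_1\rangle,\]
and shows it vanishes whenever $2k>m$, so the $k$-sum truncates at $k=\lfloor m/2\rfloor$.

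It then remains to substitute $\Cv^*=\tfrac18(|\alpha|^2+z_1^2)$, so that $(\Cv^*)^{-(\ell+2m-k+\frac12)}=8^{\ell+2m-k+\frac12}(|\alpha|^2+z_1^2)^{-(\ell+2m-k+\frac12)}$, and to collect terms: the powers of $8$ contribute $8^{-k}\cdot8^{-m}\cdot8^{-(m-2k)}\cdot8^{\ell+2m-k+\frac12}=8^{\ell+\frac12}$, independent of $k$; the factor $(i\alpha)^m$ combines with $(-1)^{m-2k}=(-1)^m$ from $(-z_1/8)^{m-2k}$ to give $(-i\alpha)^m$; and what survives is the sum over $k\le\lfloor m/2\rfloor$ of $(-1)^k2^{2m-k}z_1^{m-2k}\binom{m}{2k}\frac{(2k)!}{k!2^k}(\ell+\frac12)_{2m-k}(|\alpha|^2+z_1^2)^{-(\ell+2m-k+\frac12)}$, together with the overall factor $8^{\ell+\frac12}(-i\alpha)^m Q^m$, which is precisely the asserted identity. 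The one subtlety — and the only place where I expect to need care — is the tracking of signs when passing between $\Cv$ and $\Cv^*=-\Cv$ and between $\Cvone{\cdot},\Cvtwo{\cdot}{\cdot}$ and their starred versions; working with the starred quantities throughout, as the preceding subsections already do, keeps this transparent, and thereafter the argument is a direct computation with no essential obstacle.
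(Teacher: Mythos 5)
Your proof is correct and follows essentially the same route the paper intends: the proposition is the paper's ``putting everything together'' step, combining the recursion for $X_n\cdots X_1$ acting on $b_{\ell,v}$ (transported to the starred quantities, which as you observe is legitimate because the two sign changes $\Cv\mapsto\Cv^*$ and $\Cvone{\cdot},\Cvtwo{\cdot}{\cdot}\mapsto$ their starred versions cancel in the inductive step, so the same formula holds with stars throughout) with the preceding Lemma evaluating $\sum_r(-1)^r\binom{m}{r}\mathcal{C}^*_{v,2m,k}((X_1X_4)^{m-r}(X_2X_3)^r)$, followed by the bookkeeping of powers of $8$ and the sign $(-1)^{m-2k}=(-1)^m$. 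Your collection of constants $8^{-k}\cdot 8^{-m}\cdot 8^{-(m-2k)}\cdot 8^{\ell+2m-k+\frac12}=8^{\ell+\frac12}$ and the merging of $(i\alpha)^m$ with $(-1)^m$ to produce $(-i\alpha)^m$ both check out.
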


For a positive real number $\beta$ set
\[I_{\ell,m}(\beta) = \int_{\R}{e^{-iz} \left(\sum_{k=0}^{\lfloor m/2 \rfloor} \frac{C_{m,k} z^{m-2k} (\ell+\frac{1}{2})_{2m-k}}{(\beta^2+z^2)^{\ell+2m-k+\frac{1}{2}}}\right)\,dz}\]
where
\[C_{m,k} = (-1)^k 2^{2m-k} \binom{m}{2k} \frac{(2k)!}{k!2^k}.\]

Set $\alpha_{\ell,v}^*(g) = -(e_\ell,g^{-1}v) b_{\ell,v}^*(g)$.  We have proved: 
\begin{proposition} Set $z = \nu(g) z_1$. One has
	\begin{align*}\int_{\R = N_w\backslash N} e^{-i\langle w,\overline{n}\rangle}(X_1X_4-X_2X_3)^m a_{\ell,v}^*(ng)\,dn &=  (-i)^m 2^{\ell+\frac{3}{2}} (\nu(g)\alpha)^{\ell+m} |\nu(g)| \nu(g)^{\ell+2m} I_{\ell,m}(|\nu(g)\alpha|)\\ &\,\times (\langle V(x),w_1\rangle \langle V^*(y),w_1 \rangle - \langle V(y),w_1\rangle \langle V^*(x),w_1 \rangle)^{m}.\end{align*}
\end{proposition}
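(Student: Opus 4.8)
The plan is to get the identity by writing down, for each fixed $n$, the value of $(X_1X_4-X_2X_3)^m a_{\ell,v}^*(ng)$ from the preceding explicit formula for $(X_1X_4-X_2X_3)^m b_{\ell,v}^*$, and then integrating along $N_w\backslash N$ and recognising the resulting scalar integral as $I_{\ell,m}(|\nu(g)\alpha|)$.

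First I would dispose of the numerator. Since $a_{\ell,v}^*$ is obtained from $b_{\ell,v}^*$ by multiplication with $\bigl(-(e_\ell,g^{-1}v)\bigr)^{\ell}$, and since the operators $X_1,\dots,X_4$ annihilate the function $g\mapsto(e_\ell,g^{-1}v)$ — this is the fact $X_j\cdot e_\ell=0$ that was used to treat this numerator as constant when differentiating — the Leibniz rule gives
\[(X_1X_4-X_2X_3)^m a_{\ell,v}^*(ng)=\bigl(-(e_\ell,(ng)^{-1}v)\bigr)^{\ell}\,(X_1X_4-X_2X_3)^m b_{\ell,v}^*(ng).\]
Evaluating at $x=ng$ and using the summary proposition, $(e_\ell,x^{-1}v)=-\alpha/4$, so the prefactor is $(\alpha/4)^{\ell}$. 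Here $\alpha=\langle w_1,r_0(i)\rangle$ and $w_1=g^{-1}w$ depend only on $g$, and hence so does the bracket $\langle V(x),w_1\rangle\langle V^*(y),w_1\rangle-\langle V(y),w_1\rangle\langle V^*(x),w_1\rangle$; the dependence on $n$ is entirely through $z_1=\nu(g)^{-1}\langle w,\overline n\rangle$.

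Next I would integrate. As $w$ is rank one it is nonzero, so $\overline n\mapsto\langle w,\overline n\rangle$ identifies $N_w\backslash N$ with $\R$ through the coordinate $z=\langle w,\overline n\rangle=\nu(g)z_1$, with Haar measure transported to Lebesgue measure $dz$. Substituting in the preceding proposition, the integral equals $(\alpha/4)^{\ell}\,8^{\ell+\frac12}(-i\alpha)^m$, times the $g$-only bracket raised to the $m$th power, times
\[\int_\R e^{-iz}\sum_{k=0}^{\lfloor m/2\rfloor}C_{m,k}\,(\ell+\tfrac12)_{2m-k}\,z_1^{m-2k}\,(|\alpha|^2+z_1^2)^{-(\ell+2m-k+\frac12)}\,dz,\]
where $C_{m,k}=(-1)^k 2^{2m-k}\binom{m}{2k}\tfrac{(2k)!}{k!2^k}$ are exactly the coefficients of the preceding proposition. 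Putting $z_1=z/\nu(g)$ gives $z_1^{m-2k}=\nu(g)^{-(m-2k)}z^{m-2k}$ and $|\alpha|^2+z_1^2=\nu(g)^{-2}(|\nu(g)\alpha|^2+z^2)$, so, using $(\nu(g)^2)^{1/2}=|\nu(g)|$, the $k$th summand acquires the factor $\nu(g)^{3m+2\ell}|\nu(g)|$, which is independent of $k$; pulling it out of the sum leaves precisely $I_{\ell,m}(|\nu(g)\alpha|)$. Collecting the constants, $(\alpha/4)^\ell\,8^{\ell+\frac12}(-i\alpha)^m\,\nu(g)^{3m+2\ell}|\nu(g)|=(-i)^m\,2^{\ell+\frac32}\,(\nu(g)\alpha)^{\ell+m}\,|\nu(g)|\,\nu(g)^{\ell+2m}$, the asserted constant.

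The step calling for care is this final bookkeeping: the half-integer exponent $-(\ell+2m-k+\frac12)$ is exactly what forces the factor $|\nu(g)|$ (as opposed to a power of $\nu(g)$) to appear, and one must verify that the residual powers of $\nu(g)$ genuinely do not depend on $k$, so that the whole $k$-sum collapses into a single copy of $I_{\ell,m}$ instead of a linear combination. The interchange of the operators $X_j$ with the integral over $N$ is not an issue here, since the differentiation has already been performed in the preceding proposition and its legitimacy is established separately in subsection \ref{subsec:tech}.
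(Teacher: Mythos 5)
Your proposal is correct and follows the same route as the paper: the paper simply assembles the preceding proposition (the formula for $(X_1X_4-X_2X_3)^m b_{\ell,v}^*$), the value $(e_\ell,x^{-1}v) = -\alpha/4$, and the change of variables $z = \nu(g)z_1$ to recognize $I_{\ell,m}(|\nu(g)\alpha|)$, saying "We have proved" rather than giving a separate argument. The one point worth noting is that you implicitly correct a typo in the paper's definition of $\alpha_{\ell,v}^*$ (which drops the exponent $\ell$ on $-(e_\ell,g^{-1}v)$); as your constant check $4^{-\ell}8^{\ell+\frac12} = 2^{\ell+\frac32}$ confirms, the intended definition is $\bigl(-(e_\ell,g^{-1}v)\bigr)^\ell b_{\ell,v}^*(g)$.
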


The following proposition now implies Theorem \ref{thm:intalv}.
\begin{proposition}\label{prop:Iint} One has
	\[I_{\ell,m}(\beta) = \frac{2^{1-\ell} i^m}{(\frac{1}{2})_{\ell}} \beta^{-(\ell+m)} K_{\ell+m}(\beta).\]
\end{proposition}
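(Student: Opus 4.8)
The plan is to reduce $I_{\ell,m}(\beta)$ to a single classical Fourier integral and then to telescope the finite $k$-sum using the recurrences for $K_\nu$. First I would record the elementary identity that, for $\nu>-1/2$ and $\beta>0$,
\[\int_{\R}\frac{e^{-iz}}{(\beta^2+z^2)^{\nu+1/2}}\,dz=\frac{2^{1-\nu}\sqrt{\pi}}{\Gamma(\nu+1/2)}\,\beta^{-\nu}K_\nu(\beta),\]
which follows from the integral representation of $K_\nu$ (or from standard tables) together with the substitution $z\mapsto\beta z$. Since $C_{0,0}=1$, this is exactly the $m=0$ case of the proposition: $I_{\ell,0}(\beta)=\int_{\R}e^{-iz}(\beta^2+z^2)^{-\ell-1/2}\,dz=\frac{2^{1-\ell}\sqrt\pi}{\Gamma(\ell+1/2)}\beta^{-\ell}K_\ell(\beta)$. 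So it suffices to relate $I_{\ell,m}$ to $I_{\ell,0}$.

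Second, and this is the heart of the argument, I would prove the recursion
\[I_{\ell,m}(\beta)=i\cdot\frac{1}{\beta}\frac{d}{d\beta}\,I_{\ell,m-1}(\beta)\qquad(m\geq 1).\]
On one side, $\tfrac{1}{\beta}\tfrac{d}{d\beta}(\beta^2+z^2)^{-s}=-2s(\beta^2+z^2)^{-(s+1)}$, so differentiating the $(m-1)$-sum under the integral sign shifts each exponent and inserts one more Pochhammer factor. On the other side, the $z^{m-2k}$ factors in the $m$-sum are lowered by $z^2=(\beta^2+z^2)-\beta^2$ (for the even part, which produces a sum of terms $(\beta^2+z^2)^{-s'}$ with powers of $\beta^2$ out front) together with, for the single leftover power of $z$ in the odd case, the identity $z(\beta^2+z^2)^{-a}=-\tfrac{1}{2(a-1)}\tfrac{d}{dz}(\beta^2+z^2)^{-(a-1)}$ and the integration-by-parts formula $\int_{\R}g'(z)e^{-iz}\,dz=i\int_{\R}g(z)e^{-iz}\,dz$ (all boundary terms vanish and all integrals converge absolutely, since every exponent exceeds $1/2$). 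Matching the coefficient of each $(\beta^2+z^2)^{-s}e^{-iz}$ on the two sides then amounts to a purely combinatorial identity among the constants $C_{m,k}$, the binomials $\binom{m}{2k}$, the double factorials $(2k)!/(k!2^k)$, and the Pochhammer symbols $(\ell+\tfrac12)_{2m-k}$; it is of exactly the Pascal-triangle type as the recursion for $\mathcal{C}_{v,n,k}$ established above, and I would prove it by the same sort of re-indexing bookkeeping.

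Third, iterating the recursion gives $I_{\ell,m}(\beta)=i^m(\tfrac1\beta\tfrac{d}{d\beta})^m I_{\ell,0}(\beta)$, and the classical Bessel identity $(\tfrac1z\tfrac{d}{dz})^m(z^{-\nu}K_\nu(z))=(-1)^m z^{-\nu-m}K_{\nu+m}(z)$, applied to $I_{\ell,0}(\beta)=\tfrac{2^{1-\ell}\sqrt\pi}{\Gamma(\ell+1/2)}\beta^{-\ell}K_\ell(\beta)$, expresses $I_{\ell,m}(\beta)$ as a single multiple of $\beta^{-(\ell+m)}K_{\ell+m}(\beta)$. Collecting constants, and using $\Gamma(1/2)=\sqrt\pi$ so that $\tfrac{\sqrt\pi}{\Gamma(\ell+1/2)}=\tfrac{1}{(\frac12)_\ell}$, yields the formula in the statement, once one keeps careful track of the factors of $i$ produced by the integrations by parts and by the Bessel differentiation formula.

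The step I expect to be the real obstacle is the combinatorial identity underlying the recursion of the second paragraph: one must verify that, after lowering all $z$-powers and re-indexing, applying $\tfrac1\beta\tfrac{d}{d\beta}$ to the $(m-1)$-sum reproduces the $m$-sum term by term. An alternative that avoids guessing this clean intermediate identity, at the cost of a messier final collapse, is to reduce every $z^{m-2k}$ directly as above, obtaining $I_{\ell,m}(\beta)$ as an explicit finite linear combination of terms $\beta^{a}K_{b}(\beta)$ with $b$ ranging over an interval of integers around $\ell+m$, and then to collapse that combination to the single surviving term via the three-term recurrence $K_{\nu-1}(\beta)-K_{\nu+1}(\beta)=-\tfrac{2\nu}{\beta}K_\nu(\beta)$; here one checks — again a binomial/Pochhammer identity — that the coefficients of all the ``wrong-order'' Bessel functions cancel, leaving $\tfrac{2^{1-\ell}i^m}{(\frac12)_\ell}\beta^{-(\ell+m)}K_{\ell+m}(\beta)$.
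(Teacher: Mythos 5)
Your approach is genuinely different from the paper's. After the same $m=0$ base case, the paper introduces an auxiliary frequency $r$ (set to $1$ at the end), writes $z^{m-2k}e^{-irz}=(i\partial_r)^{m-2k}e^{-irz}$, and so expresses $I_{\ell,m}(\beta)$ as a finite $k$-sum of $\partial_r^{m-2k}$-derivatives of $I_{v,0}(r,\beta)=D_v(r/\beta)^vK_v(r\beta)$. Two combinatorial inputs then finish: the lemma $\partial_u^n(u^bK_b(u))=\sum_j(-1)^{n-j}c_n^j u^{b-j}K_{b-n+j}(u)$, and the cancellation $\sum_{j+k=r}(-1)^jc_m^kc_{m-2k}^j=0$ for $r>0$. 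You instead propose the recursion $I_{\ell,m}=i\,\beta^{-1}\partial_\beta I_{\ell,m-1}$ plus a single, cleaner Bessel identity $\bigl(\beta^{-1}\partial_\beta\bigr)^m(\beta^{-\nu}K_\nu)=(-1)^m\beta^{-\nu-m}K_{\nu+m}$, trading the paper's two combinatorial facts for the work of establishing the $m$-recursion.

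That recursion is where the gap is, and your sketch does not close it. After $\beta^{-1}\partial_\beta$ under the integral, the $(m-1)$-sum carries $z$-powers $z^{m-1-2k'}$ of one parity, whereas the $m$-sum carries $z^{m-2k}$ of the opposite parity: no term-by-term match is possible, one integration by parts is needed, and the $z^2=(\beta^2+z^2)-\beta^2$ reduction you invoke introduces extra $\beta^2$ factors that must then cancel in a way that is not visible from the shape of the sum. This is not the same Pascal-triangle bookkeeping as the $\mathcal{C}_{v,n,k}$ recurrence; it is a messier three-term manipulation of $A_{n,s}=\int_\R z^ne^{-iz}(\beta^2+z^2)^{-s}\,dz$ with a parity shift. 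The recursion \emph{is} true — it holds for $m=1,2$ by direct calculation and is forced by the closed form — but you have not proved it, and the asserted reduction to routine bookkeeping is not substantiated.

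Your last sentence also glosses over a real discrepancy. Carried out carefully, your pipeline gives $I_{\ell,m}=i^m\bigl(\beta^{-1}\partial_\beta\bigr)^mI_{\ell,0}$, and the Bessel identity contributes $(-1)^m$, so you land on $\frac{2^{1-\ell}(-i)^m}{(\frac{1}{2})_\ell}\beta^{-(\ell+m)}K_{\ell+m}(\beta)$ — with $(-i)^m$, not the proposition's $i^m$. This sign is in fact correct: $I_{1,1}(\beta)=15\int_\R ze^{-iz}(\beta^2+z^2)^{-7/2}\,dz=-\tfrac{2i}{\beta^2}K_2(\beta)$, opposite to the proposition's $+\tfrac{2i}{\beta^2}K_2(\beta)$. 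The stated proposition has a harmless sign slip (it enters the sequel only up to a nonzero scalar), and careful tracking of the $i$'s exposes the slip rather than reproducing $i^m$.
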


\subsection{Evaluation of an integral} In this subsection, we prove Proposition \ref{prop:Iint}. 

For $m=0$, from equation (5) of \cite{mathworldKBessel}, one obtains
\[I_{\ell,0}(\beta) = \frac{2}{(2\ell-1)(2\ell-3) \cdots (3)(1)} \beta^{-\ell} K_{\ell}(\beta).\]

Set, for $r > 0$, $v \in \Z_{>0}$,
\[I_{v,0}(r,\beta) = \int_{\R}{e^{-irz} (\beta^2+z^2)^{-(v+\frac{1}{2})}\,dz}.\]
Changing variables, one obtains
\[I_{v,0}(r,\beta) =D_v(r/\beta)^v K_v(r\beta).\]
where $D_v =2^{-v+1} \frac{1}{(\frac{1}{2})_{v}}$.
Differentiating under the integral sign, and using that
\[C_{m,k} (\ell+\frac{1}{2})_{2m-k} (-i)^{m-2k} D_{\ell+2m-k} = \frac{2^{1-\ell}(-i)^m}{(\frac{1}{2})_{\ell}}, \]
we get
\[I_{\ell,m}(\beta) = \frac{2^{1-\ell}(-i)^m}{(\frac{1}{2})_{\ell}} \left(\sum_{k=0}^{\lfloor m/2 \rfloor} \binom{m}{2k} \frac{(2k)!}{k!2^k} \partial_r^{m-2k}( (r/\beta)^{\ell+2m-k} K_{\ell+2m-k}(r\beta))|_{r=1}\right).\]
Finally, making the variable change $u = r \beta$, one gets
\[I_{\ell,m}(\beta) = \frac{2^{1-\ell}(-i)^m}{(\frac{1}{2})_{\ell}} \left(\sum_{k=0}^{\lfloor m/2 \rfloor} \binom{m}{2k} \frac{(2k)!}{k!2^k} \beta^{-2\ell-3m} \partial_u^{m-2k}( u^{\ell+2m-k} K_{\ell+2m-k}(u))|_{u=\beta}\right).\]

Set
\[c_n^j = \frac{n!}{j!(n-2j)!2^j} = \binom{n}{2j} \frac{(2j)!}{j!2^j}.\]
\begin{lemma} Suppose $b \geq n$.  One has
	\[\partial_u^n(u^b K_b(u)) = \sum_{j=0}^{\lfloor n/2 \rfloor}{(-1)^{n-j} c_n^j u^{b-j} K_{b-n+j}(u)}.\]
\end{lemma}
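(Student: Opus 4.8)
The plan is to prove the identity by induction on $n$, using only the standard contiguous relations for the modified Bessel function $K_\nu$. The key input is the first-order identity
\[\partial_u\bigl(u^a K_c(u)\bigr) = (a-c)\,u^{a-1}K_c(u) - u^a K_{c-1}(u),\]
which I would derive from $K_c'(u) = -\tfrac{c}{u}K_c(u) - K_{c-1}(u)$; the latter is obtained by adding the two recurrences $K_{c-1}(u) - K_{c+1}(u) = -\tfrac{2c}{u}K_c(u)$ and $K_{c-1}(u) + K_{c+1}(u) = -2K_c'(u)$. Since these hold for all orders $c$ (and $K_{-\nu} = K_\nu$), the hypothesis $b \geq n$ is not essential; it is retained only because that is the range in which the lemma is applied.

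For the base cases $n = 0$ and $n = 1$ the identity is immediate, using $c_0^0 = c_1^0 = 1$ and $\partial_u(u^b K_b(u)) = -u^b K_{b-1}(u)$ (the special case $a = c = b$ of the first-order identity). For the inductive step, I would apply $\partial_u$ term by term to the assumed expression for $\partial_u^n(u^b K_b(u))$, invoking the first-order identity with $a = b - j$ and $c = b - n + j$, so that $a - c = n - 2j$. This produces two families of terms, $u^{b-(j+1)}K_{b-(n+1)+(j+1)}(u)$ and $u^{b-j}K_{b-(n+1)+j}(u)$, each of the desired shape after reindexing. Collecting the coefficient of $u^{b-j}K_{b-(n+1)+j}(u)$ reduces the entire step to the purely combinatorial identity
\[c_{n+1}^{j} = c_n^{j} + (n - 2j + 2)\,c_n^{j-1},\]
which I would check directly from $c_n^j = \frac{n!}{j!(n-2j)!2^j}$ by clearing denominators, the bracketed factor collapsing to $(n - 2j + 1) + 2j = n+1$.

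The only real point requiring care — and the main obstacle, such as it is — is the bookkeeping at the top of the summation range when passing from $n$ to $n+1$. For $n$ even, the formal top term $j = n/2 + 1$ carries the vanishing coefficient $n - 2j + 2 = 0$, so the range does not grow; for $n$ odd, the genuinely new term $j = (n+1)/2$ is supplied solely by the $c_n^{j-1}$ contribution with coefficient $n - 2j + 2 = 1$, and one checks that $c_{n+1}^{(n+1)/2} = c_n^{(n-1)/2}$ is consistent with the Pascal-type recursion above (the $c_n^{(n+1)/2}$ term vanishing since $n - 2\cdot\tfrac{n+1}{2} < 0$). Once these boundary contributions are seen to obey the same combinatorial identity, the induction closes and the lemma follows.
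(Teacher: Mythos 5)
Your proof is correct, and it is essentially the same argument the paper has in mind: the paper's proof is a one-line remark that the lemma follows by induction using the recurrence $c_{n+1}^j = c_n^j + (n-2j+2)c_n^{j-1}$, which is precisely the combinatorial identity your inductive step reduces to. Your write-up simply fills in the Bessel-function derivative formula and the boundary bookkeeping.
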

\begin{proof} The proof is by induction, using the recurrence $c_{n+1}^j = c_n^j + (n-2j+2) c_n^{j-1}$.  See also \cite{OEIS}.
\end{proof}

From the immediately verified identity
\[\sum_{0 \leq j+k =\text{ fixed} \leq \lfloor m/2 \rfloor} (-1)^j c_m^k c_{m-2k}^j = 0\]
if $j+k > 0$, we obtain Proposition \ref{prop:Iint}.

\subsection{Technical justification}\label{subsec:tech} We still must justify our differentiation under the integral.  In other words, we must justify the identity
\[\int_{N_w\backslash N \simeq \R}{Z_1 \cdots Z_r\left(e^{-i\langle w,\overline{n}\rangle}(a_{\ell,v}(ng)\right)\,dn} = Z_1 \cdots Z_r\left(\int_{N_w\backslash N \simeq \R}{e^{-i\langle w,\overline{n}\rangle}a_{\ell,v}(ng)\,dn}\right).\]
Here the $Z_i$ are in $\p$ and differentiate with respect to the $g$ variable.

Write $n= n(z)$ if $n \mapsto z$ under the identification $N_w\backslash N \simeq \R$.  To do this justification, it suffices to show (for all non-negative integers $r$) that there is a small neighborhood $U$ of $g$, and a positive function $A_r(z)$ on $\R$, so that for all $x \in U$,
\[|Z_1 \cdots Z_r a_{\ell,v}(n(z)x)| \leq A_r(z) \text{ and } \int_{\R}A_r(z)\,dz < \infty.\]
The function $A_r(z)$ can depend upon $g$ and $Z_1, \ldots, Z_r$, but we drop them from the notation.

It is easy to see (e.g., by induction) that the derivative $Z_1 \cdots Z_r(a_{\ell,v}(nx))$ is of the form
\[\frac{(e_{\ell},x^{-1}n^{-1}v)^{\ell}}{(pr(x^{-1}n^{-1}v),pr(x^{-1}n^{-1}v))^{\ell+r+\frac{1}{2}}} P_{Z_1,\ldots,Z_r}(x,n)\]
where $P_{Z_1,\ldots,Z_r}(x,n)$ consists of sums of products of terms of the form $(pr([Z_j,x^{-1}n^{-1}v],pr(x^{-1}n^{-1}v))$, $(pr([Z_j,x^{-1}n^{-1}v],pr([Z_k,x^{-1}n^{-1}v]))$ etc.  The key point is that, if $n = n(z)$, then when written as a polynomial in $z$, $P_{Z_1,\ldots,Z_r}(n,z)$ has degree at most $2r$.  The coefficients of this polynomial depend on $x$ and $Z_1,\ldots, Z_r$, but are easily seen to be bounded for $x$ in a small compact set around $g$.

To finish the proof, we now must bound
\[\left|\frac{(e_{\ell},x^{-1}n^{-1}v)^{\ell}}{(pr(x^{-1}n^{-1}v),pr(x^{-1}n^{-1}v))^{\ell+r+\frac{1}{2}}}\right| \leq ||pr(x^{-1}n^{-1}v)||^{-(\ell+2r+1)}.\]
Here $||v|| = |B(v,\theta(v))|^{1/2}$ is the $K$-equivariant norm on $\g(J) \otimes \R$, where $\theta$ is the Cartan involution.  Write $g = n_g m_g k_g$ in the Iwasawa decomposition.  We take small open neighborhoods around $n_g, m_g$ and $k_g$, and let $U$ be the product of these neighborhoods.  Then, if $n = n(z)$, $||pr(x^{-1}n^{-1}v)||$ is bounded away from $0$ for $z$ small, independent of $x$, and is bounded below by $(|z|^2 + |\alpha_0|^2)^{1/2}$ for $z$ large, with $\alpha_0$ independent of $x \in U$.  The existence of $A_r(z)$ with the desired properties follows.

\section{Arithmeticity of modular forms on $G_2$}\label{subsec:arithG2}
The purpose of this section is to prove Theorem \ref{thm:introArith}, which we recall here.  Suppose $\varphi$ is a cuspidal quaternionic modular form on $G_2$ of weight $\ell \geq 1$.  Then 
\[\varphi_Z(g_f g_\infty) = \sum_{\chi}{a_\chi(g_f)W_\chi(g_\infty)}\]
is its Fourier expansion. The locally constant functions $a_\chi: G_2(\A_f) \rightarrow \C$ are its Fourier coefficients.  We say that $\varphi$ has Fourier coefficients in a ring $R$ if $a_\chi(g_f) \in R$ for all characters $\chi$ of $N(\Q)\backslash N(\A)$ and all $g_f \in G_2(\A_f)$.  We write $S_{\ell}(G_2; R)$ for the space cuspidal quaternionic modular forms on $G_2$ of weight $\ell$ with Fourier coefficients in $R$.

Let $\Q_{cyc} = \Q(\mu_\infty)$ be the cyclotomic extension of $\Q$.
\begin{theorem}\label{thm:ArithG2} Suppose $\ell \geq 6$ is even.  Then there is a basis of the cuspidal quaternionic modular forms of weight $\ell$ with all Fourier coefficients in $\Q_{cyc}$.  In other words, $S_{\ell}(G_2,\C) = S_{\ell}(G_2,\Q_{cyc}) \otimes_{\Q_{cyc}} \C$.
\end{theorem}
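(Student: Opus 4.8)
The plan is to derive Theorem~\ref{thm:ArithG2} from the explicit Fourier expansions of Theorem~\ref{thm:introQMF}, combined with a surjectivity statement for the theta correspondence from $F_4^I$ that rests on the Siegel--Weil theorem of \cite{pollackSW}. Throughout, put $m = \ell - 4$, so that $m \geq 2$ is even.

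First I would record that the theta lifts from $F_4^I$ are arithmetic. By Theorem~\ref{thm:introQMF}, and its level-$N$ analogue proved by the identical computation of $D^{2m}\Theta_{Gan}$, if $\alpha$ is an algebraic modular form on $F_4^I$ for $V_{m\lambda_3}$ that is $\Q$-rational --- which makes sense since $V_{m\lambda_3}$ is defined over $\Q$ and $F_4^I(\Q)\backslash F_4^I(\A_f)/K'$ is finite --- then every Fourier coefficient of $\Theta(\alpha)$ lies in $\Q_{cyc}$: at $g_f = 1$ it is, by Corollary~\ref{cor:introIntegrality}, a $\Z$-linear combination of the $\Q$-valued pairings $\langle P_m(w),\beta_I\rangle_I$ and $\langle P_m(w),\beta_E\rangle_E$ (the maps $P_m$ and the pairings being defined over the split $\Q$-form of $J$), while for general $g_f \in G_2(\A_f)$ one translates $\alpha$ on the right and notes that the action of $G_2(\A_f)$ on the minimal representation of $G_J$, through the standard additive character, introduces only roots of unity. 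Hence the $\Q_{cyc}$-span of all such $\Theta(\alpha)$ is a $\Q_{cyc}$-structure on which every Fourier coefficient is $\Q_{cyc}$-valued, and Theorem~\ref{thm:ArithG2} reduces to the spanning statement $(\star)$: every cuspidal quaternionic modular form on $G_2$ of weight $\ell$ lies in the $\C$-span of the $\Theta(\alpha)$, with $\alpha$ an algebraic modular form on $F_4^I$ for $V_{m\lambda_3}$.

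For $(\star)$, fix such a form $\varphi$, generating a cuspidal representation $\pi$ with $\pi_\infty$ the quaternionic discrete series of weight $\ell$. The crux is to show that the theta lift $\theta(\varphi)$ of $\varphi$ to $F_4^I$ does not vanish. I would prove this by a Rallis inner product argument: express $\langle\theta(\varphi),\theta(\varphi)\rangle_{F_4^I}$ as the integral of $\varphi\otimes\overline{\varphi}$ against the $G_2\times G_2$ kernel $\int_{[F_4^I]}\Theta_v(g_1,h)\overline{\Theta_v(g_2,h)}\,dh$, and then invoke the Siegel--Weil theorem of \cite{pollackSW} to identify this kernel with a value or residue of a degenerate Eisenstein series on $G_J$, restricted to $G_2\times G_2$. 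Unfolding, the resulting Rankin--Selberg integral computes the standard $L$-function of $\pi$, or a residue of it, times an explicit product of local zeta integrals; non-vanishing of the global factor, of the unramified local factors, and of the archimedean factor --- the last forcing $\ell$ even and $\ell \geq 6$, and where the existence of a cusp form with non-zero $w_0 = u^2v - uv^2$ Fourier coefficient (Corollary~\ref{cor:introG2FC}, compare \cite{CDDHPRcompleted}) is used to control the relevant period --- then gives $\theta(\varphi)\neq 0$. Granting this, local Howe duality (Gan--Savin \cite{ganSavinHD} at the finite places, Huang--Pandzic--Savin \cite{HPS} at the archimedean place) together with the seesaw identity shows that $\theta(\varphi)$ generates an irreducible automorphic representation $\tau$ on $F_4^I$ with $\tau_\infty = V_{m\lambda_3}$ whose theta lift back to $G_2$ is $\pi$. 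Then, arguing exactly as in the proof of Claim~\ref{claim:introHD} in the $\Sp_6$ setting --- using that the $K_{G_2}\times K_{F_4^I}$-type dual to $S^{2\ell}(V_2)\otimes V_{m\lambda_3}$ occurs in the archimedean minimal representation with multiplicity one \cite{HPS} --- one shows that $\varphi$ itself is a $\C$-linear combination of theta lifts $\Theta(\alpha)$ with $\alpha$ ranging over algebraic modular forms on $F_4^I$ for $V_{m\lambda_3}$ (at the level matching $\pi_f$). This establishes $(\star)$, and the theorem follows.

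The main obstacle is exactly the non-vanishing $\theta(\varphi)\neq 0$; by contrast, the rationality bookkeeping of the first step and the passage from $\pi$ back to the individual vector $\varphi$ are routine adaptations of arguments already carried out for Claim~\ref{claim:introHD}. Within the non-vanishing argument the delicate point is the archimedean place: one must identify the archimedean zeta integral attached to the quaternionic discrete series of weight $\ell$ and show it is non-zero, which is precisely where the hypotheses $\ell \geq 6$ and $\ell$ even are consumed and which is the content the Siegel--Weil theorem of \cite{pollackSW} is there to supply.
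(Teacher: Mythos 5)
Your overall strategy---establish that the theta lifts from $F_4^I$ have $\Q_{cyc}$-rational Fourier coefficients, then show every cusp form of weight $\ell \geq 6$ even lies in their span by proving non-vanishing of a theta lift via an $L$-value---matches the paper. The first step is essentially the paper's argument: it proves that the level-one Fourier coefficients of $\Theta_{Gan}$ are integral, and that for $g_f \in G_J(\A_f)$ the coefficients differ only by roots of unity (from the additive character) and rational factors, so all Fourier coefficients of lifts land in $\Q_{cyc}$; this is what your step~1 sketches.

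The key step where your proposal diverges---and where it would not go through as written---is the non-vanishing argument. You frame it as a Rallis inner product/doubling computation: you want to compute $\langle\theta(\varphi),\theta(\varphi)\rangle_{F_4^I}$ by integrating $\varphi\otimes\overline{\varphi}$ against the $G_2 \times G_2$ kernel $\int_{[F_4^I]}\Theta(g_1,h)\overline{\Theta(g_2,h)}\,dh$, and then ``invoke the Siegel--Weil theorem of~\cite{pollackSW} to identify this kernel with a value or residue of a degenerate Eisenstein series on $G_J$, restricted to $G_2\times G_2$.'' But the Siegel--Weil theorem of~\cite{pollackSW} is not of that form: it identifies $\int_{[S_E]}\Theta(g,h)\,dh$ with a value of a degenerate Heisenberg Eisenstein series on the group $G_E$ (a split $D_4$-type group containing $G_2$), for the auxiliary dual pair $G_E \times S_E \subseteq G_J$ with $S_E$ an anisotropic $\Spin_8$ sitting inside $F_4^I$. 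There is no Siegel--Weil identity in~\cite{pollackSW} for a doubled dual pair $G_2\times G_2$ against $F_4^I$, and no such doubled Eisenstein series on $G_J$ (or on a doubling of $G_J$) is constructed. Consequently the Rallis inner product formula you propose is not available with the tools cited.

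What the paper actually does is a see-saw argument with $S_E \subseteq F_4^I$ and $G_2 \subseteq G_E$: it shows that $\int_{[G_2]\times[S_E]}\{\varphi(g),P(D_{\p}^{2m}\Theta_\phi(g,h))\}\,dg\,dh \neq 0$. Collapsing the $[S_E]$-integral by the Siegel--Weil theorem of~\cite{pollackSW} converts this into the Gurevich--Segal Rankin--Selberg integral (see~\cite{gurevichSegal,Segal2017,pollackG2}) for the \emph{degree-$7$ standard} $L$-function of $\pi$ on $G_2$ against the Heisenberg Eisenstein series on $G_E$; this is not a doubling integral, and the $L$-function appears linearly, not as $|L|^2$. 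Non-vanishing of the integral follows from (a) absolute convergence of the partial Euler product at $s=3$ via~\cite{muicG2}, and (b) a global ``evaluate-at-one-example'' argument showing the archimedean zeta integral $I_{\infty,\chi}(s=3)$ is non-zero, using Corollary~\ref{cor:introG2FC}. You correctly guessed the role of Corollary~\ref{cor:introG2FC} and of $\ell \geq 6$ even, but the surrounding framework needs to be the see-saw with $S_E$ and the Gurevich--Segal integral, not a Rallis inner product against a doubled kernel.

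Finally, the non-vanishing of $\int_{[G_2]\times[S_E]}\cdots$ directly implies that $\varphi$ is not orthogonal to the space of theta lifts; the paper does not need to (and does not) produce $\theta(\varphi)$ on $F_4^I$, verify its irreducibility, or lift it back, as in your final paragraph. That extra machinery was used for Claim~\ref{claim:introHD} in the $\Sp_6$ setting for a different reason (to pin down the spherical vector at each place), and is not required here.
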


The proof of this theorem has the following steps:
\begin{enumerate}
	\item Set $S_{\ell}(G_2)_{\Theta}$ the subspace of $S_{\ell}(G_2,\C)$ consisting of theta lifts from algebraic modular forms on $F_4^I$.  It is clear that it is a $G_2(\A_f)$ submodule.  Moreover, as an application of Theorem \ref{thm:WqmfDer}, it is easy to see that $S_{\ell}(G_2)_{\Theta}$ is defined over $\Q_{cyc}$, i.e., that it has a basis consisting of elements with Fourier coefficients in $\Q_{cyc}$.
	\item To show that every element of $S_{\ell}(G_2;\C)$ is a theta lift, one uses the Siegel-Weil theorem of \cite{pollackSW} and the Rankin-Selberg integral of \cite{gurevichSegal,Segal2017}.
	\item For the previous step to go through, a certain archimedean Zeta integral must be shown to be non-vanishing.  One shows the non-vanishing of this integral by a global method, using Corollary \ref{cor:introG2FC}.
\end{enumerate}

We will break the proof into various lemmas.
\begin{lemma} Suppose $\phi \in V_{min}$ is $\phi = \sum_{j}{\mu_j g_j \cdot \phi_{0}}$ with $\mu_j \in \Q$ and $\phi_0$ the spherical vector.  Let $\mathcal{A}(F_4^I,U;V_{m\lambda_3})$ be the algebraic modular forms on $F_4^I$ of level $U \subseteq F_4^I(\A_f)$ and for the representation $V_{m\lambda_3}$. Then there is a lattice $\Lambda \subseteq \mathcal{A}(F_4^I,U;V_{m\lambda_3})$ so that if $\alpha \in \Lambda$, then $\Theta_\phi(\alpha) \in S_{\ell+m}(G_2;\C)$ has Fourier coefficients in $\Q_{cyc}$.
\end{lemma}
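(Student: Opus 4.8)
The plan is to trace the arithmetic of every ingredient through the explicit Fourier expansion of $\Theta_{Gan}$ from \cite{pollackE8} and the identity for $D^{2m}W_{w,\ell}$ in Theorem~\ref{thm:WqmfDer}. Here $m>0$ (the case relevant to Theorem~\ref{thm:ArithG2}), so Proposition~\ref{prop:thetaQMF} guarantees $\Theta_\phi(\alpha)$ is cuspidal. First I would unfold the theta integral: fixing representatives $h_1,\dots,h_r$ for $F_4^I(\Q)\backslash F_4^I(\A_f)/U$ with associated arithmetic subgroups $\Gamma_i\subseteq F_4^I(\R)$, evaluation at the $h_i$ identifies $\mathcal{A}(F_4^I,U;V_{m\lambda_3})\cong\bigoplus_i V_{m\lambda_3}^{\Gamma_i}$; and writing $\phi=\sum_j\mu_j\,g_j\cdot\phi_0$, the lift collapses to the finite sum
\[\Theta_\phi(\alpha)(g)=\sum_{i=1}^r \mathrm{vol}_i\,\{D^{2m}\Theta_\phi((g,h_i)),\alpha(h_i)\},\qquad \Theta_\phi=\textstyle\sum_j\mu_j\,g_j\cdot\Theta_{Gan},\]
with each $\mathrm{vol}_i\in\Q_{>0}$ a ratio of Haar volumes of compact open subgroups (the $F_4^I(\R)$-integration being absorbed by $K$-type equivariance, as in the proof of Proposition~\ref{prop:ThetaSMF}).

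Next I would compute the Fourier expansion as in Section~\ref{sec:QMFmainthms}. Since $m>0$, Claim~\ref{claim:constTerms} and Proposition~\ref{prop:thetaQMF} kill the constant terms along both maximal parabolics of $G_2$, so the relevant $N$-Fourier expansion of $\Theta_\phi$ restricted to $G_2\times F_4^I$ is read off from the $N_J$-expansion of $\Theta_{Gan}$ recalled from \cite{pollackE8}. Applying Theorem~\ref{thm:WqmfDer} termwise and grouping by the binary cubic $w_0=pr_I(w)$, one obtains, exactly as for Theorem~\ref{thm:introQMF} but now with level $U$ and an $\A_f$-argument, that the $\chi$-Fourier coefficient of $\Theta_\phi(\alpha)$ at $g_f\in G_2(\A_f)$ is a finite sum
\[a_\chi(g_f)=c_m\sum_{i=1}^r \mathrm{vol}_i\sum_{\substack{w\in W_J,\ \rk(w)=1\\ pr_I(w)=w_{0,\chi}}}A_w^{\Theta_\phi}(x_{g_f,h_i})\,\langle P_m(w),\alpha(h_i)\rangle_I,\]
where $A_w^{\Theta_\phi}$ is the finite part of the $w$-Fourier coefficient function of $\Theta_\phi$, $x_{g_f,h_i}\in G_J(\A_f)$ depends only on $g_f$ and $h_i$, and $c_m\in\Q^\times$ packages the scalar of Theorem~\ref{thm:WqmfDer} with the homogeneity of $P_m$ (its rationality can be tracked from the Bessel-integral evaluation in Proposition~\ref{prop:Iint} and the normalization of the Whittaker functions).

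It then remains to check that each factor lies in $\Q_{cyc}$. On the $F_4$-side: $w\in W_J$ is rational, so $P_m(w)=(b\wedge c)^{\otimes m}\in(\wedge^2 J)^{\otimes m}$ is rational, the pairing $\langle\,,\,\rangle_I$ is defined over $\Q$, and --- since $F_4^I$ is a pure inner form of split $F_4$ --- the representation $V_{m\lambda_3}$ has a rational model $V_{m\lambda_3,\Q}\subseteq(\wedge^2 J^0)^{\otimes m}$ (the rational decomposition of \cite{BGW} used in the proof of Corollary~\ref{cor:introIntegrality}); hence $\langle P_m(w),\beta\rangle_I\in\Q$ for $\beta\in V_{m\lambda_3,\Q}$. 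On the $G_J$-side: the finite-adelic Fourier coefficient function $A_w^{\Theta_{Gan}}$ of the everywhere-spherical $\Theta_{Gan}$ is $\Q_{cyc}$-valued on $G_J(\A_f)$, being the integer $\sigma_4(d_w)$ (or $0$) at the identity by \cite{pollackE8}, right $G_J(\widehat{\Z})$-invariant, multiplied by a root of unity under left translation by $N_J(\A_f)$ (a value of the finite additive character), and by a rational unramified quantity under the Levi $M_J(\A_f)$ (which carries $w$ to another rational point of $W_J$) --- all encoded in the explicit $\Q$-rational realization of the minimal representation and its spherical vector underlying \cite{pollackE8}, \cite{ganSW}. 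Since $\phi$ is a $\Q$-combination of $G_J(\A_f)$-translates of $\phi_0$, $A_w^{\Theta_\phi}$ is again $\Q_{cyc}$-valued.

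Finally, fix a full $\Z$-lattice $L\subseteq V_{m\lambda_3,\Q}$ and put $\Lambda=\bigoplus_i\bigl(V_{m\lambda_3,\Q}^{\Gamma_i}\cap L\bigr)\subseteq\mathcal{A}(F_4^I,U;V_{m\lambda_3})$, a lattice of full rank; the $E$-component of an algebraic modular form (as in Theorem~\ref{thm:introQMF}, with $\delta_E^\Q\in M_J^1(\Q)$) is handled identically. For $\alpha\in\Lambda$ every value $\alpha(h_i)$ is rational, so each term above lies in $\Q_{cyc}\cdot\Q=\Q_{cyc}$, and therefore $a_\chi(g_f)\in\Q_{cyc}$ for all $\chi$ and all $g_f$; that is, $\Theta_\phi(\alpha)\in S_{\ell+m}(G_2;\Q_{cyc})$. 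The genuinely delicate point is the $\Q_{cyc}$-valuedness of $A_w^{\Theta_{Gan}}$ on all of $G_J(\A_f)$: this is the assertion that the minimal representation of $G_J(\A_f)$ carries a $\Q$-rational structure containing $\phi_0$ on which the Whittaker functionals attached to rational characters are $\Q_{cyc}$-valued, and the explicit model and Fourier expansion of \cite{pollackE8}, \cite{ganSW} are precisely what reduce this to bookkeeping rather than a new theorem.
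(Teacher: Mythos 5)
Your proof follows the same route as the paper's: unfold the theta integral over the double coset space $F_4^I(\Q)\backslash F_4^I(\A_f)/U$ so that it collapses to a finite rational-coefficient sum of terms $\{D_{\p}^{2m}\Theta_\phi(g,1),\alpha(\gamma_j)\}$, apply Theorem~\ref{thm:WqmfDer} termwise, and reduce to the $\Q_{cyc}$-valuedness of the Fourier coefficient functions $a(\omega)(\cdot)$ of $\Theta_{Gan}$ on all of $G_J(\A_f)$. The one step you explicitly defer to ``bookkeeping'' is exactly what the paper's proof carries out: it writes $g_f = u_f m_f k_f$ by Iwasawa and $m_f = m_\Q m_\R^{-1}k'$ by strong approximation on $H_J^1$, yielding $a(\omega)(g_f)=\psi(\langle\omega,u_f\rangle)\,\det(m_\R)^{-\ell}|\det(m_\R)|^{-1}\,a_{\omega\cdot m_\Q}(1)$, i.e.\ a root of unity times a rational multiple of Kim's integral coefficient.
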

\begin{proof}
Write 
\begin{equation}\label{eqn:thetaliftLevel}\varphi(g) = \int_{[F_4^I]}{\{D_{\p}^{2m} \Theta_\phi(g,h),\alpha(h)\}\,dh}.\end{equation}

Now 
\[F_4^I(\Q) \backslash F_4^I(\R) F_4^I(\A_f)/U = \cup_{j=1}^{N} \Gamma_{j}\backslash F_4^I(\R) \gamma_j\]
where $\gamma_j \in F_4^I(\A_f)$.  In other words,
\[ F_4^I(\Q)\backslash F_4^I(\A) = \cup_{j=1}^{N} \Gamma_j \backslash F_4^I(\R) \gamma_j U.\]
Thus the integral over $[F_4^I]$ is a finite sum of terms $c_j \{D_{\p}^{2m} \Theta_\phi(g,1),\alpha(\gamma_j)\}$ where $c_j = \frac{meas(U)}{|\Gamma_j|}$ is rational.  But now Theorem \ref{thm:WqmfDer} implies that if the Fourier coefficients of $\Theta_\phi(g)$ are in some ring $R$, and $\alpha(\gamma_j)$ is in an appropriate lattice, then the Fourier coefficients of $\{D_{\p}^{2m} \Theta_\phi(g,1),\alpha(\gamma_j)\}$ are in $R$.  Thus we obtain the fact that the Fourier coefficients of theta lifts can all be made in $\Q_{cyc} = \Q(\mu_\infty)$, as soon as we prove the same result for the Fourier coefficients of $\Theta_\phi(g)$.

For the latter, simply observe the following identities: suppose $g_f \in G_J(\A_f)$.  Then we can write $g_f  = u_f m_f k_f$ with $u_f$ in the $N_J$, $k_f \in G_J(\widehat{\Z}) = K_f$, and $m_f \in H_J(\A_f)$.  Then we can further write $m_f = (m_\Q m_\R^{-1}) k'$ with $m_\Q \in H_J(\Q)$ and $k' \in K_f$.  This follows from strong approximation on the simply connected group $H_J^1$.  Thus, if $a(\omega)(g_f)$ denotes a Fourier coefficient of $\Theta_{Gan}(g)$ (level one), then
\[a(\omega)(g_f) = \psi(\langle \omega, u_f\rangle) a(\omega)(m_\Q m_\R^{-1}).\]
Additionally, 
\[a(\omega)(m_\Q m_\R^{-1}) W_\omega(g_\infty)=\Theta_\omega(m_\Q m_\R^{-1} g_\infty)= \Theta_{\omega \cdot m_\Q}(m_\R^{-1}g_\infty) = a_{\omega \cdot m_\Q}(1) W_{\omega \cdot m_\Q}(m_\R^{-1} g_\infty).\]
This last term is $\det(m_\R)^{-\ell} |\det(m_\R)|^{-1} a_{\omega \cdot m_\Q}(1) W_\omega(g_\infty).$  Thus since all the $a_{\omega'}(1)$ are integral, all $a(\omega)(g_f) \in \Q_{cyc}$.  It thus follows that if $\phi = \sum_{j}{\mu_j g_j \cdot \phi_{0}}$ with $\mu_j \in \Q$, then all the Fourier coefficients of $\Theta_\phi$ are in $\Q_{cyc}$.  This completes the argument.
\end{proof}

Write $S_{\ell}(G_2)_{\Theta}$ for the space of all lifts $\varphi$ as in equation \eqref{eqn:thetaliftLevel}.
\begin{lemma} The subspace $S_{\ell}(G_2)_{\Theta}$ is a $G_2(\A_f)$-submodule.
\end{lemma}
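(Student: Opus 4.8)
The plan is to show that for every $g_0 \in G_2(\A_f)$, right translation $R(g_0)$, defined by $(R(g_0)\varphi)(g) = \varphi(gg_0)$, carries a theta lift to a theta lift. First observe that the lift in \eqref{eqn:thetaliftLevel} is bilinear in $(\phi,\alpha)$, so it extends to a linear map $v\mapsto\Theta(v)$ on $V_{min}\otimes\mathcal{A}(F_4^I,U;V_{m\lambda_3})$ for each open compact $U$; we read $S_\ell(G_2)_\Theta$ as the $\C$-span of all such lifts, i.e. the directed union over $U$ of the images of these maps, which is a $\C$-subspace of $S_\ell(G_2;\C)$. It therefore suffices to start from a pure tensor,
\[\varphi(g) = \int_{[F_4^I]}\{D_\p^{2m}\Theta_\phi(g,h),\alpha(h)\}\,dh, \qquad \phi\in V_{min},\ \alpha\in\mathcal{A}(F_4^I,U;V_{m\lambda_3}),\]
and to exhibit $R(g_0)\varphi$ in the same form.

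The argument rests on two elementary remarks about the point $gg_0h\in G_J(\A)$. First, $g_0$ lies in $G_2(\A_f)\subseteq G_J(\A_f)$, whereas $D_\p$ is built from right-invariant derivatives coming from the complexified Lie algebra $\g(J)\otimes\C$ of $G_J(\R)$, i.e. from the archimedean place; hence $D_\p^{2m}$ commutes with the right regular action of $g_0$ on automorphic forms on $G_J(\A)$. Second, since $G_2$ and $F_4^I$ centralize one another in $G_J$ --- the dual pair of subsection~\ref{subsec:dualpairs23} --- the elements $g_0$ and $h\in F_4^I(\A)$ commute in $G_J(\A)$, so $gg_0h=(gh)g_0$. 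Using moreover that $\Theta_\phi$ is the automorphic realization of $\phi$ in the minimal representation $\Pi_{min}$, so that $R(g_0)\Theta_\phi=\Theta_{g_0\cdot\phi}$, we obtain
\[D_\p^{2m}\Theta_\phi(gg_0,h)=\bigl(R(g_0)\,D_\p^{2m}\Theta_\phi\bigr)(gh)=D_\p^{2m}\Theta_{g_0\cdot\phi}(g,h).\]
Here $g_0$ acts only on the finite component of $\phi$, so $g_0\cdot\phi$ is still a finite sum of factorizable vectors in $V_{min}$ with the same, smooth, archimedean component, hence an admissible input. Feeding this back into the integral,
\[(R(g_0)\varphi)(g)=\int_{[F_4^I]}\{D_\p^{2m}\Theta_{g_0\cdot\phi}(g,h),\alpha(h)\}\,dh=\Theta_{g_0\cdot\phi}(\alpha)(g)\in S_\ell(G_2)_\Theta.\]
As $g_0\in G_2(\A_f)$ was arbitrary, $S_\ell(G_2)_\Theta$ is a $G_2(\A_f)$-submodule.

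I do not expect a genuine obstacle. The only substance is the bookkeeping that $D_\p^{2m}$ acts at the archimedean place and thus commutes with the finite-adelic translation $g_0$, and that the $F_4^I$-integration variable $h$ is unaffected because $G_2$ centralizes $F_4^I$ in $G_J$ --- both immediate from the dual-pair construction of Section~\ref{sec:groups}. One should only take care to read $S_\ell(G_2)_\Theta$ as the $\C$-span (equivalently, the image of the linear extension $v\mapsto\Theta(v)$) so that it is manifestly a subspace; that it lands in the cusp forms $S_\ell(G_2;\C)$ is Proposition~\ref{prop:thetaQMF}, since in the range $\ell\ge 6$ one has $m=\ell-4>0$.
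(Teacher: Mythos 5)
Your argument is correct, and it spells out exactly the reasons the paper deems the statement obvious: the paper's own proof is simply ``This is clear.'' The two key points you isolate --- that $D_\p^{2m}$ acts at the archimedean place and hence commutes with right translation by $g_0\in G_2(\A_f)$, and that $(g,h)(g_0,1)=(gg_0,h)$ by the dual-pair commutation so $R(g_0)\varphi=\Theta_{g_0\cdot\phi}(\alpha)$ --- are precisely what make the claim immediate, so this is the same approach made explicit rather than a different route.
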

\begin{proof}
This is clear.
\end{proof}

Recall the projection $\p^{\otimes m} \rightarrow S^{2m}(V_{\ell}) \otimes (\wedge^2 J^0)^{\otimes m}.$ Let $W \subseteq (\wedge^2 J^0)^{\otimes m}$ be the set of all $w \in  (\wedge^2 J^0)^{\otimes m}$ for which $\{w, v\} = 0$ for all $v \in V_{m \lambda_3}$.  We set $V_m^* = (\wedge^2 J^0)^{\otimes m}/W$, and let $P:\p^{\otimes m} \rightarrow S^{2m}(V_{\ell}) \otimes V_m^*$ be the composite projection.

To prove that $S_{\ell}(G_2)_{\Theta} = S_{\ell}(G_2;\C)$ it suffices to show that if $\varphi \in S_{\ell}(G_2;\C)$ generates an irreducible representation $\pi$, then
\begin{equation}\label{eqn:thetaLift2}\int_{[G_2]}{\{\varphi(g),P(D_\p^{2m}\Theta_\phi(g,h))\}\,dg} \neq 0\end{equation}
for some $\varphi \in V_{min}$.  Indeed, in this case, the submodule $S_{\ell}(G_2)_{\Theta}$ has orthocomplement equal to $0$.  Moreover, we can assume that $\varphi$ is a pure tensor in $\pi$.

Suppose $E$ is a totally real cubic \'etale extension of $\Q$, and let $S_E$ be the group of type $\Spin_8$ defined in terms of $E$ that has $S_E(\R)$ compact.  See \cite{pollackSW} for a precise definition.  To prove \eqref{eqn:thetaLift2}, it then further suffices to show that
\begin{equation}\label{eqn:thetaLift3}\int_{[G_2]\times [S_E]}{\{\varphi(g),P(D_\p^{2m}\Theta_\phi(g,h))\}\,dg} \neq 0\end{equation}
for some such $E$.

The integral in \eqref{eqn:thetaLift3} can be evaluated using the main theorem of \cite{pollackSW} and the Rankin-Selberg integral studied in \cite{gurevichSegal,Segal2017} (see also \cite{pollackG2}).  To set up the result, following \cite{pollackSW}, write $G_E$ for a certain simply connected group of absolute Dynkin type $D_4$, defined in terms of $E$ and split over $\R$.

We have
\[D_{\p}^{2m} \otimes \Theta(g) = D_{\p}^{2m}\left( \sum_{v}\Theta_{v_j}(g)\otimes [x^{4+j}][y^{4-j}]\right) = \sum_{\alpha, j} \Theta_{u_\alpha v_j}(g) \otimes v_j^\vee \otimes u_\alpha^\vee\]
for elements $u_\alpha \in U(\g(J)\otimes \C)$. Thus by Corollary 9.4.8 of \cite{pollackSW},
\begin{equation}\label{eqn:SWint1}\int_{[G_2] \times [S_E] }{\{\varphi(g),P(D_{\p}^{2m}\Theta_\phi(g,h))\}\,dg\,dh} = \int_{[G_2]}{\{\varphi(g),\sum_{\alpha,j} E_1(\phi,u_\alpha v_j)(g,s=5) \otimes P(v_j^\vee \otimes u_\alpha^\vee)\}\,dg}.\end{equation}
Here $E_1(\phi, u_\alpha v_j,s=5)$ is the Siegel-Weil Eisenstein series on the group $G_E$.

The integral of \eqref{eqn:SWint1} can now be written as a partial $L$-function times some local Zeta integrals at bad finite places (including the archimedean place).  Specifically, we have the following proposition.  Moreover, these local zeta integrals at the finite places can be trivialized with Siegel-Weil inducing data for the Eisenstein series $E_1$ on $G_E$.  Specifically, we have the following proposition.

Suppose $\chi: N(\Q)\backslash N(\A) \rightarrow \C^\times$ is a unitary character.  To setup the proposition, we define
\[I_{\infty,\chi}(s) = \int_{N_{0,\chi}\backslash G_2(\R)}\{W_\chi(g),\sum_{\alpha,j} f_{u_\alpha v_j}(\gamma_{0,\chi} g,s) \otimes P(v_j^\vee \otimes u_\alpha^\vee)\}\,dg.\]
This is the local archimedean Zeta integral that comes from the Rankin-Selberg integral \eqref{eqn:SWint1}.  The notation is from \cite[Theorem 5.2]{pollackG2}, which is a restatement of a Theorem of \cite{gurevichSegal,Segal2017}. Here also $f_{u_\alpha v_j}(g,s)$ is the Siegel-Weil inducing section from \cite{pollackSW}.  It follows from Proposition \ref{prop:ZetaArchAbs} below that $I_{\infty,\chi}(s)$ converges absolutely for $Re(s)>1$.

\begin{proposition}\label{prop:SWRS} Let $\chi$ be a unitary character of $N(\Q)\backslash N(\A)$ for which $a_\chi(\varphi)(1) \neq 0$. Recall that from the theory of binary cubic forms one associates to $\varphi$ is a rank three $\Z$-module $R_\chi$ in a cubic \'etale algebra $E$ over $\Q$. Suppose $S$ is a set of finite places of $\Q$ that satisfies the following conditions:
\begin{enumerate}
	\item If $p \notin S$, then $\pi_p$ is unramified and $\varphi$ is spherical at $p$;
	\item If $p \notin S$, then $R_\chi \otimes \Z_p$ is a ring, and in fact the maximal order of $E\otimes \Z_p$;
	\item $S \supseteq \{2,3\}$.
\end{enumerate}
Then the finite vector $\phi$ can be chosen so that $\phi$ is spherical outside $S$, and the integral \eqref{eqn:SWint1} is equal to $L^S(\pi,Std,s=3) I_{\infty,E}(s=3)$.  Moreover, if $\varphi$ is unramified at all finite primes, and $R_\chi = \Z \times \Z \times \Z$ in $\Q \times \Q \times \Q$, then $S$ may be chosen to be empty.
\end{proposition}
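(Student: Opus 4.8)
The statement to prove is Proposition \ref{prop:SWRS}, which unfolds the integral \eqref{eqn:SWint1} into a product of a partial standard $L$-function and an archimedean zeta integral, plus finitely many local integrals at places in $S$ which are then trivialized by a careful choice of Siegel--Weil inducing data. The plan is to proceed as follows. First, I would recall the Rankin--Selberg integral of \cite{gurevichSegal,Segal2017}, in the form restated in \cite[Theorem 5.2]{pollackG2}: for a cusp form $\varphi$ on $G_2$ generating $\pi$ and an Eisenstein series $E_1(f,s)$ on $G_E$ induced from the Siegel parabolic, the global integral $\int_{[G_2]}\{\varphi(g),E_1(f,s)(g)\}\,dg$ unfolds against a non-degenerate character $\chi$ for which $a_\chi(\varphi)(1)\neq 0$, producing $\prod_v I_{v,\chi}(s)$ where each local integral $I_{v,\chi}(s) = \int_{N_{0,\chi}\backslash G_2(\Q_v)}\{W_{\chi,v}(g), f_v(\gamma_{0,\chi}g,s)\}\,dg$. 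The cuspidality of $\varphi$ (here $\ell\geq 6$) is what legitimizes the unfolding and discards the constant-term contributions; this is exactly the setting of \cite{pollackG2}.

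Second, I would invoke the unramified computation. At a finite place $p\notin S$ where $\pi_p$ is unramified, $\varphi$ is spherical, $R_\chi\otimes\Z_p$ is the maximal order of $E\otimes\Z_p$, and $f_p$ is the normalized spherical Siegel--Weil section, the local integral $I_{p,\chi}(s)$ equals the local standard $L$-factor $L_p(\pi_p,\mathrm{Std},s-2)$ (with the shift matching the normalization so that $s=5$ in \eqref{eqn:SWint1} corresponds to $s=3$ of the standard $L$-function) divided by the appropriate normalizing factor of the Eisenstein series; this is the content of the basic identity in \cite{gurevichSegal,Segal2017}, which requires precisely hypotheses (1) and (2). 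Taking the product over $p\notin S$ gives $L^S(\pi,\mathrm{Std},s=3)$ after accounting for the shift by the $D_{\p}^{2m}$ differential operator (which, by Corollary 9.4.8 of \cite{pollackSW} as already used in \eqref{eqn:SWint1}, lands us at the value $s=5$ of the $G_E$ Eisenstein series, i.e.\ $s=3$ in the classical standard-$L$ normalization). The archimedean integral is collected into $I_{\infty,E}(s)$, which by Proposition \ref{prop:ZetaArchAbs} converges for $\Re(s)>1$, so the identity is valid at $s=3$ by analytic continuation.

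Third --- and this is where the real work lies --- I would handle the places $p\in S$. The point of the Siegel--Weil inducing data of \cite{pollackSW} is that at a finite place $p$ one is free to choose the section $f_p(\cdot,s)$ within the degenerate principal series so as to make $I_{p,\chi}(s)$ equal to $1$, or any convenient nonzero constant, for the given $W_{\chi,p}$. Concretely, because $W_{\chi,p}$ is a fixed locally constant function supported near the identity coset, one chooses $f_p$ supported on a small enough open compact neighborhood of $\gamma_{0,\chi}$, normalized so the integral is $1$; the key subtlety is that such an $f_p$ must genuinely arise as the restriction to $G_E(\Q_p)$ of a Schwartz function on $W_J(\Q_p)$ (the Siegel--Weil data), i.e.\ it must lie in the image of the $p$-adic Siegel--Weil map, and one must check it can be taken spherical outside $S$. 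The last sentence of the proposition --- that $S$ may be taken empty when $\varphi$ is everywhere unramified and $R_\chi=\Z\times\Z\times\Z$ --- follows because then \emph{every} local integral including $p\in\{2,3\}$ is the unramified computation: the condition $R_\chi=\Z\times\Z\times\Z$ globally forces $R_\chi\otimes\Z_p$ to be the split maximal order $\Z_p\times\Z_p\times\Z_p$ for all $p$, so hypothesis (2) holds even at $2$ and $3$, and the spherical computation applies verbatim.

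**Main obstacle.** The main difficulty is the bad-place analysis in the third step: verifying that the local zeta integrals $I_{p,\chi}(s)$ at $p\in S$ can be made nonzero (indeed normalized to $1$) by an admissible choice of Siegel--Weil inducing section, keeping everything spherical outside $S$ and compatible with the global Eisenstein series. This requires knowing that the Siegel--Weil inducing sections of \cite{pollackSW} span a large enough space of the degenerate principal series on $G_E(\Q_p)$ --- essentially a surjectivity/density statement for the $p$-adic Siegel--Weil map onto sections relevant to the support of $W_{\chi,p}$ --- together with the non-vanishing of the pairing $\{W_{\chi,p}(\cdot), f_p(\gamma_{0,\chi}\cdot,s)\}$ for suitable $f_p$. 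The archimedean non-vanishing of $I_{\infty,E}(s=3)$ is deferred to the separate global argument via Corollary \ref{cor:introG2FC}, so it is not part of this proposition; but one does need Proposition \ref{prop:ZetaArchAbs} for absolute convergence at $s=3$ to make the manipulation of $\prod_v I_{v,\chi}$ rigorous.
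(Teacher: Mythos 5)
Your plan is structurally correct and identifies the right ingredients, but you leave the one genuine technical step of the proposition --- the one the proof actually has to supply --- as an unresolved ``main obstacle.'' You correctly note that one must verify the Siegel--Weil inducing sections span a large enough subspace of the degenerate principal series $\Ind_{P_E(\Q_p)}^{G_E(\Q_p)}(\delta_{P_E})$ at a bad place $p$, so that whatever section \cite[Section~7]{Segal2017} uses to trivialize $I_{p,\chi}(s)$ can be replaced by a Siegel--Weil section; but you stop there. The paper closes this gap with a short representation-theoretic argument: the induced representation $\Ind_{P_E(\Q_p)}^{G_E(\Q_p)}(\delta_{P_E})$ is generated as a $G_E(\Q_p)$-module by any vector not annihilated by the long intertwining operator (which at this point of evaluation is given by an absolutely convergent integral), and the restriction to $G_E(\Q_p)$ of the spherical vector from $G_J(\Q_p)$ is strictly positive, hence cannot be annihilated by a convergent positive integral. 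Therefore the Siegel--Weil sections generate the entire induced representation, not merely a subspace, and one can realize any convenient test section. There is also a subtlety you do not address: one must show the bad-place data $\phi$ can be adjusted \emph{without changing the cusp form} $\varphi$; this is what Lemma \ref{lem:SBfinite} in the paper supplies, by exhibiting an explicit Schwartz--Bruhat function whose local integral equals $L(v)$ independent of $s$.

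Your treatment of the final sentence of the proposition is also too quick. You argue that when $R_\chi = \Z\times\Z\times\Z$ the hypothesis (2) automatically holds at $p=2,3$ and so ``the spherical computation applies verbatim.'' But the hypothesis $S\supseteq\{2,3\}$ is in the statement precisely because the unramified local computation in \cite{gurevichSegal,Segal2017,pollackG2} is not established at $p=2,3$; those places require a separate verification, which the paper delegates to \cite{CDDHPRcompleted}. So the clause about $S=\emptyset$ is not a formal consequence of the rest of the proposition, and you cannot dispense with $2,3$ merely by observing that $R_\chi\otimes\Z_p$ is maximal there.

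As a smaller remark: your claim that $W_{\chi,p}$ is ``a fixed locally constant function supported near the identity coset'' is not literally correct (generalized Bessel functions need not have small support), and the normalization shift you attribute to $D_\p^{2m}$ is really a matter of the two conventions for the $G_E$ Eisenstein parameter ($s=5$) versus the standard $L$-function ($s=3$), not the differential operator. These do not affect the structure of your argument but should be stated carefully.
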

Observe that if $\phi$ is level one and has $\Z \times \Z \times \Z$ Fourier coefficient nonzero, then we may take $S$ to be empty.
\begin{proof} The fact that the global integral represents the partial $L$-function is from \cite{gurevichSegal,Segal2017}, for a slightly larger set $S$.  In \cite{pollackG2} the set $S$ is shrunk to that in the statement of the proposition, except that \cite{pollackG2} includes $2,3 \in S$ in all cases.  Then, in \cite{CDDHPRcompleted}, the case where $\varphi$ is level one and $R_\chi$ is $\Z \times \Z \times \Z$ is handled.
	
That the bad local integrals may be trivialized with some data is in \cite[Section 7]{Segal2017}.  What we state and use is slightly stronger.  Specifically, we must verify that the bad local integrals can be trivialized for Siegel-Weil inducing sections. This follows simply because the Siegel-Weil inducing sections make up all of the induced representation $Ind_{P_E(\Q_p)}^{G_E(\Q_p)}(\delta_{P_E})$, where $P_E$ is the Heisenberg parabolic of $G_E$.  To see this, recall that $Ind_{P_E(\Q_p)}^{G_E(\Q_p)}(\delta_{P_E})$ is generated by any vector which is not annihilated by the long intertwining operator, which turns out to be given by an absolutely convergent integral.  The restriction of the spherical vector from $G_J(\Q_p)$ is positively valued on $G_E(\Q_p)$, so it cannot be annihilated by the long intertwining operator.

Finally, what we have stated is that $\phi$ may be chosen to trivialize the integral, without changing the cusp form $\varphi$.  This is slightly stronger from what is stated in \cite{Segal2017}.  This claim follows from Lemma \ref{lem:SBfinite} below.
\end{proof}

It turns out the $L$-value $L^S(\pi,Std,s=3)$ is always nonzero.  This is a direct consequence of the main theorem of \cite{muicG2}.
\begin{theorem}\label{thm:G2partialL} Let $\pi$ be a cuspidal automorphic representation of $G_2$ over $\Q$.  Then the Euler product defining the partial standard $L$-function $L^S(\pi,Std,s)$ converges absolutely for $Re(s) > 2$.\end{theorem}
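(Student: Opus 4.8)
The plan is to deduce the stated convergence from a uniform bound on the unramified Satake parameters of $\pi$, which is the content of \cite{muicG2}.

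For $p \notin S$ the local component $\pi_p$ is an unramified representation of $G_2(\Q_p)$ with Satake parameter $c_p \in G_2(\C)$, and
\[L_p(\pi,Std,s) = \det\left(1 - c_p\, p^{-s} \mid V_7\right)^{-1} = \prod_{i=1}^{7}{\left(1 - \lambda_{i,p}\, p^{-s}\right)^{-1}},\]
where $V_7$ is the $7$-dimensional representation of $G_2(\C)$ and the $\lambda_{i,p}$ are the eigenvalues of $c_p$ acting on $V_7$. I claim the theorem follows from the assertion that there is an absolute constant $\theta \leq 1$ with $|\lambda_{i,p}| \leq p^{\theta}$ for all $p \notin S$ and all $i$. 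Indeed, for $Re(s) > 1 + \theta$ one has $|\lambda_{i,p}\, p^{-s}| \leq p^{\theta - Re(s)}$, so
\[\sum_{p \notin S}{\bigl|\log L_p(\pi,Std,s)\bigr|} \;\leq\; \sum_{p,\,i,\,k \geq 1}{\frac{p^{k(\theta - Re(s))}}{k}} \;\leq\; 7\sum_{p}{\Bigl(-\log\bigl(1 - p^{\theta - Re(s)}\bigr)\Bigr)} \;\ll\; \sum_{p}{p^{\theta - Re(s)}},\]
which converges since $Re(s) - \theta > 1$; hence the Euler product for $L^S(\pi,Std,s)$ converges absolutely in the half-plane $Re(s) > 1 + \theta$, and this half-plane contains $\{Re(s) > 2\}$ because $\theta \leq 1$.

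It remains to produce the bound $\theta \leq 1$. Since $\pi$ is cuspidal, $\pi_p$ is unitarizable, and we may write it as the Langlands quotient attached to a parabolic $P = MN$ of $G_2$, a tempered representation $\sigma$ of $M(\Q_p)$, and an unramified character $\nu$ of $M(\Q_p)$ whose real part lies in the open positive chamber; the possibilities for $M$ are $G_2$ itself (in which case $\pi_p$ is tempered and $|\lambda_{i,p}| = 1$), the two maximal Levi subgroups, each a copy of $\GL_2$, and the maximal torus. The main theorem of \cite{muicG2} bounds the non-temperedness of the local components of a cuspidal automorphic representation of $G_2$: it rules out the endpoints of the complementary series allowed by unitarity alone (the extreme of which are the one-dimensional representations, whose standard $L$-factor only forces convergence for $Re(s)$ large), and pins down the exponents tightly enough that in every case the eigenvalues of $c_p$ on $V_7$ satisfy $|\lambda_{i,p}| \leq p$. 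Thus $\theta = 1$ is admissible, and the theorem follows; at the single point $s = 3$ used in Proposition \ref{prop:SWRS}, absolute convergence of the product of the nonzero factors $L_p(\pi,Std,3)$ moreover gives $L^S(\pi,Std,3) \neq 0$. The only genuine work is the translation of \cite{muicG2} into this uniform statement: one runs through the finitely many shapes of Langlands data $(P,\sigma,\nu)$ for which a cuspidal local component of $G_2$ can occur and checks in each case that the resulting bound on the eigenvalues of $c_p$ on the $7$-dimensional representation is $\leq p$. I expect this bookkeeping, together with extracting the precise form of the bound from \cite{muicG2}, to be the main (and essentially only) obstacle.
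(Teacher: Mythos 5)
Your proof takes the same route as the paper: reduce the convergence of the Euler product to a uniform upper bound $|\lambda_{i,p}| \leq p^\theta$ with $\theta \leq 1$ on the eigenvalues of the Satake parameters on $V_7$, and appeal to Mui\'c's determination of the unitary dual of $p$-adic $G_2$ \cite{muicG2} for that bound. Your reduction of the convergence statement to the bound $\theta \leq 1$ is spelled out more fully than in the paper and is correct.

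One point in your last paragraph should be sharpened, however. You write that the main theorem of \cite{muicG2} ``rules out the endpoints of the complementary series allowed by unitarity alone (the extreme of which are the one-dimensional representations).'' But \cite{muicG2} is a \emph{local} classification of the unitary dual, and the trivial representation of $G_2(\Q_p)$ \emph{is} unitary: it is an isolated point of the unitary dual, with Satake parameter $p^{\rho^\vee}$, whose eigenvalues on $V_7$ go up to $p^3$. So ``$\pi_p$ unitarizable $+$ Mui\'c'' cannot by itself give $\theta \leq 1$; one needs a separate, global input (using that $\pi$ is cuspidal) to exclude the trivial representation, and more generally the one-dimensional and sufficiently degenerate local components, before Mui\'c's classification of the remaining spherical unitary representations can be used to read off a bound $\theta \leq 1$. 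The paper's own two-line proof compresses this into the same citation, so you are in good company, but when you do the bookkeeping you should be aware that the argument splits into (a) a soft global exclusion of the worst points of the unitary dual as possible cuspidal local components, and (b) the explicit exponent bounds from \cite{muicG2} on what remains.
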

\begin{proof} The local factors $\pi_p$ are unitarizable.  In \cite{muicG2}, the unitary dual of $p$-adic $G_2$ is completely and explicitly determined.  In particular, when $\pi_p$ is spherical, one has tight bounds on the Satake parameters of $\pi_p$.  These bounds imply the absolute convergence statement of the theorem.
\end{proof}

Finally, it turns out that if $\ell \geq 6$ is even, the archimedean Zeta integral $I_{\infty,\chi}(s=3)$ is nonzero for all non-degenerate $\chi$.
\begin{proposition}  Suppose $\ell \geq 6$ is even.  Then $I_{\infty,\chi}(s=3)$ is nonzero for all non-degenerate $\chi$.
\end{proposition}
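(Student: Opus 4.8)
The plan is to deduce the non-vanishing of $I_{\infty,\chi}(s=3)$ from the existence of a theta lift with a nonzero Fourier coefficient, i.e.\ from Corollary~\ref{cor:introG2FC}. There is no circularity, because Corollary~\ref{cor:introG2FC} rests only on Theorem~\ref{thm:WqmfDer} (the quaternionic Whittaker derivative) and Lemma~\ref{lem:w0nonzero}, and not on any Siegel--Weil or Rankin--Selberg input. First I would reduce to a single character. The integral $I_{\infty,\chi}(s)$ depends on the non-degenerate $\chi$ only through its archimedean component, and conjugating $\chi_\infty$ by an element of the Heisenberg Levi $\GL_2^s(\R)\subseteq G_2(\R)$ --- equivalently, acting on the associated real binary cubic form --- multiplies $I_{\infty,\chi}(s)$ by a nonzero factor (a value of the factor of automorphy together with $\rho_{[\cdot,\cdot]}$ at a fixed point). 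Hence non-vanishing at $s=3$ is an invariant of the $\GL_2^s(\R)$-orbit together with positive scaling, and since there are only finitely many orbits of non-degenerate $\chi$ for which $W_\chi$ is the moderate-growth quaternionic Whittaker function, it suffices to treat one representative per orbit. For the totally split type I take $\chi_0$ attached to $w_0=u^2v-uv^2$, so that the associated étale cubic algebra is $E_0=\Q\times\Q\times\Q$; any remaining orbit type is handled by the same mechanism, using a level one cusp form with a nonzero Fourier coefficient of the corresponding shape.

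Now fix even $\ell\geq 6$, set $m=\ell-4>0$, and invoke Corollary~\ref{cor:introG2FC}: there is a cuspidal $\pi$ on $G_2(\A)$, unramified at all finite places, whose weight $\ell$ quaternionic modular form $\varphi_\pi$ satisfies $a_{\chi_0}(\varphi_\pi)(1)\neq 0$; moreover, by the construction underlying that corollary, $\varphi_\pi=\Theta(\alpha)$ for a nonzero level one algebraic modular form $\alpha$ on $F_4^I$ for $V_{m\lambda_3}$, realized by exactly the kernel $D_\p^{2m}\Theta_{Gan}$ with $\Theta_{Gan}$ everywhere spherical. I then apply Proposition~\ref{prop:SWRS} to $\varphi_\pi$ and $\chi_0$: since $\varphi_\pi$ is unramified everywhere and $R_{\chi_0}=\Z\times\Z\times\Z$, the set $S$ may be taken empty, so with $\phi$ the spherical vector $\phi_0$,
\[\int_{[G_2]\times[S_{E_0}]}\{\varphi_\pi(g),P(D_\p^{2m}\Theta_{\phi_0}(g,h))\}\,dg\,dh \;=\; L(\pi,Std,s=3)\,I_{\infty,\chi_0}(s=3).\]
By Theorem~\ref{thm:G2partialL} the Euler product for $L(\pi,Std,s)$ converges absolutely at $s=3>2$ and its local factors are nonzero, so $L(\pi,Std,3)\neq 0$. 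It therefore remains to prove that the left-hand side is nonzero, and this is the heart of the matter.

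To see this, I would argue as follows. The inner integral $h\mapsto \int_{[G_2]}\{\varphi_\pi(g),P(D_\p^{2m}\Theta_{\phi_0}(g,h))\}\,dg$ is the vector-valued theta lift of $\pi$ from $G_2$ back to $F_4^I$ along the same kernel that produced $\varphi_\pi$. Since $\varphi_\pi=\Theta(\alpha)$ is a nonzero lift, adjointness of the theta pairing forces this back-lift to be not identically zero ($\langle\Theta(\alpha),\Theta(\alpha)\rangle_{[G_2]}\neq 0$ as $\varphi_\pi$ has positive Petersson norm), and then Howe duality for $G_2\times F_4^I\subseteq G_J$ (Gan--Savin, as used in the proof of Claim~\ref{claim:introHD}) together with the $K$-type multiplicity one of \cite{HPS} forces it to be a nonzero scalar multiple of $\alpha$. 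Hence the left-hand side equals a nonzero multiple of the $S_{E_0}$-period $\int_{[S_{E_0}]}\{\alpha(h),\cdot\}\,dh$. Finally I would identify this period with the explicit Fourier coefficient formula of Theorem~\ref{thm:introQMF}: the rank one $w\in W_{J_R}$ with $pr_I(w)=w_0$, and likewise those with $pr_E(w)=w_0$, are by the parametrization of Elkies--Gross \cite{elkiesGrossIMRN} exactly indexed by the relevant double cosets for $S_{E_0}$, with $a(w)=\sigma_4(d_w)$ playing the role of the local volumes, so that
\[\int_{[S_{E_0}]}\{\alpha(h),\cdot\}\,dh \;=\; (\text{nonzero rational constant})\cdot a_{\chi_0}(\Theta(\alpha))(1)\;=\;(\text{nonzero})\cdot a_{\chi_0}(\varphi_\pi)(1)\;\neq\;0.\]
The point is that all constants in this chain --- the Howe-duality normalization, the constant $B_m$ of Theorem~\ref{thm:WqmfDer}, and the rational factors $1/|\Gamma_I|$, $1/|\Gamma_E|$ --- come from the explicit theta computation, are manifestly nonzero, and in particular none of them is the transcendental quantity $I_{\infty,\chi_0}(3)$; so the argument is not circular. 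Combining, $L(\pi,Std,3)\,I_{\infty,\chi_0}(3)\neq 0$, whence $I_{\infty,\chi_0}(3)\neq 0$, and the orbit reduction gives $I_{\infty,\chi}(3)\neq 0$ for all non-degenerate $\chi$.

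The main obstacle I anticipate is the last step: showing cleanly that the $S_{E_0}$-period of $\alpha$ equals $a_{\chi_0}(\varphi_\pi)(1)$ up to an explicit nonzero constant that does \emph{not} involve the archimedean zeta integral. This amounts to re-deriving, by the explicit methods of Section~\ref{sec:qmfDer}, the relation between a period on $F_4^I$ and a Fourier coefficient of its $G_2$-theta lift independently of the Rankin--Selberg unfolding, which in turn requires matching the Elkies--Gross parametrization of rank one vectors in $W_{J_R}$ against the adelic description of $S_{E_0}$ and tracking the archimedean normalization through the back-lift. A secondary point is confirming that the $\GL_2^s(\R)$-orbit reduction genuinely reduces us to $\chi_0$ --- i.e.\ pinning down exactly which non-degenerate real binary cubic forms support the quaternionic Whittaker function --- and that any remaining orbit type is covered by the same global mechanism.
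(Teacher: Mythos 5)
Your outline matches the paper's proof up to and including the application of Proposition~\ref{prop:SWRS} and Theorem~\ref{thm:G2partialL}: both reduce to a single $\chi$ with $R_\chi = \Z\times\Z\times\Z$, invoke Corollary~\ref{cor:introG2FC} to produce a level one eigenform $\varphi$ with nonzero $\chi_0$-Fourier coefficient, and reduce the problem to showing the doubled integral $\int_{[G_2]\times[S_{E}]}\{\varphi(g),P(D_\p^{2m}\Theta_{\phi_0}(g,h))\}\,dg\,dh$ is nonzero. Where you diverge — and where the gap lies — is the nonvanishing of this last quantity. You propose to (a) identify the $h$-integral with the back-lift of $\pi$ to $F_4^I$, pin it down as a scalar multiple of $\alpha$ via Howe duality, and then (b) evaluate the $[S_{E_0}]$-period of $\alpha$ by matching it, via Elkies--Gross, against the $\chi_0$-Fourier coefficient of $\Theta(\alpha)$. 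Step (b) is precisely the period-equals-Fourier-coefficient relation that the whole Siegel--Weil plus Rankin--Selberg machinery is designed to produce, with $I_{\infty,\chi_0}(3)$ as the proportionality constant; invoking it here, without an independent derivation, is circular, and you acknowledge this is the ``main obstacle'' but do not close it. Step (a) also has an unjustified piece: Howe duality gives irreducibility and the identification of abstract representations, but to conclude the back-lift is \emph{literally} a scalar multiple of the specific automorphic form $\alpha$ you need multiplicity one of $\tau$ inside $\mathcal{A}(F_4^I)$, which is not automatic, and moreover $\Theta(\alpha)$ may decompose as a sum of several Hecke eigenforms so it is not literally an eigenform generating one $\pi$.

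The paper sidesteps both issues with a single device: it sets $\beta_0 := \int_{S_E(\R)}k\cdot\beta_{K,m}\,dk$, an $S_E(\R)$-invariant vector in $V_{m\lambda_3}$, shows $\Theta(\beta_0)$ still has the $\Z^3$-coefficient equal to $6$, decomposes $\Theta(\beta_0)$ into eigenforms, and picks an eigenform component $\varphi$ with $\langle\varphi,\Theta(\beta_0)\rangle\neq 0$ and nonzero $\Z^3$-coefficient. Then with $w(h) = \int_{[G_2]}\{\varphi(g),P(D_\p^{2m}\Theta(g,h))\}\,dg$, the $[S_E]$-period becomes $|\Gamma_{S_E}|^{-1}\int_{S_E(\R)}w(k)\,dk$, and because $w(k) = k^{-1}\cdot w(1)$ while $\beta_0$ is $S_E(\R)$-invariant, pairing against $\beta_0$ gives a constant multiple of $\{w(1),\beta_0\}=\langle\varphi,\Theta(\beta_0)\rangle\neq 0$. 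Thus the nonvanishing is reduced to a Petersson inner product, not a Fourier coefficient identity — no period-vs-Fourier-coefficient comparison is needed, and no appeal to Howe duality or multiplicity one on the $F_4^I$ side enters. If you wish to salvage your route, the cleanest fix is precisely this replacement: do not try to identify the back-lift with $\alpha$ and compute its period; instead average the $V_{m\lambda_3}$-datum over $S_E(\R)$ at the outset and cash out the $[S_E]$-period as a Petersson pairing.
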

\begin{proof} A change of variables in the integral $I_{\infty,\chi}(s)$ shows that the non-vanishing of $I_{\infty,\chi}(s=3)$ is equivalent for all $\chi$.  So, we take $\chi$ with $R_\chi = \Z \times \Z \times \Z$.  We will prove that this integral is nonvanishing by a global argument, using Proposition \ref{prop:SWRS}, Corollary \ref{cor:introG2FC}, and Theorem \ref{thm:G2partialL}.  Fix now $E = \Q \times \Q \times \Q$.

Let $\beta_{K,m}$ be as in the proof of Corollary \ref{cor:introG2FC}. Set $\beta_0 = \int_{S_E(\R)}{k \cdot \beta_{K,m}\,dk}$.  Then one sees that $\Theta(k \cdot \beta_{K,m})$ has $\Z \times \Z \times \Z$ Fourier coefficient equal to $6$, so $\Theta(\beta_0)$ does as well.

We can write $\Theta(\beta_0)$ as a finite sum of level one cuspidal eigenforms forms $\varphi_j$, with $\langle \varphi_j, \Theta(\beta_0) \rangle \neq 0$.  Thus there is some such $\varphi$ with $\Z \times \Z \times \Z$ Fourier coefficient nonzero; fix this $\varphi$. 

Now 
\[\int_{[G_2] \times [S_E] }{\{\varphi(g),P(D_{\p}^{2m}\Theta(g,h))\}\,dg\,dh} = L(\pi_\varphi,Std,s=3) I_{\infty,\chi}(s=3)\] 
from Proposition \ref{prop:SWRS}.

We have $\{\varphi,\Theta(\beta_0) \} \neq 0$.  So
\[\int_{[G_2]} \{\varphi(g) \otimes \beta_0, P(D_{\p}^{2m} \Theta(g,1))\} \,dg \neq 0.\]
Let $w(h) = \int_{[G_2]  }{\{\varphi(g),P(D_{\p}^{2m}\Theta(g,h))\}\,dg}$.  Then 
\[\int_{[S_{E}]}{w(h)\,dh} = |\Gamma_{S_E}|^{-1} \int_{S_E(\R)}{w(k)\,dk}\]
where $\Gamma_{S_E}$ is some finite group.  Here we are using that $S_E(\A) = S_E(\Q) S_E(\R) S_{E}(\widehat{\Z})$.

But this latter integral is nonzero, because it is nonzero after pairing with $\beta_0$.  Consequently, we have deduced that $I_{\infty,\chi}(s=3)$ is nonzero. 
\end{proof}

\begin{proof}[Proof of Theorem \ref{thm:ArithG2}]
We have proved that the space of lifts $S_{\ell}(G_2)_{\Theta}$ has a $\Q_{cyc}$ structure, from the Fourier coefficients.  We have also prove that if $\ell \geq 6$ is even, then $S_{\ell}(G_2)_{\Theta} = S_{\ell}(G_2;\C)$.  This proves the theorem.\end{proof}

We end with some of the technical details that were used in the proofs above.

\begin{lemma}\label{lem:SBfinite} Let $V_p$ denote the space of the representation $\pi_p$, and suppose $L: V_p \rightarrow \C$ is an $(N,\chi)$ functional. Given $v \in V_p$, there is a Schwartz-Bruhat function $\Phi$ on $\g_E \otimes \Q_p$ so that 
	\[I_p(\Phi,v,s) = \int_{N_{0,E}\backslash G_2(\Q_p)}{L(g v) f(\gamma_{0,E} g,\Phi,s)\,dg}\]
	is equal to $L(v)$, independent of $s$.  
\end{lemma}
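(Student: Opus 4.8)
The plan is to exploit the freedom in the Schwartz--Bruhat function $\Phi$ to concentrate the Siegel--Weil section $g\mapsto f(\gamma_{0,E}g,\Phi,s)$ in a small compact open neighborhood of the identity coset in $N_{0,E}\backslash G_2(\Q_p)$, on which the integrand is a fixed multiple of the constant $L(v)$ and the $s$-dependence drops out. Two soft inputs are needed. First, since $\pi_p$ is smooth, $g\mapsto L(gv)$ is locally constant on $G_2(\Q_p)$ and satisfies $L(ngv)=\chi(n)L(gv)$ for $n\in N$; in particular there is a compact open neighborhood $V_0$ of the identity coset in $N_{0,E}\backslash G_2(\Q_p)$ on which $L(gv)\equiv L(v)$. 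Second, as recalled in the proof of Proposition \ref{prop:SWRS}, the sections $f(\cdot,\Phi,s)$ with $\Phi\in\mathcal{S}(\g_E\otimes\Q_p)$ fill out the induced representation $\Ind_{P_E(\Q_p)}^{G_E(\Q_p)}(\delta_{P_E})$; restricting along the open orbit embedding $N_{0,E}\backslash G_2(\Q_p)\hookrightarrow P_E(\Q_p)\backslash G_E(\Q_p)$, $\gamma\mapsto P_E(\Q_p)\gamma_{0,E}\gamma$, and extending by zero over the (compact) flag variety, these restrictions realize the normalized characteristic function $\vol(U)^{-1}\mathbf{1}_U$ of any compact open neighborhood $U\subseteq V_0$ of the identity coset. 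Here I also use that the integrand $L(gv)f(\gamma_{0,E}g,\Phi,s)$ descends to $N_{0,E}\backslash G_2(\Q_p)$, the transformation of $f$ under $N_{0,E}$ being matched to $\chi$ by the construction of $\gamma_{0,E}$.

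I would then pin down the $s$-independence using the explicit shape of the inducing section from \cite{pollackSW}: $f(g,\Phi,s)$ is obtained from $\Phi$ by integrating a dilate of $\Phi$ over $\GL_1$ against the character $|\cdot|^{s}$, up to a fixed normalization. Choosing $\Phi$ with support on a small compact open subset of $\g_E\otimes\Q_p$ that is bounded away from the origin, one arranges, for $g$ ranging over a sufficiently small $U$, that this $\GL_1$-integral collapses to a single coset of units on which $|\cdot|^{s}\equiv 1$; then $g\mapsto f(\gamma_{0,E}g,\Phi,s)$ equals $\vol(U)^{-1}\mathbf{1}_U$ on $U$, vanishes on its complement in $N_{0,E}\backslash G_2(\Q_p)$, and is independent of $s$. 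Consequently the integrand of $I_p(\Phi,v,s)$ is supported on the compact set $U$, the integral converges absolutely, and
\[
I_p(\Phi,v,s)=\int_{U}L(gv)\,f(\gamma_{0,E}g,\Phi,s)\,dg=L(v)\int_{U}f(\gamma_{0,E}g,\Phi,s)\,dg=L(v)
\]
for all $s$, which is the claim (the case $L(v)=0$ being trivial with the same $\Phi$).

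The step I expect to be the main obstacle is the compatibility of the two localizations: one must verify, from the explicit construction of the Siegel--Weil section and the $G_2(\Q_p)$-orbit structure on $P_E(\Q_p)\backslash G_E(\Q_p)$ in \cite{pollackSW}, that the characteristic function of a small neighborhood of the base coset $P_E(\Q_p)\gamma_{0,E}$ is realized by some $\Phi$ whose support is simultaneously small enough and far enough from the origin that the $\GL_1$-twist contributes no power of $p^{-s}$ --- i.e.\ that the resulting section is genuinely constant in $s$ near the base point, rather than merely a nonzero Laurent polynomial in $p^{-s}$. Granting this, the remainder --- the rescaling of $\Phi$ to normalize $\int_U f(\gamma_{0,E}g,\Phi,s)\,dg$ to $1$, together with the absolute convergence and descent of the integrand --- is formal.
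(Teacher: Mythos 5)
Your high-level strategy (make the section concentrated and $s$-independent, then use smoothness of $L$) is in the right spirit, but it is \emph{not} the route the paper takes, and the key step you isolate is in fact the one the paper's proof is designed to avoid.

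Your plan requires that some Siegel--Weil section $\Phi$ makes $g\mapsto f(\gamma_{0,E}g,\Phi,s)$ vanish \emph{everywhere} on $N_{0,E}\backslash G_2(\Q_p)$ outside a small neighborhood of the base coset. Concretely, the support of this function is $\{g : g^{-1}(e\otimes\omega)\in\Q_p^\times\cdot\supp\Phi\}$, and to conclude that this is a small compact open neighborhood of the base coset, one needs to know that $g\mapsto[g^{-1}(e\otimes\omega)]$ is a locally closed embedding of $N_{0,E}\backslash G_2(\Q_p)$ into $\mathbb{P}(\g_E\otimes\Q_p)$ \emph{and} that the stabilizer of the line $\Q_p(e\otimes\omega)$ in $G_2$ is exactly $N_{0,E}$ rather than something larger (a torus enlargement would make the support noncompact and spoil $s$-independence). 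You flag this as ``the main obstacle'' but leave it unresolved, and it is genuinely the heart of the matter: the paper's explicit computation shows that constraining only the $E^0\otimes V_7$-part of $\Phi$ near $\alpha_0 e_1+\alpha_1 e_3^*$ already allows $g$ to roam over the \emph{entire} coset $Z_{\GL_2}(t)N_{Heis}K_{G_2}(p^M)$, so the localization of the section near the base coset is far from automatic.

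The paper's proof therefore proceeds quite differently. It takes $\Phi$ to be a pure tensor $\Phi_{\g_2}\otimes\Phi_{E^0\otimes V_7}$, localizes only the $E^0\otimes V_7$-factor, accepts that the section is then supported on a Heisenberg coset modulo $N_{0,E}$, and reduces to a residual integral over $\GL_1\times\mathrm{G}_a$ (the $t$ and $z$ variables) twisted by $\psi(z)$ coming from the $(N,\chi)$-functional. It then factors $\Phi_{\g_2}=\Phi_{\g_2}'\otimes\Phi_{E_{13}}$, performs the $z$-integral as a Fourier transform, and chooses $\Phi_{E_{13}}$ so that $\widehat{\Phi_{E_{13}}}$ is supported near $t=1$. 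That is what trivializes the $|t|^s$ factor and makes the answer $s$-independent; it does \emph{not} require the section to be a small-support bump function on $N_{0,E}\backslash G_2(\Q_p)$. Your approach would also succeed if one takes $\Phi$ to be a small characteristic function centered exactly at $e\otimes\omega$ (not merely ``away from the origin'': centering at $p^k(e\otimes\omega)$ for $k\neq 0$ still leaves a residual $p^{ks}$) and then verifies the local-closedness/stabilizer input above; but as written, the argument asserts rather than establishes the very thing that is nontrivial, and the paper's Fourier-transform device is precisely the mechanism that sidesteps needing that verification.
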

\begin{proof} Write $\gamma_{0,E}^{-1} E_{13} = e \otimes \omega$ in the notation of \cite{pollackG2}. We have
\[I_p(\Phi,v,s) = \int_{\GL_1 \times N_{0,E}\backslash G_2}{|t|^s \Phi(t g^{-1} e \otimes \omega)L(g v)\,dg\,dt}.\]
The function $\Phi$ is on $\mathfrak{g}_{E}$, and we have $\mathfrak{g}_{E} = \g_2 \oplus E^0 \otimes V_7$.  In this decomposition, we can write $e \otimes \omega = (e \otimes \omega') + (\alpha_0 e_1 + \alpha_1 e_{3}^*)$, where $\alpha_0, \alpha_1$ are a basis of $E^0$, the trace $0$ elements of $E$.

We take $\Phi$ a pure tensor, $\Phi = \Phi_{\g_2} \otimes \Phi_{E^ \otimes V_7}$.  We make $\Phi_{E^0 \otimes V_7}$ be the characteristic function of a set very close to $\alpha_0 e_1 + \alpha_1 e_3^*$. Let $Z_{\GL_2}$ be the center of the $\GL_2$ Levi of the Heisenberg parabolic on $G_2$.  Then $\Phi_{E^0 \otimes V_7}(t g^{-1}) \neq 0$ implies $g \in Z_{\GL_2}(t) N_{Heis} K_{G_2}(p^M)$ for some $M >>0$. Here $K_{G_2}(p^M)$ is the elements of $G_2(\Q_p)$ that are $1$ modulo $p^M$ in the $7 \times 7$ matrix representation of $G_2$. Here we are using that if $h \in G_2$, with $h^{-1} e_1 = e_1 + \delta_1$ and $h^{-1} e_3^* = e_3^* + \delta_2$, with $\delta_j \in p^M V_7(\Z_p)$, then there is $k \in K_{G_2}(p^M)$ so that $(hk)^{-1} e_1 = e_1$ and $(hk)^{-1} e_3^* = e_3^*$.  (Indeed, this latter fact can be proved by using $K_{\GL_2}(p^M)$ and also unipotent elements in $N(p^M \Z_p)$.)
	
Thus we must evaluate 
	\[\int_{\GL_1 \times G_a}{L(z(t) v)\Phi_{\g_2}(t (e \otimes \omega' + z E_{13}))|t|^s\psi(z)\,dt\,dz}.\]
We choose $\Phi_{\g_2}$ to be a pure tensor in our root basis of $\g_2$, so that $\Phi_{\g_2}(t (e \otimes \omega' + z E_{13})) = \Phi_{\g_2}'( t(e \otimes \omega')) \Phi_{E_{13}}(t z)$.  But $\int_{G_a}{\Phi_{E_{13}}(t z)\psi(z)\,dz} = \widehat{\Phi_{E_{13}}}(t^{-1}) |t|^{-1}$. By choosing $\Phi_{E_{13}}$ so that its Fourier transform is supported near $t=1$, we see that we can trivialize the integral to a constant multiple of $L(v)$.  This proves the lemma.
\end{proof}

We now prove the absolute convergence of the archimedean Zeta integral. The integral in question is
\begin{equation}\label{eqn:archZeta1}\int_{N_{0,E}\backslash G_2(\R)}{ \{W_\chi(g), f(\gamma_0 g,s)\}\,dg}.\end{equation}
\begin{proposition}\label{prop:ZetaArchAbs} The integral \eqref{eqn:archZeta1} converges absolutely for $Re(s) > 1$.
\end{proposition}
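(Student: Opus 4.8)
The plan is to bound the integrand of \eqref{eqn:archZeta1} pointwise after an Iwasawa decomposition of $G_2(\R)$ and to reduce the question to a convergent integral over a torus chamber, playing the rapid (Bessel) decay of the quaternionic Whittaker function $W_\chi$ against the polynomial-in-$s$ growth of the Siegel--Weil section $f(\gamma_0 g,s)$. Concretely, I would decompose $G_2(\R) = U A K_{G_2}$, with $U$ a maximal unipotent subgroup, $A$ the maximal split torus, and $K_{G_2}$ a maximal compact subgroup; after conjugating we may assume $N_{0,E}\subseteq U$, so $N_{0,E}\backslash G_2(\R)$ is parametrized by $(N_{0,E}\backslash U)\times A\times K_{G_2}$ with invariant measure $\delta(a)\,du\,da\,dk$ for a fixed character $\delta$ of $A$. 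Fixing a $K_{G_2}$-equivariant norm $\|\cdot\|$, one has $|\{W_\chi(g),f(\gamma_0 g,s)\}|\le\|W_\chi(g)\|\,\|f(\gamma_0 g,s)\|$, so the $K_{G_2}$-integration contributes only a bounded factor. For the $N_{0,E}\backslash U$-integration I would use the realization $f(\gamma_0 g,\Phi,s)=\int_{\GL_1(\R)}{|t|^s\,\Phi(t\,g^{-1}(e\otimes\omega))\,d^\times t}$ of the Siegel--Weil section (see \cite[Theorem 5.2]{pollackG2}) together with the Schwartz decay of $\Phi$: this gives rapid decay of $\|f(\gamma_0\, uak,s)\|$ in the $U$-directions, faster than the moderate growth of $\|W_\chi(uak)\|$ coming from the $(N,\chi)$-equivariance of $W_\chi$ and standard Whittaker estimates, so the $u$-integral converges for all $s$ and reduces everything to an estimate in the torus variable.

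It remains to estimate the integral over $A$. By the explicit Bessel-function formula for $W_\chi$ in \cite{pollackQDS}, on the relevant Weyl chamber $\|W_\chi(a)\|$ decays exponentially in the non-degenerate torus direction and grows at worst like a fixed power of $a$ in the complementary direction. On the other hand, the same integral formula for $f$, with the boundedness of $\Phi$ and an elementary estimate of the $\GL_1(\R)$-integral, gives $\|f(\gamma_0 a,s)\|\le C\,a^{\lambda(Re(s))}$ on the chamber, with $\lambda$ a weight of $A$ depending linearly on $Re(s)$. Multiplying these by $\delta(a)$, the integrand over $A$ is dominated by an exponentially decaying factor in the ``large'' directions times $a$ raised to a fixed $Re(s)$-linear weight; the exponential decay disposes of the large end of the chamber for every $s$, while at the remaining end the surviving power weights are integrable precisely when $Re(s)>1$. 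Since $I_{\infty,\chi}(s)$ is a finite linear combination of integrals of the shape \eqref{eqn:archZeta1}, its absolute convergence for $Re(s)>1$ follows as well.

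The step I expect to be the main obstacle is this last piece of bookkeeping: computing, from the root datum of the embedding $G_2\subseteq G_E$ and the explicit element $\gamma_0$ of \cite{pollackG2}, the exact weight $\lambda(Re(s))$ of the restricted Siegel--Weil section along $A$, and matching it against the known weight of $W_\chi$ and the modulus character $\delta$ so that the integrability threshold comes out to be exactly $Re(s)>1$ rather than some other bound. Everything else---compactness of $K_{G_2}$, the $(N,\chi)$-equivariance and Bessel decay of $W_\chi$ from \cite{pollackQDS}, and the Schwartz decay of $\Phi$---is soft.
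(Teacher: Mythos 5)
Your outline is in the right spirit, and the proof is indeed of the ``integral formula for $f$, play Schwartz decay against Whittaker decay'' type, but there is a genuine gap that you yourself flag. The ``last piece of bookkeeping'' you defer is not bookkeeping at all; it is the substance of the proposition. Nothing in your sketch actually establishes that the threshold is $Re(s)>1$, and the soft ingredients you list (compactness of $K_{G_2}$, $(N,\chi)$-equivariance, Schwartz decay of $\Phi$) do not by themselves force it.

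Two concrete points where the paper's argument diverges and does real work that your plan leaves open. First, the paper does not integrate over an Iwasawa decomposition $UAK$ of $G_2$. It parametrizes $N_{0,E}\backslash G_2(\R)$ by a $\mathrm{G}_a\times\GL_2\times K$ slice (the $\GL_2$ being a Levi of the Heisenberg parabolic), integrates out the $\mathrm{G}_a$ variable using Schwartz decay of $\Phi$ (gaining $|\det m|\,|t|^{-1}$), and then performs the crucial substitution $t\mapsto\det(m)\,t$, $m'=\det(m)m^{-1}$. That change of variables is what produces the factor $|t|^{s-1}$ and ultimately the exponent $s-1$, i.e.\ the threshold. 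Second, and more importantly, the rapid decay of $W_\chi$ over the entire non-compact part of the $\GL_2$ slice is not a soft Bessel-decay statement; it requires the lower bound $|\langle\omega',\,m\,r_0(i)\rangle|\ge C\,\|m\|$ uniformly for $m\in\GL_2(\R)$, which in the paper is reduced to an explicit inequality on the rational function $\frac{((x-1)^2+y^2)(x^2+y^2)}{(x^2+y^2+1)y^2}$ and verified by hand in polar coordinates. Without something like this, one cannot rule out sequences $m_n$ with $\|m_n\|\to\infty$ along which the argument of the $K$-Bessel function stays bounded, in which case the claimed ``exponential decay in the large direction'' fails. Your description of $W_\chi(a)$ as decaying exponentially ``in the non-degenerate torus direction'' and polynomially ``in the complementary direction'' glosses over exactly this issue.

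One further omission: the paper first proves that \emph{every} inducing section in $I(s)$ at the archimedean place is of the Siegel--Weil form $f(g,\Phi,s)=\int|t|^s\Phi(t\,g^{-1}E_{13})\,dt$ for a Schwartz $\Phi$ (and this uses evenness). Your sketch quietly assumes you may replace $f(\gamma_0 g,s)$ by such an expression, but that replacement is itself a lemma you would need to prove. So while your overall framing is reasonable, as written the argument does not produce the proposition; the pieces that would make $Re(s)>1$ come out, rather than some other bound (or fail to converge at all), are exactly the pieces you have left unaddressed.
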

\begin{proof} Let $\Phi$ be a Sschwartz function on $\g_\R$.  We obtain an inducing section from $\Phi$ as
	\[f(g,\Phi,s) = \int_{\GL_1(\R)}{|t|^s \Phi(t g^{-1}E_{13})\,dt}.\]
We will check that every inducing section in $I(s)$ is of this form, and we will prove the proposition for these inducing sections.
	
For the first part, observe that if $f \in I(s)$, then restricting to $K_\infty$ we obtain $f(k_1 k,s) = f(k,s)$ for all $k_1 \in K_\infty \cap M(\R)$. These are the $k_1 \in K_\infty$ for which $k_1 E_{13} = \pm E_{13}$, and note that the negative sign does indeed occur. Now let $\beta$ be an arbitrary even smooth function on $\g_E\otimes \R$ and $\alpha$ a smooth compactly supported function on $\R_{>0}$.  We set $\Phi(v) = \alpha(||v||) \beta\left(\frac{v}{||v||}\right)$; this is a Scwhartz function.
	
We have now
	\[f(k,\Phi,s) = \int_{\R^\times}{|t|^s \alpha(|t|) \beta(k^{-1} E_{13})\,dt} = 2 \beta(k^{-1} E_{13})\int_{\R_{>0}}{t^s \alpha(t)\,dt}.\]
We have used the evenness of $\beta$.  But because $\beta(k^{-1}E_{13})$ gives an arbitrary function on $(K_\infty \cap M(\R))\backslash K_\infty$, we see that every inducing section in $I(s)$ is an $f(g,\Phi,s)$.
	
We thus now proceed to prove the absolute convergence of the double integral
	\[\int_{\GL_1(\R)}\int_{N_{0,E}\backslash G_2(\R)} W_\chi(g) |t|^s \Phi(t g^{-1} e \otimes \omega)\,dg\,dt\]
	when $Re(s) > 1$.
	
	The bound we use below on $W_\chi(mk)$ is independent of $k$, so it suffices to integrate over $\GL_1 \times (\mathrm{G}_a \times \GL_2)$.  Without loss of generality, we can assume that $\Phi$ is a pure tensor of the appropriate sort.  We thus must bound the integral
	\[\int_{\GL_1 \times \mathrm{G}_a \times \GL_2} |t|^s \Phi_{13}(t \det(m)^{-1} z)\Phi_{\g_2}(t m^{-1} \omega') |\det(m)|^{-3}\Phi_{M_2}(t m^{-1}) W_\chi(m)\,dt\,dz\,dm.\]
Here $\Phi_{M_2}$ is a Schwartz function on the $2\times 2$ matrices $M_2(\R)$, and the rest of the notation is as in Lemma \ref{lem:SBfinite}.
	
One has $\int_{\R}{|\Phi_{13}(t \det(m)^{-1}z)|\,dz} < C |det(m)| |t|^{-1}$ and $\Phi_{\g_2}$ is bounded above, so we must bound
	\[\int_{\GL_1 \times \GL_2} |t|^{s-1}  |\det(m)|^{-2}\Phi_{M_2}(t m^{-1}) W_\chi(m)\,dt\,dm.\]
	
	Make the variable change $t \mapsto \det(m) t$, and set $m' = \det(m) m^{-1}$.  Then we must bound
	\[\int_{\GL_1 \times \GL_2} |t|^{s-1}  |\det(m)|^{s-3}\Phi_{M_2}(t m') W_\chi(m)\,dt\,dm.\]
	
	Now we claim $\int_{\GL_1}{|t|^{s-1} \Phi(t m')\,dt}$ is, for $s > 1$ (assumed real now) bounded by $C \frac{||m||^{s-1}}{s-1}$.  Indeed, $\Phi(t m')$ is rapidly decreasing, so $|\Phi(tm')| < C \max\{1,|t|^{-N}||m||^{-N}\}$ for an $N$ sufficiently large of our choosing.  Thus
	\[\int_{\GL_1}{|t|^{s-1} |\Phi(tm')|\,dt} < C\left( \int_{0}^{||m||}{|t|^{s-1}\,\frac{dt}{t}} + \int_{||m||}^{\infty}{|t|^{s-1-N} ||m||^{-N}\,\frac{dt}{t}}\right).\]
	Dropping terms of the form $\frac{1}{s-1}$ (since we are fixing $s$), both integrals above are bounded by $C ||m||^{s-1}$.  Thus we must bound
	\[\int_{\GL_2}{||m||^{s-1} |\det(m)|^{s-3} W_\chi(m)\,dm}.\]
	We break this integral into two pieces, one where $||m||\leq 1$ and the other with $||m|| > 1$.  The first integral has a compact domain, so can be ignored.  To show the convergence of the second integral, it suffices to show that $|W_\chi(m)| < \phi(||m||)$, where $\phi$ is a rapidly decreasing function.  And since the $K$-Bessel function is rapidly decreasing, it suffices to show that $|\langle \omega', m r_0(i)\rangle| \geq C ||m||$ for all $m \in \GL_2(\R)$.
	
	Both sides of the desired inequality scale linearly with the center of $\GL_2(\R)$, so it suffices to check that 
	\[\frac{|\langle \omega', m r_0(i)\rangle|^2}{||m||^2}\]
	is bounded away from $0$ for $m \in \SL_2(\R)$.  Furthermore, by a change of variables again (now or in the initial integral), we can assume $\omega' = (0,1,-1,0)$.  Then if 
	\[m = \mb{1}{x}{0}{1} \mb{y^{1/2}}{}{}{y^{-1/2}} k,\]
we wish to bound below the quantity
	\[y^{-3} |h(z)| ((y^2+x^2+1)/y)^{-1} = \frac{((x-1)^2+y^2)(x^2+y^2)}{(x^2+y^2+1)y^2}\]
	where $h(z) = z^2-z = z(z-1)$.
	
	Finally, to see that this rational function in $x,y$ is bounded below for $y>0$, we work in polar coordinates.  We have
	\[\frac{((x-1)^2+y^2)(x^2+y^2)}{(x^2+y^2+1)y^2} = \left(1 - \frac{2x}{r^2+1}\right)\left(1+ \frac{x^2}{y^2}\right) = \left(1+\frac{(x-1)^2}{y^2}\right)\left(1 - \frac{1}{r^2+1}\right).\]
	
	If $|r| \geq 1/2$ then $r^2 \geq 1/4$ so $1+r^2 \geq 5/4$ so $1/(1+r^2) \leq 4/5$ and then $1-(1/(r^2+1)) \geq 1/5$, so the quantity is at least $1/5$.  If $|r| \leq 1/2$, then
	\[ 2x/(r^2+1) = 2r \cos(\theta)/(r^2+1) \leq 2r/(r^2+1) \leq 2(1/2)/((1/2)^2+1) = 4/5\]
	because $f(r) = 2r/(r^2+1)$ is increasing on $(0,1)$.
	This proves the claim, and thus the proposition.
\end{proof}

\bibliography{nsfANT2020new}
\bibliographystyle{amsalpha}
\end{document}